\documentclass[12pt]{amsart}
\usepackage{mathtools} 
\usepackage{dsfont} 
\usepackage{amssymb}
\usepackage{comment}
\usepackage[pdftex]{graphicx} 
\usepackage{color}
\usepackage[colorlinks,citecolor=blue,linkcolor=blue,urlcolor=blue]{hyperref} 
\usepackage{cleveref}

\usepackage[utf8]{inputenc} 
\usepackage[english]{babel} 

\usepackage{geometry} 
\usepackage{tikz} 
\usetikzlibrary{calc,intersections}
\usepackage{cite} 
\usepackage{comment}
\usepackage{float} 

\newtheorem{theorem}{Theorem}

\newtheorem{lemma}[theorem]{Lemma}
\newtheorem{coro}[theorem]{Corollary}
\newtheorem{definition}[theorem]{Definition}
\newtheorem{notation}[theorem]{Notation}
\newtheorem{proposition}[theorem]{Proposition}
\newtheorem{example}[theorem]{Example}
\newtheorem{observation}[theorem]{Observation}
\newtheorem{Remark}[theorem]{Remark}

\newtheorem{question}[theorem]{Question}

\newtheorem{q}[theorem]{Open question}
\newtheorem{fakethm}{Theorem}
\newtheorem{fakeprop}[fakethm]{Proposition}

\numberwithin{theorem}{section} 
\numberwithin{equation}{section} 
\numberwithin{figure}{section} 

\newcommand*{\R}{\mathbb{R}}
\newcommand*{\C}{\mathbb{C}}

\newcommand*{\Q}{\mathbb{Q}}
\renewcommand*{\H}{\mathbb{H}}
\newcommand*{\Z}{\mathbb{Z}}
\newcommand*{\N}{\mathbb{N}}

\DeclareMathOperator{\PGL}{\mathrm{PGL}}
\DeclareMathOperator{\PSL}{\mathrm{PSL}}
\DeclareMathOperator{\Gr}{\mathrm{Gr}}

\newcommand{\abs}[1]{\left\lvert#1\right\rvert}

\title{Plane geometry of $q$-rationals and Springborn Operations}

\author{Perrine Jouteur, Olga Paris-Romaskevich and Alexander Thomas}

\address{Université de Reims Champagne-Ardenne, CNRS, LMR, UMR 9008, Reims, France}
\email{perrine.jouteur@univ-reims.fr}

\address{CNRS, ICJ UMR 5208, École Centrale de Lyon, INSA Lyon, Université Claude Bernard Lyon 1, Université Jeann Monnet, 69622 Villeurbanne, France}
\email{paro@math.univ-lyon1.fr}

\address{CNRS, ICJ UMR 5208, École Centrale de Lyon, INSA Lyon, Université Claude Bernard Lyon 1, Université Jeann Monnet, 69622 Villeurbanne, France}
\email{athomas@math.univ-lyon1.fr}

\begin{document}

\begin{abstract}
We study the geometry of $q$-rational numbers, introduced by Morier-Genoud and Ovsienko, for positive real $q$. In particular, we construct and analyse the deformed Farey triangulation and the deformed modular surface. We interpret every $q$-rational geometrically as a circle, similar to the famous Ford circles.
Further, we define and study new operations on $q$-rationals, the Springborn operations, which can be seen as a quadratic version of the Farey addition. Geometrically, the Springborn operations correspond to taking the homothety centers of a pair of two circles. As an application, we derive a formula for the $q$-deformed midpoint of two Farey neighbors and we consider a new $q$-deformation of Markov numbers.
\end{abstract}

\maketitle

\begin{center}
    \includegraphics[width=\textwidth, trim={0 50 0 50}, clip]{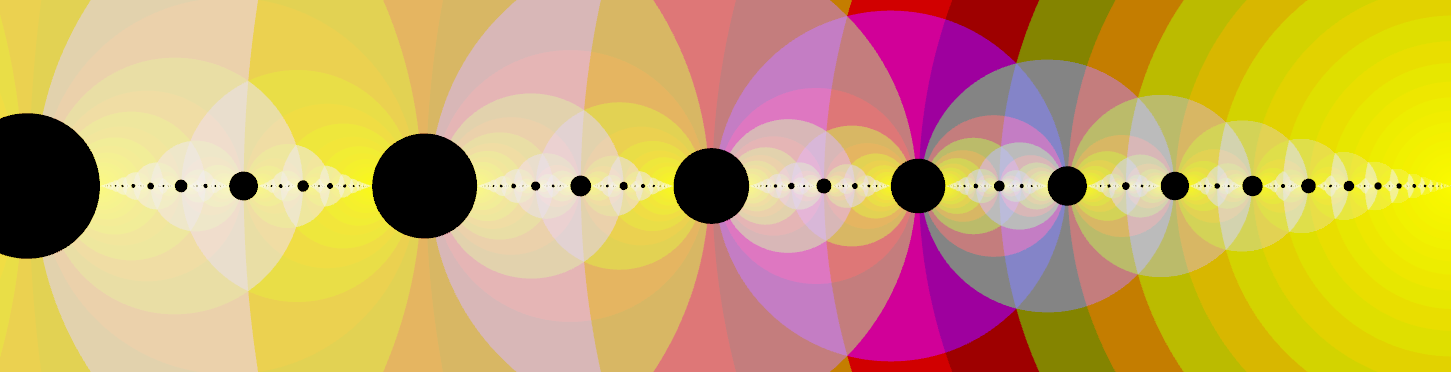}
\end{center}

\setcounter{tocdepth}{1}
\tableofcontents

\section{Introduction}

\noindent \textbf{Context and motivations.}
In their seminal paper \cite{MGO-2020}, Morier-Genoud and Ovsienko introduced the notion of a $q$-deformed rational number, using a deformed version of the continued fraction development. These $q$-rationals have astonishing positivity and convergence properties.
Equivalently, one can define $q$-rationals via a deformation of the Farey addition $$\frac{a}{b}\oplus_F \frac{c}{d}=\frac{a+c}{b+d},$$ where $\tfrac{a}{b}, \tfrac{c}{d}$ are two rationals with Farey determinant $\lvert ad-bc\rvert =1$. 
Still another equivalent description of $q$-rationals can be obtained by deforming the action of the modular group $\PSL_2(\Z)$ on the hyperbolic plane $\mathbb{H}^2$ by Möbius transformations (fractional linear transformations). The usual generators $S$ and $T$ of $\PSL_2(\Z)$ get deformed as follows:
\begin{equation*}
\begin{array}{c c c}
T = \begin{pmatrix}
		1 & 1\\
		0 & 1\\
		\end{pmatrix} & \overset{q}{\rightsquigarrow} & 
T_q := \begin{pmatrix}
		q & 1\\
		0 & 1\\
		\end{pmatrix},\\[20pt]	
S = \begin{pmatrix}
		0 & -1\\
		1 & 0\\
		\end{pmatrix} & \overset{q}{\rightsquigarrow} &
	 S_q := \begin{pmatrix}
		0 & -1\\
		q & 0\\
		\end{pmatrix}.\\
	\end{array}
\end{equation*}

Soon after, another version of $q$-rationals was discovered, first noticed in \cite[Remark 3.2]{MGO-2022} and introduced formally by Bapat-Becker-Licata \cite{BBL}. It is called the left version (while the first version is then called the right version), and has similar positivity and convergence properties. In their work, Bapat-Becker-Licata stipulate the idea of thinking about a $q$-rational number as a hyperbolic geodesic, with endpoints given by the left and right $q$-deformation. For this geometric picture to hold, it is necessary to specialize the formal parameter $q$ to be a positive real number\footnote{For negative real numbers, the representations are not faithful, which makes the hyperbolic geodesics intersect.}.

The starting point of this work is a deepened investigation of the geometry of $q$-rational numbers, using the geometry of the hyperbolic plane and the symmetries of the Farey triangulation. By doubling the hyperbolic geodesics, using complex conjugation, we advocate for thinking of a $q$-rational as a \emph{disk} in $\C$. 
These disks, indexed by $\mathbb{QP}^1$, are ordered along the real line the same way as corresponding rationals. The figure below the abstract, drawn using Shadertoy \cite{shader}, shows some of these disks.

Natural objects associated to rationals and the modular group action on them are the Farey triangulation and the modular surface. Recently, Simon \cite[Section 3.2]{Simon} has described a deformation of the modular surface, which becomes a hyperbolic orbifold with a unique funnel. We independently arrive at the same description, and go further, making the link to $q$-rationals. The preimage of the unique geodesic around the funnel in $\H^2$, seen as universal cover, is the set of hyperbolic geodesics associated to $q$-rationals.

The visualization of the $q$-rationals via disks allowed us to notice a completely unexpected property: for many pairs of disks, the intersection point of the inner or outer common tangents (the inner or outer homothety center of the two disks) lies at the boundary of another disk! If the two initial disks are indexed by rational numbers $\tfrac{a}{b}$ and $\tfrac{c}{d}$, the inner homothety point lies on the boundary of the disk associated to 
\begin{equation}\label{Eq:Springborn-intro}
    \frac{a}{b}\oplus_S \frac{c}{d} = \frac{ab+cd}{b^2+d^2},
\end{equation}
where the fraction on the right hand side might not be reduced. This operation can be seen as a quadratic version of the Farey addition. In a different context (Diophantine approximations of rationals numbers), this operation has been recently studied by Springborn \cite{Springborn}. This is why we call it the \emph{Springborn addition}. A similar operation, the Springborn difference, corresponds to the outer homothety center.

The proof of this property led to the study of involutive symmetries of the classical Farey triangulation and their deformations. Finding the reduced expression of the $q$-deformed Springborn operation uses recent results on $q$-rationals, notably when the denominator is palindromic \cite{Ren, Kogiso}. 

\medskip
\noindent \textbf{Summary and results.}
In Section \ref{Sec:q-rationals} we recall the construction and main properties of $q$-rationals, notably through their symmetry group $\PSL_2(\Z)$ or the extended modular group $\PGL_2(\Z)$. For a rational number $\tfrac{a}{b}\in\Q$, there are two versions: the right $q$-rational $[\tfrac{a}{b}]_q^\sharp = \tfrac{A^\sharp}{B^\sharp}$ and the left $q$-rational $[\tfrac{a}{b}]_q^\flat = \tfrac{A^\flat}{B^\flat}$, where $A^\sharp, B^\sharp, A^\flat, B^\flat\in\Z[q]$.

In Section \ref{Sec:Farey-tiling}, we study the Farey triangulation and its symmetries. The \emph{Farey determinant} of two rational numbers $\tfrac{a}{b}$ and $\tfrac{c}{d}$, denoted by $d_F(\tfrac{a}{b},\tfrac{c}{d})$, is given by $\lvert ad-bd\rvert$. The Farey triangulation is simply the union of all hyperbolic geodesics between pairs of rationals of Farey determinant 1.
Using the $q$-deformed action of the modular group on $\H^2$, we describe the $q$-deformed Farey tesselation and the deformed modular surface. Concretely, we compute the fundamental domain of the deformed $\mathrm{PGL}_2(\Z)$-action \cite{Jouteur}:

\begin{fakeprop}[Proposition \ref{Prop:q-fund-domain}]
The fundamental domain of the $q$-deformed action of $\PGL_2(\Z)$ on $\mathbb{H}^2$ is a hyperbolic ``triangle'' open towards infinity, with two vertices given by $\tfrac{i}{\sqrt{q}}$ and $\sigma=\frac{1+i\sqrt{3}}{2}$.
It is a deformation of the triangle with vertices $i, \sigma$ and $\infty$.
\end{fakeprop}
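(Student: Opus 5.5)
The plan is to realize the $q$-deformed $\PGL_2(\Z)$-action as a discrete group generated by three hyperbolic reflections, and then to apply Poincaré's polygon theorem (Coxeter's theorem for reflection groups) to the polygon cut out by their mirrors.

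\textbf{Generators and the two marked points.} I start from the deformed generators $S_q,T_q$ together with the orientation-reversing generator of $\PGL_2(\Z)$, which I take to be $R(z)=-\bar z$, with mirror the imaginary axis. I have not checked that this is exactly the convention of Section~\ref{Sec:q-rationals}; if the paper uses a different deformed reflection, the same argument applies with that map. Two fixed points are immediate.
\begin{itemize}
\item $S_q(z)=-1/(qz)$ is an involution of $\H^2$ fixing $\tfrac{i}{\sqrt q}$.
\item $T_qS_q(z)=1-1/z$ does not depend on $q$. It is elliptic of order $3$ and fixes $\sigma=\tfrac{1+i\sqrt3}{2}$.
\end{itemize}
These are exactly the two claimed vertices.

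\textbf{The three reflections.} I write the action as generated by three reflections.
\begin{itemize}
\item $R_1=R$, with mirror the imaginary axis.
\item $R_2=S_qR$, that is $z\mapsto 1/(q\bar z)$, with mirror the geodesic $\abs{z}=1/\sqrt q$. It passes through $\tfrac{i}{\sqrt q}$ and meets the imaginary axis orthogonally there, since $R_1R_2=\pm S_q$ has order $2$.
\item $R_3$, a reflection with $R_2R_3=(T_qS_q)^{\pm1}$. Its mirror must pass through $\sigma$ and meet the mirror of $R_2$ at angle $\pi/3$. The natural candidate is $R_3=(T_qS_q)^{\pm1}R_2$.
\end{itemize}
For $R_3$ I will check explicitly that it is an anti-Möbius involution with a real circle of fixed points, and compute that circle.

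Note that $T_qR$ itself is \emph{not} a reflection when $q\neq1$: it is $z\mapsto 1-q\bar z$, a glide, whose real fixed point is $1/(1+q)$. This is precisely why the third side deforms away from the line $\operatorname{Re} z=\tfrac12$.

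\textbf{The triangle and Poincaré's theorem.} Next I verify that the mirror of $R_3$ and the imaginary axis are disjoint in $\overline{\H^2}$ (ultraparallel) for $q\neq1$, and asymptotic at $\infty$ for $q=1$. Concretely, I show that the circle for $R_3$ has both real endpoints on one side of $0$ and does not pass through $\infty$. The three mirrors then bound a region $P$ with:
\begin{itemize}
\item angle $\pi/2$ at $\tfrac{i}{\sqrt q}$;
\item angle $\pi/3$ at $\sigma$;
\item a free side on $\partial\H^2$ between the imaginary axis and the third mirror, which is the ``open towards infinity'' end.
\end{itemize}
Since the angles are $\pi/2$ and $\pi/3$ and the remaining pair of sides never meets, Poincaré's theorem gives that the group $\langle R_1,R_2,R_3\rangle$ is discrete, has $P$ as a fundamental domain, and is the abstract Coxeter group with relations $R_i^2=1$, $(R_1R_2)^2=1$, $(R_2R_3)^3=1$. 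This is the standard presentation of $\PGL_2(\Z)$. Hence the deformed representation is faithful, and $P$ is a fundamental domain for the deformed action. At $q=1$ the region is the classical triangle with vertices $i,\sigma,\infty$, and it varies continuously in $q$.

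\textbf{Main obstacle.} The real work is in the last two checks: computing the mirror of $R_3$ and showing it is ultraparallel to the imaginary axis, including on the correct side, for every $q>0$ with $q\neq1$. It also has to be confirmed that the $P$ so obtained is the intended region rather than its mirror image under $q\mapsto q^{-1}$. I would first compute $R_3$ as an anti-holomorphic matrix and read off its fixed circle as $\{\,c\abs{z}^2-2\operatorname{Re}(\bar a z)+b=0\,\}$. Disjointness from $\operatorname{Re} z=0$ then amounts to a discriminant inequality, which I expect to be $(q-1)^2>0$.
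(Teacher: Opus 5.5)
Your overall strategy (write the action as a reflection group, locate the three mirrors, apply Poincaré's theorem) is the paper's. However, the choice $R(z)=-\bar z$ is not a harmless convention, and with it the step you flag as the main obstacle actually fails.

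First, $R$ does not satisfy the $\PGL_2(\Z)$ relations together with $T_q$: $(T_qR)^2(z)=q^2z+1-q\neq z$ for $q\neq 1$. So $\langle S_q,T_q,R\rangle$ is not a deformed $\PGL_2(\Z)$-action at all.

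Second, your candidate third reflection is $R_3=(T_qS_q)R_2=T_qS_q^2R=T_qR$, which is exactly the glide you computed. The other candidate, $(T_qS_q)^{-1}R_2=S_q(T_q^{-1}R)S_q$, is also a glide: its square is conjugate to $z\mapsto (z+1-q)/q^2$. This cannot be repaired by a cleverer choice of $R_3$. If two reflections compose to the elliptic element $(T_qS_q)^{\pm1}$, both mirrors must pass through its fixed point $\sigma$. But the mirror $|z|=q^{-1/2}$ of $R_2$ misses $\sigma$ when $q\neq1$.

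So your mirrors never bound a polygon with vertices $\tfrac{i}{\sqrt q}$ and $\sigma$. The fact that $T_qR$ is not a reflection is a symptom of the wrong $R$, not the reason the third side moves.

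The missing point is that the reflection generator is itself deformed, and the relations force its form. Take $q\neq 1$, and let $\nu$ be an orientation-reversing involution with $\nu S_q\nu=S_q$ and $\nu T_q\nu=T_q^{-1}$. Then $\nu$ must fix $\tfrac{i}{\sqrt q}$ and swap the attracting and repelling fixed points $\tfrac{1}{1-q}$ and $\infty$ of the hyperbolic element $T_q$. Hence $\nu$ is the reflection in the circle centred at $\tfrac{1}{1-q}$ through $\tfrac{i}{\sqrt q}$. This is the paper's $N_qc$ with $N_q$ as in \eqref{defi:Nq}, whose mirror is $\mathrm{Im}_q$ from \eqref{Eq:q-imag-axis}.

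Now take $s_2=N_qc$, $s_1=S_qN_qc$ and $s_3=T_qN_qc=(T_qS_q)s_1$, which is now a genuine reflection. Your argument then runs as planned. The mirrors are $\mathrm{Im}_q$, $\mathcal{C}_{1,q}$ (through $\tfrac{i}{\sqrt q}$ and $\sigma$) and $\mathcal{C}_{2,q}$ (through $\sigma$); see \eqref{Eq:q-unit-circle}--\eqref{Eq:q-critical-axis}. The ultraparallel check is immediate, because $\mathrm{Im}_q$ and $\mathcal{C}_{2,q}$ are concentric at $\tfrac{1}{1-q}$ with radii in ratio $q^{-1/2}$.

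This is exactly the paper's proof of Proposition \ref{Prop:q-fund-domain}, with Poincaré's theorem then applied in Corollary \ref{Coro:no-extra-relations}.
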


\begin{figure}[h!]
    \centering
    \includegraphics[height=4.5cm]{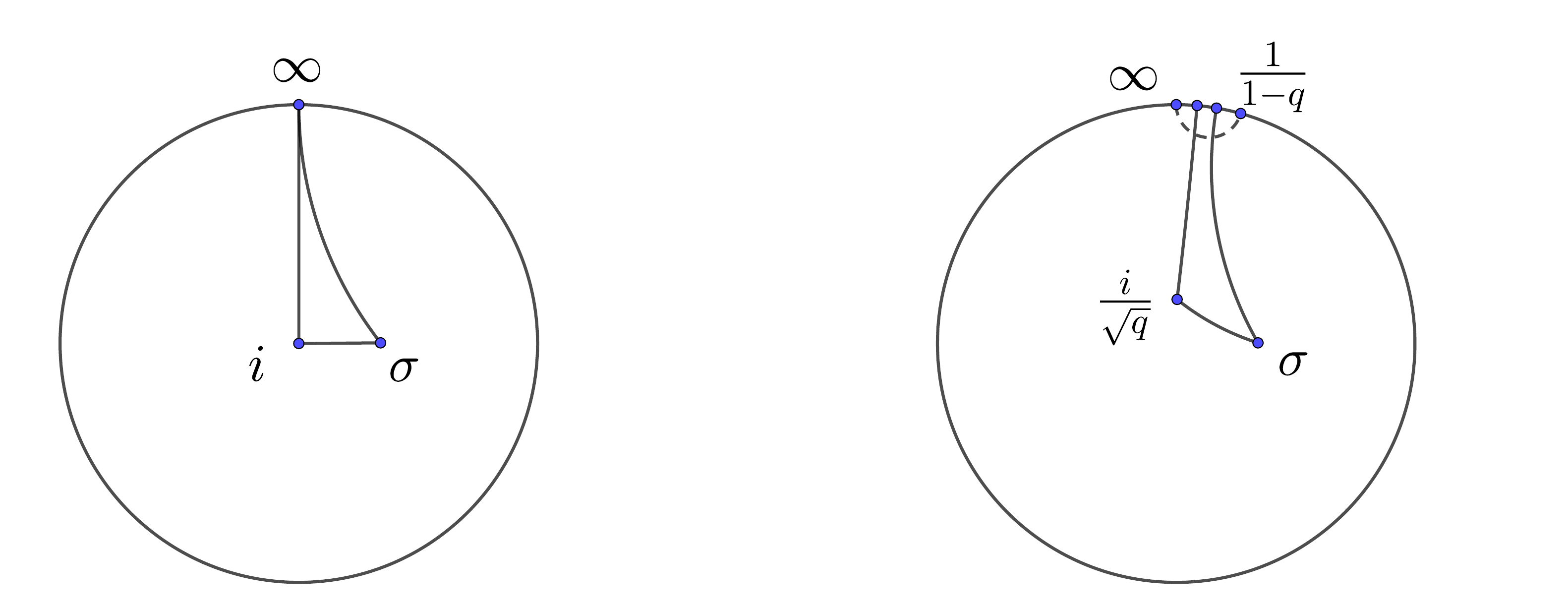}
    \caption{Fundamental domain of $q$-deformed action of $\PGL_2(\Z)$ on $\mathbb{H}^2$.}
    \label{fig:fund-domain-disk-model}
\end{figure}

As a consequence, together with Poincaré's theorem on fundamental polygons, this allows to check that the group which acts is indeed $\mathrm{PGL}_2(\Z)$ (Corollary \ref{Coro:no-extra-relations}).

The deformed modular surface is an orbifold with a unique funnel.
The preimage of the geodesic around the funnel is the set of $q$-rationals, seen as hyperbolic geodesics (the endpoints are the right and left $q$-rationals). We denote by $[\tfrac{a}{b}]$ the geodesic associated to $\tfrac{a}{b}\in \Q$ 
or the full disk in $\C$. 
From this geometric viewpoint, we get the well-orderedness of the disks associated to $q$-rationals, and we rederive the Etingof's gap formula \cite[Prop. 4.6]{Etingof} in Corollary \ref{Coro:Etingof-gap-formula}, using the ergodicity of the geodesic flow on a closed hyperbolic surface.

In Section \ref{Sec:q-Farey} we define a deformed version of the Farey determinant of a pair of rationals $(\tfrac{a}{b},\tfrac{c}{d})\in\Q^2$.
Since there are right and left versions for $q$-rationals, we get four possible notions of $q$-Farey determinants, denoted by $d_F^{\triangle\square}$ with $\triangle, \square\in\{\sharp,\flat\}$, see Definition \ref{Def:q-Farey-det}.

\begin{fakethm}[Theorem \ref{Thm:link-left-right-dF} and Proposition \ref{Prop:positivity-q-dF}]
The four $q$-Farey determinants of any pair $(\tfrac{a}{b},\tfrac{c}{d})\in\Q^2$ have positive integer coefficients: $d_F^{\triangle\square}\in\N[q]$. Further, up to multiplication by some monomial in $q$ (denoted by $\equiv_q$), they are related by
$$  d_F^{\flat \sharp }(q) \equiv_{q}  d_F^{\sharp  \flat}(q^{-1}) \;\;\text{ and }\;\;   d_F^{\flat\flat}(q)  \equiv_q d_F^{\sharp \sharp }(q^{-1}).$$
\end{fakethm}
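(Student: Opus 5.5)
We work with the quantities the theorem itself uses. For rationals $\tfrac{a}{b},\tfrac{c}{d}$ with $q$-deformations $[\tfrac{a}{b}]_q^\triangle=\tfrac{A^\triangle}{B^\triangle}$ and $[\tfrac{c}{d}]_q^\square=\tfrac{C^\square}{D^\square}$, the $q$-Farey determinant is (up to sign normalisation, for $\tfrac ab<\tfrac cd$) $d_F^{\triangle\square}=C^\square B^\triangle-A^\triangle D^\square$.

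\textbf{Step 1: reduce to a base pair via the symmetry group.} Under the deformed action of $\PSL_2(\Z)$ generated by $T_q,S_q$, the vectors $(A^\sharp,B^\sharp)$ and $(A^\flat,B^\flat)$ transform linearly by the same matrices, up to a normalising monomial factor. This is the standard fact recalled in Section \ref{Sec:q-rationals}: for $M\in\PSL_2(\Z)$, $[M\cdot x]_q=M_q\cdot[x]_q$ for both versions. Hence $d_F^{\triangle\square}$ of the pair is multiplied by $\det M_q$, a power of $q$ (since $\det T_q=\det S_q=q$), and possibly by normalising monomials. So $d_F^{\triangle\square}$ is well defined up to $\equiv_q$ on $\PSL_2(\Z)$-orbits of pairs, and we may move $\tfrac ab$ to $\infty=\tfrac10$ (with $[\infty]^\sharp=\tfrac10$ and $[\infty]^\flat$ as in the paper, e.g. $\tfrac{1}{1-q}$ or similar). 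Then $d_F^{\triangle\square}$ reduces essentially to $D^\square$, or to $C^\square-\text{stuff}\cdot D^\square$ in the flat case at infinity, for the moved point $\tfrac cd$.

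\textbf{Step 2: positivity.} Instead of reducing to $\infty$, I will reduce to $\tfrac ab=\tfrac01$, where $[0]^\sharp=[0]^\flat=0$ presumably, so $d_F=B^\triangle\cdot C^\square$ up to monomials. Alternatively, if $\tfrac ab,\tfrac cd>0$, I induct along the Farey tree/continued fraction of the pair. Each step applies $T_q$ or $T_q S_q T_q$-type positive matrices, which preserve positivity of the determinant because the determinant is multiplicative and the base case is a monomial or a positive polynomial such as $[n]_q$. Concretely, I would expand $d_F$ as a $2\times 2$ determinant of a product of matrices with entries in $\N[q]$, using the known $\N[q]$ positivity of $A^\sharp,B^\sharp,A^\flat,B^\flat$ and the standard Farey-neighbour identity $C^\sharp B^\sharp-A^\sharp D^\sharp=q^{k}$. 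Writing $\tfrac cd$ in terms of the Farey parents of $\tfrac ab$, I express $(C,D)$ as a combination with $\N[q]$ coefficients of a Farey-neighbour pair. This gives $d_F$ as an $\N[q]$-combination of monomials. The mixed left/right cases use the relation $(A^\flat,B^\flat)=(A^\sharp,B^\sharp)+$ (a $q$-multiple of the predecessor vector), which is again an $\N[q]$-combination.

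\textbf{Step 3: $q\leftrightarrow q^{-1}$ relations.} I would use the involution $x\mapsto -x$ in $\PGL_2(\Z)$. Its deformation exchanges left and right and sends $q$ to $q^{-1}$: $[-x]_q^\sharp=-q^{-1}[x]_{q^{-1}}^\flat$ (the known relation from \cite{BBL}). Applying it to both entries of the pair swaps $\sharp\leftrightarrow\flat$ in each slot and substitutes $q\to q^{-1}$, up to monomial normalisations of numerators and denominators. This yields $d_F^{\flat\flat}(q)\equiv_q d_F^{\sharp\sharp}(q^{-1})$ and $d_F^{\flat\sharp}(q)\equiv_q d_F^{\sharp\flat}(q^{-1})$. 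The order of the pair reverses under negation, which only affects the sign and the slot order; the slot order is consistent because $\flat\sharp$ maps to $\sharp\flat$.

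The main obstacle will be the bookkeeping of normalising monomials: the $q$-rationals are only defined as reduced fractions up to units, and the $\PGL$ action involves orientation reversal with a factor of $q^{-1}$. I would fix normalisations with $B(0)=1$ so that the monomials are explicit.
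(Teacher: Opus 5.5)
Your Step 1 is fine and matches the paper. The proof breaks in Step 3, which is where the real content of the theorem lies.

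First, the identity you invoke is false. Negation together with $q\mapsto q^{-1}$ \emph{preserves} the version: $[-x]^\square_q=-q^{-1}[x]^\square_{q^{-1}}$ for $\square\in\{\sharp,\flat\}$. Already for $x=1$ one has $[-1]^\sharp_q=-q^{-1}$, whereas $-q^{-1}[1]^\flat_{q^{-1}}=-q^{-2}$, because $[1]^\flat_q=q$.

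Second, even a correct symmetry moving $x$ to $-x$ would only compare the determinants of the pair $(-x,-y)$ with those of $(x,y)$. These pairs generally lie in different $\PSL_2(\Z)$-orbits, so this cannot give a relation among the four determinants of a single pair. The map you need fixes $x$, exchanges $\sharp$ and $\flat$, and inverts $q$. This is the transition map $g_q$, with $g_q([x]^\sharp_q)=[x]^\flat_{q^{-1}}$ and $g_q([x]^\flat_q)=[x]^\sharp_{q^{-1}}$; it equals $z\mapsto -qz$ composed with the orientation-reversing $N_q$.

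Its matrix $\left(\begin{smallmatrix} q & 1-q\\ q-1 & 1\end{smallmatrix}\right)$ has determinant $q^2-q+1$, which is not a monomial, so this is not bookkeeping of normalising monomials. The missing step is a gcd computation: $g_q$ sends a reduced vector $(R^\sharp,S^\sharp)$ to a reduced vector, but sends $(R^\flat,S^\flat)$ to a vector whose entries have gcd exactly $q^2-q+1$. The paper proves this with special values at $\sigma=\tfrac{1+i\sqrt3}{2}$:
$S^\sharp(\sigma)+(\sigma-1)R^\sharp(\sigma)$ is a sixth root of unity, $S^\flat(\sigma)+(\sigma-1)R^\flat(\sigma)=0$, and $R^\flat(\sigma)\neq 0$.
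Each property is checked to be invariant under $T$ and $S$, starting from $x=0$. The vanishing statement also shows that $A^\flat D^\flat-B^\flat C^\flat$ is divisible by $q^2-q+1$, i.e.\ that $d_F^{\flat\flat}$ is a polynomial.

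Your uniform formula $d_F^{\triangle\square}=C^\square B^\triangle-A^\triangle D^\square$ omits this division, and with it the $\flat\flat$ claims are false. For the pair $(0,1)$, where $[0]^\flat_q=\tfrac{q-1}{q}$ and $[1]^\flat_q=q$, you get $q^2-q+1\notin\N[q]$, while $d_F^{\sharp\sharp}=1$.

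Step 2 has the same blind spot. Since $[0]^\flat_q=\tfrac{q-1}{q}\neq 0$, moving a point to $0$ only helps when that point occupies a $\sharp$ slot. Then the determinant is, up to $\pm q^k$, the numerator of a $q$-rational, which has constant-sign coefficients, and the sign is read off at $q=1$. This is exactly the paper's argument. It handles the determinants with a $\sharp$ slot; the paper uses it for $d_F^{\sharp\sharp}$ and $d_F^{\flat\sharp}$. It cannot handle $d_F^{\flat\flat}$, whose positivity the paper obtains only as a consequence of $d_F^{\flat\flat}(q)\equiv_q d_F^{\sharp\sharp}(q^{-1})$.

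Your Farey-tree alternative does not close this gap. The claimed relation ``$(A^\flat,B^\flat)=(A^\sharp,B^\sharp)+$ a $q$-multiple of a predecessor vector'' is wrong: for $x=n$ the difference is $(q^n-q^{n-1},0)$, since $[n]^\flat_q=[n-1]_q+q^n$. Moreover, the induction is only sketched, not carried out.
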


The proof of this theorem needs the computation of special values of numerators and denominators of $q$-rationals at the value $q=\sigma = \tfrac{1+i\sqrt{3}}{2}$, see Lemma \ref{Lem:special-values}. This contributes to other special values computed for instance in \cite[Section 1.4]{MGO-2020} for $q=-1$, \cite[Section 7]{Kogiso} for $q=\sigma^2$ and \cite{Leclere_these} for roots of unity of order at most 5, see also \cite{Finitness}.

We also generalize the $q$-Farey addition, introduced in the original paper on $q$-rationals \cite[Section 2.5]{MGO-2020} for pairs of rationals of Farey determinant 1, to pairs which are at graph distance 2 in the Farey triangulation, see Theorem \ref{Thm:q-Farey-operations}.

In Section \ref{Sec:Springborn-classic}, we introduce the Springborn operations \eqref{Eq:Springborn-intro}, give geometric interpretations and analyse how to iterate them. An important notion is that of inner and outer regular pairs, for which a reduced expression of the Springborn operation can be given. We concentrate on inner regular pairs in this introduction. A pair of rationals $(\tfrac{a}{b},\tfrac{c}{d})$ is called \emph{inner regular}, if $\mathrm{gcd}(ab+cd,b^2+d^2,a^2+c^2)=d_F(\tfrac{a}{b},\tfrac{c}{d})$. It is easy to see that any pair of Farey determinant 1 or 2 is regular. A geometric characterization is given in Theorem \ref{Thm:charact-regularity}, which states that a pair $(\tfrac{a}{b},\tfrac{c}{d})$ is inner regular if and only if there is an orientation-preserving involution in $\PGL_2(\Z)$ exchanging $\tfrac{a}{b}$ and $\tfrac{c}{d}$.

The core of the paper is Section \ref{Sec:q-Springborn}, in which we compute the $q$-version of the Springborn operations, given by the coordinates of homothetic centers of two circles associated to $q$-rationals.
We denote by $i([\tfrac{a}{b}],[\tfrac{c}{d}])$ the inner homothety center of the two circles $[\tfrac{a}{b}]$ and $[\tfrac{c}{d}]$.\footnote{We also consider the outer homothety center, giving similar results, which are omitted in this introduction.}

Our main result links the Springborn operation to the homothety centers of the associated $q$-disks:
\begin{fakethm}[Theorem \ref{Thm:main}]
Let $(\tfrac{a}{b},\tfrac{c}{d})\in\Q^2$ be inner regular. Then
$$\left[\frac{a}{b}\oplus_S\frac{c}{d}\right]^\sharp _q = i\left(\left[\frac{a}{b}\right],\left[\frac{c}{d}\right]\right).$$
\end{fakethm}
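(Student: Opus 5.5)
Since the deformed action of $\PGL_2(\Z)$ on $\C$ is by Möbius maps sending circles $[\tfrac{x}{y}]$ to circles $[g\cdot\tfrac{x}{y}]$, I will reduce the statement to a normalized pair using equivariance. The inner homothety center of two disjoint circles is not Möbius-invariant in general, so I will not transport it directly. Instead I will use the characterization of Theorem \ref{Thm:charact-regularity}: since $(\tfrac{a}{b},\tfrac{c}{d})$ is inner regular, there is an orientation-preserving involution $\iota\in\PGL_2(\Z)$ exchanging $\tfrac{a}{b}$ and $\tfrac{c}{d}$. Its $q$-deformation $\iota_q$ is an elliptic Möbius involution of $\C$ (real coefficients) swapping the two circles $[\tfrac{a}{b}]$ and $[\tfrac{c}{d}]$. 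Its two fixed points are complex conjugates $z_0,\bar z_0$ in $\H^2\cup\overline{\H^2}$.

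The key geometric lemma is then the following. For two circles symmetric with respect to the real axis and exchanged by a real elliptic involution with fixed points $z_0,\bar z_0$, the inner homothety center is the real point $p$ on the geodesic (circle through $z_0,\bar z_0$ orthogonal to $\R$) that separates the two circles. Equivalently, $p$ is the point where the hyperbolic geodesic through $z_0$ perpendicular to the geodesic joining the two circles meets $\R$. I will prove this by conjugating $\iota_q$ to $z\mapsto -1/z$ (real conjugation preserves $\R$). The circles become symmetric under $z\mapsto -1/z$, and I will check by direct computation that for a pair of real-symmetric circles swapped by $z\mapsto -1/z$, the inner homothety center maps correctly. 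The cleanest route is to compare two characterizations of the inner center: it is the unique real point $x$ with $\mathrm{dist}(x,\text{centers})$ proportional to the radii, and, for circles $C,C'$ with $C'=\iota(C)$, this is a cross-ratio condition. The cross-ratio condition is preserved by real Möbius maps that fix the data. This cross-ratio formulation is where I expect the main difficulty, since homothety centers are affine, not projective, notions. I would first try the explicit computation with $C$ having endpoints $\alpha<\beta$ and $C'$ endpoints $-1/\beta,-1/\alpha$, and identify the inner center as the point where $(x-\alpha)(x-\beta)/(\ldots)$ balances.

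With the geometric lemma in hand, the problem becomes algebraic. I will compute the fixed point of the deformed involution $\iota_q$ and show that the resulting real point equals $[\tfrac{ab+cd}{b^2+d^2}]_q^\sharp$. Here I use the structure of the Springborn sum: classically, $\tfrac{a}{b}\oplus_S\tfrac{c}{d}$ is exactly the endpoint determined by the involution exchanging the two rationals. Writing $\iota = g\,S\,g^{-1}$-type or $g\,\begin{pmatrix}n&-m\\ \ldots\end{pmatrix}g^{-1}$ with $g\in\PSL_2(\Z)$, I reduce to a finite list of normal forms. The reduced fraction of $\tfrac{ab+cd}{b^2+d^2}$ divided by the gcd $d_F$ has palindromic denominator, so I can invoke the results on $q$-rationals with palindromic denominators \cite{Ren, Kogiso} to identify $[\cdot]^\sharp_q$ with the fixed-geodesic endpoint of $\iota_q$. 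Equivariance under $g_q$ then reduces everything to the normal form. Checking that right-versus-left conventions match, so that $\sharp$ appears rather than $\flat$, will be a final sign and orientation verification.
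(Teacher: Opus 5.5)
There is a genuine gap in your key geometric lemma. The inner homothety center is an affine notion, so any correct characterization has to involve the point $\infty$. Yours does not, and as stated it is false.

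The first formulation is ambiguous: there is a whole open family of geodesics through $z_0$ separating the two circles, each with two real endpoints. The ``equivalent'' formulation fails outright. The circles with real endpoints $\{2,3\}$ and $\{-\tfrac12,-\tfrac13\}$ are exchanged by $z\mapsto -1/z$, with $z_0=i$, and their inner homothety center is $\frac{r_1M_2+r_2M_1}{r_1+r_2}=0$. Their common perpendicular is the circle of center $1$ and radius $\sqrt2$, which passes through $i$. The geodesic through $i$ orthogonal to it is the circle of center $-1$ and radius $\sqrt2$, whose endpoints are $-1\pm\sqrt2\neq 0$.

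The correct lemma is the paper's Proposition \ref{prop:homothety_geometric}: $i(C_1,C_2)=\iota_q(\infty)=\mathrm{Re}(z_0)$, the finite endpoint of the vertical geodesic through $z_0$. Your conjugation idea does prove it, provided you conjugate only by real affine maps $z\mapsto\lambda z+\mu$ with $\lambda>0$. These commute with homothety centers and act transitively on $\H^2$, so they move $z_0$ to $i$. For circles with endpoints $\{\alpha,\beta\}$ and $\{-1/\alpha,-1/\beta\}$, $\alpha\beta>0$, one then checks $r_1M_2+r_2M_1=0$. Conjugating by general real Möbius maps, or seeking a cross-ratio condition that does not involve $\infty$, cannot work.

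With the corrected lemma, the algebraic half of your plan is unnecessary, and as written it is not an argument. An elliptic involution has no ``fixed geodesic.'' Reducing to a normal form by conjugating with a deformed element of $\PSL_2(\Z)$ moves $\infty$. The palindromicity results are never actually linked to a homothety center. Also, for $n\mid k^2+1$, Ren's theorem makes $N^\flat$ palindromic, not $N^\sharp$.

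The direct argument is as follows. Since $\iota$ preserves orientation, it lies in $\PSL_2(\Z)$. Definition \ref{def:q-numbers}, together with the fact that $M\mapsto M_q$ is a homomorphism, gives $\iota_q(\infty)=\iota_q\cdot[\infty]^\sharp_q=[\iota\cdot\infty]^\sharp_q$. The explicit matrix \eqref{Eq:matrix-exchange} gives $\iota\cdot\infty=\frac{ab+cd}{b^2+d^2}$. This also settles the $\sharp$ versus $\flat$ question with no separate orientation check. This is essentially the paper's proof, except that the paper identifies the label $\iota\cdot\infty$ by letting $q\to1$ rather than reading it off the matrix.
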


The inner homothety center can be computed explicitely:
\begin{fakethm}[Proposition \ref{prop:homotheticformulas} and Theorem \ref{Thm:q-gcd}]
Let $\left(\frac{a}{b},\frac{c}{d}\right)\in\mathbb{Q}^2$ be an inner regular pair. Then there are explicit integers $\varepsilon_1,\varepsilon_2$ such that
$$
i\left(\left[\frac{a}{b}\right],\left[\frac{c}{d}\right]\right) = \frac{q^{\varepsilon_2}A^\sharp B^{\flat}+q^{\varepsilon_1}C^{\flat} D^\sharp }{q^{\varepsilon_2}B^\sharp B^{\flat}+q^{\varepsilon_1}D^\sharp D^{\flat}},$$
and the greatest common divisor of numerator and denominator is (up to a monomial in $q$) given by $d_F^{\sharp\flat}$.
\end{fakethm}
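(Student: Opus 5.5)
The plan is to compute the inner homothety center directly from the geometry of the two $q$-disks, and then to identify the common factor of numerator and denominator by reducing, via the $q$-deformed $\PGL_2(\Z)$-action, to a normalized pair.

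\textbf{Step 1: radius of a $q$-disk.} For real $q>0$ the disk $[\tfrac{a}{b}]$ has a diameter on $\R$ with endpoints $A^\sharp/B^\sharp$ and $A^\flat/B^\flat$. Its radius is therefore
$$\tfrac12\left\lvert \frac{A^\sharp B^\flat-A^\flat B^\sharp}{B^\sharp B^\flat}\right\rvert .$$
I would first establish that $A^\sharp B^\flat-A^\flat B^\sharp=\pm(1-q)\,q^{e}$ for an explicit integer $e=e(\tfrac ab)$. The route is induction on the generators $T_q,S_q$. The pair $(\text{right},\text{left})$ of a rational is obtained from that of $\infty$ (or of $0$) by the same matrix $M_q$, so the $2\times 2$ determinant formed by the two column vectors transforms by $\det M_q$, which is a monomial $\pm q^k$. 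Hence the radius is $r_1=\lvert 1-q\rvert q^{e_1}/(2B^\sharp B^\flat)$, and similarly $r_2=\lvert1-q\rvert q^{e_2}/(2D^\sharp D^\flat)$.

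\textbf{Step 2: the formula for $i([\tfrac ab],[\tfrac cd])$.} The inner homothety has ratio $-r_2/r_1$. Since the disks are ordered along $\R$ like the rationals and are disjoint or tangent, this homothety sends the endpoint of the first disk facing the second one to the endpoint of the second disk facing the first. Writing $p_1,p_2$ for these facing endpoints, the center is $h=(r_2p_1+r_1p_2)/(r_1+r_2)$. With the convention for $q<1$ (respectively $q>1$) determining which of the $\sharp,\flat$ endpoints faces which, one of the choices gives $p_1=A^\sharp/B^\sharp$ and $p_2=C^\flat/D^\flat$. Multiplying numerator and denominator by $2B^\sharp B^\flat D^\sharp D^\flat/\lvert1-q\rvert$ gives exactly
$$\frac{q^{\varepsilon_2}A^\sharp B^\flat+q^{\varepsilon_1}C^\flat D^\sharp}{q^{\varepsilon_2}B^\sharp B^\flat+q^{\varepsilon_1}D^\sharp D^\flat},$$
with $\varepsilon_1,\varepsilon_2$ equal to $e_1,e_2$ up to a common shift. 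In the other case ($q$ on the other side of $1$, or the pair in the other order) the roles of $\sharp$ and $\flat$ swap. I would check that the same expression still results, using the symmetry $q\leftrightarrow q^{-1}$ of Theorem \ref{Thm:link-left-right-dF}.

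\textbf{Step 3: the gcd.} Let $N$ and $D$ be the numerator and denominator above. By Theorem \ref{Thm:charact-regularity}, inner regularity gives an orientation-preserving involution $\iota\in\PGL_2(\Z)$ exchanging $\tfrac ab$ and $\tfrac cd$. Its $q$-deformation $\iota_q$ is an elliptic isometry of order two exchanging the two geodesics, hence the two disks. Conjugating by an element of $\PSL_2(\Z)$, I would reduce to a normal form such as $\iota=S$ with the pair $(x,-1/x)$, or more generally to a finite list of conjugacy representatives. This requires checking that $N$, $D$ and $d_F^{\sharp\flat}$ all transform by the same monomial-times-cocycle factor under the deformed action.

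In the normal form, the denominator of $\tfrac ab\oplus_S\tfrac cd$ becomes a sum of squares $b^2+d^2$ with palindromic structure. I would then use the known results on $q$-rationals with palindromic denominators \cite{Ren, Kogiso} to factor $N=d_F^{\sharp\flat}\cdot \mathcal{A}$ and $D=d_F^{\sharp\flat}\cdot\mathcal{B}$ up to monomials, where $\mathcal{A}/\mathcal{B}$ is the reduced $q$-deformation of the Springborn sum. Since the numerator and denominator of a $q$-rational are coprime in $\Z[q]$, the gcd is $d_F^{\sharp\flat}$ up to a monomial.

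\textbf{Main obstacle.} The hard step is this factorization, namely showing that $d_F^{\sharp\flat}$ actually divides both $N$ and $D$. I would attempt it first by writing $N$ and $D$ as bilinear expressions in the columns $(A^\sharp,B^\sharp)$ and $(C^\flat,D^\flat)$ twisted by the matrix of $\iota_q$. Then $N$ and $D$ should appear as entries of $M_q\cdot(\text{column})$ multiplied by a determinant equal to $d_F^{\sharp\flat}$.
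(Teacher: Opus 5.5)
Your Steps 1 and 2 match the paper's proof of Proposition~\ref{prop:homotheticformulas}: Proposition~\ref{prop:qradius} combined with $i=(r_2M_1+r_1M_2)/(r_1+r_2)$. For the ``other case'' you do not need Theorem~\ref{Thm:link-left-right-dF}. The identity $A^\sharp B^\flat-A^\flat B^\sharp=q^{\varepsilon_1}(1-q)$ from your Step~1, and its analogue for $\tfrac cd$, already show that both choices of endpoints give the same fraction (Remark~\ref{rem:sym_sharp_and_flat}).

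\textbf{The gap is in Step~3.} Write $\mathcal N,\mathcal D$ for the numerator and denominator, and put $\mathcal E:=q^{\varepsilon_2}A^\sharp A^\flat+q^{\varepsilon_1}C^\sharp C^\flat$. Your reduction to a normal form needs $\mathcal N$ and $\mathcal D$ to change by a common factor under $S$, and they do not. For $\tfrac ab,\tfrac cd>0$, $S$ acts by $\varepsilon\mapsto\varepsilon+1$ and $(A,B)\mapsto(-B,qA)$. A direct computation using Remark~\ref{rem:sym_sharp_and_flat} gives
\[
(\mathcal N,\mathcal D,\mathcal E)\longmapsto(-q^2\mathcal N,\;q^3\mathcal E,\;q\mathcal D),
\]
so $\gcd(\mathcal N,\mathcal D)$ is carried to $\gcd(\mathcal N,\mathcal E)$. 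This cannot be avoided. By Proposition~\ref{prop:homothety_geometric}, the centre for the new pair is $S_q\iota_qS_q^{-1}(\infty)=S_q\iota_q(0)$, and $\iota(0)=\frac{a^2+c^2}{ab+cd}$ by \eqref{Eq:matrix-exchange}.

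The missing idea is to carry $\mathcal E$, the $q$-analogue of $a^2+c^2$, along with $\mathcal N$ and $\mathcal D$, exactly as in Definition~\ref{Def:reg}. Theorem~\ref{Thm:q-gcd} shows that $\gcd(\mathcal N,\mathcal D,\mathcal E)$ is invariant under $T$ and $S$ up to monomials. It then evaluates this gcd on the representatives $(\tfrac10,\tfrac kn)$ with $n\mid k^2+1$. There, palindromicity (Corollary~\ref{Prop:id-flat-sharp}) shows that the left $q$-denominator of $\tfrac kn$ divides $\mathcal E$ and agrees with $d_F^{\flat\sharp}$ up to a monomial.

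Your normal-form step is also misdirected. For $\iota=S$ the pair is $(x,-\tfrac1x)$ and the Springborn sum is $0$. For $x>0$ one finds $\mathcal N=0$ and $\mathcal D=q\mathcal E=q^{\varepsilon_1+1}d_F^{\sharp\flat}$, so \cite{Ren,Kogiso} play no role there.

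Your last paragraph points in the right direction and becomes a complete argument once $\mathcal E$ is included. For \emph{any} pair, expanding with both expressions of $\mathcal N$ gives
\[
\mathcal M:=\begin{pmatrix}\mathcal N&-\mathcal E\\ \mathcal D&-\mathcal N\end{pmatrix},\qquad
\mathcal M\binom{A^\sharp}{B^\sharp}=q^{\varepsilon_1}d_F^{\sharp\flat}\binom{C^\sharp}{D^\sharp},\qquad
\mathcal M\binom{A^\flat}{B^\flat}=q^{\varepsilon_1}d_F^{\flat\sharp}\binom{C^\flat}{D^\flat},
\]
and $\det\mathcal M=\mathcal D\mathcal E-\mathcal N^2=q^{\varepsilon_1+\varepsilon_2}d_F^{\sharp\flat}d_F^{\flat\sharp}$. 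This is the $q$-analogue of \eqref{Eq:matrix-exchange}. Now take an inner regular pair, and let $J\in\mathrm{GL}_2(\Z[q^{\pm1}])$ be a matrix of $\iota_q$ with monomial determinant. $\mathcal M$ is trace-free, hence an involution, and it agrees with $J$ on the three points $[\tfrac ab]^\sharp,[\tfrac ab]^\flat,[\tfrac cd]^\sharp$. Hence $\mathcal M=\lambda J$ with $\lambda\in\Z[q^{\pm1}]$, since the entries of $J$ have unit gcd. Because $J$ sends primitive vectors to primitive vectors, the displayed identities force $\lambda=\pm q^{k}d_F^{\sharp\flat}=\pm q^{k'}d_F^{\flat\sharp}$. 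Finally, $\gcd(\mathcal N,\mathcal D)=\lambda\gcd(J_{11},J_{21})$ and $\gcd(J_{11},J_{21})$ divides the unit $\det J$. So the gcd is $d_F^{\sharp\flat}$ up to a monomial, with no case analysis and no palindromicity input.
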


As application of the two theorems above, we derive a formula for the mid-point of two Farey neighbors:
\begin{fakethm}[Theorem \ref{Coro:q-midpoint}]
Consider two rational numbers $0 < \tfrac{a}{b} < \tfrac{c}{d}$ of Farey determinant 1. Then there is an explicit integer $\varepsilon$ such that the left $q$-version of their mid-point is represented as a reduced fraction by:
$$\left[\frac{1}{2}\left(\frac{a}{b} + \frac{c}{d}\right)\right]_q^\flat = \frac{A^\sharp D^\flat + q^{\varepsilon}C^\sharp B^\flat}{B^\sharp D^\flat + q^{\varepsilon}B^\flat D^\sharp}.$$
\end{fakethm}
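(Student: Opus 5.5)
The plan is to reduce the mid-point to a Springborn addition and then combine the main theorem with the gcd computation. Classically, for Farey neighbours $\tfrac{a}{b},\tfrac{c}{d}$ with $cb-ad=1$, the mid-point $\tfrac{ad+bc}{2bd}$ is not of the form $\tfrac{ab+cd}{b^2+d^2}$ directly. We therefore first apply an element of $\PGL_2(\Z)$ that carries the pair $(\tfrac{a}{b},\tfrac{c}{d})$ to a pair whose Springborn sum is the image of the mid-point. The natural choice is a Möbius map $g$ with $g(\tfrac{a}{b})=\tfrac{x}{y}$, $g(\tfrac{c}{d})=\tfrac{z}{w}$, chosen so that $g(\tfrac12(\tfrac ab+\tfrac cd))=\tfrac{x}{y}\oplus_S\tfrac{z}{w}$. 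Using the $\PGL_2(\Z)$-equivariance of Farey neighbours, it suffices to do this for the standard pair $(0,\infty)$, or $(0,1)$ with mid-point $\tfrac12$.

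Geometrically, the mid-point of $\tfrac ab,\tfrac cd$ is the fixed point of the orientation-reversing reflection swapping them together with $\infty$. The Springborn sum, by the characterization of inner regular pairs (Theorem \ref{Thm:charact-regularity}), relates to the orientation-preserving involution swapping the pair. Composing with the map that exchanges $\infty$ with the relevant vertex converts one description into the other. This identifies the classical mid-point with a Springborn sum of an explicit pair of determinant $1$ or $2$ (hence inner regular), e.g. $\tfrac{a}{b}$ and $\tfrac{c}{d}$ transported by $S$ or by the reflection $x\mapsto -x$.

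Next I would apply Theorem \ref{Thm:main} to this inner regular pair: the right $q$-deformation of the Springborn sum is the inner homothety centre, and Proposition \ref{prop:homotheticformulas} gives the formula $\tfrac{q^{\varepsilon_2}A^\sharp B^\flat+q^{\varepsilon_1}C^\flat D^\sharp}{q^{\varepsilon_2}B^\sharp B^\flat+q^{\varepsilon_1}D^\sharp D^\flat}$. Then I would transport back through the deformed action. Since the deformed $\PGL_2(\Z)$ action intertwines right and left $q$-rationals (the orientation-reversing generator exchanges $\sharp$ and $\flat$, up to $q\mapsto q^{-1}$ and monomials), the right version on one side becomes the left version $[\cdot]^\flat_q$ of the mid-point. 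Simultaneously, the $q$-numerators and denominators of the transported pair are expressed through $A^\sharp,B^\sharp,C^\sharp,D^\sharp,A^\flat,B^\flat,D^\flat$. Substituting these gives the stated numerator and denominator after clearing a common monomial, which produces $\varepsilon$.

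Finally, for reducedness I would invoke Theorem \ref{Thm:q-gcd}: the gcd of numerator and denominator is $d_F^{\sharp\flat}$ of the transported pair, up to a monomial. After undoing the transformation this must be shown to be a monomial (a unit). This uses the computation of $q$-Farey determinants for pairs of Farey determinant $1$, together with the fact that the factor coming from the Springborn denominator $b^2+d^2$ versus $2bd$ is absorbed. I expect the main obstacle to be bookkeeping. One difficulty is choosing the correct group element so that the hypothesis $0<\tfrac ab$ places us in the range where the deformed formulas for $T_q,S_q$ are exact rather than off by monomials. The other is verifying that the resulting $d_F^{\sharp\flat}$ is indeed trivial, as opposed to something like $[2]_q$. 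I would first check both on $(0,1)$, then propagate by induction along the Farey tree using $T_q$ and the deformed reflections.
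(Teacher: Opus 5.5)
There is a genuine gap: you never correctly identify the mid-point with a Springborn operation on a regular pair, and the reductions you propose to get there fail.

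First, neither the mid-point nor the Springborn operations are $\PGL_2(\Z)$-equivariant. The mid-point of $\tfrac ab,\tfrac cd$ is the harmonic conjugate of $\infty$ with respect to them. So $g\in\PGL_2(\Z)$ sends it to the mid-point of the image pair only if $g(\infty)=\infty$, i.e.\ $g\in\langle T,N\rangle$. This group is not transitive on Farey edges: no element sends $(\tfrac12,\tfrac11)$ to $(0,1)$. Hence you can neither reduce to $(0,\infty)$ or $(0,1)$, nor ``propagate along the Farey tree'' with $T_q$ and deformed reflections.

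Second, the concrete transports you suggest are wrong. For $(\tfrac12,\tfrac11)$, with mid-point $\tfrac34$, one has $S(\tfrac34)=-\tfrac43$ whereas $S(\tfrac12)\oplus_S S(\tfrac11)=-\tfrac32$. Likewise $N(\tfrac34)=-\tfrac34$ whereas $N(\tfrac12)\oplus_S N(\tfrac11)=-\tfrac35$.

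Third, the reflection $x\mapsto\tfrac ab+\tfrac cd-x$ is in general not in $\PGL_2(\Z)$. So it is not a symmetry of $\mathcal{Q}$ and has no deformation to exploit.

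The missing idea is that the mid-point is naturally a Springborn \emph{difference}. By Proposition \ref{prop:algebraic_springborn}, $\tfrac12(\tfrac ab+\tfrac cd)=\tfrac{a+c}{b+d}\ominus_S\tfrac{a-c}{b-d}$. Geometrically, it is the image of $\infty$ under the inversion in the Farey edge joining $\tfrac ab$ and $\tfrac cd$, an element of $\PGL_2(\Z)$ exchanging the Farey sum and difference. By Lemma \ref{Lemma:dF-for-Farey-op} this pair has Farey determinant $2$, hence is outer regular. The outer case of Theorem \ref{Thm:main} then gives directly $[\tfrac12(\tfrac ab+\tfrac cd)]^\flat_q=e([\tfrac{a+c}{b+d}],[\tfrac{a-c}{b-d}])$, so the left version appears with no transport at all.

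The paper then inserts into Proposition \ref{prop:homotheticformulas} the left and right $q$-deformations of $\tfrac{a\pm c}{b\pm d}$, written in terms of $A^\sharp,\dots,D^\flat$. A case split on whether $\tfrac ab$ is a child or a parent of $\tfrac cd$ fixes the exponents of $q$. It then cancels the common factor $q^{\varepsilon_1}(1+q)$. So a non-trivial factor $[2]_q$ does occur. It matches $d_F^{\sharp\sharp}\equiv_q[2]_q$ for a determinant-$2$ pair in Theorem \ref{Thm:q-gcd}, and it is exactly what gets removed.

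Your sum-based plan can only be rescued with one specific element: the reflection $R\in\PGL_2(\Z)$ that swaps $\tfrac ab,\tfrac cd$ and fixes $\tfrac{a\pm c}{b\pm d}$. Composed with the inversion above, it gives the rotation exchanging $\tfrac ab$ and $\tfrac cd$. Hence $R(\tfrac12(\tfrac ab+\tfrac cd))=\tfrac ab\oplus_S\tfrac cd$ and $[\tfrac12(\tfrac ab+\tfrac cd)]^\flat_q=R_q(i([\tfrac ab],[\tfrac cd]))$. You would still need $R_q$ explicitly, and a separate reducedness check after applying it. That is considerably more work than the outer route.
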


Another application shows that the size of the $q$-disks associated to rationals decreases within the Farey tree, see Theorem \ref{Thm:diam-decrease}.

In the final Section \ref{Sec:Markov}, we study combinatorial interpretations for the $q$-Springborn operations in the case of Markov fractions, introduced by Springborn in \cite{Springborn}. They are rational numbers with a Markov number in the denominator, obtained by iteration of the Springborn sum on inner regular pairs. The $q$-Springborn sum gives a new notion of $q$-Markov numbers, and we deduce from our main result a deformed Markov equation they satisfy, see Theorem \ref{thm:qdeformedequations}.

\medskip
\noindent \textbf{Notation.} 
\begin{itemize}
    \item For a rational number $\tfrac{r}{s}\in\Q$, we denote by $R^\sharp, S^\sharp \in\Z[q]$ the numerator and denominator of the right $q$-rational associated to $\tfrac{r}{s}$. Similarly, we denote by $R^\flat, S^\flat \in\Z[q]$ the numerator and denominator of its left $q$-version.
    \item For a pair of reduced fractions $(\tfrac{a}{b},\tfrac{c}{d})\in\Q^2$, we often use $d_F=\lvert ad-bc\rvert$, their Farey determinant.
    \item We put $\sigma=\frac{1+i\sqrt{3}}{2}$, the 6th root of unity and fixed point of $TS=T_q S_q$, the order 3 element of the modular group.
    \item For two polynomials $A,B\in \Z[q,q^{-1}]$, we write $A\equiv_q B$ if there is an integer $k\in\Z$ such that $A(q)=q^k B(q)$.
\end{itemize}

\medskip
\noindent \textbf{Acknowledgements.}
We had many fruitful discussions and exchanges of ideas, for which we want to thank Pierre-Louis Blayac, Vladimir Fock, Summer Haag, Cyril Lecuire, Julien Marché, Théo Marty, Sophie Morier-Genoud, Valentin Ovsienko, Serge Parmentier, Ivan Rasskin, Charles Reid, Barbara Schapira, Bruno Sevennec, Christopher-Lloyd Simon, Boris Springborn, Katherine Stange and Andrei Zabolotskii. For the mathematical illustrations, A. T. is very grateful to Steve Trettel for his course on Shadertoy, to Sebastian Manecke for his improvement of the illustration and to the semester program about mathematical illustration at IHP in 2026. 

We are grateful to our institutions, Université de Reims Champagne-Ardenne and Université Claude Bernard Lyon 1, as well as to the Institut Henri Poincaré, where we regularly met for discussions.

O. P.-R. has been supported by the ANR grant GALS ANR-23-CE40-0001. 
A. T. has been supported by a BQR grant of Université Claude Bernard Lyon 1.

\section{From $q$-integers to $q$-rationals to $q$-reals}\label{Sec:q-rationals}
\subsection{Definition(s) of $q$-rationals}

Let $q$ be a formal parameter. Recall the classical $q$-integers that deform any positive integer $n\in \Z_{\geq 0}$ as follows
$$[n]_q := \frac{1-q^n}{1-q}=1+q+\ldots+q^{n-1}\in \Z[q].$$

One of various illustrations of the interest of $q$-integers is the fact that they generalize classical combinatorial objects in the setting of vector spaces. 

\begin{example} 
Define $[n]!_q:=[n]_q \cdot [n-1]_q \cdot \ldots [1]_q$ and $\binom{n}{k}_q:=\frac{[n]!_q}{[k]!_q \cdot [n-k]!_q}$. Then, for the finite field $\mathbb{F}_q$, the $q$-binomial coefficient counts the number of $k$-dimensional subspaces of $\mathbb{F}_q^n$: $\left|\Gr(k,n)(\mathbb{F}_q)\right|=\binom{n}{k}_q$.
\end{example}

Morier-Genoud and Ovsienko in \cite{MGO-2020} extended this $q$-deformation to all rational numbers. Their construction relies on a deformation of the standard action of the modular group $\PSL_2(\Z)$ on the projective line $\mathbb{QP}^1$ by fractional linear transformations. It is defined via the following deformation of the two generators of the modular group $\PSL_2(\Z)= \left<T,S ~|\; S^2=(TS)^3=1\right>$:
\begin{equation}\label{eq:q_representation}
\begin{array}{c c c}
T = \begin{pmatrix}
		1 & 1\\
		0 & 1\\
		\end{pmatrix} & \overset{q}{\rightsquigarrow} & 
T_q := \begin{pmatrix}
		q & 1\\
		0 & 1\\
		\end{pmatrix},\\[20pt]	
S = \begin{pmatrix}
		0 & -1\\
		1 & 0\\
		\end{pmatrix} & \overset{q}{\rightsquigarrow} &
	 S_q := \begin{pmatrix}
		0 & -1\\
		q & 0\\
		\end{pmatrix}.\\
	\end{array}
\end{equation}

At this formal level, the matrices $T_q$ and $S_q$ are elements of $\PGL_2(\Z[q,q^{-1}])$. One checks that as Möbius transformations, we still have $S_q^2=(T_qS_q)^3 = 1$. Hence the group generated by $T_q$ and $S_q$ is still $\PSL_2(\Z)$.

Given a matrix $M\in \PSL_2(\Z)$, its $q$-analogue $M_q$ is obtained by replacing $T$ by $T_q$ and $S$ by $S_q$ in the expansion of $M$ as a finite product of generators $T$ and $S$. This does not depend on the choice of representing $M$ as product of $T$ and $S$.

\begin{notation}
By convention, the reduced form of a rational number $x \in \Q$ is a fraction $x = \frac{a}{b}$ with $a$ and $b$ two coprime integers, and such that $b\in \Z_{\geq 0}$. Any rational number admits a unique such reduced form. We extend to $\mathbb{QP}^1$ by putting $\infty=\frac{1}{0}$. Throughout the document, we assume that rational numbers are always given in reduced form, if not stated explicitly otherwise. 
\end{notation}

\begin{definition}[\cite{MGO-2020, BBL}]\label{def:q-numbers}
The \emph{right $q$-version}  of a number $\frac{a}{b} \in \mathbb{QP}^1$ is a rational function in $q$ with integer coefficients defined as
\begin{equation}\label{defi:right_q_numbers}
\left[\frac{a}{b}\right]^{\sharp}_q := M_q \cdot \frac{1}{0} = \frac{A^{\sharp}_{a/b}(q)}{B^{\sharp}_{a/b}(q)} \in \Z(q), 
\end{equation}
\noindent where $M\in \PSL_2(\Z)$ is any map such that $M\cdot \frac{1}{0} = x$ and where $\PSL_2(\Z)$ acts by M\"obius transformations. Its \emph{left $q$-version} is defined as
\begin{equation}\label{defi:left_q_numbers}
\left[\frac{a}{b}\right]^{\flat}_q := M_q \cdot \frac{1}{1-q} = \frac{A^{\flat}_{a/b}(q)}{B^{\flat}_{a/b}(q)} \in \Z(q).
\end{equation}
We often omit the subscript $a/b$ while dealing with these polynomials when the corresponding rational is fixed along the argument.
By convention, $A^{\square}$ and $B^{\square}$ are coprime polynomials in $q$, and $B^{\square}$ has positive leading coefficient, for $\square \in \{\sharp,\flat\}$. 
\end{definition}

The left version of $q$-rationals was first mentioned in \cite[Remark 3.2]{MGO-2022}, and then studied by Bapat-Becker-Licata in \cite{BBL}. The reason for such naming is that for any $q \in (0,1)$, the left $q$-version of any rational number is strictly smaller than its right $q$-version (see Proof of Proposition 2.14 in \cite{BBL}). The choice of $\frac{1}{0}$ and $\frac{1}{1-q}$ in Definition \ref{def:q-numbers} comes from the fact that these are exactly the two fixed points of $T_q$.

\begin{example} In addition to the usual $q$-integers $[n]_q=[n]_q^\sharp$, we give some more examples:
\begin{itemize}
    \item For $n\in \N_{>0}$, we have $[n]_q^\flat = 1+q+q^2+...+q^{n-2}+q^n$.
    \item $\left[\frac{1}{2}\right]_q^\sharp = \frac{q}{1+q}$ and $\left[\frac{1}{2}\right]_q^\flat = \frac{q^2}{1+q^2}$.
    \item $\left[\frac{7}{5}\right]_q^\sharp = \frac{q^4 + 2q^3 + 2q^2 + q + 1}{q^3 + 2q^2 + q + 1}$ and $\left[\frac{7}{5}\right]_q^\flat = \frac{q^5 + q^4 + 2q^3 + q^2 + q + 1}{q^4 + q^3 + q^2 + q + 1}$.
\end{itemize}
\end{example}

Various equivariance properties of $q$-rationals are summarized in the following:
\begin{proposition}[\cite{Leclere-Morier-Genoud,Jouteur}]\label{Prop:equivariance}
For any $x\in\mathbb{QP}^1$ and $\square\in\{\sharp,\flat\}$, we have
\begin{enumerate}
    \item[(i)] $[x+1]_q^\square = q[x]_q^\square+1$ and $\left[-\frac{1}{x}\right]_q^\square = \frac{-1}{q[x]_q^\square}$,
    \item[(ii)] $\left[\frac{1}{x}\right]_q^\square = \frac{1}{[x]_{q^{-1}}^\square}$ and $[-x]_q^\square = -\frac{1}{q}[x]_{q^{-1}}^\square$.
\end{enumerate}
\end{proposition}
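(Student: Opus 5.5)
The plan is to check (i) directly from the definition and then deduce (ii) from (i) together with a single extra symmetry.

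\emph{Part (i).} Write $x=M\cdot\frac10$ with $M\in\PSL_2(\Z)$. Then $x+1=(TM)\cdot\frac10$ and $-\frac1x=(SM)\cdot\frac10$. Since $M\mapsto M_q$ is a well-defined homomorphism (it is independent of the word chosen), we get $(TM)_q=T_qM_q$ and $(SM)_q=S_qM_q$. Consequently
\[
[x+1]^\square_q=T_q\cdot[x]^\square_q=q[x]^\square_q+1,
\qquad
[-1/x]^\square_q=S_q\cdot[x]^\square_q=\frac{-1}{q[x]^\square_q}.
\]
This holds for both $\square=\sharp$ (base point $\frac10$) and $\square=\flat$ (base point $\frac1{1-q}$), because the same $M_q$ acts on either base point.

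\emph{Part (ii).} I introduce the map $\phi(z)=-\frac{1}{q}\,z|_{q\to q^{-1}}$, acting on $\Z(q)$. Concretely, $\phi(f)(q)=-q^{-1}f(q^{-1})$. The goal is to show $[-x]^\square_q=\phi([x]^\square_q)$ for every $x$. I will do this by checking three things: the identity holds on the base points, and it is compatible with the generators $T$ and $S$.

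\begin{itemize}
\item \emph{Base points.} Since $-\infty=\infty$, the $\sharp$ base point needs $\phi(\infty)=\infty$, which is clear. For $\flat$, $x=\infty$ also has left version $M_q\cdot\frac1{1-q}$ with $M=\mathrm{id}$. A direct computation gives
\[
\phi\!\left(\tfrac1{1-q}\right)=-q^{-1}\cdot\frac{1}{1-q^{-1}}=\frac{1}{1-q},
\]
so the base point is fixed.
\item \emph{Translations.} The relation $-(x+1)=(-x)-1$ should match $\phi(qz+1)=T_q^{-1}\phi(z)$. Checking this: $T_q^{-1}w=(w-1)/q$, and
\[
\phi(qz+1)=-q^{-1}\bigl(q^{-1}z'+1\bigr)=\frac{-q^{-1}z'-1}{q}=T_q^{-1}\phi(z),
\]
where $z'=z(q^{-1})$.
\item \emph{The involution $S$.} The relation $-(-1/x)=-1/(-x)$ should match $\phi(S_qz)=S_q\phi(z)$. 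Checking this: $\phi(-1/(qz))=-q^{-1}\cdot(-q/z')=1/z'$, while $S_q\phi(z)=-1/(q\cdot(-q^{-1}z'))=1/z'$, so the two agree.
\end{itemize}

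\emph{Induction.} I argue by induction on the word length of $M$. Since $-x=(\iota M\iota)\cdot\infty$ with $\iota=\mathrm{diag}(-1,1)$, and conjugation by $\iota$ sends $T\mapsto T^{-1}$ and $S\mapsto S$, the three checks above show that $\phi$ intertwines $M_q$ with $(\iota M\iota)_q$. Applying this to the base point gives $[-x]^\square_q=\phi([x]^\square_q)$, which is the second formula of (ii).

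\emph{The reciprocal formula.} This follows by composing with (i):
\[
[1/x]^\square_q=\Bigl[-\bigl(-\tfrac1x\bigr)\Bigr]^\square_q=\phi\!\left(\frac{-1}{q[x]^\square_q}\right)=-q^{-1}\cdot\frac{-1}{q^{-1}[x]^\square_{q^{-1}}}=\frac{1}{[x]^\square_{q^{-1}}}.
\]

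The main care point is that $\phi$ is not a Möbius map in $z$ alone, since it also substitutes $q\mapsto q^{-1}$. Therefore the intertwining has to be checked as an identity in $\Z(q)$, which is exactly what the generator computations do. The $\flat$ base point computation is the one place where the choice $\frac1{1-q}$ genuinely matters.
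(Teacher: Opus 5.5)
Your proof is correct, and for part (ii) it takes a different route from the paper; part (i) is the same in both. The paper checks nothing on generators. It composes two cited structural results: the equivariance \eqref{Eq:N_q-equivariance} of the deformed negation $N_q$, which trades $\sharp$ for $\flat$, and the transition map $g_q$ of Theorem~\ref{Prop:duality}, which trades $\sharp$ for $\flat$ while inverting $q$. It writes $[-x]^\square_q=N_q\,g_{q^{-1}}[x]^\square_{q^{-1}}$ and $[1/x]^\sharp_q=N_qS_q\,g_{q^{-1}}[x]^\sharp_{q^{-1}}$, so the two exchanges of versions cancel. A short computation gives $N_q\circ g_{q^{-1}}(w)=-w/q$, so the paper's composite, precomposed with the substitution $q\mapsto q^{-1}$, is exactly your map $\phi$.

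You instead verify directly that $\phi$ intertwines $T_q$ with $T_q^{-1}$ and $S_q$ with itself, and that it fixes both base points $\frac10$ and $\frac1{1-q}$. Hence $\phi$ preserves each version separately and never mixes $\sharp$ and $\flat$, and the reciprocal formula then follows from (i). Your argument is self-contained: it uses only the definition and the well-definedness of $M\mapsto M_q$. The paper's argument is shorter once the results of \cite{Jouteur,Thomas} are granted, and it presents the identity as a consequence of the $\PGL_2(\Z)$-extension and the $q\leftrightarrow q^{-1}$ duality.

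One cosmetic point: your intertwining checks are written for $z\in\Z(q)$, so the points $0$ and $\infty$ should also be covered. This is immediate, or follows because both sides are Möbius maps composed with the same substitution $q\mapsto q^{-1}$ and agree on a cofinite set.
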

\begin{proof}
Item (i) expresses the equivariance with respect to $T$ and $S$, so follows directly from Definition \ref{def:q-numbers}. For item (ii), we also need the negation map $N_q$ (Equation \eqref{Eq:N_q-equivariance} below) and the duality $g_q$ (Theorem \ref{Prop:duality} 
below). We get
$$\left[\tfrac{1}{x}\right]_q^\sharp = [NSx]_q^\sharp = N_qS_q[x]_q^\flat = N_qS_qg_{q^{-1}}[x]^\sharp_{q^{-1}}=\frac{1}{[x]_{q^{-1}}^\sharp}.$$
The same holds for $[\tfrac{1}{x}]_q^\flat$. Finally, 
$$[-x]_q^\square = [Nx]_q^\square = N_qg_{q^{-1}}[x]_{q^{-1}}^\square = -\frac{1}{q}[x]_{q^{-1}}^\square.$$
\end{proof}

Even though any $q$-rational can be explicitly computed in a finite time, numerous arithmetic patterns of its coefficients represent a challenge to understand. We state several results already obtained in this direction, starting with

\begin{proposition}[\textbf{\emph{Positivity property}},  \cite{MGO-2020,BBL}]\label{Prop:cst-sign-coeffs}
For any number $\tfrac{a}{b}\in\mathbb{Q}\backslash\{0\}$, the four corresponding polynomials $A^\sharp , A^\flat, B^\sharp , B^\flat$ have integer coefficients of constant sign. Moreover, this sign is positive if $\tfrac{a}{b}>0$.
\end{proposition}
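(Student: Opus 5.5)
We prove the positivity property by induction along the continued fraction expansion of $\tfrac{a}{b}$. We first treat $\tfrac{a}{b}>0$ and then use the equivariance of Proposition \ref{Prop:equivariance}(ii), namely $[-x]_q^\square=-\frac1q[x]_{q^{-1}}^\square$, to reduce negative numbers to positive ones. Replacing $q$ by $q^{-1}$ and clearing by a suitable power of $q$ only reverses coefficient lists and multiplies by monomials. The overall factor $-1$ therefore shows that, for $x<0$, the coefficients of $A^\square$ and $B^\square$ are still of constant sign: the numerator's coefficients are all negative, and $B^\square$ is positive after the normalization of its leading coefficient.

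\emph{Right version, $x>0$.} Write $x=[a_1;a_2,\dots,a_{2m}]$ with $a_1\ge 0$ and $a_i\ge1$ for $i\ge2$, using an even-length expansion. Then $M=T^{a_1}ST^{a_2}S\cdots$, and by the standard identity $T^{a}S\cdots = R^{a_1}L^{a_2}R^{a_3}\cdots$, the matrix $M$ can be written as a product of $R=T$ and $L=\begin{pmatrix}1&0\\1&1\end{pmatrix}$. The $q$-deformation gives $R_q=\begin{pmatrix}q&1\\0&1\end{pmatrix}$ and $L_q=\begin{pmatrix}q&0\\q&1\end{pmatrix}$, up to scalar monomials, and both have entries in $\N[q]$. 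Hence $M_q$, with $M=R^{a_1}L^{a_2}\cdots$, has entries in $\N[q]$. Its first column gives $A^\sharp,B^\sharp$ up to a common factor. The key point is coprimality: $\det M_q$ is a monomial $\pm q^k$, so any common factor of the first-column entries divides $q^k$. Moreover, at $q=0$ the entries are not both zero; one checks by induction that the bottom-left entry has nonzero constant term whenever $M$ involves some $L$. Therefore no cancellation occurs beyond a power of $q$, and dividing by that power preserves positivity. This yields $A^\sharp,B^\sharp\in\N[q]$.

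\emph{Left version.} We have $[x]_q^\flat=M_q\cdot\frac{1}{1-q}$, so
\[
A^\flat=M_{11}+(1-q)^{-1}\cdot(\ldots),
\]
which more precisely means $A^\flat\propto M_{11}\cdot 1+M_{12}(1-q)$, and similarly for $B^\flat$. The term $(1-q)$ threatens positivity. I would instead use the relation $[x]_q^\flat$ versus $[x]^\sharp$ via $\frac1{1-q}=T_q\cdot\frac1{1-q}$ and write the left version as $M_q L'_q\cdot\frac10$ with an explicit modification of the last letter. Concretely, $[a_1;\dots,a_n]^\flat$ equals the right version of $M$ multiplied by a final factor that replaces the last $R_q^{a_n}$ by $R_q^{a_n-1}\begin{pmatrix}q^2&1\\0&1\end{pmatrix}$-type matrices, i.e.\ matrices with $\N[q]$ entries. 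This parallels the example $[n]^\flat_q=1+\dots+q^{n-2}+q^n$. Alternatively, the Bapat--Becker--Licata duality $g_q$ (Theorem \ref{Prop:duality}) gives the left version of $x$ from the right version at $q^{-1}$ of a related number, so positivity transfers directly.

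The main obstacle is this left case. It requires checking that the modified last factor indeed has nonnegative entries, or correctly invoking the duality, and then, as before, that the gcd is only a monomial so that reduction preserves positivity.
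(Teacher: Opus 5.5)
Your argument for the right version with $x>0$ is sound, and it takes a different route from the paper. The paper computes nothing here: it quotes the combinatorial interpretation of all four polynomials for $x\ge 1$ from \cite{MGO-2020,BBL}, then reduces $0<x<1$ and $x<0$ via Proposition \ref{Prop:equivariance}(ii). In your route, $M_q=R_q^{a_1}L_q^{a_2}\cdots L_q^{a_{2m}}$ has entries in $\N[q]$, and any common divisor of its first column divides $\det M_q$, a monomial. Your side claim that the bottom-left entry has nonzero constant term is false, since the first column of $L_q^{a}$ is $(q^{a},q[a]_q)$ and so the first column of $M_q$ is always divisible by $q$. It is not needed, though.

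The genuine gap is the left version, which you leave open; neither route you sketch works as written. Your ``concrete'' modification acts on a final block $R_q^{a_n}$, but your even-length word ends with $L_q^{a_{2m}}$, and no identity is actually checked. The duality route does not transfer positivity. Theorem \ref{Prop:duality} gives, for the \emph{same} $x$,
\[
\frac{A^\flat(q^{-1})}{B^\flat(q^{-1})}=\frac{(1-q)B^\sharp(q)+qA^\sharp(q)}{B^\sharp(q)+(q-1)A^\sharp(q)},
\]
and neither side is manifestly in $\N[q]$. Positivity of $B^\sharp+(q-1)A^\sharp$ is, up to reversal and a monomial, exactly positivity of $B^\flat$, i.e.\ what you want to prove. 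The paper in fact uses this implication in the opposite direction: Proposition \ref{Prop:positivity-q-dF} deduces positivity of $d_F^{\flat\sharp}$ from the present statement. Likewise $N_q$, which exchanges $\sharp$ and $\flat$, has entries of both signs.

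The missing step is to absorb the factor $1-q$ into the last $L$-block. Since $[x]^\flat_q=M_q\cdot\frac{1}{1-q}$ and
\[
L_q^{a}\begin{pmatrix}1\\ 1-q\end{pmatrix}=\begin{pmatrix}q^{a}\\ q[a]_q+1-q\end{pmatrix}=\begin{pmatrix}q^{a}\\ 1+\sum_{j=2}^{a}q^{j}\end{pmatrix},
\]
the pair $(A^\flat,B^\flat)$ is proportional to $M'_q\,\bigl(q^{a_{2m}},\,1+\sum_{j=2}^{a_{2m}}q^j\bigr)^{T}$, where $M'_q=R_q^{a_1}L_q^{a_2}\cdots R_q^{a_{2m-1}}$. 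This vector lies in $\N[q]^2$; for example it gives $[n]^\flat_q=R_q^{n-1}\cdot q$ and $[\tfrac12]^\flat_q=\frac{q^2}{1+q^2}$. Multiplying by the adjugate of $M'_q$ shows that any common divisor of the two entries divides $\det M'_q\cdot\gcd\bigl(q^{a_{2m}},1+\sum_{j=2}^{a_{2m}}q^j\bigr)=\det M'_q$, a monomial. So reducing the fraction again only removes a power of $q$. With this, your reduction of $x<0$ through $[-x]^\square_q=-\frac1q[x]^\square_{q^{-1}}$ completes the proof, as in the paper.
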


\begin{proof}
If $\tfrac{a}{b} \geq 1$, there is a combinatorial interpretation of all coefficients of the four polynomials, as proven in \cite[Theorem 4]{MGO-2020} and \cite[Corollary A.2]{BBL}, see also \cite[Proposition 2.4]{Leclere-Morier-Genoud}. Indeed, these coefficients count some special subsets of the graph dual to a part of the Farey triangulation encoding the continued fraction convergents of $\frac{a}{b}$. So they are all  positive.

If $0\leq \tfrac{a}{b}<1$, from Proposition \ref{Prop:equivariance} the identity $\left[\tfrac{1}{x}\right]^\square_q = \frac{1}{[x]^\square_{q^{-1}}}$, reduces the argument to the previous case.

For $\tfrac{a}{b}<0$, again from Proposition \ref{Prop:equivariance} the identity $[-x]^\square_q = -\tfrac{1}{q}[x]^\square_{q^{-1}}$ reduces the argument to the two previous cases.
\end{proof}

\begin{Remark}
The only two polynomials with coefficients of different signs are the numerator of $[0]_q^\flat = \frac{q-1}{q}$ and the denominator of $[\infty]_q^\flat=\frac{1}{1-q}$.
\end{Remark}

In our investigations, we will need two results giving conditions under which the numerators of two $q$-rationals are identical or palindromic. Recall that a polynomial $B(q)$ is called palindromic if $B(q)= B(q^{-1}) \cdot q^{\deg B}$.

\begin{theorem}[Theorem 1.2 in \cite{Ren}]\label{thm:palindromicity}
The polynomial $B^{\sharp}_{a/b}$ is palindromic if and only if $a^2 \equiv 1 \mod \; b$, and the polynomial $B^{\flat}_{a/b}$ is palindromic if and only if $a^2 \equiv -1 \mod \; b$.
\end{theorem}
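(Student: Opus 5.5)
Throughout, I write $B^\sharp_{a/b}(q)$ as a product of matrices and compare it with the matrix of the inverse.

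\textbf{Step 1: a matrix formula for the denominator.} Assume $0<\tfrac{a}{b}$ with $b\geq 2$. Expand $\tfrac{a}{b}$ as a continued fraction; one may first reduce modulo $1$, since $T_q$ does not change the denominator. This gives $M=T^{a_1}ST^{a_2}S\cdots$ with $M\cdot\infty=\tfrac{a}{b}$, and the first column of $M_q$ is $(A^\sharp,B^\sharp)$. For the left version the vector is $M_q\,(1,1-q)^T$ instead. Writing $M=\begin{pmatrix}a&a'\\b&b'\end{pmatrix}$, the classical reversal of the continued fraction gives the second rational $\tfrac{b'}{b}$, or its negative. It satisfies $ab'\equiv 1 \pmod b$, i.e.\ $b'\equiv a^{-1}\pmod b$.

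\textbf{Step 2: the reversal/transpose symmetry.} Let $\tau$ denote $q\mapsto q^{-1}$ combined with transposition. I would show the following at the $q$-level, using the explicit forms of $T_q$ and $S_q$ together with Proposition \ref{Prop:equivariance}(ii). The reversed word $M^{\mathrm{rev}}$ satisfies $(M^{\mathrm{rev}})_q \equiv_q D\,(M_q)^T|_{q\to q^{-1}}\,D'$ for fixed diagonal or anti-diagonal matrices $D,D'$. Reading off the appropriate entry then gives
$$B^\sharp_{a'/b}(q)\equiv_q B^\sharp_{a/b}(q^{-1}),\qquad aa'\equiv 1\pmod b.$$
I would also show that $B^\sharp_{x/b}$ depends only on $x \bmod b$, since $T_q$ preserves denominators. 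Combining these, $B^\sharp_{a/b}$ is palindromic if and only if $B^\sharp_{a/b}=B^\sharp_{a^{-1}/b}$.

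\textbf{Step 3: the converse direction.} If $a^2\equiv 1 \pmod b$, then $a\equiv a^{-1}$ and palindromicity follows immediately. For the converse I must show that $B^\sharp_{x/b}=B^\sharp_{y/b}$ with $0<x,y<b$ forces $x=y$. This injectivity is the main obstacle. I would try to recover $x$ from the polynomial, using that $B^\sharp_{x/b}$ specialised at $q=1$ is $b$. The first nontrivial coefficients, or the degree together with the leading coefficients, encode the continued fraction $[0;a_2,a_3,\dots]$. Failing that, I would evaluate at a root of unity to extract $x \bmod b$.

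\textbf{Step 4: the left version.} Here the extra factor $(1,1-q)^T$, the eigenvector of $T_q$, effectively inserts $S$. The reversal symmetry then pairs $\tfrac{a}{b}$ with $-a^{-1} \bmod b$, so palindromicity of $B^\flat_{a/b}$ holds if and only if $a\equiv -a^{-1}$, i.e.\ $a^2\equiv -1 \pmod b$. Injectivity is handled as in Step 3.
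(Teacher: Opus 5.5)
The paper does not prove this statement; it quotes it from Ren. Your argument therefore has to stand on its own.

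\textbf{What works.} Steps 1--2 are sound and give both ``if'' directions. The anti-automorphism $X\mapsto DJ\,X^{T}|_{q\to q^{-1}}\,JD^{-1}$, with $J$ the antidiagonal permutation matrix and $D=\mathrm{diag}(1,q)$, sends $T_q$ and $S_q$ to scalar multiples of themselves. It multiplies the $(2,1)$ entry by $q$ after $q\mapsto q^{-1}$, which is exactly your reversal identity.

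\textbf{The gap.} The ``only if'' directions fail, because the injectivity you aim for in Step 3 is false. For every $n\ge 3$ one has $B^\sharp_{1/n}=B^\sharp_{(n-1)/n}=[n]_q$. More generally, Theorem~\ref{thm:equal-denominators} gives $B^\sharp_{x/b}=B^\sharp_{y/b}$ whenever $xy\equiv -1 \pmod b$. Your own Step 2 already forces this: Proposition~\ref{Prop:equivariance}(ii) gives $B^\sharp_{-a/b}(q)\equiv_q B^\sharp_{a/b}(q^{-1})$, so Step 2 says $B^\sharp_{a^{-1}/b}\equiv_q B^\sharp_{-a/b}$. The left version is no better: $B^\flat_{2/5}=B^\flat_{3/5}=[5]_q$.

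Consequences:
\begin{itemize}
\item Neither reading off coefficients nor evaluating at roots of unity can recover $x \bmod b$ from the denominator.
\item Your criterion ``$B^\sharp_{a/b}=B^\sharp_{a^{-1}/b}$'' becomes, after this identification, ``$B^\sharp_{a/b}=B^\sharp_{-a/b}$''. That is just palindromicity restated, so the reduction is circular.
\item The strongest injectivity you could hope for is that the fibres are exactly $\{x,-x^{-1}\}$. That is the converse of Kogiso's theorem, which the paper records only as a conjecture.
\end{itemize}

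\textbf{Missing idea.} Turn palindromicity into a polynomial divisibility, then specialise at $q=1$. Translate by $T$ so that $\tfrac ab>0$; then $B^\sharp(0)=B^\flat(0)=1$, $A^\square(1)=a$ and $B^\square(1)=b$.

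For the right version, suppose $B^\sharp$ is palindromic.
\begin{itemize}
\item The left side of \eqref{Eq:duality-diese} (with $\tfrac rs=\tfrac ab$) is a reduced fraction whose denominator is $\equiv_q B^\sharp(q)$. Hence $B^\sharp$ divides $B^\flat+(q-1)A^\flat$.
\item Reduce the identity $A^\sharp B^\flat-A^\flat B^\sharp=q^{\varepsilon}(1-q)$ of Remark~\ref{rem:sym_sharp_and_flat} (with $\varepsilon=\varepsilon(\tfrac ab)$) modulo $B^\sharp$. This gives $B^\sharp\mid (1-q)(A^\sharp A^\flat-q^{\varepsilon})$.
\item Since $B^\sharp(1)=b\neq 0$ and $B^\sharp$ is primitive, $B^\sharp$ divides $A^\sharp A^\flat-q^{\varepsilon}$ in $\Z[q^{\pm1}]$.
\item Setting $q=1$ gives $b\mid a^2-1$.
\end{itemize}

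For the left version, suppose $B^\flat$ is palindromic. Then \eqref{Eq:duality-bemol} gives $B^\flat\mid B^\sharp+(q-1)A^\sharp$. The same reduction yields $B^\flat\mid A^\sharp A^\flat+q^{\varepsilon}$, hence $b\mid a^2+1$.

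Replacing your Steps 3--4 with this argument completes the proof.
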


\begin{theorem}[Theorem 3.5 in \cite{Kogiso}]\label{thm:equal-denominators}
Let $p$ be a positive integer. For irreducible fractions $\frac{a}{p}$ and $\frac{b}{p}$ with $ab \equiv -1 \mod p$, the denominators are equal: $B^{\sharp}_{a/p}(q)=B^{\sharp}_{b/p}(q)$.
\end{theorem}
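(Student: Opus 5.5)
Since $ab\equiv -1 \pmod p$ with $\gcd(a,p)=\gcd(b,p)=1$, I would realise this congruence as a matrix. Choose an integer $k$ with $ab+1=kp$ and set
$$M=\begin{pmatrix} a & k\\ p & b\end{pmatrix},\qquad \det M = ab-kp=-1 .$$
The idea is to compare the $q$-deformations of $M$ and of its "transpose-like" companion, which sends $\infty$ to $\tfrac{b}{p}$ instead of $\tfrac{a}{p}$.

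\textbf{Step 1: reduce to $\PSL_2(\Z)$.} Composing with the negation $N$ turns $M$ into a determinant-one matrix. Concretely,
$$M'=\begin{pmatrix} a & -k\\ p & -b\end{pmatrix}\cdot(\text{sign fix}),\qquad M'\cdot\infty=\tfrac{a}{p},\qquad M'\cdot 0 = \tfrac{k}{b}.$$
Alternatively, work with $M_1=\begin{pmatrix} a & a'\\ p & p'\end{pmatrix}\in \PSL_2(\Z)$, where $\tfrac{a'}{p'}$ is a Farey neighbour of $\tfrac{a}{p}$. The right $q$-rational is the first column of $(M_1)_q$, so
$$B^\sharp_{a/p}=\bigl((M_1)_q\bigr)_{21},$$
up to normalisation of the representative. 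This normalisation is the first bookkeeping point: I would use the standard convention, e.g. the MGO one, in which $(M_q)$ has entries in $\Z[q]$ and its first column is $(A^\sharp,B^\sharp)$ up to a power of $q$.

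\textbf{Step 2: a symmetry exchanging $a$ and $b$.} Let $\tau$ be the anti-involution of $\PSL_2(\Z)$ that reverses words in $T$ and $S$, i.e. $M\mapsto JM^{T}J$ for a suitable $J$. In the classical case, $\begin{pmatrix} a & *\\ p & *\end{pmatrix}$ is sent to a matrix whose first column is $(b',p)$, with $b'\equiv$ the lower-right entry. Take $M_1=\begin{pmatrix} a & -k\\ p & -b\end{pmatrix}\cdot \begin{pmatrix}1&0\\0&-1\end{pmatrix}$, adjusted so that $\det=1$. Its lower-right entry is then $\pm b$ with the right sign, and $\tau(M_1)$ has first column $(b,p)$, so $\tau(M_1)\cdot\infty=\tfrac bp$.

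\textbf{Step 3: compatibility of the deformation with word reversal.} Because the $S_q$, $T_q$ relations are preserved under reversal, reversing a word commutes with $q$-deformation up to conjugation and transposition. One checks this on generators: $T_q^T=\begin{pmatrix} q&0\\1&1\end{pmatrix}$, and $S_q^T$ is related to $S_q$ by conjugation with $\mathrm{diag}(1,q)$ together with scaling. From this I would derive
$$\bigl(\tau(M)\bigr)_q = D\,(M_q)^T D^{-1}$$
for a fixed diagonal $D$, possibly up to a power of $q$. Conjugation by a diagonal matrix preserves the product of the off-diagonal entries. Combined with the transpose, the $(2,1)$ entry of $\tau(M)_q$ is a monomial multiple of the $(2,1)$ entry of $M_q$. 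Hence $B^\sharp_{b/p}\equiv_q B^\sharp_{a/p}$.

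\textbf{Step 4: remove the monomial.} Both polynomials have constant term $1$ when normalised, by positivity (Proposition \ref{Prop:cst-sign-coeffs}) and the value at $q=0$. Alternatively, both have $B(1)=p$ and no factor of $q$, since the numerator and denominator of a $q$-rational are coprime to $q$ up to the exceptional cases. Therefore the monomial is $1$ and the denominators are equal.

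The main obstacle is Step 3: finding the exact intertwiner $D$ and verifying it on both generators, given that $S_q$ is not symmetric. I would first test it on $T_q$ and $S_q$ directly. If a pure diagonal $D$ fails, I would allow composing with $S_q$, which shifts which entry plays the role of the denominator, and redo Step 2 with the correspondingly permuted column.
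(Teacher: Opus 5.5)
The gap is in Steps 2--3: word reversal $\tau(M)=JM^TJ$, with $J=\begin{pmatrix}0&1\\1&0\end{pmatrix}$, is the wrong symmetry, and both steps assert something false.

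\emph{Step 2.} Any $M_1\in\mathrm{SL}_2(\Z)$ with first column $(a,p)^T$ has lower-right entry $a^*$ with $aa^*\equiv 1 \pmod p$. Hence $a^*\equiv -b$, and $\tau(M_1)\cdot\infty=\tfrac{a^*}{p}\in -\tfrac bp+\Z$. There is no sign to adjust: $\begin{pmatrix}a&k\\p&b\end{pmatrix}$ has determinant $-1$, and replacing $M_1$ by $-M_1$ changes nothing. So reversal pairs $\tfrac ap$ with its $ab\equiv +1$ partner. This is the mechanism behind the palindromicity criterion of Theorem \ref{thm:palindromicity}, not behind the $ab\equiv -1$ statement.

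\emph{Step 3.} No diagonal $D$ works even for $T$. Indeed $D\,T_q^TD^{-1}$ is lower triangular, and $D\,JT_q^TJ\,D^{-1}$ has diagonal $(1,q)$, which is not proportional to $(q,1)$. What actually holds, with $D=\mathrm{diag}(1,q)$, is
\[
J\,T_q^T J=q\,D\,T_{q^{-1}}D^{-1},\qquad J\,S_q^T J=q\,D\,S_{q^{-1}}D^{-1}.
\]
So reversal intertwines the deformation at $q$ with the one at $q^{-1}$. Your argument really yields $B^\sharp_{a^*/p}(q)\equiv_q B^\sharp_{a/p}(q^{-1})$: reciprocal polynomials, not equal ones.

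No other intertwiner can rescue Step 3, because its conclusion is false. Take $\tfrac ap=\tfrac{2}{13}$, so $b=6$. Then $M=\begin{pmatrix}2&1\\13&7\end{pmatrix}$ gives $\tau(M)\cdot\infty=\tfrac{7}{13}$, while
\[
B^\sharp_{2/13}=B^\sharp_{6/13}=1+q+2q^2+2q^3+2q^4+2q^5+2q^6+q^7,\qquad B^\sharp_{7/13}=1+2q+2q^2+2q^3+2q^4+2q^5+q^6+q^7,
\]
and these do not agree up to a monomial.

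\emph{Fix.} Use inversion instead of reversal; it commutes with the deformation for free. The matrix from your Step 1, $M=\begin{pmatrix}a&-k\\p&-b\end{pmatrix}$, already has $\det M=kp-ab=1$. Its inverse $M^{-1}=\begin{pmatrix}-b&k\\-p&a\end{pmatrix}$ sends $\infty$ to $\tfrac bp$.

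Since $M\mapsto M_q$ is a homomorphism, $(M^{-1})_q=(M_q)^{-1}$, which is proportional to the adjugate of $M_q=\begin{pmatrix}\alpha&\beta\\ \gamma&\delta\end{pmatrix}$. Hence $[\tfrac ap]^\sharp_q=\tfrac{\alpha}{\gamma}$ and $[\tfrac bp]^\sharp_q=-\tfrac{\delta}{\gamma}$. Because $\alpha\delta-\beta\gamma=\det M_q$ is a power of $q$, both fractions are reduced up to units $\pm q^j$. So $B^\sharp_{a/p}$ and $B^\sharp_{b/p}$ both agree with $\gamma$ up to such a unit.

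Your Step 4 then removes the unit, provided $a,b>0$, which is the intended reading. For $\tfrac15$ and $-\tfrac15$ the denominators are $[5]_q$ and $q[5]_q$.

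If you prefer to keep reversal, compose it with negation, $[-x]^\sharp_q=-\tfrac1q[x]^\sharp_{q^{-1}}$ (Proposition \ref{Prop:equivariance}). Since $a^*\equiv -b$, the two substitutions $q\mapsto q^{-1}$ then cancel and give $B^\sharp_{a/p}\equiv_q B^\sharp_{b/p}$.
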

The authors also conjecture that the inverse holds if $p$ is prime and $a \neq b \mod p$.

\smallskip
An important feature of $q$-rationals is the existence of a transition map that exchanges the left version with the right one while replacing $q$ by $q^{-1}$ :

\begin{theorem}[\cite{Thomas, Jouteur}]\label{Prop:duality}
For any rational $\tfrac{a}{b}\in \mathbb{QP}^1$, we have
$$g_q\left(\left[\frac{a}{b}\right]_q^\sharp \right)= \left[\frac{a}{b}\right]^\flat_{q^{-1}}   \;\;\text{ and } \;\; g_q\left(\left[\frac{a}{b}\right]_q^\flat\right) = \left[\frac{a}{b}\right]^\sharp _{q^{-1}},   $$
where $g_q(x) = \frac{1+(x-1)q}{1+(q-1)x}$ is called the \emph{transition map}. In other words, there is a global map that exchanges left and right $q$-versions of rational numbers : a composition of the transition map (acting on rational fractions in $q$) and that of the duality map on the set of parameters : $q \mapsto \frac{1}{q}$.
\end{theorem}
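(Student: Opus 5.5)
Since both $q$-versions are defined as images of the two fixed points of $T_q$ under the deformed modular group, I will prove the statement by showing that $g_q$ intertwines the deformed action at parameter $q$ with the deformed action at parameter $q^{-1}$, and that it exchanges the two base points $\tfrac{1}{0}$ and $\tfrac{1}{1-q}$ (with parameter swapped). Concretely, writing $g_q$ as the Möbius map of the matrix $G_q=\begin{pmatrix} q & 1-q\\ q-1 & 2-q\end{pmatrix}$, I will aim to establish
$$G_q\, T_q\, G_q^{-1} = T_{q^{-1}}, \qquad G_q\, S_q\, G_q^{-1} = S_{q^{-1}}$$
as elements of $\PGL_2$ over $\Z[q,q^{-1}]$, that is, up to scalar factors. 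Once these two identities hold, they propagate to every word in the generators. Since the map $M\mapsto M_q$ is independent of the chosen word, this yields $G_q M_q G_q^{-1}=M_{q^{-1}}$ for all $M\in\PSL_2(\Z)$.

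Next come the base points. I check directly that $g_q(\tfrac10)=\tfrac{q}{q-1}$, which equals $\tfrac{1}{1-q^{-1}}$: this is the left base point at parameter $q^{-1}$. I also check that $g_q\bigl(\tfrac{1}{1-q}\bigr)=\dfrac{1+\tfrac{q^2}{1-q}}{1+(q-1)\tfrac{1}{1-q}}$. The denominator here vanishes, so the value is $\tfrac10$, the right base point. Then for $M\cdot\tfrac10=\tfrac ab$ we get
$$g_q\bigl([\tfrac ab]^\sharp_q\bigr)=G_qM_q\cdot\tfrac10=M_{q^{-1}}G_q\cdot\tfrac10=M_{q^{-1}}\cdot\tfrac{1}{1-q^{-1}}=[\tfrac ab]^\flat_{q^{-1}}.$$
The second identity follows in the same way from the other base point. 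Alternatively, it follows because $g_{q^{-1}}\circ g_q=\mathrm{id}$, which I would verify by a short matrix multiplication (the product $G_{q^{-1}}G_q$ should be scalar).

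The main work is the pair of conjugation identities. Rather than multiplying matrices blindly, I will check them on fixed points and eigenvalue data, since a parabolic or elliptic Möbius map is determined by this data together with one extra point.

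For $T_q$, the fixed points are $\tfrac10$ and $\tfrac1{1-q}$, with multiplier $q$ at $\tfrac1{1-q}$. The map $T_{q^{-1}}$ has fixed points $\tfrac10$ and $\tfrac{1}{1-q^{-1}}$, which $g_q$ swaps with those of $T_q$ as computed above. The multipliers must then match. Swapping the fixed points inverts the multiplier, and $q\leftrightarrow q^{-1}$ accounts for this, so the identity is consistent.

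For $S_q$ (an involution $x\mapsto -1/(qx)$), it suffices to check that $G_qS_qG_q^{-1}$ is an involution sharing two points with $S_{q^{-1}}$. Equivalently, I check $g_q(S_q x)=S_{q^{-1}}g_q(x)$ at two convenient points, for instance $x=0,\infty$, plus the involution property.

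I expect the only real obstacle to be bookkeeping: determinant and sign normalisations in $\PGL_2$, and whether the conjugation is exact or only up to the scalar $q$. Working projectively removes this issue. The fall-back is to do the direct $2\times2$ multiplication, which is routine.
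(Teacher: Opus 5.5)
Your strategy is sound, and it takes a different route from the paper. The paper gives no computation: it cites \cite[Theorem 2.7]{Thomas} for the first identity and derives the second from $g_q\circ g_{q^{-1}}=\mathrm{id}$ (or refers to \cite[Theorem 1.7]{Jouteur}). You instead give a self-contained equivariance argument. You show that $g_q$ conjugates the deformed action at $q$ into the one at $q^{-1}$, and that it swaps the fixed points of $T_q$ with those of $T_{q^{-1}}$. Your base-point computations $g_q(\tfrac10)=\tfrac{1}{1-q^{-1}}$ and $g_q(\tfrac{1}{1-q})=\tfrac10$ are correct. This explains where the transition map comes from and gives both identities at once, at the cost of a short computation that the paper outsources.

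Two steps need repair.

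\textbf{The matrix of $g_q$.} It is $G_q=\left(\begin{smallmatrix} q & 1-q\\ q-1 & 1\end{smallmatrix}\right)$, with determinant $q^2-q+1$. With $2-q$ in the lower-right corner, as you wrote, $\tfrac{1}{1-q}$ would not be sent to $\infty$. That contradicts your own correct computation from the formula for $g_q$.

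\textbf{The check for $S_q$.} Your shortcut does not work. The points $0$ and $\infty$ form a single $S_q$-orbit, so the check at $\infty$ already follows from the check at $0$ together with involutivity. A single swapped pair does not determine an involution: every map $x\mapsto c/x$ swaps $0$ and $\infty$. You need a third point, for instance
\[
g_q(S_q(1))=g_q(-\tfrac1q)=-q^2=S_{q^{-1}}(\tfrac1q)=S_{q^{-1}}(g_q(1)).
\]
Alternatively, just multiply the matrices, which is immediate:
\[
G_qT_q=\begin{pmatrix} q^2 & 1\\ q^2-q & q\end{pmatrix}=q\,T_{q^{-1}}G_q,\qquad G_qS_q=\begin{pmatrix} q(1-q) & -q\\ q & 1-q\end{pmatrix}=q\,S_{q^{-1}}G_q .
\]

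Your fixed-point and multiplier argument for $T_q$ is valid. Note only that for $q\neq 1$ the map $T_q$ is loxodromic, not parabolic. With these two corrections your proof is complete.
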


\begin{proof}
The first statement is proven in \cite[Theorem 2.7]{Thomas}, the second follows from the first together with a remark that $g_q \circ g_{q^{-1}} = \mathrm{id}$. This statement also follows from \cite[Theorem 1.7]{Jouteur}, by applying the argument of the theorem to the identity matrix.
\end{proof}

\subsection{Extension of the deformation from $\PSL_2(\Z)$ to $\PGL_2(\Z)$.}

The $q$-action of $\PSL_2(\Z)$ was extended to that of $\PGL_2(\Z)$ in \cite{Jouteur}, showing that under this larger $q$-action the left and right versions of $q$-rationals form a single orbit. 

Since $\PGL_2(\Z)=\left<T,S,N |\; S^2=(TS)^3=1, N^2=(NT)^2=(NS)^2=1\right>$, where $N : x \mapsto -x$ is the negation map, the extension is defined by $q$-deforming $N$ into
\begin{equation}
\label{defi:Nq}
N_q  = \begin{pmatrix}-1 & 1-q^{-1}\\q-1 & 1\\\end{pmatrix}.
\end{equation}
Moreover, for any matrix $M \in \PGL_2(\Z)$ with $\det M =-1$ and $x\in \mathbb{QP}^1$, the following relations hold (see Theorem 1.5 in \cite{Jouteur}):
\begin{equation}\label{Eq:N_q-equivariance}
M_q\cdot [x]_q^{\sharp} = [M\cdot x]_q^{\flat} \;\; \text{ and }\;\;
M_q\cdot [x]_q^{\flat} = [M\cdot x]_q^{\sharp}.
\end{equation}

For any $q \in \R_{>0}$, the map $N_q$ sends the upper half-plane to the lower half-plane. In order to define (and deform) $\PGL_2(\Z)$ inside the group $\mathrm{Isom}(\H^2)$, one should compose $N$ (and $N_q$) with the complex conjugation $c : z \mapsto \bar{z}$, and define $\PGL_2(\Z)$ as generated by this composition and its products with $T$ and $S$.
Putting
$$s_1 = SNc \;, \; s_2 = Nc \;, \; s_3 = TNc,$$
we get another well-known presentation of $\PGL_2(\Z)$, seen as Coxeter group:
\begin{equation}\label{Eq:presentation-pgl2}
    \PGL_2(\Z) = \langle s_1,s_2,s_3\mid s_i^2 = (s_1s_2)^2=(s_1s_3)^3 = 1 \rangle.
\end{equation}

\subsection{From $q$-rationals to $q$-reals and beyond}

The $q$-rational numbers satisfy convergence properties leading to the definition of $q$-irrational numbers, see \cite[Thm 1]{MGO-2022}. In the formal setting, the convergence takes place as Laurent series (expanded around $q=0$) in the $q$-adic topology. We only state the version of this result with $q$ specialized to a positive real value (smaller than 1), see \cite[Thm 2.11]{BBL}. 

\begin{theorem}[\textbf{\emph{Convergence property}}, \cite{MGO-2022, BBL}]\label{Thm:convergence-q-numbers}
Let $q\in (0,1)$ and $(x_n)_{n\in \mathbb{N}}$ be a convergent sequence of rational numbers with limit $\ell$. 
\begin{itemize}
    \item If $\ell$ is irrational, then both $([x_n]_q^\sharp)_{n\in \N}$ and $([x_n]_q^\flat)_{n\in \N}$ converge to the same real number, denoted by $[\ell]_q$.
    \item If $(x_n)_{n\in \mathbb{N}}$ converges to $\ell\in\Q$ from the right (resp. from the left), both $([x_n]_q^\sharp)_{n\in \N}$ and $([x_n]_q^\flat)_{n\in \N}$  converge to $[\ell]_q^\sharp$ (resp. to $[\ell]_q^\flat$).
\end{itemize}
\end{theorem}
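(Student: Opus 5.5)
The plan is to deduce this convergence property for $q\in(0,1)$ from the order and contraction properties of the $q$-deformed modular action on $\R$, using the hyperbolic geometry developed later.

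\textbf{Step 1: order and nesting.} For fixed $q\in(0,1)$, the maps $x\mapsto[x]_q^\sharp$ and $x\mapsto[x]_q^\flat$ are strictly increasing on $\Q$. Moreover $[x]_q^\flat<[x]_q^\sharp$, and for $x<y$ we have $[x]^\sharp_q\le[y]^\flat_q$. So the intervals $I_x=[[x]^\flat_q,[x]^\sharp_q]$ are pairwise disjoint and ordered like $\Q$. To prove this, I would note that $T_q$ and $S_q$ act on $\R\cup\{\infty\}$ by orientation-preserving M\"obius maps (determinants $q>0$). I would check the ordering on the Farey triangle $\{0,1,\infty\}$ directly. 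I would then propagate it by equivariance (Proposition~\ref{Prop:equivariance}(i)), since every Farey triangle is an image of the base one and adjacent triangles determine consistent cyclic orders.

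\textbf{Step 2: shrinking of Farey intervals.} For a Farey interval $(\tfrac ab,\tfrac cd)$ with $ad-bc=-1$, consider the deformed interval $J=([\tfrac ab]^\sharp_q,[\tfrac cd]^\flat_q)$. I want its length to tend to $0$ as the interval goes deeper in the Stern--Brocot tree, with a bound uniform in the path. Write $J=M_q\cdot J_0$, where $J_0=([0]^\sharp_q,[\infty]^\flat_q)=(0,\tfrac1{1-q})$ and $M\in\PSL_2(\Z)$ is a product of the Stern--Brocot generators $L=TST$-type and $R=T$-type matrices. I would compute the deformed $q$-matrices $L_q,R_q$ and show that each is a strict contraction of $J_0$ into itself, for instance in the Hilbert metric of $J_0$. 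The contraction comes from each map sending the closure of $J_0$strictly inside $J_0$, with the endpoint $\tfrac1{1-q}$ (resp.\ $0$) moving inward because $q\neq1$. Compositions of finitely many types of strict contractions (two maps plus conjugation by integer shifts) then shrink the length of $J$ geometrically in the depth.

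\textbf{Step 3: conclude.} Let $x_n\to\ell$ with $\ell$ irrational. Then for every depth $k$, $x_n$ eventually lies in the depth-$k$ Farey interval containing $\ell$. By Step~1, both $[x_n]^\sharp_q$ and $[x_n]^\flat_q$ lie in the corresponding deformed interval, whose length tends to $0$ by Step~2. The nested deformed intervals therefore intersect in a single point $[\ell]_q$, and both sequences converge to it.

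If $\ell\in\Q$ and $x_n\downarrow\ell$ from the right, then eventually $x_n$ lies in the Farey intervals $(\ell,\ell_k)$, where the $\ell_k$ are right Farey neighbours of $\ell$ decreasing to $\ell$. Their deformed versions are $([\ell]^\sharp_q,[\ell_k]^\flat_q)$, and I need $[\ell_k]_q\to[\ell]_q^\sharp$. By equivariance this reduces to $\ell=\infty$ approached via the integers, i.e.\ to the formula for $[n]_q$ as $n\to-\infty$ after applying $S$. Alternatively, I would use that $T_q^{n}\cdot y\to\tfrac{1}{1-q}$ is the attracting fixed point, so that conjugating gives the limit $[\ell]^\sharp_q$. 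The left-approach case is symmetric and gives $[\ell]^\flat_q$.

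The main obstacle is Step~2, namely the uniform contraction. Near the parabolic-type endpoints, long strings of the same generator contract only slowly. I would handle this by treating a block $R^n$ separately: $T_q^n$ has attracting fixed point $\tfrac1{1-q}$ with multiplier $q^n$, and is therefore genuinely hyperbolic for $q\neq1$. This hyperbolicity is exactly what yields the geometric contraction.
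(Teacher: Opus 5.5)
The paper does not prove this statement; it quotes it from \cite{MGO-2022, BBL}. Your proposal therefore has to stand on its own. Its architecture is right: order the deformed intervals, show the nested deformed Farey intervals $([\tfrac ab]^\sharp_q,[\tfrac cd]^\flat_q)$ shrink, then squeeze. For Step~1 you can simply cite Corollary~\ref{Prop:well-orderedness} instead of the sketched propagation of cyclic orders.

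\textbf{The gap is in Step~2, which carries all the content: it rests on a false claim.} Neither generator sends $\overline{J_0}$ into $J_0$.
\begin{itemize}
\item $R_q=T_q$, $x\mapsto qx+1$, fixes $\tfrac{1}{1-q}$, one of the two fixed points of $T_q$.
\item $L_q=T_qS_qT_q$, $x\mapsto \tfrac{qx}{qx+1}$, fixes $0$.
\item Indeed $R_q(J_0)=(1,\tfrac1{1-q})$ and $L_q(J_0)=(0,q)$ each share an endpoint with $J_0$.
\end{itemize}
Consequently neither map is a strict contraction of the Hilbert metric of $J_0$ with a uniform constant; near its fixed endpoint each is almost an isometry. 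Worse, intervals such as $R_q^n(J_0)$ have infinite Hilbert diameter in $J_0$, so that metric cannot register their shrinking at all. Your last paragraph notices that repeated letters cause trouble, but the remedy (treat $R^n$-blocks via the multiplier $q^n$) is not carried out. It also leaves $L^n$-blocks at $0$ untouched. Making it work would require splicing Hilbert-metric estimates for mixed blocks with Euclidean ones for pure blocks.

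\textbf{The repair: use the Euclidean metric, where there is no obstacle.} We have $R_q'\equiv q$ and $L_q'(x)=\tfrac{q}{(1+qx)^2}\le q$ for $x\ge 0$, and both maps send $J_0$ into itself. By the chain rule, if $M$ is a word of length $k$ in $L,R$, then $M_q(J_0)=([M\cdot 0]^\sharp_q,[M\cdot\infty]^\flat_q)$ has length at most $q^k/(1-q)$, uniformly in the path. For a sequence converging to a given $\ell$, a fixed power of the affine map $T_q$ transports this bound to the relevant Farey intervals, at the cost of a constant factor. This uses $[x+m]^\square_q=T_q^m[x]^\square_q$.

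This bound also makes your separate treatment of the rational case unnecessary. The intervals $([\ell]^\sharp_q,[\ell_k]^\flat_q)$ are themselves deformed Farey intervals whose depth tends to infinity, so their lengths tend to $0$.

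\textbf{A slip in your ``alternative'' argument there.} It lands on the wrong endpoint. $T_q^ny\to\tfrac1{1-q}=[\infty]^\flat_q$ as $n\to+\infty$ describes approach to $\infty$ from the left. Conjugating by an element of $\PSL_2(\Z)$ therefore gives $[\ell]^\flat_q$, not $[\ell]^\sharp_q$. For the right-hand limit you need $T_q^{-n}y\to\infty=[\infty]^\sharp_q$.
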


Note that, unlike the case of rational numbers, the left and right versions of $q$-irrationals coincide. These properties become visualized in our geometric approach which represent $q$-rationals as circles, see for instance Figure \ref{fig:qrationals}.

The geometric arguments in \cite{BBL}, as well those that we develop in this work, show that for any $x \in \R \setminus \Q$, its quantification $[x]_q \in \R$ is well-defined for any $q \in \R_{+}$.

\begin{question}
For any fixed $x \in \R \setminus \mathbb{Q}$, consider the function on the half-line $ [x] : \R_+ \rightarrow \R$, defined by $[x] : q \mapsto [x]_q$. This function is continuous in $q \in \R_+$, and analytic near $q=0$. What can one say about its behavior outside its convergence radius? What about its smoothness class, does it depend in any way on the Diophantine properties of $x$? 
\end{question}

For an irrational number $x\in \R\setminus \Q$, the expression for $[x]_q$ is given by a formal Laurent series. The question of convergence of this series was raised by Leclere, Morier-Genoud, Ovsienko and Veselov in \cite{Leclere-radius}, where this convergence was proven for any rational $x \in \Q_{>1}$ in the disk centered at $0$ of radius $R_1:=3-2 \sqrt{2}$. It was also conjectured in \cite{Leclere-radius} that for any $x \in \R_{>1}$ the corresponding Laurent series defining its $q$-analogue $[x]_q$ converges  in the disk of radius $R_{\star}:=\frac{3-\sqrt{5}}{2}$ centered at $0$. This was proven for all $x \in \Q$ in \cite{Elzenaar2024BoundingDS} using the theory of Kleinian groups. Then, in \cite{Etingof} the above conjecture is proven for all real $x$, in a weaker form (in the disk of radius $R_2=2-\sqrt{3}<R_{\star}$).

\smallskip

Finally, one could ask : what about $q$-complex numbers? Two different approaches were proposed by Ovsienko in \cite{Ovsienko} (quantization of Gaussian integers) and Etingof \cite{Etingof} (quantization of all complex numbers), and we are currently working on another (geometric) approach that we will present in the upcoming work. 

In the following, we deal exclusively with $q$-rationals for $q \in \R_+$.

\section{Hyperbolic geometry and deformed Farey tesselation}\label{Sec:Farey-tiling}

For a fixed positive real $q$, we propose a geometric picture of $q$-rationals, using the Farey triangulation of the hyperbolic plane.

\subsection{Classical Farey triangulation}

We present two constructions of the classical Farey triangulation. The first uses geodesics connecting certain pairs of rationals, the second uses the representation of $\mathrm{PGL}_2(\Z)$ as a reflection group.

\begin{definition}\label{defi:farey_det}
The \emph{Farey determinant} $d_F:\Q\times \Q\to \mathbb{N}$ is the function which to a given pair of rational numbers in reduced form $\left(\frac{a}{b},\frac{c}{d}\right)$ associates the quantity
$$
d_F\left(\frac{a}{b},\frac{c}{d}\right) = \abs{ad-bc}.
$$
We can extend this definition to a pair in $\mathbb{QP}^1$ by writing $\infty=\tfrac{1}{0}$.

The \emph{Farey sum} and the \emph{Farey difference} of the pair are  
\[
\frac{a}{b}\oplus_F \frac{c}{d} := \frac{a+c}{b+d} \text{ and } \frac{a}{b}\ominus_F \frac{c}{d} := \frac{a-c}{b-d}.
\]
\end{definition}
Note that the resulting fractions of the Farey operations are not necessarily reduced. These expressions are reduced if the pair is of Farey determinant 1.

\begin{proposition}\label{Prop:invariance-Farey}
The Farey determinant is invariant under the $\mathrm{SL}_2(\Z)$-action, and the Farey operations are equivariant under the same action. All pairs of Farey determinant $n$ belong to the same $\mathrm{SL}_2(\Z)$-orbit, that of $\left(\frac{1}{0},\frac{k}{n}\right)$ with some $k \in [0,n)$.

\end{proposition}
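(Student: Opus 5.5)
The plan is to prove the three claims in turn, working with matrix representatives in $\mathrm{SL}_2(\Z)$ and primitive vectors $(a,b)^T\in\Z^2$.

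\textbf{Invariance and equivariance.} Identify the reduced fraction $\tfrac{a}{b}$ with the primitive vector $v=(a,b)^T$. This vector is determined up to sign, and the sign is fixed by the convention $b\geq 0$. Then
$$d_F\left(\tfrac{a}{b},\tfrac{c}{d}\right)=\lvert\det(v,w)\rvert,\qquad w=(c,d)^T.$$
For $M\in\mathrm{SL}_2(\Z)$, the vector $Mv$ is again primitive, since $M$ is invertible over $\Z$. Hence $Mv$ represents $M\cdot\tfrac{a}{b}$ up to a sign, and signs disappear under the absolute value. Therefore
$$\lvert\det(Mv,Mw)\rvert=\lvert\det M\rvert\,\lvert\det(v,w)\rvert=\lvert\det(v,w)\rvert,$$
which gives invariance.

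For the Farey sum, the fraction $\tfrac{a+c}{b+d}$ is represented by $v+w$, and linearity gives $M(v+w)=Mv+Mw$. The only subtlety is sign. If $Mv=\epsilon_1 v'$ and $Mw=\epsilon_2 w'$ with $v',w'$ normalized and $\epsilon_1\neq\epsilon_2$, then $M(v+w)$ represents the Farey difference of the images, not their sum. So I will state equivariance at the level of vectors: the operation $(v,w)\mapsto v\pm w$ commutes with $M$, with sum and difference possibly exchanged. Equivalently, the unordered pair $\{\oplus_F,\ominus_F\}$ is preserved. This sign convention is the one point requiring care, and I expect it to be the main (though mild) obstacle.

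\textbf{Transitivity on pairs of determinant $n$.} Take a pair with $d_F=n$ and primitive $v=(a,b)^T$. Since $\gcd(a,b)=1$, Bézout gives $x,y$ with $ax+by=1$. Then
$$M=\begin{pmatrix} x & y\\ -b & a\end{pmatrix}\in\mathrm{SL}_2(\Z)$$
satisfies $Mv=(1,0)^T$, i.e.\ $M$ sends $\tfrac{a}{b}$ to $\infty$. Let $Mw=(k',m)^T$. By invariance, $\lvert\det((1,0)^T,(k',m)^T)\rvert=\lvert m\rvert=n$. Possibly negating the vector, we get $m=n$, and $\gcd(k',n)=1$ because $Mw$ is primitive.

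Next compose with $T^j=\begin{pmatrix}1&j\\0&1\end{pmatrix}$. This fixes $(1,0)^T$ and sends $(k',n)^T$ to $(k'+jn,n)^T$. Choosing $j$ appropriately gives $k\in[0,n)$ with $k\equiv k'\bmod n$, so the pair lands on $\left(\tfrac10,\tfrac kn\right)$.

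The degenerate case $n=0$ forces $\tfrac{a}{b}=\tfrac{c}{d}$. It is handled identically with $k=0$ read as the pair $(\infty,\infty)$, or excluded. For $n=1$, $k$ must be $0$.
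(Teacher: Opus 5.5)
Your proof is correct and follows the same route as the paper. Reduced fractions become primitive vectors, so that $d_F$ is $\lvert\det\rvert$ and the $\mathrm{SL}_2(\Z)$-action is linear; then the first point is sent to $\infty=\tfrac{1}{0}$, and $k$ is moved into $[0,n)$ with powers of $T$. Your caveat about signs is a genuine refinement, not an obstacle. With the convention $b\geq 0$, the paper's claim that the Farey operations commute with the action holds only up to exchanging $\oplus_F$ and $\ominus_F$: for example, $S$ maps $\tfrac{1}{1}\oplus_F\tfrac{0}{1}=\tfrac{1}{2}$ to $-2=\tfrac{-1}{1}\ominus_F\tfrac{1}{0}$. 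The paper's proof passes over this point.
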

\begin{proof}
The invariance of the Farey determinant is clear from the matrix perspective: 
$ d_F \left (\tfrac{a}{b},\tfrac{c}{d}\right)= \left| 
\mathrm{det}M
\right|,$ with $M=\left(\begin{smallmatrix}a & c \\ b& d\end{smallmatrix}\right)$. The Farey operations correspond to $M\binom{1}{1}$ and $M\binom{1}{-1}$. Since the $\mathrm{SL}_2(\Z)$-action preserves the space of reduced fractions, its action on  pairs is simply by matrix multiplication. Hence the Farey determinant is unchanged, while the Farey operations commute with the $\PSL_2(\Z)$-action.

Since $\mathrm{SL}_2(\Z)$ acts transitively on $\mathbb{QP}^1$, we can send one element of a pair to $\infty = \tfrac{1}{0}$. The other element is then of the form $\tfrac{k}{n}$, where $n$ is the Farey determinant of the original pair. Using the action of $T\in\mathrm{Stab}(\infty)$, we can impose $k\in\{0,1,...,n-1\}$.
\end{proof}

\begin{lemma}\label{Lemma:dF-for-Farey-op}
Let $\frac{a}{b}$ and $\frac{c}{d}$ be two rational numbers in reduced form. Then $$d_F\left(\frac{a}{b},\frac{c}{d}\right)\in\{1,2\} \;\; \Longleftrightarrow \;\; d_F\left(\frac{a}{b}\oplus_F\frac{c}{d},\frac{a}{b}\ominus_F\frac{c}{d}\right)\in\{2,1\}.$$
\end{lemma}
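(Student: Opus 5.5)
Set $n=d_F(\tfrac{a}{b},\tfrac{c}{d})=\lvert ad-bc\rvert$ and $M=\left(\begin{smallmatrix}a & c\\ b & d\end{smallmatrix}\right)$. The Farey sum and difference are the (possibly non-reduced) fractions given by the columns of
$$M\begin{pmatrix}1 & 1\\ 1 & -1\end{pmatrix}=\begin{pmatrix}a+c & a-c\\ b+d & b-d\end{pmatrix},$$
a matrix of determinant $-2(ad-bc)$, hence of absolute determinant $2n$. Let $g_1=\gcd(a+c,b+d)$ and $g_2=\gcd(a-c,b-d)$. After reducing both fractions, the Farey determinant of the pair $\bigl(\tfrac{a}{b}\oplus_F\tfrac{c}{d},\tfrac{a}{b}\ominus_F\tfrac{c}{d}\bigr)$ is
$$\frac{2n}{g_1g_2}.$$
So everything reduces to controlling the gcds $g_1$ and $g_2$.

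\textbf{Key observation.} Any common divisor of $a+c$ and $b+d$ divides $(a+c)d-(b+d)c=ad-bc$, so $g_1\mid n$; likewise $g_2\mid n$. Conversely, by the inverse relation
$$\begin{pmatrix}a\\ b\end{pmatrix}=\tfrac12\left[\begin{pmatrix}a+c\\ b+d\end{pmatrix}+\begin{pmatrix}a-c\\ b-d\end{pmatrix}\right],$$
the original pair is recovered from the new one by the same operation up to a factor $2$. I would make this symmetric statement precise as follows. Write the reduced forms $\tfrac{x}{y}$ and $\tfrac{z}{w}$ of the sum and difference, and let $m=\lvert xw-yz\rvert$. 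Then $(a+c,b+d)=g_1(x,y)$ and $(a-c,b-d)=g_2(z,w)$, so
$$2(a,b)=g_1(x,y)+g_2(z,w),\qquad 2(c,d)=g_1(x,y)-g_2(z,w).$$

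\textbf{The two directions.}
\begin{itemize}
\item ($\Rightarrow$) If $n=1$, then $g_1=g_2=1$ and the new determinant is $2$. If $n=2$, then $g_1,g_2\in\{1,2\}$. Since $a,b$ are coprime and $c,d$ are coprime, a parity check on the entries mod $2$ (note $ad-bc$ is even) shows that $(a,b)\equiv(c,d)\pmod 2$. Both vectors are then nonzero mod $2$ and equal, so $a\pm c$ and $b\pm d$ are all even. Hence $g_1=g_2=2$ and the new determinant is $4/4=1$.
\item ($\Leftarrow$) Suppose $m\in\{1,2\}$. Computing the determinant of the displayed relations gives $4n=2g_1g_2m$, that is $g_1g_2m=2n$, which is consistent with the formula above. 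Since $g_1$ divides $2a$ and $2b$, and $\gcd(a,b)=1$, we get $g_1\mid 2$; similarly $g_2\mid 2$. Therefore $n=g_1g_2m/2\le 4$.
\end{itemize}

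\textbf{Main obstacle.} The remaining work is to exclude $n\in\{3,4\}$ in the reverse direction, and to check which combinations of $g_1,g_2$ actually occur. First, $g_1=2$ forces $a\equiv c$ and $b\equiv d \pmod 2$, which in turn forces $g_2=2$; so either $g_1=g_2=1$ or $g_1=g_2=2$. This gives $n=m/2$ or $n=2m$. Since $n$ is an integer, $m\in\{1,2\}$ then yields $n\in\{1,2,4\}$. The case $n=4$ occurs only when $m=2$ and $g_1=g_2=2$. In that case $(a,b)\equiv(c,d)\pmod 2$, and I would rule it out by a refined mod-$4$ analysis of $\tfrac{x}{y}$ and $\tfrac{z}{w}$, using that $(a,b)=\tfrac12\bigl((x,y)+(z,w)\bigr)$ must be primitive. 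I expect this parity and mod-$4$ bookkeeping to be the delicate step.
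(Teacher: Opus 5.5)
Your $\Rightarrow$ direction is correct and is essentially the paper's proof. Your mod-$2$ argument (two primitive vectors with even determinant are congruent mod $2$, so $g_1=g_2=2$) even supplies the justification for $\gcd(a\pm c,b\pm d)=2$ that the paper only asserts.

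The $\Leftarrow$ direction fails at the step ``$g_1$ divides $2a$ and $2b$''. From $2(a,b)=g_1(x,y)+g_2(z,w)$ you only learn that $g_1$ divides the first summand, not the sum. The claim is in fact false: for $(\tfrac{0}{1},\tfrac{3}{5})$ one has $g_1=\gcd(3,6)=3$.

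This gap cannot be closed, because the implication $\Leftarrow$ is itself false as stated.
\begin{itemize}
\item For $(\tfrac{0}{1},\tfrac{3}{5})$ we have $d_F=3$. The Farey sum is $\tfrac{3}{6}=\tfrac{1}{2}$ and the Farey difference is $\tfrac{-3}{-4}=\tfrac{3}{4}$, and these have Farey determinant $2$.
\item For $(\tfrac{0}{1},\tfrac{4}{5})$ we have $d_F=4$. The sum is $\tfrac{2}{3}$ and the difference is $\tfrac{1}{1}$, with Farey determinant $1$. Here $g_1=2$ and $g_2=4$, so your reduction to $g_1=g_2\in\{1,2\}$ also fails, and no mod-$4$ analysis can exclude $n=4$.
\end{itemize}

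Your own formula $m=2n/(g_1g_2)$ shows what is really going on. We have $m\in\{1,2\}$ if and only if $g_1g_2\in\{n,2n\}$. In the normal form $(\tfrac{1}{0},\tfrac{k}{n})$, where $g_1=\gcd(k+1,n)$ and $g_2=\gcd(k-1,n)$, this is exactly the outer regularity condition $n\mid k^2-1$ of Proposition~\ref{Prop:represents-reg} and Remark~\ref{Rk:reg-characterizations}. That condition is strictly weaker than $n\in\{1,2\}$; for instance $k=1$, $n=3$ satisfies it.

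The paper's proof only treats the cases $d_F=1$ and $d_F=2$, that is, the direction $\Rightarrow$. This is the only direction used later, in the midpoint formula of Theorem~\ref{Coro:q-midpoint}. You should therefore keep your proof of $\Rightarrow$ and drop the converse. Alternatively, you can state a correct converse by replacing its conclusion with ``the pair is outer regular''.
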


\begin{proof} Let us denote by $d_F$ the Farey determinant between $\frac{a}{b}$ and $\frac{c}{d}$, and suppose without loss of generality that $\frac{c}{d} < \frac{a}{b}$. Note the following relations : $$a(b+d) - b(a+c) = d_F, \; \textit{and} \; \; b(a-c) - a(b-d) = d_F.$$ 

Let us suppose first that $d_F = 1$. Then the fractions $\frac{a+c}{b+d}$ and $\frac{a-c}{b-d}$ are reduced, and the Farey determinant between those is given by 
$$
d_F\left(\frac{a+c}{b+d},\frac{a-c}{b-d}\right) = \abs{(a+c)(b-d) - (a-c)(b+d)} = 2\abs{ad-bc} = 2.
$$
Now, suppose that $d_F = 2$. Then $\gcd(a+c,b+d) = \gcd(a-c,b-d) = 2$, and the Farey determinant we are interested in is given by 
$$
d_F\left(\frac{a+c}{b+d},\frac{a-c}{b-d}\right) = \abs{\frac{a+c}{2} \cdot \frac{b-d}{2} - \frac{a-c}{2} \cdot \frac{b+d}{2}} = \frac{2d_F}{4} = 1.
$$
\end{proof}

\begin{definition}
The \emph{Farey triangulation} $\mathcal{F}$ is a tesselation of the hyperbolic plane by ideal triangles, delimited by all hyperbolic geodesics between two rationals in $\mathbb{QP}^1$ of Farey determinant $1$.
\end{definition}

\begin{figure}[h!]
\centering
\begin{tikzpicture}[scale=1.8]
    \draw (0,0) circle (1);
    \draw (0,1)--(0,-1);
    \draw (0,1) arc (180:270:1);
    \draw (0,-1) arc (180:90:1);
    \draw (0,1) arc (0:-90:1);
    \draw (0,-1) arc (0:90:1);
      \node at(0,1.3) {$\frac{1}{0}$};
    \node at(0,-1.3) {$\frac{0}{1}$};
    \node at(1.2,0) {$\frac{1}{1}$};
    \node at(-1.3,0) {$-\frac{1}{1}$};
     \node at(1.0,0.8) {$\frac{2}{1}$};
    \node at(1.0,-0.8) {$\frac{1}{2}$};
    \node at(-1,-0.8) {$-\frac{1}{2}$};
    \node at(-1.0,0.8) {$-\frac{2}{1}$};
    
    \node at(0.7,1) {\tiny{$\frac{3}{1}$}};
     \node at(-0.7,1) {\tiny{$-\frac{3}{1}$}};
    \node at(0.7,-1) {\tiny{$\frac{1}{3}$}};
     \node at(-0.7,-1) {\tiny{$-\frac{1}{3}$}};
  
   \node at({12/13+0.1},{5/13+0.1}) {\tiny{$\frac{3}{2}$}};
     \node at({-12/13-0.15},{5/13+0.1}) {\tiny{$-\frac{3}{2}$}};
    \node at ({12/13+0.1},{-5/13-0.1}) {\tiny{$\frac{2}{3}$}};
     \node at ({-12/13-0.15},{-5/13-0.1}) {\tiny{$-\frac{2}{3}$}};
  
    \draw (1,0) arc[radius=0.3333, start angle=270, end angle=126.87];
    \draw (-0.8,0.6) arc (53.13:-90:0.3333);
    
    \draw (0,1) arc (180:306.87:0.5);
    \draw (0,1) arc (0:-126.87:0.5);
    
    \draw (1, 0) arc (90:233.13:0.3333);
    
    \draw (0.8, -0.6) arc (53.13:180:0.5);
    
    \draw (0, -1) arc (0:126.87:0.5);
    
    \draw (-1, 0) arc (90:-53.13:0.3333);
    
\draw (0,1) arc (180:323.13:0.3333);
\draw (0,1) arc (0:-143.13:0.3333);
    
    \draw (0.6,0.8) arc (143.13:306.87:0.1414);
    \draw (-0.8, 0.6) arc (233.13:396.87:0.1428);

    \draw (0.6,-0.8) arc (36.87:180:0.333);
    \draw (-0.6,-0.8) arc (143.13:0:0.333);
    	
 \draw (0.8, -0.6) arc (53.13:216.87:0.14286);
 \draw (-0.8, -0.6) arc (126.87:-36.87:0.14286);
    
\draw (0.8, 0.6) arc (126.87:292.62:0.125);
\draw (-0.8, 0.6) arc (53.13:-112.62:0.125);

\draw (1, 0) arc (270:112.62:0.2);
\draw (-1, 0) arc (270:427.38:0.2); 

\draw (1, 0) arc (90:247.38:0.2);
\draw (-1, 0) arc (90:-67.38:0.2);
	
\draw (0.8, -0.6) arc (233.13:67.38:0.125);
\draw (-0.8, -0.6) arc (306.87:472.62:0.125);
\end{tikzpicture}
\label{fig:farey_tesselation}
\caption{The ideal triangles of the classical Farey triangulation $\mathcal{F}$ attached to rationals with denominator at most $3$. Note that the rational points are not equidistant on the boundary.}
\end{figure}

The second construction of the Farey triangulation starts from the presentation of $\PGL_2(\Z)$ as a subgroup of $\mathrm{Isom}(\H^2)$ given by (see  \eqref{Eq:presentation-pgl2}):
 $$\PGL_2(\Z) = \langle s_1,s_2,s_3\mid s_i^2 = (s_1s_2)^2=(s_1s_3)^3 = 1 \rangle.$$

\begin{proposition}
Consider the ideal hyperbolic triangle $\Delta_0$ with vertices $i, \sigma=\tfrac{1+i\sqrt{3}}{2}$ and $\infty$. The reflection group generated by its three sides is $\PGL_2(\Z)$.
\end{proposition}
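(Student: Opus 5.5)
The plan is to identify the three sides of $\Delta_0$ with the three isometries $s_1=SNc$, $s_2=Nc$, $s_3=TNc$, and then use the Coxeter presentation \eqref{Eq:presentation-pgl2}. The sides of $\Delta_0$ are: the vertical line $\mathrm{Re}\,z=0$ through $i$ and $\infty$; the vertical line $\mathrm{Re}\,z=\tfrac12$ through $\sigma$ and $\infty$; and the arc of the unit circle between $i$ and $\sigma$.

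The first step is to compute the three maps explicitly.
\begin{itemize}
\item $s_2=Nc$ acts by $z\mapsto -\bar z$, which is the reflection in $\mathrm{Re}\,z=0$.
\item $s_3=TNc$ acts by $z\mapsto 1-\bar z$, which is the reflection in $\mathrm{Re}\,z=\tfrac12$.
\item $s_1=SNc$ acts by $z\mapsto -1/(-\bar z)=1/\bar z$, which is the inversion in the unit circle.
\end{itemize}
So the three side reflections are exactly $s_2,s_3,s_1$. Each of them has the form $Mc$ with $M\in\PGL_2(\Z)$ of determinant $-1$. This is precisely how $\PGL_2(\Z)$ was embedded in $\mathrm{Isom}(\H^2)$ above: determinant $-1$ elements are composed with $c$.

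The second step is to show that $s_1,s_2,s_3$ generate this copy of $\PGL_2(\Z)$. We have
\[
s_2^2=1,\qquad s_3s_2=TNcNc=T,\qquad s_1s_2=S,\qquad s_2=Nc .
\]
Hence $T$, $S$ and $Nc$ all lie in the group generated by the reflections, and conversely each $s_i$ lies in $\PGL_2(\Z)$. The reflection group therefore equals the image of $\PGL_2(\Z)$.

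The third step confirms that no collapse occurs in this image. The angles of $\Delta_0$ are $\pi/2$ at $i$ (between $s_1$ and $s_2$), $\pi/3$ at $\sigma$ (between $s_1$ and $s_3$), and $0$ at $\infty$. By Poincaré's polygon theorem, the reflection group of $\Delta_0$ has presentation
\[
\langle s_1,s_2,s_3\mid s_i^2=(s_1s_2)^2=(s_1s_3)^3=1\rangle,
\]
with no relation between $s_2$ and $s_3$ because their mirrors meet at an ideal vertex. This matches \eqref{Eq:presentation-pgl2}, so the identification is an isomorphism.

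The only point needing care is the consistency of sign and projective conventions, namely checking that $SNc$ really is the inversion $z\mapsto 1/\bar z$ and not its composite with $z\mapsto -z$. Once the Möbius formulas are written out, this is routine.
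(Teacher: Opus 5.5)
Your proof is correct and follows essentially the paper's route. Both arguments identify the three side reflections of $\Delta_0$ with $s_1=SNc$, $s_2=Nc$, $s_3=TNc$ by computing their fixed sets (the unit circle, $\mathrm{Re}\,z=0$ and $\mathrm{Re}\,z=\tfrac12$), and then rely on the presentation \eqref{Eq:presentation-pgl2}. Your additional checks ($T=s_3s_2$, $S=s_1s_2$, and the Poincaré-theorem confirmation of the Coxeter presentation) are valid but go beyond what the paper writes, since it simply takes that presentation as known.
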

\begin{proof}
We work in the upper half plane model of $\mathbb{H}^2$. A direct computation shows that the fixed point set of $s_1$ is the unit circle, which is the geodesic between $i$ and $\sigma$. The fixed point set of $s_2$ is the imaginary axis, the geodesic between $i$ and $\infty$, and the fixed point set of $s_3$ is $\mathrm{Re}(z)=\tfrac{1}{2}$, the geodesic between $\sigma$ and $\infty$.
\end{proof}

The tesselation generated by the reflection group and the base triangle $\Delta_0$ is a refinement of the Farey tesselation, where each Farey triangle gets subdivided into six smaller triangles.

\begin{figure}[H]
\centering
\begin{tikzpicture}[scale=1.8]
    \draw (0,0) circle (1);
    \draw (0,1)--(0,-1);
    \draw (0,1) arc (180:270:1);
    \draw (0,-1) arc (180:90:1);
    \draw (0,1) arc (0:-90:1);
    \draw (0,-1) arc (0:90:1);
      \node at(0,1.3) {$\frac{1}{0}$};
    \node at(0,-1.3) {$\frac{0}{1}$};
    \node at(1.2,0) {$\frac{1}{1}$};
    \node at(-1.3,0) {$-\frac{1}{1}$};
     \node at(1.0,0.8) {$\frac{2}{1}$};
    \node at(1.0,-0.8) {$\frac{1}{2}$};
    \node at(-1,-0.8) {$-\frac{1}{2}$};
    \node at(-1.0,0.8) {$-\frac{2}{1}$};
    
    \node at(0.7,1) {\tiny{$\frac{3}{1}$}};
     \node at(-0.7,1) {\tiny{$-\frac{3}{1}$}};
    \node at(0.7,-1) {\tiny{$\frac{1}{3}$}};
     \node at(-0.7,-1) {\tiny{$-\frac{1}{3}$}};
  
   \node at({12/13+0.1},{5/13+0.1}) {\tiny{$\frac{3}{2}$}};
     \node at({-12/13-0.15},{5/13+0.1}) {\tiny{$-\frac{3}{2}$}};
    \node at ({12/13+0.1},{-5/13-0.1}) {\tiny{$\frac{2}{3}$}};
     \node at ({-12/13-0.15},{-5/13-0.1}) {\tiny{$-\frac{2}{3}$}};

    \draw (1,0) arc[radius=0.3333, start angle=270, end angle=126.87];
    \draw (-0.8,0.6) arc (53.13:-90:0.3333);
    
    \draw (0,1) arc (180:306.87:0.5);
    \draw (0,1) arc (0:-126.87:0.5);
    
    \draw (1, 0) arc (90:233.13:0.3333);
    
    \draw (0.8, -0.6) arc (53.13:180:0.5);
    
    \draw (0, -1) arc (0:126.87:0.5);
    
    \draw (-1, 0) arc (90:-53.13:0.3333);
    
\draw (0,1) arc (180:323.13:0.3333);
\draw (0,1) arc (0:-143.13:0.3333);
    
    \draw (0.6,0.8) arc (143.13:306.87:0.1414);
    \draw (-0.8, 0.6) arc (233.13:396.87:0.1428);

    \draw (0.6,-0.8) arc (36.87:180:0.333);
    \draw (-0.6,-0.8) arc (143.13:0:0.333);
    	
 \draw (0.8, -0.6) arc (53.13:216.87:0.14286);
 \draw (-0.8, -0.6) arc (126.87:-36.87:0.14286);
    
\draw (0.8, 0.6) arc (126.87:292.62:0.125);
\draw (-0.8, 0.6) arc (53.13:-112.62:0.125);

\draw (1, 0) arc (270:112.62:0.2);
\draw (-1, 0) arc (270:427.38:0.2); 

\draw (1, 0) arc (90:247.38:0.2);
\draw (-1, 0) arc (90:-67.38:0.2);
	
\draw (0.8, -0.6) arc (233.13:67.38:0.125);
\draw (-0.8, -0.6) arc (306.87:472.62:0.125);

\definecolor{mygreen}{RGB}{167,62,75}
\draw[mygreen] (-1,0)--(1,0);
  
\draw[mygreen](0, -1) arc (180:126.87:2);
\draw[mygreen] (0, -1) arc (0:53.13:2);
\draw[mygreen](0, 1) arc (180:233.13:2);
\draw[mygreen] (0, 1) arc (0:-53.13:2);
\draw[mygreen] (1, 0) arc (90:216.87:0.5);
\draw[mygreen]  (0, -1) arc (180:67.38:0.6667); 
\draw[mygreen] (1, 0) arc (270:143.13:0.5);
\draw[mygreen] (0, 1) arc (180:292.62:0.6667);
\draw[mygreen] (0, 1) arc (0:-112.62:0.6667);
\draw[mygreen] (-1, 0) arc (270:396.87:0.5);
\draw[mygreen] (-1, 0) arc (90:-36.87:0.5);
\draw[mygreen] (0, -1) arc (0:112.62:0.6667);
    
\end{tikzpicture}
\label{fig:farey_tesselation_refined}
\caption{The subdivided Farey triangulation, consisting in all geodesics between rationals of Farey determinant 1 (black) or 2 (red), up to the denominator at most $3$. This Figure is a refinement of the Figure \ref{fig:farey_tesselation}. The points appearing as intersections of three red lines belong to the orbit of $\sigma$ under the action of the modular group, while the intersections between red and black lines to the orbit of $i$.}
\end{figure}

\subsection{Symmetries of classical Farey triangulation}

We are now studying the symmetries of the Farey triangulation $\mathcal{F}$. In particular, we describe the set of involutive symmetries. The material of this paragraph is classical.

\begin{proposition}
The group of symmetries of the Farey triangulation $\mathcal{F}$ is $\mathrm{PGL}_2(\Z)$.
\end{proposition}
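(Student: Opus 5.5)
We prove the two inclusions separately. Here "symmetry" means an isometry of $\H^2$, possibly orientation-reversing, that maps the union of edges of $\mathcal{F}$ to itself. Such an isometry extends to the boundary $\partial\H^2=\R\mathbb{P}^1$ and permutes the ideal vertices $\mathbb{QP}^1$.

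\textbf{$\PGL_2(\Z)$ acts by symmetries.} Elements of $\PGL_2(\Z)$ preserve $\mathcal{F}$. For $M\in\PSL_2(\Z)$ this follows from Proposition \ref{Prop:invariance-Farey}: $M$ is a Möbius transformation, so it maps geodesics to geodesics. It also permutes $\mathbb{QP}^1$ and preserves the Farey determinant, hence maps edges of $\mathcal{F}$ to edges. For the orientation-reversing elements it is enough to treat the single generator $s_2=Nc:z\mapsto-\bar z$. On the boundary it acts by $\tfrac ab\mapsto\tfrac{-a}{b}$, which preserves $\abs{ad-bc}$, so it also maps edges to edges. Since $\PGL_2(\Z)$ acts faithfully on $\H^2$, this gives an injection of $\PGL_2(\Z)$ into the symmetry group $\mathrm{Sym}(\mathcal{F})$.

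\textbf{Every symmetry lies in $\PGL_2(\Z)$.} Let $\phi$ be a symmetry. It maps ideal triangles of $\mathcal{F}$ to ideal triangles, because these are exactly the closures of the complementary regions of the edge set. We use that $\PGL_2(\Z)$ acts transitively on \emph{flags}, i.e. on triangles of $\mathcal{F}$ together with an ordering of their three vertices. Take the base triangle $\Delta=(\infty,0,1)$.
\begin{itemize}
\item \emph{Transitivity on triangles.} The dual graph of $\mathcal{F}$ is connected. The involution $S$ preserves the edge $(0,\infty)$ and swaps its two adjacent triangles $(\infty,0,1)$ and $(\infty,0,-1)$. So every triangle adjacent to $\Delta$ is an image of $\Delta$ by a conjugate of $S$ under a power of $TS$. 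Induction along dual paths then gives transitivity on triangles.
\item \emph{Ordered vertices.} The stabilizer of $\Delta$ contains $TS$, which cyclically permutes $\infty\to1\to0\to\infty$. It also contains the reflection $s_3:z\mapsto 1-\bar z$, which fixes $\infty$ and swaps $0$ and $1$. Together these realize all of $S_3$ on the three vertices.
\end{itemize}

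Hence there is $g\in\PGL_2(\Z)$ such that $g\circ\phi$ fixes $\Delta$ together with its three vertices $\infty,0,1$. An isometry of $\H^2$ fixing three distinct boundary points is the identity: a Möbius or anti-Möbius map of $\R\mathbb{P}^1$ fixing three points is trivial. Therefore $\phi=g^{-1}\in\PGL_2(\Z)$.

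The main point needing care is the first bullet, the connectivity and induction argument for transitivity on triangles. Alternatively, one can deduce it from Proposition \ref{Prop:invariance-Farey} directly. Any triangle has two vertices at Farey determinant 1, so an element of $\mathrm{SL}_2(\Z)$ sends them to $\infty$ and $0$. The third vertex then has determinant 1 with both $\tfrac10$ and $\tfrac01$, which forces it to be $\pm1$. If it is $-1$, applying $z\mapsto-\bar z$ sends it to $1$. This alternative route avoids the induction entirely, so I would use it.
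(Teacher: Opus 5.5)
Your proof is correct, and it proves the inclusion $\mathrm{Sym}(\mathcal F)\subseteq\PGL_2(\Z)$ by a different route from the paper. Your alternative to the dual-graph induction is sound, so that induction can be dropped.

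For the easy inclusion, the paper only notes that $\PGL_2(\Z)$ is generated by the reflections $s_1,s_2,s_3$, each of which is a symmetry. For the converse it argues arithmetically:
\begin{itemize}
\item a symmetry permutes $\mathbb{QP}^1$, so it lies in $\PGL_2(\mathbb{Q})$;
\item a rational matrix preserving Farey determinants must have determinant $\pm1$, hence lie in $\PGL_2(\Z)$.
\end{itemize}
You instead combine two facts: $\PGL_2(\Z)$ acts transitively on ordered Farey triangles (via Proposition~\ref{Prop:invariance-Farey} and $z\mapsto-\bar z$), and an isometry fixing three ideal points is the identity.

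Your route is more self-contained. It avoids the rationality step. It also avoids the normalization step, which the paper states tersely and which is not immediate. The Farey determinant of image points is computed only after reducing the image vectors, and the matrix must first be scaled to a primitive integer one. Making that rigorous requires, for instance, comparing the gcds of the three image vectors along a Farey triangle, which is a triangle-based argument close to yours. Your argument also gives a sharper conclusion: the symmetry group acts simply transitively on ordered Farey triangles.

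The paper's version is shorter. It phrases the answer directly through the arithmetic invariant $d_F$, which is the viewpoint the paper reuses later for involutions and regular pairs.
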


\begin{proof}
The group $\mathrm{PGL}_2(\Z)$ is generated by three reflections, and these reflections are symmetries of $\mathcal{F}$. Hence $\mathrm{PGL}_2(\Z)$ is included in the symmetry group of $\mathcal{F}$. On the other hand, any symmetry of $\mathcal{F}$ is an isometry of $\mathbb{H}^2$ which preserves $\mathbb{QP}^1$, the set of vertices of $\mathcal{F}$. Such an isometry is in $\mathrm{PGL}_2(\mathbb{Q})$. Finally, a matrix in $\mathrm{GL}_2(\mathbb{Q})$ which preserves the Farey determinant (which is necessary to preserve the set of edges in $\mathcal{F}$) is necessarily of determinant $\pm 1$, hence in $\mathrm{GL}_2(\Z)$. Since the action is projective, we get $\PGL_2(\Z)$.
\end{proof}

In order to describe involutive symmetries of $\mathcal{F}$, it will appear natural to introduce Farey determinants for pairs of points in the quadratic imaginary field $\Q[i]$. This ring is Euclidean, so reduced fractions are well-defined.

\begin{definition}\label{Def:dF-in-Qi}
For two reduced fractions $\tfrac{r}{s}$ and $\tfrac{r'}{s'}$ in $\Q[i]$, we define their \emph{Farey determinant} by
$$d_F\left(\frac{r}{s}, \frac{r'}{s'}\right)=\lvert rs'-r's\rvert.$$
\end{definition}

In the sequel, we will only use the case for a pair of complex conjugates.

\begin{proposition}
For any matrix $M\in\mathrm{GL}_2(\mathbb{Q})$, we have $$d_F(Mz, M\bar{z}) = \lvert\mathrm{det}(M)\rvert d_F(z,\bar{z}).$$
\end{proposition}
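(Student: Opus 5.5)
The plan is to pass from points to vectors and use multiplicativity of the $2\times 2$ determinant. Write $z=\tfrac{r}{s}$ in reduced form over the Euclidean ring $\Z[i]$. Then $\bar z=\tfrac{\bar r}{\bar s}$ is also reduced, since conjugation is a ring automorphism of $\Z[i]$ and so preserves coprimality. By Definition \ref{Def:dF-in-Qi},
$$d_F(z,\bar z)=\lvert r\bar s-\bar r s\rvert=\left\lvert\det\begin{pmatrix} r & \bar r\\ s & \bar s\end{pmatrix}\right\rvert .$$
This is the same matrix viewpoint used in the proof of Proposition \ref{Prop:invariance-Farey}.

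First I would normalize $M$. Multiplying $M$ by a nonzero rational scalar does not change the Möbius action, so I would take $M$ to have coprime integer entries. Then $M$ has real entries, so $M\bar z=\overline{Mz}$. The vector $M\binom{r}{s}$ lies in $\Z[i]^2$ and represents $Mz$, and its conjugate $M\binom{\bar r}{\bar s}$ represents $M\bar z$. Multiplicativity of the determinant gives
$$\det\Big(M\tbinom{r}{s},\,M\tbinom{\bar r}{\bar s}\Big)=\det(M)\,(r\bar s-\bar r s).$$

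The remaining step, and the main obstacle, is to relate these unreduced representatives to reduced ones. Let $g=\gcd\big(M\binom{r}{s}\big)\in\Z[i]$, defined up to a unit. The reduced representative of $Mz$ is then $g^{-1}M\binom{r}{s}$, and the reduced representative of $M\bar z$ is $\bar g^{-1}M\binom{\bar r}{\bar s}$. Hence
$$d_F(Mz,M\bar z)=\frac{\lvert\det M\rvert}{\lvert g\rvert^{2}}\,d_F(z,\bar z).$$
The claimed identity is therefore exactly the statement that $M\binom{r}{s}$ is primitive, i.e. $\lvert g\rvert=1$. I would prove primitivity by the adjugate trick: $\operatorname{adj}(M)\,M\binom{r}{s}=\det(M)\binom{r}{s}$, and $\gcd(r,s)=1$, so $g$ divides $\det M$. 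When $\det M=\pm1$ this forces $g$ to be a unit and the identity holds.

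For general $M\in\mathrm{GL}_2(\Q)$ I expect this step to genuinely fail, and I would check it on examples before going further. Taking $M=\left(\begin{smallmatrix}1&1\\1&-1\end{smallmatrix}\right)$ and $z=i$, we get $Mz=-i$ and $M\bar z=i$. Here $g=1+i$, so $d_F(Mz,M\bar z)=2$, whereas $\lvert\det M\rvert\, d_F(z,\bar z)=4$. Similarly, scalar matrices change $\det M$ without changing the action at all, so the identity cannot hold for them as written.

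So the proof I can actually carry out establishes the formula with the correction factor $\lvert g\rvert^{-2}$ for primitive integer $M$, and the stated equality whenever $M\binom{r}{s}$ is primitive. In particular this covers all $M\in\mathrm{GL}_2(\Z)$, where $\lvert\det M\rvert=1$ and $d_F(z,\bar z)$ is invariant. I would flag to the authors that the statement needs one of these hypotheses, or the correction factor, for general rational $M$.
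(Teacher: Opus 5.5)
Your analysis is correct: the statement as written is false, and your proof establishes the corrected version. The counterexample checks out. For $M=\left(\begin{smallmatrix}1&1\\1&-1\end{smallmatrix}\right)$ one gets $Mi=-i$ and $M\bar i=i$, so $d_F(Mi,M\bar i)=2$, while $\lvert\det M\rvert\,d_F(i,\bar i)=4$. Scalar matrices such as $2\,\mathrm{Id}$ fail even faster, since $d_F(z,\bar z)\neq 0$ for every non-real $z$.

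The paper's proof goes as follows. It writes the reduced representative of $z$ as $P\tbinom{1}{i}$, with $P=\left(\begin{smallmatrix}\alpha&\beta\\ \gamma&\delta\end{smallmatrix}\right)$ an integer matrix. It then replaces $M$ by $MP$ and evaluates $\bigl\lvert\det\bigl(M\tbinom{1}{i},M\tbinom{1}{-i}\bigr)\bigr\rvert=2\lvert\det M\rvert$. It never checks that $M\tbinom{1}{i}$ is a reduced representative of the image point; for rational $M$ it need not even lie in $\Z[i]^2$. This is exactly the step you isolated, and your formula with the factor $\lvert g\rvert^{-2}$ is the correct general statement for integer $M$.

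Your route also differs from the paper's in two ways. You work with an arbitrary reduced $\tbinom{r}{s}$ instead of reducing to $z=i$. You also supply the missing primitivity check via the adjugate, showing $g\mid\det M$.

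The paper's reduction needs that check even in the unimodular case. For $M\in\mathrm{GL}_2(\Z)$, the matrix $MP$ is usually not unimodular (e.g.\ $z=2i$ gives $\det P=-2$). The base computation is valid for $MP$ only because $MP\tbinom{1}{i}=M\tbinom{\alpha+i\beta}{\gamma+i\delta}$ is primitive, which is your adjugate argument again.

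The paper only ever uses invariance of $d_F(z,\bar z)$ under $\PGL_2(\Z)$, in the proof of Proposition \ref{Prop:inversion-sym}. So the right repair is to restrict the proposition to $M\in\mathrm{GL}_2(\Z)$, or to insert your correction factor, and your argument proves it completely.
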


In particular, we see that the Farey determinant is preserved under conjugacy.

\begin{proof}
It is sufficient to prove the proposition for $z=i$. Indeed, if $z=\frac{\alpha+i\beta}{\gamma+i\delta}$ is a reduced fraction in $\Q[i]$, then we can replace $M$ by $M\left(\begin{smallmatrix}\alpha & \beta\\ \gamma & \delta\end{smallmatrix}\right)$ to reduce to $z=i$.

For $M=\left(\begin{smallmatrix}a & b\\ c & d\end{smallmatrix}\right)$, a direct computation gives
$$d_F(Mi,-Mi) = \lvert(a+bi)(c-di)-(c+di)(a-bi)\rvert = 2 \lvert ad-bc\rvert.$$
Since $d_F(i,-i)=\lvert 2i\rvert =2$, the latter equals $\lvert\mathrm{det}(M)\rvert d_F(i,-i)$.
\end{proof}

We can now describe the set of involutive symmetries of the Farey triangulation.

\begin{proposition}\label{Prop:inversion-sym}
The set of orientation-reversing involutions in $\mathrm{PGL}_2(\Z)$ is the set of inversions in circles with endpoints in $\mathbb{QP}^1$ of Farey determinant 1 or 2.

The set of orientation-preserving involutions in $\mathrm{PGL}_2(\Z)$ is the set of rotations of $\pi$ around a point $z\in \mathbb{Q}[i]\cap\mathbb{H}^2$ such that $d_F(z,\bar{z})=2$.
\end{proposition}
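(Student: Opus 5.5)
The plan is to work with matrices $M\in\mathrm{GL}_2(\Z)$ up to sign, and to use the fact that a nontrivial projective involution satisfies $M^2=\lambda I$. If $M^2=\lambda I$ with $M\neq\pm I$, then $M$ is not scalar, and Cayley--Hamilton gives $M^2=\mathrm{tr}(M)M-\det(M)I$. This forces $\mathrm{tr}(M)=0$. We then split according to whether $\det M=-1$ or $\det M=+1$.

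\emph{Orientation-reversing case.} Here $\det M=-1$, and the isometry is $z\mapsto M\bar z$. Write $M=\left(\begin{smallmatrix}a&b\\c&-a\end{smallmatrix}\right)$ with $a^2+bc=1$.
\begin{itemize}
\item The fixed point set in $\H^2$ is $\{z: cz\bar z - a(z+\bar z) - b=0\}$. This is a geodesic (a circle or a vertical line) meeting $\R\cup\{\infty\}$ at the fixed points of the real Möbius map $x\mapsto (ax+b)/(cx-a)$.
\item These boundary fixed points are the eigenlines of $M$ for the eigenvalues $\pm1$, namely $v_+=\ker(M-I)$ and $v_-=\ker(M+I)$. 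Both are rational, so the geodesic has endpoints in $\mathbb{QP}^1$.
\item Take primitive integer eigenvectors $v_\pm$ and set $P=(v_+\,v_-)$. Then $M$ is integral and $PMP^{-1}$, that is $P\,\mathrm{diag}(1,-1)P^{-1}$, must be integral. I plan to show that $|\det P|$ divides $2$, using $2v_+ = (I+M)w$ for suitable integer $w$ and the fact that $\Z^2/(\Z v_+\oplus\Z v_-)$ is killed by $2$ because $w=\tfrac12((I+M)w+(I-M)w)$. This gives Farey determinant $1$ or $2$.
\item Conversely, given endpoints $\tfrac ab,\tfrac cd$ with $|\det P|\in\{1,2\}$, I check that $P\,\mathrm{diag}(1,-1)P^{-1}$ is integral. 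When $\det P=\pm2$, this uses $a\equiv c$ and $b\equiv d \pmod 2$.
\end{itemize}
The resulting matrix is the inversion in that geodesic.

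\emph{Orientation-preserving case.} Here $\det M=1$ and $\mathrm{tr}M=0$, so $M$ is elliptic of order $2$, a rotation by $\pi$ about its fixed point $z\in\H^2$. That point solves $cz^2-2az-b=0$ with discriminant $4(a^2+bc)=-4$, giving $z=(a+i)/c$ (or $z=\infty$ is excluded since $c\neq0$). So $z\in\Q[i]$.

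I would then exhibit $M=P\left(\begin{smallmatrix}0&-1\\1&0\end{smallmatrix}\right)P^{-1}$ conjugating to $S$, which fixes $i$. Writing $z=Pi$ for a suitable $P$, the earlier proposition gives $d_F(z,\bar z)=|\det P|\,d_F(i,-i)=2|\det P|$. The key computation is done directly. Reducing $(a+i)/c$ in $\Z[i]$ means dividing by $g=\gcd_{\Z[i]}(a+i,c)$. Then $d_F(z,\bar z)=|((a+i)\bar c-(a-i)c)/|g|^2|=2|c|/|g|^2$. Since $a^2+1=-bc$, the ideal $(a+i,c)$ has norm exactly $|c|$, so $|g|^2=|c|$ and $d_F=2$.

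Conversely, for $z=r/s$ reduced with $d_F(z,\bar z)=2$, write $r=\alpha+i\beta$ and $s=\gamma+i\delta$. The condition says $|\alpha\delta-\beta\gamma|=1$. Thus $P=\left(\begin{smallmatrix}\alpha&\beta\\\gamma&\delta\end{smallmatrix}\right)\in\mathrm{GL}_2(\Z)$ sends $i$ to $z$, and $PSP^{-1}$ is the required involution.

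The main obstacle I expect is the index computation: showing $|\det P|\in\{1,2\}$ in the reflection case, and the ideal-norm claim $|g|^2=|c|$ in the rotation case. For the latter I would argue via $N(a+i,c)=\gcd(N(a+i),N(c),\ldots)$, or more safely by the conjugation argument $z=Pi$ after proving that $M$ is $\mathrm{GL}_2(\Z)$-conjugate to $\pm S$. That conjugacy follows from the classification of order-$2$ elements in $\PSL_2(\Z)\cong\Z/2*\Z/3$, all of which are conjugate to $S$.
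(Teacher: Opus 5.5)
Your proposal is correct in substance, but it takes a genuinely different route from the paper's.

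\textbf{The paper's route.} The paper argues geometrically inside the Farey tessellation.
\begin{itemize}
\item For a reflection, it conjugates so that the fixed geodesic crosses a base Farey triangle. The involution must then preserve that triangle, so the axis is either an edge (Farey determinant $1$) or one of the three perpendicular bisectors (Farey determinant $2$).
\item For a rotation, it moves the centre into the base triangle and excludes interior points by looking at the images of $0$ and $\infty$. This lands on $z=i$.
\item The converses come from the fact that pairs of Farey determinant $1$ or $2$ span edges of the refined triangulation.
\end{itemize}

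\textbf{Your route.} You work purely with integer matrices.
\begin{itemize}
\item You use the trace-zero condition throughout.
\item For reflections, you use rational eigenlines and a lattice-index bound.
\item For rotations, you use the fixed point $(a+i)/c$ and a Gaussian-integer norm computation.
\item You check the converses by explicit integrality of $P\,\mathrm{diag}(1,-1)P^{-1}$ and $PSP^{-1}$.
\end{itemize}
The paper's argument is shorter and ties the statement to the refined tessellation it reuses later. Yours is self-contained: it needs neither the symmetry group of $\mathcal F$ nor transitivity on triangles, and it writes the involutions down explicitly.

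\textbf{Points to tighten.}
\begin{itemize}
\item \emph{Index bound.} ``$\Z^2/(\Z v_+\oplus\Z v_-)$ is killed by $2$'' only gives index dividing $4$. You must also use primitivity of $v_+$: complete it to a basis $(v_+,w)$. The quotient is then cyclic, generated by the image of $w$, so it has order at most $2$. Equivalently, index $4$ would force $\Z v_+\oplus\Z v_-=2\Z^2$, which is impossible since $v_+$ is primitive.
\item \emph{Norm of $(a+i,c)$.} A formula in terms of the norms alone fails; for example, $(2+i,2-i)=(1)$ although both norms are $5$. The claim is instead immediate from the surjection $\Z[i]\to\Z/c\Z$, $i\mapsto -a$. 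It is well defined because $c\mid a^2+1$, and its kernel is exactly $(a+i,c)$. This gives norm $|c|$, so $d_F(z,\bar z)=2$, and you do not need the free-product classification.
\item \emph{Which point $P$ moves $i$ to.} With $P=\left(\begin{smallmatrix}\alpha&\beta\\ \gamma&\delta\end{smallmatrix}\right)$ one has $P\cdot i=\bar z$, not $z$. Either swap the columns, or note that $PSP^{-1}$ has determinant $1$ and fixes $\{z,\bar z\}$, so it is the rotation about $z$ anyway.
\item \emph{Unneeded parity argument.} In the converse reflection check, the parity congruences are unnecessary. When $|ad-bc|=2$, the entry $ad+bc=(ad-bc)+2bc$ is automatically even.
\end{itemize}
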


\begin{proof}
Consider a pair of distinct points in $\mathbb{QP}^1$ with Farey determinant 1 or 2. Then the associate geodesic is an edge in the refined Farey triangulation. Hence the inversion with respect to this geodesic is a symmetry of $\mathcal{F}$.

Conversely, consider an orientation-reversing involution $I$ which is a symmetry of $\mathcal{F}$. From general structure theory of isometries of the hyperbolic plane, we know that $I$ is an inversion with respect to some geodesic $\gamma$.

If $\gamma$ does never intersect the interior of a triangle of $\mathcal{F}$, then $\gamma$ has to be one edge of $\mathcal{F}$, so by definition its endpoints are of Farey determinant 1.

Consider then the case when $\gamma$ intersects the interior of some triangle. We can assume that this triangle is the base triangle $\Delta_0$, by conjugating $I$ if necessary by an element of $\PGL_2(\Z)$ (by definition this action is transitive on triangles of $\mathcal{F}$). Since $I$ is a symmetry of $\mathcal{F}$, it has to fix $\Delta_0$. Hence $\gamma$ is one of the three perpendicular bisectors of $\Delta_0$, which are geodesics with endpoints of Farey determinant 2 (see Figure \ref{fig:farey_tesselation_refined}).

Finally, consider an orientation-preserving involution $I'$ in $\mathrm{PGL}_2(\Z)$, different from identity. Any such involution is the rotation around a point $z$ with angle $\pi$. 
We can conjugate $I'$ inside $\mathrm{PGL}_2(\Z)$ so that the fixed point $z$ lies inside the fundamental triangle $\Delta_0$. If $z$ lies strictly inside $\Delta_0$, then $I'(0)\in (1,\infty)$ and $I'(\infty)\in (0,1)$. But there is no edge in $\mathcal{F}$ between any pair of points in these two intervals. Hence $z$ lies on the boundary of $\Delta_0$. Using the rotation $TS$, we can suppose that $z$ lies on the geodesic between $0$ and $\infty$, so $I'$ exchanges $0$ and $\infty$. Hence $I'$ sends $\Delta_0$ to the triangle $(-1,0,\infty)$. Therefore $z$ is the intersection of the geodesic between 0 and $\infty$ with the one between $1$ and $-1$. Thus $z=i$, for which we have $d_F(i,\bar{i}) = \lvert 2i\rvert = 2$. Since the action of $\PGL_2(\Z)$ does not change the Farey determinant, we see that $d_F(z,\bar{z})=2$. 
Conversely, any point $z\in \Q[i]\cap \H^2$ with $d_F(z,\bar{z})=2$ can be conjugated to $i$, so the rotation around $z$ with angle $\pi$ is a symmetry of $\mathcal{F}$.
\end{proof}

\begin{proposition}\label{Prop:exchange-under-involution}
If two distinct rational numbers $\tfrac{a}{b}$ and $\tfrac{c}{d}$ are exchanged under an orientation-reversing involution in $\PSL_2(\Z)$, then the fixed points (in $\R$) of this involution are $\tfrac{a+c}{b+d}$ and $\tfrac{a-c}{b-d}$.

If two distinct rational numbers $\tfrac{a}{b}$ and $\tfrac{c}{d}$ are exchanged under an orientation-preserving involution in $\PSL_2(\Z)$, then the fixed points (in $\C$) of this involution are $\tfrac{a+ci}{b+di}$ and $\tfrac{a-ci}{b-di}$.
\end{proposition}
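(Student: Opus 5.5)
Both parts can be handled in the same way. An involution in $\PGL_2(\Z)$ that exchanges $\tfrac{a}{b}$ and $\tfrac{c}{d}$ is determined by very little data. Put
$$M=\begin{pmatrix} a & c\\ b & d\end{pmatrix},$$
so that $M$ sends $0\mapsto \tfrac{c}{d}$ and $\infty\mapsto\tfrac{a}{b}$; it lies in $\mathrm{GL}_2(\Q)$ since the two rationals are distinct. Let $I$ be the involution, viewed as a Möbius map (we ignore the complex conjugation $c$ in the orientation-reversing case, since it does not affect real fixed points). Then $J=M^{-1}IM$ is a Möbius involution of $\mathbb{CP}^1$ exchanging $0$ and $\infty$. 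Any such map has the form $J(z)=\lambda/z$ for some $\lambda\neq 0$, and its fixed points are $\pm\sqrt{\lambda}$. The fixed points of $I$ are therefore $M(\pm\sqrt\lambda)=\dfrac{a\sqrt\lambda+c}{b\sqrt\lambda+d}$. It remains to identify $\lambda$ in each case and use projective invariance.

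\textbf{Step 1: computing $\lambda$.} Write the matrix of $I$ in $\mathrm{GL}_2(\Z)$ and take determinants. Since $MJ=IM$ and $J=\begin{pmatrix}0&\lambda\\1&0\end{pmatrix}$ up to scaling, we get $-\lambda=\det J=\det I$ up to a positive square factor, namely $\mu^2$ for the scalar ambiguity. In the orientation-reversing case (before composing with $c$) we have $\det I=-1$, which gives $\lambda>0$. In the orientation-preserving case $\det I=+1$, which gives $\lambda<0$. A cleaner route is to compute $J=M^{-1}IM$ directly in $\mathrm{GL}_2(\Q)$. Its determinant equals $\det I=\pm1$, and it is a scalar multiple of $\begin{pmatrix}0&\lambda\\1&0\end{pmatrix}$, say $\mu\begin{pmatrix}0&\lambda\\1&0\end{pmatrix}$. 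This gives $-\mu^2\lambda=\pm1$. So the sign of $\lambda$ is determined, but not yet its modulus.

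\textbf{Step 2: pinning $\lambda=\pm1$.} This is the main obstacle: we must show $\lambda=1$, respectively $\lambda=-1$, exactly. The claimed answer is equivalent to this, since the fixed points are $\tfrac{a\pm c}{b\pm d}$ in the first case and $\tfrac{a\pm ci}{b\pm di}$ in the second.

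The tool is the integrality of $I$ together with the fact that $I$ sends the reduced vector $(a,b)$ to $\pm(c,d)$ and $(c,d)$ to $\pm(a,b)$. This holds because $I\in\mathrm{GL}_2(\Z)$ maps primitive vectors to primitive vectors, so the image of a reduced fraction's vector is exactly the reduced vector of the image, up to sign. Hence
$$IM=M\begin{pmatrix}0&\epsilon_2\\ \epsilon_1&0\end{pmatrix},\qquad \epsilon_i\in\{\pm1\},$$
which gives $J=\begin{pmatrix}0&\epsilon_2\\ \epsilon_1&0\end{pmatrix}$. Thus $\lambda=\epsilon_2/\epsilon_1=\pm1$. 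Comparing determinants, $\det J=-\epsilon_1\epsilon_2=\det I$. In the orientation-reversing case $\det I=-1$, so $\epsilon_1=\epsilon_2$ and $\lambda=1$. In the orientation-preserving case $\det I=1$, so $\lambda=-1$. The interpretation of "orientation-reversing" through the convention $s_i=\cdot\,Nc$ needs a little care, but it amounts to this determinant sign.

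\textbf{Step 3: conclusion.} The fixed points of $J$ are $\pm1$ and $\pm i$ respectively. Applying $M$ gives $M(\pm1)=\tfrac{a\pm c}{b\pm d}$ and $M(\pm i)=\tfrac{a\pm ci}{b\pm di}$.
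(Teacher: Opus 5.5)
Your proof is correct, but it takes a different route from the paper.

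The paper first invokes the classification of involutions (Proposition~\ref{Prop:inversion-sym}). This tells it that the fixed points have Farey determinant $1$ or $2$, respectively $d_F(z,\bar z)=2$. It then conjugates by $\mathrm{SL}_2(\Z)$ to the normal forms $x\mapsto -x$, $x\mapsto 1/x$, $x\mapsto -1/x$ and checks the claim by hand. That step implicitly relies on the unordered pair $\{\frac{a+c}{b+d},\frac{a-c}{b-d}\}$ (and its complex analogue) being equivariant once reduced forms are renormalised.

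You instead conjugate by the rational matrix $M=\begin{pmatrix} a&c\\ b&d\end{pmatrix}$ built from the two reduced vectors. A single observation then does all the work: $\mathrm{GL}_2(\Z)$ maps primitive vectors to primitive vectors. This forces $M^{-1}IM=\begin{pmatrix}0&\epsilon_2\\ \epsilon_1&0\end{pmatrix}$ exactly, and $\det I$ then fixes $\lambda=\pm1$.

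What your route buys:
\begin{itemize}
\item It is self-contained: it needs neither the classification of involutions nor the equivariance of the Farey operations.
\item The signs $\epsilon_i$ absorb all sign ambiguities of reduced fractions.
\item You never need to assume $I$ is an involution, since $J^2=\epsilon_1\epsilon_2\,\mathrm{Id}$ shows it automatically.
\item It yields the explicit matrix $I=MJM^{-1}$. This is precisely formula \eqref{Eq:matrix-exchange}, which the paper only derives later, in the proof of Theorem~\ref{Thm:charact-regularity} via the horocycle correspondence.
\end{itemize}

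The paper's route, in exchange, fits its geometric narrative about the symmetries of the Farey tessellation. It explains why the fixed-point pairs are exactly the Farey-determinant-$1$/$2$ geodesics, respectively the points with $d_F(z,\bar z)=2$.

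Your Step~1 is made redundant by Step~2 and can be dropped.
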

Note that the fractional expressions of the fixed points need not to be reduced.

\begin{proof}
The statement is invariant under $\PSL_2(\Z)$-action. Consider first the case of an orientation-reversing involution. By the previous Proposition \ref{Prop:inversion-sym}, we know that $d_F(\tfrac{a+c}{b+d},\tfrac{a-c}{b-d})\in\{1,2\}$. Hence we can conjugate this pair to be either $(0,\infty)$ or $(-1,1)$.
In the first case, the involution becomes $x\mapsto -x$, so $\tfrac{c}{d}=\tfrac{-a}{b}$. Then $(\tfrac{a+c}{b+d},\tfrac{a-c}{b-d}) = (0,\infty)$.
In the second case, the involution becomes $x\mapsto \tfrac{1}{x}$, so $\tfrac{c}{d} =  \tfrac{b}{a}$. Thus, we have $(\tfrac{a+c}{b+d},\tfrac{a-c}{b-d}) = (1,-1)$.

Consider now the case of an orientation-preserving involution. Again from Proposition \ref{Prop:inversion-sym}, we know that $d_F(\tfrac{a+ci}{b+di}, \tfrac{a-ci}{b-di}) = 2$. Hence we can conjugate this pair to $(i,-i)$. The involution then becomes $x\mapsto \tfrac{-1}{x}$, so $\tfrac{c}{d}=\tfrac{-b}{a}$. Then $(\tfrac{a+ci}{b+di}, \tfrac{a-ci}{b-di}) = (-i,i)$.
\end{proof}

\subsection{$q$-Farey tesselation and $q$-modular surface}

Using the second viewpoint on the Farey tesselation above, we now $q$-deform the construction, using the deformed generators $(S_q,T_q,N_q)$ and complex conjugation $c$.

We consider $q\in (0,1)$, since we can get $q>1$ from the transition map \ref{Prop:duality}.

\begin{proposition}\label{Prop:q-fund-domain}
The fundamental domain of the $q$-deformed action of $\PGL_2(\Z)$ on $\mathbb{H}^2$ in the upper half-plane model, is a quadrilateral with vertices $\tfrac{i}{\sqrt{q}}, \sigma, P_1:=\frac{1+\sqrt{q^2-q+1}}{1-q}$ and $P_2:=\frac{1+\sqrt{q-1+q^{-1}}}{1-q}$.
\end{proposition}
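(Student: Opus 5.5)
The plan is to identify the fundamental domain directly as the region cut out by the mirrors of the three deformed reflections
$$s_1^q = S_qN_q c,\qquad s_2^q = N_q c,\qquad s_3^q = T_qN_q c,$$
which are the $q$-analogues of the Coxeter generators in \eqref{Eq:presentation-pgl2}. I will then apply Poincaré's polygon theorem for reflection groups. This mirrors the classical picture, where $\Delta_0$ is bounded by the mirrors of $s_1,s_2,s_3$. The difference is that for $q\neq 1$ the mirrors of $s_2$ and $s_3$ should no longer meet at $\infty$. Instead they should land at two distinct points of $\partial\H^2$, which will be $P_2$ and $P_1$.

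\textbf{Step 1 (mirrors).} For an orientation-reversing map $z\mapsto \frac{\alpha\bar z+\beta}{\gamma\bar z+\delta}$ with real entries and negative determinant, the fixed set is given by $\gamma|z|^2+\delta z-\alpha\bar z-\beta=0$. This is a circle or line orthogonal to $\R$, hence a geodesic.
\begin{itemize}
\item For $s_2^q$, with the matrix $N_q$, one gets $(q-1)|z|^2+2\,\mathrm{Re}\,z-(1-q^{-1})=0$. This is a circle centered at $\frac{1}{1-q}$, and it passes through $\tfrac{i}{\sqrt q}$, since $\frac{q-1}{q}-1+\frac1q=0$. Its endpoint to the right of the center should be $P_2$.
\item I will compute the mirrors of $s_1^q$ and $s_3^q$ in the same way from the matrices $S_qN_q$ and $T_qN_q$. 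I expect the mirror of $s_1^q$ to pass through $\tfrac{i}{\sqrt q}$ and $\sigma$. I expect the mirror of $s_3^q$ to pass through $\sigma$ and to have $P_1$ as an endpoint.
\end{itemize}
Two checks confirm the vertices. The point $\sigma$ is the fixed point of $T_qS_q$ (see the Notation), consistent with its lying on both mirrors $s_1^q,s_3^q$. The point $\tfrac{i}{\sqrt q}$ is the fixed point of the rotation $S_q$, since $S_q(z)=-1/(qz)$. Both checks are routine algebra.

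\textbf{Step 2 (angles).} The relations $S_q^2=(T_qS_q)^3=1$ hold as Möbius maps. Since $s_1^qs_2^q=S_q$ and $s_1^qs_3^q$ is conjugate to $T_qS_q$ or its inverse, the mirrors meet as follows:
\begin{itemize}
\item the mirrors of $s_1^q$ and $s_2^q$ meet at $\tfrac{i}{\sqrt q}$ with angle $\pi/2$;
\item the mirrors of $s_1^q$ and $s_3^q$ meet at $\sigma$ with angle $\pi/3$.
\end{itemize}
The product $s_2^qs_3^q$ is, up to inversion, $N_qT_qN_q$, which is conjugate to $T_q^{\pm1}$. For $q\in(0,1)$, $T_q(z)=qz+1$ is hyperbolic, with fixed points $\tfrac{1}{1-q}$ and $\infty$. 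Hence the mirrors of $s_2^q$ and $s_3^q$ are ultraparallel. Their common perpendicular is the axis of this hyperbolic element, namely the half-circle that is the conjugate of the segment from $\tfrac1{1-q}$ to $\infty$. This is precisely where a funnel appears.

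\textbf{Step 3 (the domain and Poincaré).} Consider the convex region $Q$ bounded by the three geodesic segments:
\begin{itemize}
\item from $P_2$ to $\tfrac{i}{\sqrt q}$ along the mirror of $s_2^q$;
\item from $\tfrac{i}{\sqrt q}$ to $\sigma$ along the mirror of $s_1^q$;
\item from $\sigma$ to $P_1$ along the mirror of $s_3^q$;
\end{itemize}
together with the boundary arc of $\partial\H^2$ between $P_1$ and $P_2$ (the one passing through $\infty$). Its interior angles are $\pi/2$ and $\pi/3$, and the two remaining sides are disjoint, with no vertex between them. Poincaré's theorem for polygons bounded by reflection mirrors, whose dihedral angles are of the form $\pi/m$, then gives two conclusions. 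First, $Q$ is a fundamental domain for $\langle s_1^q,s_2^q,s_3^q\rangle$. Second, this group has the Coxeter presentation $\langle s_i\mid s_i^2,(s_1s_2)^2,(s_1s_3)^3\rangle$, which is \eqref{Eq:presentation-pgl2}. This yields both the proposition and the faithfulness statement.

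\textbf{Main obstacle.} The delicate point is Step 3's convexity and side-pairing check. I must verify for all $q\in(0,1)$ that the mirrors of $s_2^q$ and $s_3^q$ do not cross the mirror of $s_1^q$ a second time. I must also verify that $P_1$ and $P_2$ are the correct endpoints, so that $Q$ is really the quadrilateral with the stated vertices rather than an unbounded region on the wrong side of a mirror. I would settle this by writing the three circle equations explicitly, comparing centers and radii, and checking continuity in $q$ as $q\to1$. At $q=1$ the configuration must degenerate to $\Delta_0$, with $P_1,P_2\to\infty$. Since the combinatorics cannot change without two mirrors becoming tangent, and I will check that the determinant conditions prevent tangency on $(0,1)$, the configuration is stable in $q$. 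The explicit endpoint formulas for $P_1,P_2$ then follow by solving the quadratic equations at $\mathrm{Im}\,z=0$.
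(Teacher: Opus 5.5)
Your route is the paper's: its proof computes the fixed circles of $S_qN_qc$, $N_qc$ and $T_qN_qc$ (Equations \eqref{Eq:q-unit-circle}--\eqref{Eq:q-critical-axis}) and reads off the quadrilateral. The angle check at $\tfrac{i}{\sqrt q}$ and $\sigma$, and the appeal to Poincaré's theorem, come in Corollary \ref{Coro:no-extra-relations}; your Steps 1--3 package these together.

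One point in Step 3 is wrong as written. The ideal side of $Q$ is the real segment $[P_1,P_2]$, not the arc of $\partial\H^2$ through $\infty$. The mirrors of $s_2^q$ and $s_3^q$ are concentric circles centred at $\tfrac{1}{1-q}$, with radii $\frac{\sqrt{q^2-q+1}}{\sqrt q\,(1-q)}$ and $\frac{\sqrt{q^2-q+1}}{1-q}$. Hence $\tfrac{1}{1-q}<P_1<P_2<\infty$, and the domain is the Euclidean-bounded piece of the half-annulus between these circles lying outside $\mathcal{C}_{1,q}$. $P_1$ and $P_2$ only run off to $\infty$ as $q\to 1$. If you close the boundary curve through $\infty$ instead, you enclose the complementary region, with reflex angles at $\tfrac{i}{\sqrt q}$ and $\sigma$ --- precisely the ``wrong side'' you were worried about. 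For the same reason, the common perpendicular of these two mirrors is simply the vertical line $x=\tfrac{1}{1-q}$, since $s_3^qs_2^q=T_q$ exactly.

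Your ``main obstacle'' is lighter than you expect. Two distinct geodesics of $\H^2$ meet at most once, so a second crossing with the mirror of $s_1^q$ is impossible. Concentricity also makes the continuity-in-$q$ argument unnecessary.

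The one genuine subtlety is in Step 2: the order of $T_qS_q$ only tells you the mirrors meet at $\sigma$ with angle $\pi/3$ or $2\pi/3$. To settle it, use the explicit centres $\tfrac{q-1}{q}$ and $\tfrac{1}{1-q}$. The unit outward normals of $\mathcal{C}_{1,q}$ and $\mathcal{C}_{2,q}$ at $\sigma$ have inner product $-\tfrac12$. So the sector exterior to both discs, which is the one containing the domain, has angle $\pi/3$, as Poincaré's theorem requires.
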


Note that one side of the quadrilateral lies at the boundary at infinity of $\mathbb{H}^2$, see Figure \ref{fig:fundamental_quadrilateral_PGL}. 

\begin{proof}
In analogy to the classical case ($q=1$) we consider the combinations $s_1=S_qN_qc$, $s_2=N_qc$ and $s_3=T_qN_qc$, which satisfy the same relations than in Equation \eqref{Eq:presentation-pgl2}. We determine the fixed point sets of each of the transformations.

Consider first $$z=S_qN_qc(z)=\frac{(q-1)\bar{z}+1}{q\bar{z}+1-q}.$$
Writing $z=x+iy$ the previous equation is equivalent to $q(x^2+y^2)+2(1-q)x-1=0$, which in turn can be written as
\begin{equation}\label{Eq:q-unit-circle}
    \left(x-\frac{q-1}{q}\right)^2+y^2 = \frac{q^2-q+1}{q^2},
\end{equation}
which is the equation of a circle. This is the $q$-deformed unit circle $\mathcal{C}_{1,q}$.

Similarly, $z=N_qc(z) = \frac{-\bar{z}+1-1/q}{(q-1)\bar{z}+1}$ gives $(q-1)(x^2+y^2)+2x+\tfrac{1-q}{q}=0$, which is equivalent to
\begin{equation}\label{Eq:q-imag-axis}
\left(x-\frac{1}{1-q}\right)^2+y^2 = \frac{q^2-q+1}{q(1-q)^2},
\end{equation}
a circle describing the $q$-deformed imaginary axis, which we denote by $\mathrm{Im}_q$.

Finally, $z=T_qN_qc(z) = \frac{-\bar{z}+q}{(q-1)\bar{z}+1}$ gives $(q-1)(x^2+y^2)+2x-q=0$ which is equivalent to
\begin{equation}\label{Eq:q-critical-axis}
\left(x-\frac{1}{1-q}\right)^2+y^2 = \frac{q^2-q+1}{(1-q)^2},
\end{equation}
a circle describing the $q$-deformed line $\mathrm{Re}(z)=\tfrac{1}{2}$, denoted by $\mathcal{C}_{2,q}$.

Finally, we notice that the three fixed geodesics form a quadrilateral. The computation of the vertices is direct.
\end{proof}

\begin{figure}[h!]
\begin{tikzpicture}[scale=0.7]
\def\q{0.45}
\begin{scope}
 \clip (-5,0) rectangle (5,5);
\draw[blue, thick,fill=blue!10] ({1/(1-\q)},0) circle [radius={sqrt((\q*\q - \q + 1)/(\q*(1-\q)*(1-\q))}];

\draw[blue,thick,fill=white] ({(1 + sqrt((1/\q + \q - 1)))/(1 - \q)},0)arc(0:-180:{sqrt((\q*\q - \q + 1)/(\q*(1-\q)*(1-\q))});

\draw[blue, thick,fill=white] ({(\q-1)/(\q)},0) circle [radius={sqrt((\q*\q - \q + 1)/(\q*\q))} ];

\draw[blue, thick,fill=white] ({1/(1-\q)},0) circle [radius={sqrt((\q*\q - \q + 1)/((1-\q)*(1-\q)))} ];

\draw[blue, thick] ({1/(1-\q)},0) circle [radius={sqrt((\q*\q - \q + 1)/(\q*(1-\q)*(1-\q)))}];

\draw[blue, thick] ({(\q-1)/(\q)},0) circle [radius={sqrt((\q*\q - \q + 1)/(\q*\q))} ];
\end{scope}

\draw[->] (-3.3,0) -- (5,0) node[right] {$x$};
\draw[->] (0,0) -- (0,3.1) node[above] {$y$};
\coordinate (A) at (
  { (1 + sqrt((\q*\q - \q + 1))) / (1 - \q) },
  0
);
\fill (A) circle (3pt) node[above left] {$P_1$};
\coordinate (B) at (
  { (1 + sqrt((1/\q + \q - 1))) / (1 - \q) },
  0
);
\fill (B) circle (3pt) node[above right] {$P_2$};

\coordinate (I) at (0,{1/sqrt(\q)});
\fill (I) circle (3pt) node[above left] {$\frac{i}{\sqrt{q}}$};
\coordinate (S) at (0.5,{sqrt(3)/2});
\fill (S) circle (3pt) node[right]
{$\sigma$};

\end{tikzpicture}
\caption{Fundamental quadrilateral for the action of $\PGL_2(\Z)$ on $\mathbb{H}^2$.
}\label{fig:fundamental_quadrilateral_PGL}
\end{figure}

\begin{coro}\label{Coro:no-extra-relations}
For $q\in \R_+$, the group generated by $T_q, S_q$ and $N_qc$ is isomorphic to $\mathrm{PGL}_2(\Z)$.
\end{coro}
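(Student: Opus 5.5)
The plan is to apply Poincaré's polygon theorem for reflection groups to the quadrilateral $Q_q$ of Proposition~\ref{Prop:q-fund-domain}. Its three finite sides lie on the circles $\mathcal{C}_{1,q}$, $\mathrm{Im}_q$ and $\mathcal{C}_{2,q}$, the fixed sets of $s_1=S_qN_qc$, $s_2=N_qc$ and $s_3=T_qN_qc$. The fourth side is the arc $(P_1,P_2)$ of $\partial\H^2$, a free side at infinity. For $q\in(0,1)$ each $s_i$ is an anti-holomorphic involution of $\H^2$, i.e.\ a hyperbolic reflection in a geodesic. This is because $N_q$ has negative determinant and so composing with $c$ preserves $\H^2$; the equations \eqref{Eq:q-unit-circle}--\eqref{Eq:q-critical-axis} then identify the mirrors. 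The case $q>1$ will follow from the case $q<1$ via the transition map of Theorem~\ref{Prop:duality}, which conjugates the $q$-action to the $q^{-1}$-action. The case $q=1$ is classical.

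First, compute the interior angles at the two finite vertices. At $\tfrac{i}{\sqrt q}$, the mirrors of $s_1$ and $s_2$ meet. Since $s_1s_2=S_q$ is an elliptic involution fixing $\tfrac{i}{\sqrt q}$ (indeed $S_q(z)=-1/(qz)$), the angle there is $\pi/2$. At $\sigma$, the mirrors of $s_1$ and $s_3$ meet. The product $s_3s_1$ equals $T_qN_q\,\overline{N_q}^{-1}S_q^{-1}$ up to the appropriate conjugation, which reduces to $T_qS_q^{-1}$ or a conjugate of it. This is an elliptic element of order $3$ fixing $\sigma$, which has to be checked directly from the matrices, so the angle is $\pi/3$. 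Both angles are submultiples of $\pi$.

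Second, verify that the mirrors of $s_2$ and $s_3$ do not meet in $\overline{\H^2}$. These are the circles \eqref{Eq:q-imag-axis} and \eqref{Eq:q-critical-axis}. They are concentric about $\tfrac{1}{1-q}$, with radii whose squares differ by the factor $q$. Being concentric, they are disjoint and ultraparallel, which is exactly why the domain has a free side $(P_1,P_2)$ at infinity rather than a cusp. One also checks that the region bounded by the three mirrors is a genuine convex polygon: $\tfrac{i}{\sqrt q}$ and $\sigma$ must lie on the correct arcs, and $\mathcal{C}_{1,q}$ must cross neither $\mathrm{Im}_q$ nor $\mathcal{C}_{2,q}$ away from these vertices. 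This follows from the explicit centers and radii.

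Poincaré's theorem for a convex hyperbolic polygon whose side reflections have dihedral angles $\pi/m$ (or zero, or disjoint mirrors) then gives two conclusions. The group generated by $s_1,s_2,s_3$ is discrete with fundamental domain $Q_q$. It has presentation $\langle s_1,s_2,s_3\mid s_i^2=(s_1s_2)^2=(s_1s_3)^3=1\rangle$, with no relation between $s_2$ and $s_3$ because their mirrors are ultraparallel. This is precisely \eqref{Eq:presentation-pgl2}. Finally, $T_q=s_3s_2$, $S_q=s_1s_2$ and $N_qc=s_2$, so the group generated by $T_q,S_q,N_qc$ equals $\langle s_1,s_2,s_3\rangle\cong\PGL_2(\Z)$.

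I expect the main obstacle to be the geometric verification in the second step. One must show that the three circles really bound a convex quadrilateral with only the two stated vertices, uniformly for all $q\in(0,1)$. A related check is that the relation $(s_1s_3)^3=1$ corresponds to angle exactly $\pi/3$ and not $2\pi/3$. I would settle this by noting that the angles are continuous in $q$, always lie in $\{\pi/3,2\pi/3\}$, and equal $\pi/3$ at $q=1$.
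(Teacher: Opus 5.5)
Your proposal is correct and follows the same route as the paper: apply Poincar\'e's polygon theorem to the quadrilateral of Proposition~\ref{Prop:q-fund-domain}, reading off the angle $\pi/2$ at $\tfrac{i}{\sqrt q}$ from the order of $S_q=s_1s_2$ and the angle $\pi/3$ at $\sigma$ from the order of $T_qS_q=s_3s_1$. (For the second product, $s_3s_1=T_qN_qS_qN_q=T_qS_q^{-1}=T_qS_q$ projectively because all entries are real, so no conjugation is needed.) You are in fact more careful than the paper's short proof, which leaves implicit three points: the concentric mirrors of $s_2,s_3$ are ultraparallel, so there is no relation between them; the angle at $\sigma$ must be $\pi/3$ rather than $2\pi/3$; and the case $q>1$ reduces to $q<1$ via the transition map.
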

\begin{proof}
It is clear that the map $T_q \mapsto T, S_q \mapsto S, N_qc \mapsto Nc$ is a surjective group homomorphism. We only have to show that its kernel is trivial, i.e. that there are not more relations among the generators than expected. This follows from Poincaré's theorem on fundamental polygons (see for instance \cite{Maskit}). It asserts that given a hyperbolic polygon $\mathcal{P}$ in which all angles are rational multiples of $\pi$, then the group $G$ generated by reflections in all sides of $\mathcal{P}$ is a Coxeter group with explicit presentation, and its action on $\H^2$ has $\mathcal{P}$ as fundamental domain.

We apply this theorem to our fundamental domain we found in the previous proposition. We only have to check the angles. At the vertex $\tfrac{i}{\sqrt{q}}$, the angle is $\tfrac{\pi}{2}$ since this vertex is the fixed point of $S_q$, which is of order 2. Similarly, at the vertex $\sigma$, the angle is $\tfrac{\pi}{3}$, since this vertex is the fixed point of $T_qS_q$, which is of order 3.
\end{proof}

\begin{Remark}
Note that the previous Corollary is not true for all real $q$. In the case of $q=-1$ for example, the group generated by $T_{-1}$, $S_{-1}$ and $N_{-1}c$ is finite. More precisely, the corresponding special linear group has presentation $$\left<T_{-1},S_{-1} | T_{-1}^2=S_{-1}^2=(T_{-1}S_{-1})^3=1\right>,$$ so it is the classical dihedral group of order $6$, that is, the symmetric group $S_3$. Furthermore, the corresponding general linear group is a semidirect product $S_3\rtimes \Z/2\Z$. Indeed, $S_3$ is its normal subgroup of index $2$ and it is easily checked that the element $N_{-1}c$ is an antiholomorphic involution such that $N_{-1}c T N_{-1}c = S, N_{-1}c S N_{-1}c = T$.
\end{Remark}

Since one side of the fundamental quadrilateral is along the boundary $\partial_\infty\H^2$, geometrically we have a funnel. There is a unique geodesic which cuts this funnel. In our case, it is given by the vertical line $x=\frac{1}{1-q}$, which is orthogonal both to $\mathrm{Im}_q$ and $\mathcal{C}_{2,q}$. This geodesic links precisely $[\infty]_q^\flat$ and $[\infty]_q^\sharp$. 

\begin{definition}\label{def:discs}
Consider $q\in (0,1)$. To each $x\in \mathbb{QP}^1$, we associate the unique hyperbolic geodesic between $[x]_q^\flat$ and $[x]_q^\sharp $. We denote this geodesic by $[x]$. The union of all these geodesics is denoted by $\mathcal{Q}$.

We denote by $\mathbb{H}_q^2$ the hyperbolic plane in the upper half-plane model from which we removed all (Euclidean) half-disks bounded by the geodesics $[x]$, where $x\in \mathbb{QP}^1$. Doubling the construction using complex conjugation is shown in Figure \ref{fig:qrationals}.
\end{definition}

\begin{figure}[H]
    \centering
\begin{tikzpicture}[scale=1.5]
\fill[gray!30] (1.8182,-1.6) rectangle (3.4,1.6);
\draw[darkgray, line width=1.5pt] (1.8182,-1.6) -- (1.8182,1.6);
\node[font=\large] at (2.9,0.2) {$\infty$};
\filldraw[fill=gray!30, draw=darkgray, line width=1pt]
  (-3.58025,0) circle (1.35803);
\node[font=\large] at (-3.58025,0.25) {$-1$};
\filldraw[fill=gray!30, draw=darkgray, line width=1pt]
  (-1.69035,0) circle (0.15765);
\node[font=\footnotesize]  at (-1.69035,0.25) {\small{$-1/2$}};
\filldraw[fill=gray!30, draw=darkgray, line width=1pt]
  (-0.61111,0) circle (0.61111);
\node[font=\large] at (-0.61111,0.25) {$0$};
\node[font=\large, above] at (-0.61111, 1) {$\mathbb{H}_q$};
\node[font=\large, below] at (-0.61111, -1) {$\overline{\mathbb{H}}_q$};
\filldraw[fill=gray!30, draw=darkgray, line width=1pt]
  (0.725,0) circle (0.275);
\node[font=\small] at (0.725,0.10) {$1$};
\filldraw[fill=gray!30, draw=darkgray, line width=1pt]
  (1.32625,0) circle (0.12375);
\node at (1.32625,0) {\scriptsize{$2$}};
\filldraw[fill=gray!30, draw=darkgray, line width=1pt]
  (0.23935,0) circle (0.07095);
\node[font=\footnotesize]  at (0.23935,0.17) {\small{$1/2$}};
\draw[->, line width=1pt] (-5.2,0) -- (3.5,0) node[right] {$x$};
\draw[->, line width=1pt] (0,-1.5) -- (0,1.5) node[above] {$y$};
\end{tikzpicture}
    \caption{Representation of the $q$-disks corresponding to the numbers $-1, 1/2, 0, 1/2, 1, 2$ and $\infty$. Here, the parameter $q$ is specified to $q=0.45$. The ``$q$-disk'' corresponding to $\infty$ is a half-plane bordered by a vertical line $x=\frac{1}{1-q}$.}\label{fig:qrationals}
\end{figure}

The convergence properties of $q$-rationals and $q$-irrationals from Theorem \ref{Thm:convergence-q-numbers} can be well-understood in from Definition \ref{def:discs} and visualization of the Figure \ref{fig:qrationals}.

\begin{proposition}\label{Prop:q-Farey-topo}
For $x, y\in \mathbb{QP}^1$ with $x\neq y$, the geodesics $[x]$ and $[y]$ are disjoint.
\end{proposition}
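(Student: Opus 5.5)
The plan is to use the $q$-deformed $\PGL_2(\Z)$-action together with the fundamental quadrilateral of Proposition \ref{Prop:q-fund-domain}. First I check that the family $\mathcal{Q}$ of geodesics is a single orbit under the deformed group. By Definition \ref{def:q-numbers} and the equivariance relations \eqref{Eq:N_q-equivariance}, for any $M\in\PSL_2(\Z)$ the Möbius map $M_q$ sends $[\infty]^\sharp_q=\tfrac10$ to $[M\infty]^\sharp_q$ and $[\infty]^\flat_q=\tfrac{1}{1-q}$ to $[M\infty]^\flat_q$. Hence $M_q$ maps the geodesic $[\infty]$ (the vertical line $x=\tfrac1{1-q}$) onto $[M\cdot\infty]$. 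Since $\PSL_2(\Z)$ acts transitively on $\mathbb{QP}^1$, every $[x]$ is of the form $M_q[\infty]$. Moreover, $[x]=[y]$ as sets only if $x=y$, because the right $q$-versions differ for distinct rationals.

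Given $x\neq y$, choose $M$ with $M\infty=x$ and apply $M_q^{-1}$. This reduces the problem to $x=\infty$ and $y\neq\infty$, so I must show that $[\infty]$ and $[y]=g_q[\infty]$, with $g=M^{-1}$, $g\infty=y$, are disjoint. For this I use the structure of the quotient. By Corollary \ref{Coro:no-extra-relations} and Poincaré's theorem, the translates of the quadrilateral $\mathcal{P}$ by the deformed group tile $\H^2$ with disjoint interiors. The line $x=\tfrac1{1-q}$ meets the sides $\mathrm{Im}_q$ and $\mathcal{C}_{2,q}$ orthogonally, since it passes through the centers of both circles \eqref{Eq:q-imag-axis} and \eqref{Eq:q-critical-axis}. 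So it cuts $\mathcal{P}$ into two pieces: a piece $F$ adjacent to the ideal side $[P_1,P_2]$, which is a neighborhood of that side, and a compact piece containing $\tfrac{i}{\sqrt q}$ and $\sigma$.

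The stabilizer of the segment $\mathcal{P}\cap[\infty]$ in the tiling is generated by the reflections $s_2$ and $s_3$. Reflecting $\mathcal{P}$ successively across these two sides unfolds the funnel: the translates of $\mathcal{P}$ under $\langle s_2,s_3\rangle$ cover the region between $[\infty]$ and the interval $(-\infty,[\infty]^\flat_q)\cup\dots$ on the boundary, which is the half-plane $\{\mathrm{Re}\,z>\tfrac1{1-q}\}$. Here $s_2s_3$ is conjugate to $T_q$, which is hyperbolic with axis $[\infty]$. Thus $[\infty]$ is the lift of the closed geodesic around the funnel, and the open half-plane $U=\{\mathrm{Re}\,z>\tfrac1{1-q}\}$ is exactly the union of the translates of $F$ by $\mathrm{Stab}([\infty])=\langle s_2,s_3\rangle$.

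To conclude, note that a translate $h(\mathcal{P})$ meets $U$ only if $h\in\langle s_2,s_3\rangle$, by disjointness of the tile interiors. Therefore $g_q U\cap U\neq\emptyset$ forces $g\in\langle s_2,s_3\rangle$, hence $g_q[\infty]=[\infty]$ and $y=\infty$, a contradiction. So the half-planes $g_qU$ and $U$ are disjoint, and their boundary geodesics $[y]$ and $[\infty]$ are disjoint, apart from a possible common point at infinity. That last case is excluded because they would then be asymptotic, which is impossible for two distinct lifts of a closed geodesic bounding a funnel. The main obstacle is making the third step rigorous: one has to justify that the $\langle s_2,s_3\rangle$-orbit of $F$ fills exactly the half-plane $U$. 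I would do this by checking that the angles between $[\infty]$ and the two circles are $\pi/2$, and that the two circles $\mathrm{Im}_q$ and $\mathcal{C}_{2,q}$ are concentric, hence ultraparallel with common perpendicular $[\infty]$.
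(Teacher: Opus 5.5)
Your proposal is correct and is essentially the paper's argument. The paper simply notes that $\mathcal{Q}=\pi^{-1}(\gamma)$ for the closed geodesic $\gamma$ cutting off the funnel and concludes that the lifts are pairwise disjoint; your tiling argument (the translates of $F$ under $\langle s_2,s_3\rangle$ fill $U$, so $g_qU\cap U\neq\emptyset$ forces $g\in\langle s_2,s_3\rangle$) is a concrete justification of that step.

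The one loose point is ruling out $[y]=[\infty]$, i.e.\ the case where $g_qU$ and $U$ are complementary half-planes. Your reason, ``the right $q$-versions differ'', is not automatic at a fixed $q$, and it would not exclude swapped endpoints anyway. Instead, Proposition~\ref{Prop:cst-sign-coeffs} gives $B^\sharp_y(q)\neq 0$ and $B^\flat_y(q)\neq 0$ for $y\neq\infty$ and $q>0$, so $\infty$ is not an endpoint of $[y]$. The asymptotic case needs no argument at all, since asymptotic geodesics are already disjoint in $\H^2$.
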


\begin{proof}
This follows from the geometric viewpoint: denote by $\gamma$ the unique geodesic cutting the funnel of the $q$-deformed modular surface, and by $\pi:\H^2\to \H^2/\mathrm{PGL}_2(\Z)$ the universal covering map. Then $\mathcal{Q}=\pi^{-1}(\gamma)$, hence all geodesics in $\mathcal{Q}$ are pairwise disjoint.
\end{proof}

\begin{Remark}
The previous proposition also follows from the positivity of $q$-rationals, see Corollary \ref{Prop:well-orderedness}.
\end{Remark}

\begin{Remark}
The well-orderdness of $q$-rationals might come as a surprise on the first sight, since rational numbers are already dense in $\R$. For $x\in\Q$, the interval $[[x]_q^\flat,[x]_q^\sharp]$ does not contain $x$ in general. The construction of $q$-rationals, step by step via Farey addition, is similar to that of a Cantor set.
\end{Remark}

Recall from hyperbolic geometry that for two disjoint geodesics $\gamma_1, \gamma_2$, there is a unique third geodesic orthogonal to both of them. This third geodesic realizes the minimal distance between $\gamma_1$ and $\gamma_2$. 
For $x,y\in\mathbb{QP}^1$, we denote by $\gamma([x],[y])$ the geodesic between $[x]$ and $[y]$ (which are disjoint by Proposition \ref{Prop:q-Farey-topo}).

From Proposition \ref{Prop:inversion-sym} and the $\mathrm{PGL}_2(\Z)$-symmetry of $\mathcal{Q}$, we get:
\begin{coro} \label{coro:symmetry_12}
If $d_F(x,y)\in \{1,2\}$, the inversion with respect to $\gamma([x],[y])$ is a symmetry of $\mathcal{Q}$.
\end{coro}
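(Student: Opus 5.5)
The plan is to take the orientation-reversing involution of the Farey triangulation that exchanges $x$ and $y$, deform it, and check that its fixed geodesic is $\gamma([x],[y])$. By Proposition \ref{Prop:inversion-sym} and Proposition \ref{Prop:exchange-under-involution}, when $d_F(x,y)\in\{1,2\}$ the geodesic joining $x\oplus_F y$ and $x\ominus_F y$ has endpoints of Farey determinant $2$ or $1$ (Lemma \ref{Lemma:dF-for-Farey-op}). Hence the reflection $I$ in it is an element of $\PGL_2(\Z)$ which exchanges $x$ and $y$. Concretely, I would conjugate by $\PSL_2(\Z)$ so that either $(x,y)=(a/b,-a/b)$ with $I=Nc$, or $(x,y)=(a/b,b/a)$ with $I=SNc$. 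I only need one such normalization per orbit.

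Next I pass to the $q$-deformed action. The element $I_q$, i.e. $s_2=N_qc$ or $s_1=S_qN_qc$ up to conjugation by some $M_q$, is an orientation-reversing isometry of $\H^2$ of order $2$. By Corollary \ref{Coro:no-extra-relations} it is nontrivial, and since it is conjugate to $s_1$ or $s_2$ it is a reflection in a geodesic $\ell$. By the equivariance \eqref{Eq:N_q-equivariance}, an element of determinant $-1$ sends $[x]^\sharp_q$ to $[Ix]^\flat_q$ and $[x]^\flat_q$ to $[Ix]^\sharp_q$. The complex conjugation $c$ fixes these real endpoints. Therefore $I_q$ maps the geodesic $[x]$ onto $[y]$, and the whole set $\mathcal{Q}$ is preserved by $I_q$ (this is the $\PGL_2(\Z)$-symmetry of $\mathcal{Q}$).

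It remains to identify $\ell$ with $\gamma([x],[y])$. Let $\gamma=\gamma([x],[y])$ be the unique common perpendicular of the disjoint geodesics $[x],[y]$ (Proposition \ref{Prop:q-Farey-topo}). Because $I_q$ is an isometry exchanging $[x]$ and $[y]$, it maps $\gamma$ to the common perpendicular of $[y],[x]$, which by uniqueness is $\gamma$ itself. There are two ways a reflection can preserve $\gamma$: either $\ell=\gamma$, or $\ell$ is orthogonal to $\gamma$.

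The main obstacle is excluding the orthogonal case. In that case $I_q$ restricted to $\gamma$ would be a reflection of the segment joining its foot points $p\in[x]$ and $p'\in[y]$, swapping $p$ and $p'$. If instead $\ell=\gamma$, then $I_q$ fixes $p$, so it would have to send $[x]$ to itself. Since it sends $[x]$ to $[y]\neq[x]$, this second option is also contradictory, so the case analysis as stated does not close, and the orthogonal case is the one that actually occurs. The correct statement is that $I_q$ is the reflection in the perpendicular bisector of the common perpendicular, not in $\gamma$ itself. Reading the Corollary this way, I would derive the inversion in $\gamma([x],[y])$ instead from the pair $(x\oplus_F y, x\ominus_F y)$: its Farey determinant lies in $\{1,2\}$, it is exchanged by the reflection in the Farey geodesic between $x$ and $y$, and the above argument applied to that pair yields a symmetry of $\mathcal{Q}$ fixing $[x]$ and $[y]$ setwise.

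To finish, I would show that this symmetry $J_q$ is the reflection in $\gamma$. Since $J_q$ is a reflection preserving each of $[x]$ and $[y]$, its axis is orthogonal to both, hence equals $\gamma$ by uniqueness of the common perpendicular. The only point needing care is that $J_q$ does not instead fix $[x]$ pointwise. This is ruled out because $J_q$ swaps the two endpoints $[x]^\sharp_q$ and $[x]^\flat_q$, again by \eqref{Eq:N_q-equivariance}.
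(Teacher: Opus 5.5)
Your final argument is correct and is essentially the paper's one-line proof. By Proposition \ref{Prop:inversion-sym}, the reflection in the geodesic joining $x$ and $y$ lies in $\PGL_2(\Z)$, and by \eqref{Eq:N_q-equivariance} its $q$-deformation preserves $\mathcal{Q}$ and maps each of $[x]$, $[y]$ to itself while swapping its endpoints, so its axis is the common perpendicular $\gamma([x],[y])$. The opening attempt with the involution exchanging $x$ and $y$ (which, as you correctly diagnosed, yields the reflection in the perpendicular bisector of $\gamma([x],[y])$) and the detour through the pair $(x\oplus_F y, x\ominus_F y)$ are unnecessary and can be cut: Proposition \ref{Prop:inversion-sym} applies directly to $(x,y)$, and the Corollary as stated needs no reinterpretation.
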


We analyze more in detail the quotient $\mathbb{H}^2_q/\mathrm{PSL}_2(\Z)$, which is the convex core of the deformed modular surface.
It is an orbifold with boundary. The orbifold point $i/\sqrt{q}$ is of order 2, while $\sigma$ is of order 3. The deformed modular surface has recently been described independently by Simon \cite[Section 3.2]{Simon}.

\begin{proposition}\label{Prop:q-modular-surface}
The boundary curve of the convex core of the $q$-deformed modular surface $\mathbb{H}^2_q/\mathrm{PSL}_2(\Z)$ is a geodesic of length $\lvert \ln(q)\rvert$. Its area is $\mathrm{arctan}\left(\frac{2q-1}{\sqrt{3}}\right)+\mathrm{arctan}\left(\frac{2q^{-1}-1}{\sqrt{3}}\right)$. 
\end{proposition}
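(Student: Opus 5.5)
The plan is to work in the fundamental quadrilateral of Proposition \ref{Prop:q-fund-domain} (for $q\in(0,1)$; the case $q>1$ follows from the transition map of Theorem \ref{Prop:duality}, which replaces $q$ by $q^{-1}$ and leaves both the length $\lvert\ln q\rvert$ and the symmetric area expression unchanged). The key observation is that the funnel-cutting geodesic $[\infty]$ is the vertical line $x=\frac{1}{1-q}$. This line passes through the common centre $\frac{1}{1-q}$ of the circles $\mathrm{Im}_q$ and $\mathcal{C}_{2,q}$ (Equations \eqref{Eq:q-imag-axis} and \eqref{Eq:q-critical-axis}), so it meets both orthogonally.

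\textbf{Length.} First I will show that the stabilizer of $[\infty]$ in the deformed $\PSL_2(\Z)$ is generated by $T_q$. An element preserving $[\infty]$ permutes its endpoints $[\infty]^\flat_q,[\infty]^\sharp_q$. Orientation-preserving elements that swap the endpoints are excluded by the well-ordering of the geodesics in $\mathcal{Q}$ (Proposition \ref{Prop:q-Farey-topo}) together with equivariance: such an element would have to send $\infty$ to $\infty$ while reversing orientation near it. Hence the element fixes $\infty$ in the undeformed action, so it is a power of $T$ by Corollary \ref{Coro:no-extra-relations}. Next, $T_q\colon z\mapsto qz+1$ fixes $\frac{1}{1-q}$ and $\infty$, and acts on the vertical line by $z-\frac{1}{1-q}\mapsto q\bigl(z-\frac{1}{1-q}\bigr)$. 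This is a hyperbolic translation of length $\lvert\ln q\rvert$ along $[\infty]$. Its quotient is the boundary curve of $\mathbb{H}^2_q/\PSL_2(\Z)$, which therefore has length $\lvert\ln q\rvert$. As a check, this length equals twice the segment of $[\infty]$ between $\mathrm{Im}_q$ and $\mathcal{C}_{2,q}$, consistent with the index-two relation between the $\PSL_2$ and $\PGL_2$ quotients.

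\textbf{Area.} The convex core of the $\PGL_2(\Z)$-quotient is the pentagon obtained by cutting the fundamental quadrilateral along $[\infty]$. Its angles are $\frac{\pi}{2}$ at $\frac{i}{\sqrt q}$ and $\frac{\pi}{3}$ at $\sigma$ (both from the proof of Corollary \ref{Coro:no-extra-relations}), plus two right angles where $[\infty]$ meets $\mathrm{Im}_q$ and $\mathcal{C}_{2,q}$, plus the fifth vertex. That fifth vertex is where $\mathcal{C}_{1,q}$ meets $\mathcal{C}_{2,q}$, i.e. $\sigma$ is adjacent to the boundary side, and I need to recheck the combinatorics carefully against Figure \ref{fig:fundamental_quadrilateral_PGL}. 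If the region is in fact a pentagon with the angles just listed, Gauss--Bonnet gives $3\pi-(\frac{\pi}{2}+\frac{\pi}{3}+\frac{\pi}{2}+\frac{\pi}{2})=\frac{\pi}{6}$. The $\PSL_2(\Z)$-core, being a double cover, then has area $\frac{\pi}{3}$.

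The remaining step, and the one I expect to require care, is to match this with the stated expression. The plan is to show
$$\arctan\Bigl(\tfrac{2q-1}{\sqrt3}\Bigr)+\arctan\Bigl(\tfrac{2q^{-1}-1}{\sqrt3}\Bigr)=\tfrac{\pi}{3}\quad\text{for all } q>0.$$
With $u=\frac{2q-1}{\sqrt3}$ and $v=\frac{2q^{-1}-1}{\sqrt3}$, one has $1-uv=\frac{2(q+q^{-1})-2}{3}$ and $u+v=\frac{2(q+q^{-1})-2}{\sqrt3}$. Hence $\frac{u+v}{1-uv}=\sqrt3$, and since $1-uv>0$ the two arctangents sum to $\frac{\pi}{3}$.

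If the authors intend the two summands as separate geometric contributions, I would alternatively compute each half of the region as a triangle with one vertex at $\frac{1}{1-q}$'s perpendicular foot and read off its angle-defect directly. In that approach the summand $\arctan(\frac{2q-1}{\sqrt3})$ would appear as an angle at $\sigma$ between the circle $\mathcal{C}_{2,q}$ and the radius from $\frac{1}{1-q}$.
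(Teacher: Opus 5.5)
Your proposal is correct in strategy but takes a different route from the paper, and the area step contains an error you need to fix.

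\textbf{Comparison with the paper.} The paper reads off the length from the side pairing $D=T_q(C)$ of its $\PSL_2(\Z)$ fundamental quadrilateral $ABCD$, so that $y(D)=q\,y(C)$ on the line $x=\frac{1}{1-q}$. It obtains the area by integrating $dx\,dy/y^2$ between $\mathcal{C}_{1,q}$, $\mathcal{C}_{2,q}$, $\mathcal{C}_{3,q}$ and that line.

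Your length argument has the same content: the stabilizer of $[\infty]$ is $\langle T_q\rangle$, and $T_q$ scales heights on $x=\frac{1}{1-q}$ by $q$. The endpoint-swapping discussion is vague and unnecessary. Equivariance ($M_q[x]^\sharp_q=[Mx]^\sharp_q$ and $M_q[x]^\flat_q=[Mx]^\flat_q$) together with well-orderedness directly forces $M\infty=\infty$.

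For the area, replacing the integral by Gauss--Bonnet plus the identity $\arctan\bigl(\frac{2q-1}{\sqrt3}\bigr)+\arctan\bigl(\frac{2q^{-1}-1}{\sqrt3}\bigr)=\frac{\pi}{3}$ is a genuine gain. Your check $u+v=\sqrt3\,(1-uv)$ with $1-uv\geq\frac23>0$ is correct. It shows the area does not depend on $q$ at all: it is $-2\pi\chi^{\mathrm{orb}}=\frac{\pi}{3}$ for a disk with cone points of orders $2$ and $3$. The paper's closed form hides this.

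\textbf{The error in the Gauss--Bonnet step.} As written, this step is wrong. The region is not a pentagon.

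The line $x=\frac{1}{1-q}$ misses $\mathcal{C}_{1,q}$. Indeed, the rightmost point of $\mathcal{C}_{1,q}$ is $\frac{q-1+\sqrt{q^2-q+1}}{q}$, which is $<\frac{1}{1-q}$ because $1-q<\sqrt{q^2-q+1}$ for $q>0$. So cutting the $\PGL_2(\Z)$ quadrilateral along this line leaves a quadrilateral with four vertices:
$\frac{i}{\sqrt q}$;
$\sigma$;
$E=\frac{1}{1-q}+i\frac{\sqrt{q^2-q+1}}{1-q}$ on $\mathcal{C}_{2,q}$, which is the paper's $D$;
$F=\frac{1}{1-q}+i\frac{\sqrt{q^2-q+1}}{\sqrt q\,(1-q)}$ on $\mathrm{Im}_q$.
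Your ``fifth vertex'' is just $\sigma$ counted twice.

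With the pentagon formula you wrote, $3\pi-\frac{11\pi}{6}=\frac{7\pi}{6}$, not $\frac{\pi}{6}$. The correct count is $2\pi-\bigl(\frac{\pi}{2}+\frac{\pi}{3}+\frac{\pi}{2}+\frac{\pi}{2}\bigr)=\frac{\pi}{6}$, and doubling gives $\frac{\pi}{3}$. Alternatively, apply Gauss--Bonnet directly to the paper's $ABCD$, whose angles are $\frac{\pi}{3},\frac{\pi}{3},\frac{\pi}{2},\frac{\pi}{2}$.

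Your length check is consistent with this picture: $T_q=s_3s_2$ is the product of the inversions in the concentric circles $\mathcal{C}_{2,q}$ and $\mathrm{Im}_q$, so it translates by twice $d(E,F)=\frac12\lvert\ln q\rvert$. Once the quadrilateral is fixed, your last paragraph about splitting the two arctangents into separate geometric contributions is unnecessary.
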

Note that for $q=1$, we recover the modular surface with a cusp and of area $2\mathrm{arctan}(\tfrac{1}{\sqrt{3}})=\tfrac{\pi}{3}$. We also see the symmetry between $q$ and $q^{-1}$. If we use $\mathrm{PGL}_2(\Z)$ instead of $\PSL_2(\Z)$, then the length of the boundary geodesic and the area have to be divided by 2, since the fundamental domain for $\mathrm{PGL}_2(\Z)$ is half the one for $\mathrm{PSL}_2(\Z)$.

\begin{proof}
The proof is a direct computation. The vertices of the fundamental quadrilateral of $\mathrm{PSL}_2(\Z)$ acting on $\H^2_q$ are given by $A=\sigma, B=S_q(\sigma), C=\frac{q+i\sqrt{q^2-q+1}}{q(1-q)}, D=T_q(C)$, see Figure \ref{fig:fundamental_quadrilateral}.
Since $C$ and $D$ lie on the same vertical line, their hyperbolic distance is given by $\lvert\ln(y(C)/y(D))\rvert$, where $y(C)$ denotes the $y$-coordinate of $C$. Since $D=T_q(C)$, we get $y(D)=qy(C)$. Therefore the length of the boundary geodesic is $\lvert\ln(q)\rvert$.

\begin{figure}
\begin{tikzpicture}[scale=1.6]
\def\q{0.45}
\fill[gray!30] (1.8182,0) rectangle (3,3.505); 
\draw[darkgray, line width=1.5pt] (1.8182,0) -- (1.8182,3.5);
\draw[color=purple, line width=1.5pt] ({1/(1-\q)},{(sqrt(\q*\q-\q+1))/(\q*(1-\q))}) -- ({1/(1-\q)},{(sqrt(\q*\q-\q+1))/(1-\q)});

[line width=0.4mm, mygreen]

\node[font=\large] at (2.5,1) {$\infty$}; 
\draw[->] (-3,0) -- (3.5,0) node[right] {$x$}; 
\draw[->] (0,0) -- (0,3.1) node[above] {$y$};
\draw[->] (0,0) -- (0,3.1) node[above] {$y$};

\draw[purple, line width=0.4mm] ({-1/(2*(0.45))},{sqrt(3)/(2*(0.45))}) arc (86.7:26.6:1.928);


\draw[purple, line width=0.4mm]({-1/(2*(0.45))},{sqrt(3)/(2*(0.45))}) arc (147:90:3.505);

\draw[purple, line width=0.4mm]({1/2}, {sqrt(3)/2}) arc (147:90:1.578);

\coordinate (A) at (0.5,{sqrt(3)/2});
\fill (A) circle (1pt) node[right] {$A$};

\coordinate (Q) at (1.818,0);
\fill (Q) circle (1pt) node[above right]{$\frac{1}{1-q}$};

\draw[gray, dotted, line width=1.5pt] (1.8182,0) -- ({-1/(2*\q)},{sqrt(3)/(2*\q)});

\coordinate (B) at ({-1/(2*\q)},{sqrt(3)/(2*\q)});
\fill (B) circle (1pt) node[above left] {$B$};

\coordinate (C) at ({1/(1-\q)},{(sqrt(\q*\q-\q+1))/(\q*(1-\q))}); \fill (C) circle (1pt) node[above right] {$C$};

\coordinate (D) at ({1/(1-\q)},{(sqrt(\q*\q-\q+1))/(1-\q)}); \fill (D) circle (1pt) node[above right] {$D$};
\end{tikzpicture}
\caption{Fundamental quadrilateral for the action of $\PSL_2(\Z)$ on $\mathbb{H}_q^2$. The angles at $C$ and $D$ are equal to $\frac{\pi}{2}$. The point $B$ is obtained as the intersection of the circle centered at $\frac{1}{1-q}$  passing by $C$ and of the (Euclidean) line connecting $\frac{1}{1-q}$ and $A$.}\label{fig:fundamental_quadrilateral}
\end{figure}

For the area, we parametrize the boundary circles of the fundamental domain, which are portions of the deformed unit circle $\mathcal{C}_{1,q}$ (see Equation \eqref{Eq:q-unit-circle}), $\mathcal{C}_{2,q}$ (see Equation \eqref{Eq:q-critical-axis}) and $\mathcal{C}_{3,q}=T_q^{-1}(\mathcal{C}_{2,q})$, given by
$$ \left(x-\frac{1}{1-q}\right)^2+y^2 = \frac{q^2-q+1}{q^2(1-q)^2}.$$

The vertical line is $x=\tfrac{1}{1-q}$. We can then directly compute the area $\mathcal{A}$:
\begin{align*}
    \mathcal{A} &= \int_{x=-\tfrac{1}{2q}}^{\tfrac{1}{2}}\int_{y\in\mathcal{C}_{1,q}}^{\mathcal{C}_{3,q}}\frac{dx\,dy}{y^2}+\int_{x=\tfrac{1}{2}}^{\tfrac{1}{1-q}}\int_{y\in \mathcal{C}_{2,q}}^{\mathcal{C}_{3,q}}\frac{dx\,dy}{y^2} \\
    &= \int_{x=-\tfrac{1}{2q}}^{\tfrac{1}{2}}\frac{dx}{\sqrt{\tfrac{q^2-q+1}{q^2}-(x-\tfrac{q-1}{q})^2}} -\int_{x=-\tfrac{1}{2q}}^{\tfrac{1}{1-q}} \frac{dx}{\sqrt{\tfrac{q^2-q+1}{q^2(1-q)^2}-(x-\tfrac{1}{1-q})^2}} \\
    & \;\;\;\; +\int_{x=\tfrac{1}{2}}^{\tfrac{1}{1-q}}\frac{dx}{\sqrt{\tfrac{q^2-q+1}{(1-q)^2}-(x-\tfrac{1}{1-q})^2}} \\
    &= \mathrm{arctan}(\tfrac{2-q}{\sqrt{3}q})-\mathrm{arctan}(\tfrac{1-2q}{\sqrt{3}})+0-\mathrm{arctan}(\tfrac{1+q}{\sqrt{3}(q-1)})-0+\mathrm{arctan}(\tfrac{1+q}{\sqrt{3}(q-1)}) \\
    &= \mathrm{arctan}\left(\tfrac{2q-1}{\sqrt{3}}\right)+\mathrm{arctan}\left(\tfrac{2q^{-1}-1}{\sqrt{3}}\right),
\end{align*}
where we used that $\int\frac{dx}{\sqrt{a-(x-b)^2}} = \arctan\left(\frac{x-b}{\sqrt{a-(x-b)^2}}\right)$.
\end{proof}

Etingof \cite[Prop. 4.6]{Etingof} defines the jump of a rational $x\in\mathbb{Q}$ as the length 
\begin{equation}\label{Eq:diam-def}
\ell_q(x):= \lvert [x]^\sharp _q-[x]^\flat_q\rvert.
\end{equation}
Using analytic considerations, Etingof computes the sum of jumps of all rationals for $q$ in some open subset of $\C$ (Section 6 of his paper). From our geometric perspective, the jump corresponds to the diameter of the $q$-disk $[x]$. We get a similar result for $q\in \R_{>0}$, which is both more restrictive (we do not include complex $q$) and more general (since Etingof considers only $\mathrm{Re}(q)<1$):

\begin{coro}\label{Coro:Etingof-gap-formula}
Let $q\in \R_{>0}$. If $q\in (0,1)$, then for any $x, y\in \mathbb{QP}^1$ with $x<y$, we have
$$\sum_{x<z<y, z\in\Q} \ell_q(z)=[y]^\flat_q -[x]^\sharp_q.$$
If $q=1$, then all jumps are zero. If $q>1$, then 
$$\sum_{x<z<y, z\in\Q} \ell_q(z)=[y]^\sharp_q -[x]^\flat_q.$$
\end{coro}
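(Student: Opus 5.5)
The plan is to read the sum as the total length of the gaps removed from a Cantor-type construction, and to show that what is left has Lebesgue measure zero.

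Fix $q\in(0,1)$ and $x<y$ in $\mathbb{QP}^1$. By the well-orderedness of the $q$-disks, the map $z\mapsto [z]$ preserves order: for $x<z<y$ the interval $I_z:=[[z]^\flat_q,[z]^\sharp_q]$ lies inside $J:=[[x]^\sharp_q,[y]^\flat_q]$. This uses Proposition \ref{Prop:q-Farey-topo} together with the ordering of the endpoints. Two such intervals are disjoint. Hence the sum of the $\ell_q(z)$ is at most $[y]^\flat_q-[x]^\sharp_q$.

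It remains to show that the complement $K:=J\setminus\bigcup_{x<z<y}\mathring I_z$ has Lebesgue measure zero.

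1. \textbf{Description of $K$.} Using Theorem \ref{Thm:convergence-q-numbers}, I will check that the points of $K$ are exactly the endpoints $[z]^\flat_q,[z]^\sharp_q$ together with the values $[\xi]_q$ for irrational $\xi\in(x,y)$. Any other point would lie in a gap between disks that does not shrink under Farey refinement, contradicting the convergence property. In particular, $K$ is contained in the limit set $\Lambda$ of the deformed group acting on $\partial\H^2$, with the endpoints of the geodesics $[z]$ included.

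2. \textbf{$\Lambda$ is null.} By Proposition \ref{Prop:q-fund-domain} and Corollary \ref{Coro:no-extra-relations}, the group acts with a fundamental domain having a funnel. The convex core $\H^2_q/\PSL_2(\Z)$ has finite area and a closed geodesic boundary of length $\lvert\ln q\rvert>0$ (Proposition \ref{Prop:q-modular-surface}). So the group is a geometrically finite Fuchsian group of the second kind without parabolics (convex cocompact). The limit set of such a group has Lebesgue measure zero; in fact its Hausdorff dimension is $<1$. Equivalently, by ergodicity of the geodesic flow on the compact convex core (Hopf), almost every geodesic leaves the core, so almost every boundary point lies outside $\Lambda$. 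Hence $K$ is null.

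3. \textbf{Conclusion for $q\in(0,1)$.} Since $J$ is the disjoint union of the open intervals $\mathring I_z$ and the null set $K$, taking measures gives the stated equality.

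4. \textbf{The cases $q=1$ and $q>1$.} For $q=1$ the matrices are the classical ones, so $[z]^\flat_1=[z]^\sharp_1=z$ and every jump is zero. For $q>1$, apply the transition map of Theorem \ref{Prop:duality} with parameter $q^{-1}<1$. It exchanges the $\flat$ and $\sharp$ versions, and the $q$-disks for $q$ are the images of the $q^{-1}$-disks under the Möbius map $g$. Here a direct proof is cleaner than transporting lengths through $g$, which would distort them. For $q>1$ the orientation is reversed, so $[z]^\flat_q>[z]^\sharp_q$. The same argument then applies to $J=[[x]^\flat_q,[y]^\sharp_q]$: the group is still convex cocompact, since the area and length formulas are symmetric under $q\leftrightarrow q^{-1}$. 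This yields the second formula.

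The main obstacle is step 1: showing that no gap of positive length in $J$ escapes the disks, that is, that $K\subset\Lambda$. I would handle it by noting that any complementary open interval of $\bigcup \mathring I_z$ lifts to a half-plane disjoint from all geodesics in $\mathcal{Q}$. By the convexity of $\H^2_q$, that half-plane lies outside the convex hull of $\Lambda$, and so it must be contained in one of the removed half-disks. This is a contradiction.
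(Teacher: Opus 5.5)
Your proposal is correct and is essentially the paper's proof: well-orderedness makes the intervals disjoint, and the complement is contained in the limit set of the convex cocompact deformed group, which is Lebesgue-null. The paper invokes Sullivan's ergodicity dichotomy or the ergodicity of the geodesic flow on the doubled core, and it simply asserts that the complement is the limit set; your convergence-theorem argument in step 1 justifies this, once you add that $[z]^\sharp_q$ and $[z]^\flat_q$ are fixed points of the hyperbolic elements $M_qT_qM_q^{-1}$ and that $\Lambda$ is closed.

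Your final half-plane argument is not needed, and as phrased it presupposes that $\H^2_q$ is the convex hull of $\Lambda$. For $q>1$, the paper's reduction via the M\"obius map $g_q$ is legitimate, because only null sets and the ordering have to be transported, not lengths.
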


In particular, for $x=1$, $y=\infty$, and $q\in (0,1)$, this sum equals $[\infty]^\flat_q-[1]^\sharp _q=\tfrac{1}{1-q}-1=\frac{q}{1-q}$, in concordance with \cite[Prop. 6.1]{Etingof}. 
We are grateful to Pierre-Louis Blayac who suggested the ergodicity arguments in the following proof.

\begin{proof}
We can reduce to $q\in (0,1)$ using the transition map \ref{Prop:duality}. We then only need to prove that the complement $\Lambda$ to the union 
$$
\bigcup_{x \in \mathbb{QP}^1} \left(
[x]_q^{\flat}, [x]_q^{\sharp}
\right)
$$
is of Lebesgue measure zero in $\mathbb{RP}^1$. Since the intervals above are well-ordered, we then get the result.

From the hyperbolic geometry viewpoint, the complement $\Lambda$ is the limit set of the fundamental group $\Gamma$ of the deformed modular surface ($\Gamma\cong \PSL_2(\Z)$), which is convex cocompact.
Sullivan \cite{Sullivan1979} then shows that if $\Lambda$ was of positive Lebesgue measure, then the action of $\Gamma$ on the boundary of $\H^2$ would be ergodic with respect to the Lebesgue measure. Hence $\Lambda$ would be of full measure, but its complement has strictly positive measure, a contradiction.

Another argument is to use the ergodicity of the geodesic flow on a closed hyperbolic surface: cut the deformed modular surface along the unique geodesic along its funnel and double it, to get a closed surface. Ergodicity of its geodesic flow implies that almost all geodesics in the original modular surface go into the funnel. Hence the preimage of the funnel in $\H^2$ has full Lebesgue measure.

\end{proof}

For rational numbers, the jump can be computed algebraically. For that, we need the following:

\begin{definition}\label{defi:epsilon}
To a rational number $x=\tfrac{a}{b}$ with even-length continued fraction expansion $\frac{a}{b} = [\alpha_1,\alpha_2,\cdots,\alpha_{2s}]$, we associate the integer $\varepsilon(\tfrac{a}{b})$ given by
\begin{equation}
\varepsilon(x)=\begin{cases}
\abs{\alpha_1} + \alpha_2 -2 \text{ if } x \in \Z_{\leq 0},\\
\abs{\alpha_1}+\alpha_2+...+\alpha_{2s}-1 \text{ else.}
\end{cases}
\end{equation}
\end{definition}

\begin{proposition}\label{prop:qradius}
Let $x = a/b$ be a rational number, and denote by $A^{\square}/B^{\square}$ its left and right $q$-deformations, for $\square\in \{\sharp,\flat\}$. Then  
\begin{equation}\label{Eq:q-diam}
\ell_q(x) = \abs{1-q}\frac{q^{\varepsilon(x)}}{B^{\sharp}(q)B^{\flat}(q)}.
\end{equation}
\end{proposition}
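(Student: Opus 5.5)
The plan is to compute both endpoints of the geodesic $[x]$ from one matrix. By Definition \ref{def:q-numbers} they are $M_q\cdot\frac{1}{0}$ and $M_q\cdot\frac{1}{1-q}$ for the same $M$. I will fix a specific word for $M$ in the generators, read off its entries as polynomials, and take the difference of the two Möbius images. The determinant will give the numerator $(1-q)q^{\varepsilon}$. The denominator will be identified with $B^\sharp B^\flat$ after normalizing.

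\textbf{Step 1 (determinant identity).} Write
$$M_q=\begin{pmatrix}P & R\\ Q & U\end{pmatrix}\in \mathrm{GL}_2(\Z[q]),$$
obtained as an honest matrix product (not projectively) of copies of $T_q$ and $S_q$. Then $\det M_q=\pm q^{k}$, because $\det T_q=\det S_q=q$, and
$$[x]_q^\sharp-[x]_q^\flat=\frac{P}{Q}-\frac{P+(1-q)R}{Q+(1-q)U}=\frac{(1-q)(PU-QR)}{Q\,(Q+(1-q)U)}=\frac{\pm(1-q)q^k}{Q\,(Q+(1-q)U)}.$$
This is a one-line computation.

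\textbf{Step 2 (choice of word and identification of $Q$ and $Q+(1-q)U$).} Assume first $x>0$. I take the even-length expansion $\frac{a}{b}=[\alpha_1,\dots,\alpha_{2s}]$ and the word
$$M=T^{\alpha_1}\,L^{\alpha_2}\,T^{\alpha_3}\cdots L^{\alpha_{2s}},\qquad L=STS^{-1}\text{-type lower unipotent}.$$
This is the word used in \cite{MGO-2020} to get the combinatorial formula. Its $q$-deformation has entries with nonnegative coefficients, by Proposition \ref{Prop:cst-sign-coeffs}, and the exponent $k$ of its determinant is an explicit linear combination of the $\alpha_i$.

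I then have to show two things. First, that $P,Q$ are coprime, and that $P+(1-q)R$ and $Q+(1-q)U$ are coprime up to a monomial. Any common factor of either pair divides $\det M_q=\pm q^k$; the exceptional factor $(1-q)$ for the second pair is excluded by evaluating at $q=1$, where the pair becomes $(a,b)$. Second, after stripping powers of $q$ and fixing the sign of the leading coefficient, these two pairs are exactly $(A^\sharp,B^\sharp)$ and $(A^\flat,B^\flat)$. Hence $Q=q^{m}B^\sharp$ and $Q+(1-q)U=q^{n}B^\flat$.

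The formula then follows with $\varepsilon(x)=k-m-n$. Taking absolute values is legitimate for $q>0$, since $B^\sharp,B^\flat$ have positive coefficients.

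\textbf{Step 3 (bookkeeping of exponents; main obstacle).} The real work is computing $k-m-n$ and checking that it equals $\alpha_1+\dots+\alpha_{2s}-1$. This requires the lowest-order terms of $Q$ and $Q+(1-q)U$, namely their $q$-adic valuations. I would get these by induction on $s$, tracking how right multiplication by $T_q^{\alpha}$ and $L_q^{\alpha}$ shifts the valuations of the bottom row.

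The cases $x\le 0$ are handled separately, and this is where the special shape of $\varepsilon$ for $x\in\Z_{\le 0}$ comes from.
\begin{itemize}
\item For $x<0$, I would apply Proposition \ref{Prop:equivariance}(ii), $[-x]_q^\square=-\frac1q[x]_{q^{-1}}^\square$. This converts $\ell_q(-x)$ into $\frac1q\ell_{q^{-1}}(x)$. Clearing $q^{-1}$ from $B^\sharp(q^{-1})B^\flat(q^{-1})$ by multiplying by $q^{\deg B^\sharp+\deg B^\flat}$ then yields a new exponent, which I would match to the definition.
\item For $x\in\Z_{\le0}$, I would instead verify the formula directly from the explicit $[n]_q^\sharp$, $[n]_q^\flat$ and their translates under $x\mapsto x+1$.
\end{itemize}
I expect the degree and valuation matching in these reductions to be the fiddliest part, rather than any conceptual step.
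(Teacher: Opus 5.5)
\textbf{The case $x>0$.} Here your argument is the paper's. You use the same word $T^{\alpha_1}L^{\alpha_2}\cdots L^{\alpha_{2s}}$, evaluated at the two fixed points $\frac10$ and $\frac1{1-q}$ of $T_q$, so that the jump is $(1-q)\det M_q$ divided by the product of the two bottom entries. Note that you want $L=TST=\begin{pmatrix}1&0\\1&1\end{pmatrix}$; $STS^{-1}$ has the wrong sign.

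The paper does not do your Step 3 by hand. It quotes \cite[Prop.~4.3]{MGO-2020} and \cite[Prop.~2.11]{BBL}: with $L_q=\begin{pmatrix}q&0\\q&1\end{pmatrix}$, the columns of $M_q\begin{pmatrix}1&1\\0&1-q\end{pmatrix}$ are exactly $q\binom{A^\sharp}{B^\sharp}$ and $\binom{A^\flat}{B^\flat}$. That is, $m=1$, $n=0$ and $k=\sum\alpha_i$.

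Your induction can reproduce this, most easily by left-multiplying vectors rather than tracking right multiplication. Indeed, $L_q^{\alpha}\binom10=q\binom{q^{\alpha-1}}{[\alpha]_q}$, and the bottom entry of $L_q^{\alpha}\binom1{1-q}$ has constant term $1$. Moreover $T_q\binom uv=\binom{qu+v}{v}$ and $L_q\binom uv=\binom{qu}{qu+v}$ never change the constant term of the bottom entry. Keep in mind that $m$ is the valuation of $\gcd(P,Q)$, not of $Q$. It is this constant-term statement, $B^\sharp(0)=B^\flat(0)=1$, that lets you read $m$ and $n$ off the bottom entries.

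\textbf{The gap: $x<0$, $x\notin\Z$.} Put $y=-x>0$. Then $\deg A^\square_y\ge\deg B^\square_y$, so the reduced form of $[x]^\square_q=-\frac1q[y]^\square_{q^{-1}}$ has denominator $B^\square_x=q^{\deg A^\square_y+1}B^\square_y(q^{-1})$. This is not the reversal $q^{\deg B^\square_y}B^\square_y(q^{-1})$ you propose to clear with.

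Carrying this through, the formula for $x$ is equivalent to $\deg A^\sharp_y+\deg A^\flat_y=\varepsilon(y)+\varepsilon(-y)$. So you would need explicit degree formulas for the numerators in terms of the continued fraction of $y$, together with the even-length expansion of $-y$. Neither your proposal nor the paper supplies these, so ``match to the definition'' is not yet an argument.

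The paper avoids the issue by allowing $\alpha_1<0$ in the same word; this is the factor $q^{2\min(0,\alpha_1)}$ in its determinant identity. It amounts to using translation instead of $x\mapsto -x$:
\begin{itemize}
\item Write $x=z-N$ with $z\in(0,1)$ and $N\ge1$. Then $\ell_q(x)=q^{-N}\ell_q(z)$.
\item We have $[x]^\square_q=\frac{A^\square_z-[N]_qB^\square_z}{q^NB^\square_z}$, which is already reduced because $A^\square_z(0)=0$ and $B^\square_z(0)=1$. Both follow from the same constant-term bookkeeping, the leftmost factor being $L_q^{\alpha_2}$.
\item Hence $B^\square_x=q^NB^\square_z$ and $\varepsilon(x)=\varepsilon(z)+N=N+\alpha_2+\dots+\alpha_{2s}-1$. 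This is exactly the definition of $\varepsilon$ for $x=[-N,\alpha_2,\dots,\alpha_{2s}]$.
\end{itemize}
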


\begin{proof} Denote $L = TST$. Suppose $x \notin \Z_{\leq 0}$, and let $M = T^{\alpha_1}L^{\alpha_2}\cdots T^{\alpha_{2s-1}}L^{\alpha_{2s}}$. Then, by Proposition 4.3 of \cite{MGO-2020} and Proposition 2.11 of \cite{BBL},
$$
    \det\left(M_q \begin{pmatrix}
    1 & 1 \\
    0 & 1-q\\
    \end{pmatrix}\right) = q^{2\min(0,\alpha_1)}\det \begin{pmatrix}
    qA^{\sharp} & A^{\flat}\\
    qB^{\sharp} & B^{\flat}\\
    \end{pmatrix} = q^{2\min(0,\alpha_1) + 1}(A^{\sharp}B^{\flat} - A^{\flat}B^{\sharp}).
$$
\noindent But $\det(M_q) = q^{\alpha_1+\alpha_2+\cdots+\alpha_{2s}}$, and hence the formula for $\ell_q(x)$. The proof is analogue for the case $x\in \Z_{\leq 0}$.
\end{proof}

As a consequence of this explicit formula for the diameter, we see that for $q$ close to 1, the Taylor expansion of Equation \eqref{Eq:q-diam} gives
\begin{equation}\label{Eq:diam-near-1}
\ell_q\left(\frac{r}{s}\right)
 \underset{q\to 1}{\sim} \frac{\lvert 1-q\rvert}{s^2}.
\end{equation}
In other words: the radius of the disk associated to $\tfrac{r}{s}\in\Q$ is $(q-1)$ times the radius of the Ford circle. The important difference is that the boundaries of $q$-discs are orthogonal to the real axis, while Ford circles are tangent to it.

\section{Deformed Farey determinants and operations}\label{Sec:q-Farey}

In this section, we define a $q$-deformation of the Farey determinant, and extend the $q$-Farey addition from \cite[Section 2.5]{MGO-2020}, where it is defined only between pairs of Farey determinant 1, to a more general case.

\subsection{$q$-Farey determinant}

To define a $q$-deformed version of the Farey determinant, there are four possibilities, since there are two versions, left and right, for a $q$-rational:

\begin{definition}\label{Def:q-Farey-det}
For $(\tfrac{a}{b},\tfrac{c}{d})$ any pair of rationals with $q$-deformation $\left(\tfrac{A^\square}{B^\square},\tfrac{C^\triangle}{D^\triangle}\right)$, with $\square, \triangle\in\{\sharp,\flat\}$, the \emph{deformed Farey determinant} is
$$d_F^{\square\triangle}:= A^\square D^\triangle-B^\square C^\triangle $$
if $(\square,\triangle)\neq (\flat ,\flat)$.
Else, 
$$d_F^{\flat\flat}:=\frac{A^\flat D^\flat-B^\flat C^\flat}{q^2-q+1}.$$
\end{definition}

To analyze the properties of these four $q$-Farey determinants, we need the following lemma about special values of $q$-rationals at the point $q=\sigma$. These properties complete previous works on special values: \cite[Section 1.4]{MGO-2020} for $q=-1$, \cite[Section 7]{Kogiso} for $q=\sigma^2$ and \cite{Leclere_these} for roots of unity of order at most 5.

\begin{lemma}\label{Lem:special-values}
Denote by $\mathbb{U}_6$ the set of 6-th roots of unity. For all $\frac{a}{b}\in\mathbb{Q}$, we have the following:
\begin{enumerate}
    \item $B^\sharp (\sigma)+(\sigma-1)A^\sharp (\sigma) \in \mathbb{U}_6$,
    \item $B^\flat(\sigma)+(\sigma-1)A^\flat(\sigma)=0$,
    \item $A^\flat(\sigma)$ and $B^\flat(\sigma)$ are in $\mathbb{U}_6$.
\end{enumerate}
\end{lemma}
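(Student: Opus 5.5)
The plan is to evaluate the deformed matrices at $q=\sigma$ and to find a common eigenvector of $T_\sigma$ and $S_\sigma$. The key identity is $\sigma^2-\sigma+1=0$, which gives $\sigma-1=\sigma^2$ and $\sigma^{-1}=1-\sigma=\sigma^5$. Let $\varphi=(\sigma-1,\;1)$ act on column vectors $\binom{A}{B}$, so that $\varphi\binom{A}{B}=B+(\sigma-1)A$. I will check directly that $\varphi$ is a common left eigenvector:
$$\varphi\,T_\sigma=(\sigma^2-\sigma,\;\sigma)=(-1,\sigma)=\sigma\,\varphi,\qquad \varphi\,S_\sigma=(\sigma,\;1-\sigma)=\sigma^5\,\varphi .$$
Both eigenvalues lie in $\mathbb{U}_6$. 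Hence for any word $M_q$ in $T_q,S_q$ we get $\varphi\,M_\sigma=\lambda\,\varphi$ with $\lambda\in\mathbb{U}_6$. The same holds for the inverses $T_q^{-1},S_q^{-1}$, with inverse eigenvalues.

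Next I apply this to the two base vectors. The right vector is $\binom{1}{0}$, with $\varphi\binom{1}{0}=\sigma-1=\sigma^2\in\mathbb{U}_6$. So $\varphi\bigl(M_\sigma\binom{1}{0}\bigr)\in\mathbb{U}_6$, which gives item (1). The left vector is $w=\binom{1}{1-q}$, and $\varphi(w)=\sigma-1+1-\sigma=0$. So $\varphi(M_\sigma w)=0$, which gives item (2).

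For item (3) I use the fact that $\ker\varphi=\mathbb{C}w$ is invariant under every $M_\sigma$, because $\varphi$ is a common left eigenvector. Concretely, $T_\sigma w=w$, and $S_\sigma w=\binom{\sigma-1}{\sigma}=\sigma^2\binom{1}{\sigma^{-1}}=\sigma^2 w$. Therefore $M_\sigma w=\mu w$ with $\mu\in\mathbb{U}_6$. The entries of $M_\sigma w$ are then $\mu$ and $\mu(1-\sigma)=\mu\sigma^5$, both in $\mathbb{U}_6$.

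The main obstacle is normalization. $A^\square,B^\square$ are defined as the reduced, coprime pair with positive leading coefficient of $B^\square$, whereas the computation above is about the entries of $M_q\binom{1}{0}$ and $M_q\binom{1}{1-q}$ for a chosen word $M_q$. I will argue that these entries differ from $(A^\square,B^\square)$ only by a common factor $\pm q^k$. The determinant of $M_q$ is $\pm q^{m}$, so any common factor of its columns, and also of $M_q w$ (using $\det(\binom10,w)=1-q$ and coprimality arguments), divides a power of $q$ up to possibly the factor $1-q$. For $\square=\flat$ I will check directly that $1-q$ cannot divide both entries, by evaluating at $q=1$, where the vector becomes the classical $\binom{a}{b}\neq 0$. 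Alternatively, I will cite the matrix descriptions in \cite[Prop.~4.3]{MGO-2020} and \cite[Prop.~2.11]{BBL}, already used in the proof of Proposition \ref{prop:qradius}, which give exactly this up to $\pm q^k$. Since $\pm\sigma^k\in\mathbb{U}_6$, this ambiguity does not affect the three statements, and the lemma follows.
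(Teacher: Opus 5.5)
Your proof is correct and is essentially the paper's argument. The paper starts from $\tfrac{a}{b}=0$ and uses transitivity, then checks that $B+(q-1)A$ is multiplied by $q$ under $T$ and by $-\sigma^2=\sigma^5$ under $S$ at $q=\sigma$, and (once (2) is known) that $A^\flat(\sigma)$ is fixed by $T$ and multiplied by $\sigma^2$ under $S$; these are exactly your identities $\varphi T_\sigma=\sigma\varphi$, $\varphi S_\sigma=\sigma^5\varphi$, $T_\sigma w=w$, $S_\sigma w=\sigma^2 w$, and your explicit normalization step (the determinant bound plus evaluation at $q=1$ to exclude a common factor $1-q$) makes precise what the paper leaves as ``modulo a power of $q$''.
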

\begin{proof}
All the identities hold for $\tfrac{a}{b}=0$. It is then sufficient to show that the properties are invariant under $T$ and $S$, since the modular group acts transitively on $\mathbb{QP}^1$.\\
\noindent For $\square\in\{\sharp ,\flat\}$, the expression $B^\square(q)+(q-1)A^\square(q)$ transforms under $T$ into $$B^\square(q)+(q-1)(qA^\square +B^\square) = q(B^\square(q)+(q-1)A^\square(q))$$
modulo a power of $q$. Hence, evaluating at $q=\sigma$ does not change the fact of being zero or being in $\mathbb{U}_6$.\\ 
\noindent Similarly, the transformation under $S$ gives $qA^\square(q)-(q-1)B^\square(q)$. Evaluating at $q=\sigma$ and using $\sigma^2-\sigma+1=0$, this gives $-\sigma^2(B^\square(\sigma)+(\sigma-1)A^\square(\sigma))$. Again, this preserves the fact of being zero or in $\mathbb{U}_6$. This finishes the proof of (1) and (2).\\
\noindent For (3), note that $A^\flat(\sigma)$ transforms under $T$ into
$$\sigma A^\flat(\sigma)+B^\flat(\sigma)=A^\flat(\sigma)$$
using the identity from (2). Under $S$ it transforms into $-B^\flat(\sigma)=\sigma^2 A^\flat(\sigma)$, where we used again (2). Therefore, $A^\flat(\sigma)\in\mathbb{U}_6$. Finally, since $B^\flat(\sigma)=-\sigma^2 A^\flat(\sigma)$, we also get $B^\flat(\sigma)\in\mathbb{U}_6$.
\end{proof}

\begin{notation}
We will write $A \equiv_{q} B$ to mean that there is an integer $k\in\Z$ such that $A(q) = q^k B(q)$.
\end{notation}

\begin{proposition}\label{Prop:invariance-q-Farey-det}
The four $q$-Farey determinants are invariant under the $\PSL_2(\Z)$-action, up to some monomial factor in $q$.
\end{proposition}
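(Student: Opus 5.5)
Since $\PSL_2(\Z)$ is generated by $T$ and $S$, it suffices to check that each of the four $q$-Farey determinants is multiplied by a monomial in $q$ (up to sign) when both rationals of the pair are moved by $T$ or by $S$. The tool is the definition of $q$-rationals via the deformed matrices (Definition \ref{def:q-numbers}) together with Proposition \ref{Prop:equivariance}(i). For $M\in\{T,S\}$ and $\square\in\{\sharp,\flat\}$ this gives
$$\begin{pmatrix} A^\square_{Mx}\\ B^\square_{Mx}\end{pmatrix} = \lambda\, M_q\begin{pmatrix} A^\square_{x}\\ B^\square_{x}\end{pmatrix}$$
for some scalar $\lambda$. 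Indeed, $T_q$ sends $(A,B)$ to $(qA+B,B)$ and $S_q$ sends $(A,B)$ to $(-B,qA)$.

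\textbf{Checking that the normalization is harmless.} The entries of $M_q$ are in $\Z[q]$, so the new vector has polynomial entries. I must check that these entries are coprime, up to a power of $q$. Since $\det T_q=q$ and $\det S_q=q$, any common factor of $qA+B$ and $B$, or of $-B$ and $qA$, divides $q\cdot\gcd(A,B)$, which is $q$ up to a unit. Hence $\lambda$ is $\pm q^{-k}$ for some $k\ge 0$, the sign fixing the positive leading coefficient. There is one exception: the left version of $0$ or $\infty$, where $A^\flat_0=q-1$ and $B^\flat_\infty=1-q$. A direct check shows these edge cases also give only monomial and sign factors.

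\textbf{Computing the determinants.} For the three determinants $d_F^{\square\triangle}$ with $(\square,\triangle)\neq(\flat,\flat)$ we get
$$d_F^{\square\triangle}(Mx,My)=\lambda_1\lambda_2\det\Bigl(M_q\begin{pmatrix}A^\square & C^\triangle\\ B^\square & D^\triangle\end{pmatrix}\Bigr)=\lambda_1\lambda_2\, q\, d_F^{\square\triangle}(x,y).$$
This is a monomial multiple of $d_F^{\square\triangle}(x,y)$, up to sign. For $d_F^{\flat\flat}$ the same computation applies before dividing by $q^2-q+1$, and the divisor is unchanged. So $d_F^{\flat\flat}$ is also invariant up to a monomial, provided it is a polynomial. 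That integrality is where Lemma \ref{Lem:special-values}(2) enters: it gives $B^\flat(\sigma)=(1-\sigma)A^\flat(\sigma)$ and $D^\flat(\sigma)=(1-\sigma)C^\flat(\sigma)$. Therefore
$$A^\flat D^\flat-B^\flat C^\flat$$
vanishes at $\sigma$, and also at $\bar\sigma$ since the coefficients are real. It is thus divisible by $q^2-q+1$. The identity itself holds as rational functions in any case.

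\textbf{Main obstacle.} The delicate part is the bookkeeping of $\lambda$. One must confirm that passing to coprime representatives with positive leading coefficient only introduces factors $\pm q^k$, never a nontrivial polynomial factor. I would handle this using $\gcd(A,B)=1$ and $\det M_q\in\{q\}$ as above, and treat $0,\infty$ separately. If signs matter, I would read "invariant" as invariant up to $\pm q^k$, consistent with the absolute value in the classical Farey determinant.
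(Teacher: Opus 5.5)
Your proposal is correct and follows the paper's proof: reduce to the generators $T$ and $S$, and observe that $T_q$ and $S_q$ multiply the $2\times 2$ determinant $A^\square D^\triangle-B^\square C^\triangle$ by $\det T_q=\det S_q=q$. Your determinant argument, that renormalizing to coprime representatives only costs a factor $\pm q^{-k}$, makes explicit what the paper compresses into ``modulo some power of $q$''. So does your remark that a sign can genuinely occur: for instance $d_F^{\sharp\sharp}(1,-1)=1+q$, while $d_F^{\sharp\sharp}(S\cdot 1,S\cdot(-1))=-(1+q)$.
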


\begin{proof}
It is sufficient to compute the change of $d_F^{\triangle\square}$ under $T$ and $S$ (modulo some power of $q$):
\begin{align*}
A^\square D^\triangle-B^\square C^\triangle &\xmapsto{T} (qA^\square+B^\square)D^\triangle-B^\square(qC^\triangle+D^\triangle) = q(A^\square D^\triangle-B^\square C^\triangle)    \\
A^\square D^\triangle-B^\square C^\triangle &\xmapsto{S} -qB^\square C^\triangle + qA^\square D^\triangle = q(A^\square D^\triangle-B^\square C^\triangle).
\end{align*}
\end{proof}

The four $q$-Farey determinants behave nicely under the duality $q\mapsto q^{-1}$, showing that there are essentially only two $q$-Farey determinants:
\begin{theorem}\label{Thm:link-left-right-dF}
For any two rational numbers, $d_F^{\flat\flat}$ is a polynomial and we have 
\begin{align*}
    d_F^{\flat \sharp }(q) & \equiv_{q}  d_F^{\sharp  \flat}(q^{-1}); \\
    d_F^{\flat\flat}(q) & \equiv_q d_F^{\sharp \sharp }(q^{-1}), 
\end{align*}
\end{theorem}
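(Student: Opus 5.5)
The plan is to use the transition map $g_q$ of Theorem \ref{Prop:duality}, which exchanges left and right versions while sending $q\mapsto q^{-1}$. As a Möbius transformation,
$$g_q=\begin{pmatrix} q & 1-q\\ q-1 & 1\end{pmatrix},\qquad \det g_q = q+(q-1)^2 = q^2-q+1 .$$
Applying $g_q$ to a pair of $q$-rationals multiplies their $2\times 2$ determinant by $\det g_q$, up to the normalisation of numerators and denominators. This explains the factor $q^2-q+1$ in $d_F^{\flat\flat}$, and Lemma \ref{Lem:special-values} will control the gcd of the transformed coefficient vectors.

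\textbf{Step 1 (the $\sharp\to\flat$ direction).} Write $[\tfrac{a}{b}]^\sharp_q = A^\sharp/B^\sharp$. Then
$$g_q\cdot\begin{pmatrix}A^\sharp\\ B^\sharp\end{pmatrix}=\begin{pmatrix}qA^\sharp+(1-q)B^\sharp\\ (q-1)A^\sharp+B^\sharp\end{pmatrix}$$
represents $[\tfrac{a}{b}]^\flat_{q^{-1}}$. By Theorem \ref{Prop:duality}, $A^\flat(q^{-1})/B^\flat(q^{-1})$ equals this fraction. Multiplying by a power $q^{m}$ to clear denominators, the reduced pair $(q^{m}A^\flat(q^{-1}),q^{m}B^\flat(q^{-1}))$ differs from the vector above by a factor $h(q)\in\Z[q,q^{-1}]$.

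I need to identify $h$. Lemma \ref{Lem:special-values}(1) says the second coordinate $B^\sharp+(q-1)A^\sharp$ is a unit at $q=\sigma$ (and at $\bar\sigma$ by conjugation), so $q^2-q+1$ does not divide it. Since $\gcd(A^\sharp,B^\sharp)=1$ and $\det g_q=q^2-q+1$, any common factor of the two coordinates must divide $(q^2-q+1)$ times a unit. Hence the vector is already reduced up to a monomial and a sign, and $h\equiv_q \pm1$.

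Symmetrically, $g_q$ applied to $(A^\flat,B^\flat)$ gives $[\tfrac ab]^\sharp_{q^{-1}}$. Here Lemma \ref{Lem:special-values}(2) shows the second coordinate vanishes at $\sigma$, and real coefficients give vanishing at $\bar\sigma$ too. The first coordinate $qA^\flat+(1-q)B^\flat$ at $\sigma$ equals $\sigma A^\flat-\sigma^2\cdot(\ldots)$, which is also zero by (2), since $1-\sigma=-\sigma^2$ and $\sigma A^\flat - \sigma^2 B^\flat = \sigma(A^\flat+(1-\sigma)B^\flat)$. So the coordinates are divisible by exactly $q^2-q+1$: the determinant bound shows no higher power can divide both, as $(A^\flat,B^\flat)$ is coprime. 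Thus $(A^\sharp(q^{-1}),B^\sharp(q^{-1}))\equiv_q \pm\, g_q(A^\flat,B^\flat)/(q^2-q+1)$.

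\textbf{Step 2 (determinants).} For $d_F^{\sharp\flat}(q^{-1})$, I will express $(A^\sharp(q^{-1}),B^\sharp(q^{-1}))$ via $g_q(A^\flat,B^\flat)/(q^2-q+1)$ and $(C^\flat(q^{-1}),D^\flat(q^{-1}))$ via $g_q(C^\sharp,D^\sharp)$, using $g_{q^{-1}}$ where necessary. The determinant is then $\det g_q/(q^2-q+1)\cdot(A^\flat D^\sharp - B^\flat C^\sharp)$ up to a monomial and sign, which equals $d_F^{\flat\sharp}(q)$.

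Similarly, $d_F^{\sharp\sharp}(q^{-1})$ becomes $\det g_q\cdot(A^\flat D^\flat-B^\flat C^\flat)/(q^2-q+1)^2 = d_F^{\flat\flat}(q)$ up to monomials. This proves the two relations. It also shows $d_F^{\flat\flat}$ is a Laurent polynomial, hence a polynomial, since $(q^2-q+1)$ divides $A^\flat D^\flat-B^\flat C^\flat$ because both pairs are proportional to $(1,\sigma^2)$-type vectors at $\sigma$ by Lemma \ref{Lem:special-values}(2).

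\textbf{Main obstacle.} The main difficulty is the bookkeeping of signs and leading-coefficient normalisations, so that the equalities hold up to $\equiv_q$ rather than also up to a sign. I would fix the sign by evaluating at $q=1$, where both sides reduce to $\pm(ad-bc)$ with matching orientation. I would also check the edge cases $\infty$ and $0$ separately, where Remark on non-positive coefficients applies.
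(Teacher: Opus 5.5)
Your proof is correct, but it is organized differently from the paper's.

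\textbf{The paper's route.} The paper first uses the $\PSL_2(\Z)$-invariance of the $q$-Farey determinants (Proposition \ref{Prop:invariance-q-Farey-det}) to move the first rational to $\infty$. Since $[\infty]^\sharp_q=\tfrac10$ and $[\infty]^\flat_q=\tfrac1{1-q}$, the four determinants become $S^\flat$, $S^\sharp+(q-1)R^\sharp$, $S^\sharp$ and $(S^\flat+(q-1)R^\flat)/(q^2-q+1)$. The theorem then reduces to comparing denominators in the two transition formulas for the single rational $\tfrac rs$. Lemma \ref{Lem:special-values} decides whether each transformed fraction is reduced.

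\textbf{Your route.} You skip this normalization and transform both rationals at once, using $\det\bigl(g_qv\,|\,g_qw\bigr)=(q^2-q+1)\det(v\,|\,w)$. The gcd control is the same in substance: your adjugate bound is the paper's elementary-operation step. Your version buys three things. It needs no invariance proposition. It shows that the factor $q^2-q+1$ in $d_F^{\flat\flat}$ is $\det g_q$ balanced against two reductions by $q^2-q+1$. It gives polynomiality of $d_F^{\flat\flat}$ for arbitrary pairs directly from Lemma \ref{Lem:special-values}(2).

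The price is the sign bookkeeping. Evaluation at $q=1$ settles the sign whenever $ad\neq bc$, since both sides equal $ad-bc$ there. The degenerate case of equal rationals is a direct check, using $A^\sharp B^\flat-A^\flat B^\sharp=q^{\varepsilon}(1-q)$.

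\textbf{One line needs fixing.} The identity $\sigma A^\flat-\sigma^2B^\flat=\sigma\bigl(A^\flat+(1-\sigma)B^\flat\bigr)$ is false. Moreover, $A^\flat+(1-\sigma)B^\flat$ does not vanish at $\sigma$ in general: for $x=1$, where $A^\flat=q$ and $B^\flat=1$, it equals $1$. Your conclusion that $q^2-q+1$ divides $qA^\flat+(1-q)B^\flat$ is nevertheless true. The cleanest way to see it is
\[
qA^\flat+(1-q)B^\flat=(1-q)\bigl(B^\flat+(q-1)A^\flat\bigr)+(q^2-q+1)A^\flat,
\]
so divisibility of the first coordinate follows from that of the second, which is Lemma \ref{Lem:special-values}(2).
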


The proof idea is to use the invariance under $\PSL_2(\Z)$ to put one fraction to $\infty$. Then, the transition map from Proposition \ref{Prop:duality} concludes.

\begin{proof}
We can use the $\mathrm{PSL}_2(\mathbb{Z})$-symmetry from Proposition \ref{Prop:invariance-q-Farey-det} to send the first rational to $\infty$. The second one is then sent to some rational $\tfrac{r}{s}$.\\

\noindent To prove the first identity, we have to compare $d_F^{\flat\sharp }(q) = S^\sharp (q)-(1-q)R^\sharp (q)$ to $d_F^{\sharp \flat}(q^{-1})=S^\flat(q^{-1})$, since $[\infty]^\sharp =\tfrac{1}{0}$ and $[\infty]^\flat=\tfrac{1}{1-q}$.\\

\noindent The transition map of $q$-rationals gives
\begin{equation}\label{Eq:duality-bemol}
\frac{R^\flat(q^{-1})}{S^\flat(q^{-1})}=\left[\frac{r}{s}\right]^\flat_{q^{-1}}=g_q\left(\left[\frac{r}{s}\right]^\sharp _q\right)=\frac{(1-q)S^\sharp (q)+qR^\sharp (q)}{S^\sharp (q)+(q-1)R^\sharp (q)}.    
\end{equation}
In order to identify the denominators of the two sides, we have to check that the fraction on the right hand side is reduced:
\begin{align*}
\gcd((1-q)S^\sharp +qR^\sharp ,S^\sharp +(q-1)R^\sharp ) &= \gcd((q^2-q+1)R^\sharp ,S^\sharp +(q-1)R^\sharp ) \\
&= \gcd(R^\sharp ,S^\sharp +(q-1)R^\sharp )= 1,
\end{align*}
where we used an elementary operation in the first line, then by point (1) of Lemma \ref{Lem:special-values}, the fact that $\gcd(q^2-q+1,S^\sharp +(q-1)R^\sharp )=1$ ($\sigma$ is a root of $q^2-q+1$), and finally that $\gcd(R^\sharp ,S^\sharp )=1$. This proves
$$d_F^{\flat \sharp }(q) = S^\sharp (q)+(q-1)R^\sharp (q) = q^{\alpha}S^{\flat}(q^{-1}) \equiv_q d_F^{\sharp\flat}(q^{-1}).$$

\medskip
\noindent For the second case, we notice that the polynomial $S^\flat(q)+(q-1)R^\flat(q)$ evaluates to 0 at $q=\sigma$, by Lemma \ref{Lem:special-values} point (2), hence is divisible by $q^2-q+1=(q-\sigma)(q-\bar{\sigma})$.
We then have to compare $d_F^{\sharp \sharp }(q^{-1})=S^\sharp (q^{-1})$ to $(q^2-q+1)d_F^{\flat\flat}(q)=S^\flat(q)+(q-1)R^\flat(q)$. \\
\noindent We use the transition map again to get
\begin{equation}\label{Eq:duality-diese}
\frac{R^\sharp (q^{-1})}{S^\sharp (q^{-1})}=\left[\frac{r}{s}\right]^\sharp _{q^{-1}}=g_q\left(\left[\frac{r}{s}\right]^\flat_q\right)=\frac{(1-q)S^\flat(q)+qR^\flat(q)}{S^\flat(q)+(q-1)R^\flat(q)}.
\end{equation}
The fraction on the right hand side is not reduced this time since
\begin{align*}
\gcd((1-q)S^\flat+qR^\flat,S^\flat+(q-1)R^\flat) &= \gcd((q^2-q+1)R^\flat,S^\flat+(q-1)R^\flat) \\
&= (q^2-q+1)\gcd\left(R^\flat,\frac{S^\flat+(q-1)R^\flat}{q^2-q+1}\right) \\
&= (q^2-q+1)\gcd(R^\flat,S^\flat+(q-1)R^\flat) \\
&= q^2-q+1,
\end{align*}
where we used point (2) of Lemma \ref{Lem:special-values} in the second line. For the third line, point (3) of Lemma \ref{Lem:special-values} implies $\gcd(R^\flat,q^2-q+1)=1$. Therefore,
$$ d_F^{\sharp \sharp }(q^{-1})= S^\sharp (q^{-1}) \equiv_q \frac{S^\flat(q)+(q-1)R^\flat(q)}{q^2-q+1} =d_F^{\flat\flat}(q).$$
\end{proof}

As a consequence, we get a positivity property of all $q$-Farey determinants:
\begin{proposition}\label{Prop:positivity-q-dF}
For $\tfrac{a}{b}> \tfrac{c}{d}$, we have $d_F^{\square\triangle}\in\mathbb{N}[q]$ for $\square,\triangle\in\{\sharp ,\flat\}$.
\end{proposition}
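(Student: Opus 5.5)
Since $\tfrac{a}{b}>\tfrac{c}{d}$, we first use Proposition \ref{Prop:invariance-q-Farey-det}, but only with elements preserving orientation and order. Choose $M\in\PSL_2(\Z)$ sending $\tfrac{a}{b}$ to $\infty=\tfrac{1}{0}$. The cyclic order on $\mathbb{QP}^1$ is preserved, so the pair $(\tfrac{a}{b},\tfrac{c}{d})$ goes to $(\infty,\tfrac{r}{s})$. Replacing $\tfrac{r}{s}$ by $\tfrac{r}{s}+k$ via $T^k$, we may assume $\tfrac{r}{s}>0$.

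The delicate point is signs. The transformation rules computed in the proof of Proposition \ref{Prop:invariance-q-Farey-det} are exact identities $d_F\mapsto q\,d_F$ on representatives. However, reducing fractions and normalising denominators to positive leading coefficient may introduce a sign $\pm1$. We will track this sign along the way. Alternatively, note that for $q\in(0,1)$ the sign of $A^\square D^\triangle-B^\square C^\triangle$ equals the sign of $[\tfrac{a}{b}]^\square_q-[\tfrac{c}{d}]^\triangle_q$, since the denominators are positive by Proposition \ref{Prop:cst-sign-coeffs}. By Proposition \ref{Prop:q-Farey-topo} and the well-ordering of the disks, this difference is positive whenever $\tfrac{a}{b}>\tfrac{c}{d}$. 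So the determinant takes positive values on $(0,1)$, and an overall sign of $-1$ is excluded. Hence, up to a monomial $q^k$ with $k\in\Z$, it suffices to show that the four determinants for the pair $(\infty,\tfrac{r}{s})$ with $\tfrac{r}{s}>0$ lie in $\N[q]$. Once that is done, the monomial cannot introduce negative powers. Indeed, the original $d_F^{\square\triangle}$ is a polynomial by construction (and by Theorem \ref{Thm:link-left-right-dF} for $\flat\flat$). It also equals $q^k$ times a polynomial with nonnegative coefficients, so it has nonnegative coefficients itself.

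For $(\infty,\tfrac{r}{s})$ the four determinants are explicit, using $[\infty]^\sharp=\tfrac{1}{0}$ and $[\infty]^\flat=\tfrac{1}{1-q}$:
\begin{equation*}
d_F^{\sharp\sharp}=S^\sharp,\quad d_F^{\sharp\flat}=S^\flat,\quad d_F^{\flat\sharp}=S^\sharp+(q-1)R^\sharp,\quad (q^2-q+1)\,d_F^{\flat\flat}=S^\flat+(q-1)R^\flat .
\end{equation*}
The first two lie in $\N[q]$ by the positivity property (Proposition \ref{Prop:cst-sign-coeffs}), since $\tfrac{r}{s}>0$. For the last two we use the proof of Theorem \ref{Thm:link-left-right-dF}. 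It shows $d_F^{\flat\sharp}(q)=q^{\alpha}S^\flat(q^{-1})$ and $d_F^{\flat\flat}(q)\equiv_q S^\sharp(q^{-1})$. Both $S^\flat$ and $S^\sharp$ have nonnegative coefficients, so $q^{\alpha}S^\flat(q^{-1})$ and $q^{\beta}S^\sharp(q^{-1})$ are Laurent polynomials with nonnegative coefficients. They are in fact polynomials, because the left-hand sides are.

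The main obstacle is the sign bookkeeping in the reduction step. The positivity of values on $(0,1)$, coming from the ordering of $q$-rationals, resolves it. The case $q>1$ is not needed, since a single evaluation region suffices to fix a global sign.
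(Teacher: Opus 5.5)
Your reduction and the four computations at $(\infty,\tfrac rs)$ are correct. They are the mirror image of the paper's argument. The paper sends $\tfrac cd$ to $0$, gets $d_F^{\square\sharp}=A^\square$, and transfers to $d_F^{\square\flat}$ via Theorem~\ref{Thm:link-left-right-dF}. You send $\tfrac ab$ to $\infty$, read off $S^\sharp$ and $S^\flat$, and take $d_F^{\flat\sharp},d_F^{\flat\flat}$ from the identities in the proof of that theorem. You are also right that Proposition~\ref{Prop:invariance-q-Farey-det} only gives invariance up to $\pm q^k$ once fractions are normalised: under $S$, the new denominator $qA$ can have negative leading coefficient.

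The gap is in the step you call the crux. To exclude the sign $-1$, you use $[\tfrac ab]^\square_q>[\tfrac cd]^\triangle_q$ for $q\in(0,1)$. In the paper this ordering is Corollary~\ref{Prop:well-orderedness}, which is \emph{deduced from} the proposition you are proving, so as written the argument is circular. Proposition~\ref{Prop:q-Farey-topo} does not fill this in. It only says that distinct geodesics $[x],[y]$ are disjoint, not that the disks occur along $\R$ in the order of the rationals. In particular it does not give $[\tfrac cd]^\sharp_q<[\tfrac ab]^\flat_q$, which your $\flat\sharp$ case needs. You could import the ordering from \cite{BBL}, but you would have to say so, and the proof would no longer be self-contained.

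The repair is elementary and is what the paper does: evaluate at $q=1$. There $A^\square(1)=a$, $B^\square(1)=b$ and $q^2-q+1=1$, so every original $d_F^{\square\triangle}$ takes the value $ad-bc>0$. Each of your normalised determinants at $(\infty,\tfrac rs)$ takes the value $s>0$. Hence the unit relating them is $+q^k$, and no ordering of $q$-rationals is needed. Equivalently, as in the paper, track only the property ``coefficients of constant sign'', which ignores the $\pm$ introduced by normalisation, and fix the sign at $q=1$.
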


\begin{proof}
We show that for any pair of distinct rationals, the $q$-Farey determinants have constant sign coefficients (this property is invariant under $T$ and $S$). The proposition then follows, since the sign can be determined by evaluating at $q=1$.\\
~\\
\noindent We have already seen that the Farey determinants do not change under $T$ and $S$ (modulo a power of $q$).
If $\triangle = \sharp $, we can send $\tfrac{c}{d}$ to $\tfrac{0}{1}$, then $d_F^{\square \sharp }=A^\square$, which has constant sign coefficients by Proposition \ref{Prop:cst-sign-coeffs}. This proves the proposition for $d_F^{\sharp \sharp }$ and $d_F^{\flat\sharp }$.
The remaining two cases follow from the relations between the $q$-Farey determinants given in Theorem \ref{Thm:link-left-right-dF}.
\end{proof}

Note that even in the case $(\sharp \sharp )$, this provides a new proof of the positivity of the polynomial $A^\sharp D^\sharp -B^\sharp C^\sharp $. The original proof in \cite[Section 4.7]{MGO-2020} uses the Farey triangulation.

As a consequence, we recover the well-orderedness of $q$-rationals:
\begin{coro}\label{Prop:well-orderedness}
For $\tfrac{c}{d}< \tfrac{a}{b}$ and $q\in(0,1)$, we have
$$\left[\frac{c}{d}\right]^{\flat}_q<\left[\frac{c}{d}\right]^{\sharp}_q<\left[\frac{a}{b}\right]^{\flat}_q<\left[\frac{a}{b}\right]^{\sharp}_q.$$
\end{coro}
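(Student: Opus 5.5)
The plan is to deduce the chain of inequalities from the positivity of the four $q$-Farey determinants (Proposition \ref{Prop:positivity-q-dF}), together with the fact that all denominators $B^\square, D^\triangle$ are positive for $q\in(0,1)$.

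The first step is to establish positivity of the denominators. By Proposition \ref{Prop:cst-sign-coeffs}, $B^\sharp$ and $B^\flat$ have constant-sign integer coefficients, normalized with positive leading coefficient, so they are in $\N[q]$ and nonzero. Hence they are strictly positive for $q>0$. The same holds for $D^\sharp, D^\flat$. The only exception is $[\infty]^\flat=\tfrac{1}{1-q}$, whose denominator $1-q$ is still positive for $q\in(0,1)$. The cases where one of the fractions is $\infty$ can also be handled directly.

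The second step compares the distinct rationals. For any choice of $\square,\triangle$,
$$\frac{A^\square}{B^\square}-\frac{C^\triangle}{D^\triangle}=\frac{A^\square D^\triangle-B^\square C^\triangle}{B^\square D^\triangle},$$
and the numerator equals $d_F^{\square\triangle}$, or $(q^2-q+1)d_F^{\flat\flat}$ in the case $(\flat,\flat)$. Since $\tfrac{a}{b}>\tfrac{c}{d}$, Proposition \ref{Prop:positivity-q-dF} gives $d_F^{\square\triangle}\in\N[q]$. Its value at $q=1$ is the Farey determinant $ad-bc>0$, so it is a nonzero polynomial with nonnegative coefficients and hence strictly positive for $q>0$. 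The factor $q^2-q+1$ is also positive. Taking $\square=\flat$ and $\triangle=\sharp$ gives the middle inequality $[\tfrac{c}{d}]^\sharp_q<[\tfrac{a}{b}]^\flat_q$.

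The third step proves the outer inequalities, $[x]^\flat_q<[x]^\sharp_q$ for a single rational. Proposition \ref{prop:qradius} gives $\lvert [x]^\sharp_q-[x]^\flat_q\rvert=\ell_q(x)>0$, but this does not determine the sign. For the sign, the proof of Proposition \ref{prop:qradius} shows that $A^\sharp B^\flat-A^\flat B^\sharp$ equals, up to a positive monomial factor $q^k$, the quantity $\det(M_q)/(1-q)$ with $\det(M_q)=q^{\alpha_1+\cdots+\alpha_{2s}}>0$. So $[x]^\sharp_q-[x]^\flat_q$ has the sign of $1/(1-q)>0$ for $q\in(0,1)$.

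Alternatively, one can check the sign at $x=\infty$, where $[\infty]^\flat=\tfrac{1}{1-q}$ lies to the left of $\tfrac{1}{0}$ in the cyclic order on $\mathbb{RP}^1$. The orientation-preserving $\PSL_2(\Z)$-action preserves the cyclic order, and combining this with the second step for neighbouring rationals yields the linear order.

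The main obstacle is the sign bookkeeping in this third step. The monomial factors and the $\min(0,\alpha_1)$ correction must not hide a sign, and the exceptional case $x\in\Z_{\le 0}$ (and $x=\infty$) must be treated separately. If that becomes messy, the fallback is the cyclic-order argument applied to the geodesics $[x]$, which are disjoint by Proposition \ref{Prop:q-Farey-topo}.
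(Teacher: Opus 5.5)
Your proposal is correct and follows the paper's route. The paper derives the corollary directly from Proposition \ref{Prop:positivity-q-dF}: the sign of $[\tfrac{a}{b}]^\square_q-[\tfrac{c}{d}]^\triangle_q$ is the sign of $d_F^{\square\triangle}$ divided by positive denominators, exactly as in your second step, and $[x]^\flat_q<[x]^\sharp_q$ is the known fact from \cite{BBL}, equivalently $A^\sharp B^\flat-A^\flat B^\sharp=q^{\varepsilon}(1-q)$ (Remark \ref{rem:sym_sharp_and_flat}), which your third step recovers. One slip in that step: since $\det\left(\begin{smallmatrix}1&1\\0&1-q\end{smallmatrix}\right)=1-q$, the determinant identity involves $\det(M_q)\,(1-q)$ rather than $\det(M_q)/(1-q)$, but this does not change the sign conclusion.
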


Another consequence arises by combining Theorem \ref{Thm:link-left-right-dF} with results about palindromic denominators in $q$-rationals from \cite{Kogiso, Ren}.

\begin{coro}\label{Prop:id-flat-sharp}
Consider a rational number $\frac{k}{n} \in \Q$.\\
$(i)$ If $n$ divides $k^2+1$, then $N^\sharp +(q-1)K^\sharp  \equiv_q N^{\flat}$.\\
$(ii)$ If $n$ divides $k^2-1$, then $N^\flat+(q-1)K^\flat\equiv_q (q^2-q+1)N^\sharp $.
\end{coro}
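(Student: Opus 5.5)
The plan is to combine the two identities established in the proof of Theorem \ref{Thm:link-left-right-dF}, which compare a polynomial in $q$ with the denominator of the dual $q$-rational at $q^{-1}$, with the palindromicity criterion of Theorem \ref{thm:palindromicity}. From Equations \eqref{Eq:duality-bemol} and \eqref{Eq:duality-diese}, and the gcd computations following them, applied to $\tfrac{r}{s}=\tfrac{k}{n}$, we already have
$$N^\sharp(q)+(q-1)K^\sharp(q)\equiv_q N^\flat(q^{-1}), \qquad N^\flat(q)+(q-1)K^\flat(q)\equiv_q (q^2-q+1)\,N^\sharp(q^{-1}).$$
These identities were derived directly from the transition map $g_q$ and Lemma \ref{Lem:special-values}, without any hypothesis on $\tfrac{k}{n}$. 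So the whole statement reduces to replacing $N^\flat(q^{-1})$ by $N^\flat(q)$, respectively $N^\sharp(q^{-1})$ by $N^\sharp(q)$, up to a monomial.

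\emph{Case (i).} If $n\mid k^2+1$, then $k^2\equiv -1 \bmod n$. Theorem \ref{thm:palindromicity} then says that $N^\flat=B^\flat_{k/n}$ is palindromic, i.e. $N^\flat(q^{-1})=q^{-\deg N^\flat}N^\flat(q)\equiv_q N^\flat(q)$. Substituting into the first identity gives $N^\sharp+(q-1)K^\sharp\equiv_q N^\flat$.

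\emph{Case (ii).} If $n\mid k^2-1$, then $k^2\equiv 1\bmod n$. Theorem \ref{thm:palindromicity} gives that $N^\sharp$ is palindromic, so $N^\sharp(q^{-1})\equiv_q N^\sharp(q)$. Substituting into the second identity gives $N^\flat+(q-1)K^\flat\equiv_q (q^2-q+1)N^\sharp$.

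The only point needing care is whether the reduction to $\infty$ used in Theorem \ref{Thm:link-left-right-dF} is compatible with the present statement. It is: the displayed identities come from applying $g_q$ to the single rational $\tfrac{k}{n}$, so no $\PSL_2(\Z)$ move is needed here. We also need the sign and normalization conventions for $N^\square$ (positive leading coefficient) so that the equality of reduced fractions determines denominators up to $\equiv_q$ and not merely up to sign. I expect the only potential obstacle to be edge cases such as $n=1$ or $k=0$, where $\tfrac{k}{n}\in\Z$ and the relevant denominators are monomials or $1$. These can be checked directly: both sides reduce to monomials, consistent with the identities above.
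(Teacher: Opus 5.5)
Your proposal is correct and follows the paper's proof. Both read off $N^\sharp+(q-1)K^\sharp\equiv_q N^\flat(q^{-1})$ and $N^\flat+(q-1)K^\flat\equiv_q (q^2-q+1)N^\sharp(q^{-1})$ from the denominators in \eqref{Eq:duality-bemol} and \eqref{Eq:duality-diese}, then use palindromicity to replace $q^{-1}$ by $q$. The only difference is in case (ii): the paper obtains palindromicity of $N^\sharp$ from Theorem \ref{thm:equal-denominators} (in effect applied to $k\cdot(-k)\equiv -1 \bmod n$), whereas you use the $\sharp$-half of Theorem \ref{thm:palindromicity} directly, which is equally valid and slightly more direct.
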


\begin{proof}
From \cite[Theorem 1.2]{Ren} (see Theorem \ref{thm:palindromicity}), we know that $n\mid k^2+1$ implies that $N^\flat$ is palindromic. Hence by the denominator of Equation \eqref{Eq:duality-bemol}, we see that $$N^\flat(q)=q^{\mathrm{deg}(N^\flat)}N^\flat(q^{-1})\equiv_q (N^\sharp +(q-1)K^\sharp ).$$
\noindent Similarly, from \cite[Corollary 3.8]{Kogiso} (see Theorem \ref{thm:equal-denominators}), we know that $n\mid k^2-1$ implies that $N^\sharp $ is palindromic. Hence the denominator of Equation \eqref{Eq:duality-diese} concludes.
\end{proof}

\subsection{$q$-Farey operations}

The $q$-deformed Farey operations are an equivalent way to define $q$-rationals. They were introduced in \cite[Section 2.5]{MGO-2020}. Let us quickly recall them here.

\begin{proposition}[\cite{MGO-2020,ren_2024}]
Let $(\tfrac{a}{b},\tfrac{c}{d})$ be a pair of rationals with Farey determinant 1, with $\frac{a}{b} < \frac{c}{d}$. Put $\tfrac{r}{s} = \tfrac{a}{b}\oplus_F \tfrac{c}{d}=\tfrac{a+c}{b+d}$. Then
$$R^\sharp = A^\sharp + q^\alpha C^\sharp \;\;\text{ and }\;\; S^\sharp = B^\sharp+q^\alpha D^\sharp,$$
where 
$$\alpha= \left \{ \begin{array}{cl}
\varepsilon(\tfrac{a}{b})-\varepsilon(\tfrac{c}{d})+1 & \text{ if  } \varepsilon(\tfrac{a}{b})\geq \varepsilon(\tfrac{c}{d}) \\
1 &\text{ else  },
\end{array} \right.$$
where $\varepsilon$ is defined in Definition \eqref{defi:epsilon}. A similar formula holds for left $q$-rationals.
\end{proposition}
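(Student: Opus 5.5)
We prove the statement by induction along the Farey tree, using the $\PSL_2(\Z)$-equivariance of the deformed action. The formula is checked on a base pair and then transported by $T_q$ and $S_q$, keeping track of the exponent $\alpha$ through the function $\varepsilon$.

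\textbf{Reduction.} Since $M=\left(\begin{smallmatrix}a&c\\ b&d\end{smallmatrix}\right)$ has determinant $\pm1$ and $\tfrac{a}{b}<\tfrac{c}{d}$, we have $M\in\mathrm{SL}_2(\Z)$ after fixing signs. The Farey sum is $\tfrac{a+c}{b+d}=M\cdot\tfrac11$. By Definition \ref{def:q-numbers},
$$\left[\tfrac{r}{s}\right]^\sharp_q = M_q\cdot[1]^\sharp_q = M_q\cdot \tfrac{1}{1}.$$
The columns of $M_q$ represent the images of $\infty=\tfrac10$ and $0=\tfrac01$, namely $[\tfrac{a}{b}]^\sharp_q$ and $[\tfrac{c}{d}]^\sharp_q$. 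Applying $M_q$ to the vector $\binom11$ therefore returns a vector of the form $\lambda\binom{A^\sharp}{B^\sharp}+\mu\binom{C^\sharp}{D^\sharp}$, where $\lambda,\mu$ are monomials in $q$ coming from the normalisation of the columns of $M_q$. These columns are only proportional to $\binom{A^\sharp}{B^\sharp}$ and $\binom{C^\sharp}{D^\sharp}$ up to powers of $q$ and signs. After dividing by $\lambda$, we get
$$R^\sharp\equiv_q A^\sharp+q^{\alpha}C^\sharp,\qquad S^\sharp\equiv_q B^\sharp+q^\alpha D^\sharp,$$
for some integer $\alpha$. Reducedness of this fraction follows from the $q$-Farey determinant $d_F^{\sharp\sharp}$ being a monomial. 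Indeed, it equals $1$ at $q=1$, it is $\PSL_2(\Z)$-invariant up to monomials (Proposition \ref{Prop:invariance-q-Farey-det}), and it reduces to the case $(\infty,0)$.

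\textbf{Computing $\alpha$.} It remains to show that $\alpha$ is as stated, and that with this choice $R^\sharp,S^\sharp$ are literally the normalised polynomials, with nonzero constant term and positive coefficients. We determine $\alpha$ through the determinant identity used in Proposition \ref{prop:qradius}: $\det(M_q)=q^{\sum\alpha_i}$, and the columns of $M_q$ are $q^{u}\binom{A^\sharp}{B^\sharp}$ and $q^{v}\binom{C^\sharp}{D^\sharp}$. Comparing with the continued fraction expansions of $\tfrac{a}{b}$ and $\tfrac{c}{d}$ expresses $u-v$ in terms of $\varepsilon(\tfrac ab)-\varepsilon(\tfrac cd)$. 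Since Farey neighbours have continued fractions differing by one step, their $\varepsilon$-values are closely related; this is where I would use Definition \ref{defi:epsilon}. The case split in the statement corresponds to whether $q^{u-v}$ or its inverse must be multiplied through so that both summands are polynomials with constant term. I would carry out the split by induction on the depth in the Stern–Brocot tree: assume the formula for both parents and check the two children $\tfrac{a}{b}\oplus_F\tfrac{r}{s}$ and $\tfrac rs\oplus_F\tfrac cd$.

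\textbf{Main obstacle.} The hardest step is this exponent bookkeeping, especially the boundary cases $x\in\Z_{\le0}$, where $\varepsilon$ has its special form, and negative fractions. I would first treat $0<\tfrac ab<\tfrac cd$ via the combinatorial positivity (Proposition \ref{Prop:cst-sign-coeffs}), and then extend to other signs using Proposition \ref{Prop:equivariance}. The left version follows by applying the transition map $g_q$ of Theorem \ref{Prop:duality} together with $q\mapsto q^{-1}$.
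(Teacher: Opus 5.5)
The paper does not prove this proposition; it quotes it from \cite{MGO-2020,ren_2024}. So your argument has to stand on its own, and it stops exactly where the content of the statement begins.

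Your reduction is fine apart from one slip. For $\tfrac ab<\tfrac cd$ the matrix $\left(\begin{smallmatrix}a&c\\ b&d\end{smallmatrix}\right)$ has determinant $-1$. You must swap its columns (so $\infty\mapsto\tfrac cd$, $0\mapsto\tfrac ab$) rather than negate one. Negating a column turns $M\cdot 1$ into the Farey difference, and a determinant $-1$ element acts through $N_q$ and exchanges $\sharp$ and $\flat$. With that fix, this is essentially the equivariance argument the paper uses for Theorem~\ref{Thm:q-Farey-operations}, where the authors remark that explicit exponents seem hard to obtain.

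But the reduction only yields $R^\sharp=q^m(A^\sharp+q^\alpha C^\sharp)$ and $S^\sharp=q^m(B^\sharp+q^\alpha D^\sharp)$ for some unknown $m,\alpha\in\Z$. The proposition is precisely the assertion $m=0$ together with the value of $\alpha$, and none of your tools reaches it:
\begin{itemize}
\item The identity $\det M_q=q^{\sum\alpha_i}$ only involves the product $q^{u+v}$ of the two column scalings. Rescaling one column by $q$ and the other by $q^{-1}$ leaves the determinant unchanged, so it cannot detect $u-v$, which is what you need.
\item Your Stern--Brocot induction has no inductive step. To pass to the edge $(\tfrac ab,\tfrac rs)$ you need $[\tfrac{2a+c}{2b+d}]^\sharp_q$ exactly, and equivariance again gives it only up to an unknown monomial.
\item Positivity of coefficients does not fix exponents either.
\end{itemize}

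What is missing is an invariant that pins down the normalisation. For instance, it would suffice to prove two facts for Farey neighbours $\tfrac ab<\tfrac cd$:
\begin{itemize}
\item $A^\sharp D^\sharp-B^\sharp C^\sharp=-q^{\varepsilon(c/d)}$;
\item $\varepsilon(\tfrac ab\oplus_F\tfrac cd)=\max(\varepsilon(\tfrac ab),\varepsilon(\tfrac cd))+1$.
\end{itemize}
Both hold on pairs such as $(0,\tfrac12)$, $(\tfrac12,1)$, $(-1,0)$, $(-1,-\tfrac12)$. You could try to prove them from a base case plus tracking under $T_q,S_q$, minding the special form of $\varepsilon$ on $\Z_{\le0}$. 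Given these, $R^\sharp D^\sharp-S^\sharp C^\sharp=q^m(A^\sharp D^\sharp-B^\sharp C^\sharp)$ forces $m=0$. Then $A^\sharp S^\sharp-B^\sharp R^\sharp=q^{m+\alpha}(A^\sharp D^\sharp-B^\sharp C^\sharp)$ forces $\alpha=\varepsilon(\tfrac rs)-\varepsilon(\tfrac cd)$, which is exactly the stated case formula. Alternatively, you need an explicit formula for $M_q$ in terms of $q$-convergents, as in \cite{MGO-2020}.

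Your reading of the case split is also wrong. In both cases the answer has the form $A^\sharp+q^\alpha C^\sharp$ with $\alpha\ge1$, so the split is not about which summand carries the power of $q$. Nor is ``nonzero constant term'' the normalisation in force: $A^\sharp=0$ for $\tfrac ab=0$, $C^\sharp=q^{n-1}$ for $\tfrac cd=\tfrac1n$, and $B^\sharp(0)=0$ whenever $\tfrac ab<0$. The split just says $\alpha-1=\max\left(0,\varepsilon(\tfrac ab)-\varepsilon(\tfrac cd)\right)$, i.e.\ it records which neighbour is deeper.

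Finally, transporting the positive case to negative fractions with $[-x]_q=-q^{-1}[x]_{q^{-1}}$ reverses the order of the pair and inverts $q$. The power of $q$ then moves to the other summand and must be converted back into $\varepsilon$ through the degrees of the polynomials; your sketch does not address this.
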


Pairs of Farey determinant 1 correspond to points in the Farey graph $\mathcal{F}$ which are at graph distance 1 (they have an edge between them). We will generalize the above proposition to pairs of rationals of Farey graph distance 2.

\begin{Remark}
Note that the Farey graph distance is not equal in general to the Farey determinant. They coincide only when they both are equal to $1$.
\end{Remark}

Pairs of rationals of Farey graph distance 2 arise naturally in any Farey sequence, i.e. a sequence of all reduced fractions in a given interval, with denominator smaller or equal to a fixed number $m$, and ordered increasingly, see Chapter III of \cite{Hardy_Wright}. For a Farey sequence $(f_n)$, we have $d_F(f_n,f_{n+1}) = 1$ for all $n$, and also 
\begin{equation}\label{Eq:Farey-sequences}
    f_{n}=f_{n-1}\oplus_F f_{n+1}.
\end{equation}
Hence, the Farey graph distance between $f_n$ and $f_{n+2}$ is at most 2.

\begin{example}
The Farey sequence in the intveral $[0,\tfrac{1}{2}]$ for $m=6$ is given by
$$\left\{\frac{0}{1},\frac{1}{6}, \frac{1}{5}, \frac{1}{4}, \frac{1}{3}, \frac{2}{5}, \frac{1}{2}\right\}.$$
For a more complicated example, consider the interval $[\tfrac{151}{227},\tfrac{153}{229}]$ with $m=234$, which gives 
$\{\tfrac{151}{227},\tfrac{153}{230}, \tfrac{155}{233},\tfrac{2}{3}, \tfrac{155}{232}, \tfrac{153}{229}\}.$
In both examples, the relation \eqref{Eq:Farey-sequences} holds.
\end{example}

There is a nice characterization of pairs with Farey graph distance at most 2:
\begin{proposition}
Consider a pair of rationals $(\tfrac{a}{b},\tfrac{c}{d})$ and denote by $d_F$ their Farey determinant. The pair is of Farey graph distance at most 2 inside $\mathcal{F}$ if and only if $\mathrm{gcd}(a+c,b+d) = d_F$ or $\mathrm{gcd}(a-c,b-d) = d_F$.
\end{proposition}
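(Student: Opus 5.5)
We reduce to a normal form using the $\mathrm{SL}_2(\Z)$-action. Both conditions in the statement are $\mathrm{SL}_2(\Z)$-invariant. The Farey graph distance is invariant because $\PGL_2(\Z)$ acts by automorphisms of $\mathcal{F}$. For the gcd conditions, Proposition \ref{Prop:invariance-Farey} gives that $d_F$ is invariant and that the vectors $\binom{a\pm c}{b\pm d}=M\binom{1}{\pm 1}$ transform linearly under $\mathrm{SL}_2(\Z)$. Since the gcd of the entries of an integer vector is unchanged by an invertible integer matrix, $\gcd(a\pm c,b\pm d)$ is invariant as well. (Replacing the representative $(c,d)$ by $(-c,-d)$ only swaps the two conditions, so the "or" statement is unaffected.) By Proposition \ref{Prop:invariance-Farey}, we may therefore assume $\tfrac{a}{b}=\tfrac{1}{0}$ and $\tfrac{c}{d}=\tfrac{k}{n}$ with $0\le k<n$ and $n=d_F$. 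The trivial cases $d_F\in\{0,1\}$ are handled separately: distance $0$ or $1$, where the gcd condition holds automatically.

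In this normal form the two gcds are $\gcd(1+k,n)$ and $\gcd(1-k,n)$. Each equals $n$ exactly when $k\equiv -1$ or $k\equiv 1 \pmod n$, respectively. With $0\le k<n$, this means $k=n-1$ or $k=1$, so $\tfrac{k}{n}\in\{\tfrac{1}{n},\tfrac{n-1}{n}\}$. If one prefers the representative $-\tfrac{1}{n}$, this is $T$-equivalent to $\tfrac{n-1}{n}$.

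On the geometric side, the neighbours of $\infty$ in $\mathcal{F}$ are exactly the integers $m=\tfrac{m}{1}$, since these are the fractions with $d_F=1$ to $\tfrac{1}{0}$. Hence $\tfrac{k}{n}$ is at graph distance at most $2$ from $\infty$ if and only if it is a Farey neighbour of some integer $m$, that is, $\lvert k-mn\rvert=1$ for some $m\in\Z$. This is equivalent to $k\equiv\pm1 \pmod n$. For $0\le k<n$ and $n\ge 2$, it means $k\in\{1,n-1\}$, matching the previous paragraph and proving the equivalence.

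The only delicate point is the invariance of the gcd conditions under the group action. In particular, the sign convention for reduced fractions forces care with the case $n=2$, where $1=n-1$ and the sum and difference conditions coincide. That case is checked directly: $\tfrac{1}{2}$ is adjacent to both $0$ and $1$, so it is at distance $2$ from $\infty$, consistent with both gcds equaling $2$.
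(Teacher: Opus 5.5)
Your proof is correct, but it takes a different route from the paper. The paper never uses the group action for this statement.

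\begin{itemize}
\item For the forward direction, it takes the reduced Farey sum $\tfrac{(a+c)/d_F}{(b+d)/d_F}$ (or the reduced difference). It then checks directly that its Farey determinant with $\tfrac{a}{b}$ is $\tfrac{\lvert a(b+d)-b(a+c)\rvert}{d_F}=1$, and likewise with $\tfrac{c}{d}$.
\item For the converse, it takes a common neighbour $\tfrac{x}{y}$ and solves the linear system $ay-bx=\pm1$, $cy-dx=\pm1$ by Cramer's rule. This gives $x=\tfrac{\pm a\pm c}{d_F}$ and $y=\tfrac{\pm b\pm d}{d_F}$, and $\gcd(x,y)=1$ then forces $\gcd(a\pm c,b\pm d)=d_F$.
\end{itemize}

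You instead prove that both sides of the equivalence are $\mathrm{SL}_2(\Z)$-invariant. This requires the sign bookkeeping you handle correctly: renormalizing a denominator swaps the sum and difference conditions. You then reduce to $(\tfrac10,\tfrac kn)$. There both sides become $k\equiv\pm1 \pmod n$, because the neighbours of $\infty$ are exactly the integers.

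The paper's computation is shorter and needs no invariance argument. It also identifies the intermediate vertex directly as the reduced Farey sum or difference in arbitrary position, which is what Theorem \ref{Thm:q-Farey-operations} uses afterwards. Your argument is more conceptual and matches the normal-form strategy the paper uses elsewhere, e.g.\ in Proposition \ref{Prop:represents-reg}. The identification of the middle vertex can also be recovered from your normal form: $\tfrac{1}{0}\oplus_F\tfrac{n-1}{n}=\tfrac{n}{n}=\tfrac11$, transported by equivariance of the Farey operations.
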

In particular, the reduced expression of the Farey sum or the Farey difference of a pair of Farey graph distance 2 is explicit.

\begin{proof}
Assume that $\mathrm{gcd}(a+c,b+d) = d_F$. We then compute the Farey determinant between $\tfrac{a}{b}$ and $\tfrac{a+c}{b+d}$:
$$d_F\left(\frac{a}{b}, \frac{a+c}{b+d}\right) = \frac{a(b+d)-b(a+c)}{d_F} = 1.$$
The same computation shows that $d_F(\tfrac{c}{d}, \tfrac{a+c}{b+d})=1$. Hence their graph distance is at most 2. A similar argument works for the assumption that $\mathrm{gcd}(a-c,b-d) = d_F$.

Conversely, assume that the graph distance is at most 2. If the distance is 1, there is nothing to prove since $d_F=1$ and the Farey operations give reduced fractions. If the distance is 2, we know that there is a rational number $\tfrac{x}{y}$ such that $d_F(\tfrac{x}{y}, \tfrac{a}{b})=1=d_F(\tfrac{x}{y},\tfrac{c}{d})$. This leads to a system in $(x,y)$ with solution $x=\frac{\pm a\pm c}{d_F}$ and $y=\frac{\pm b \pm d}{d_F}$. Since $\tfrac{x}{y}$ is reduced, we get that either $\mathrm{gcd}(a+c,b+d)=d_F$ or $\mathrm{gcd}(a-c,b-d)=d_F$.
\end{proof}

\begin{theorem}\label{Thm:q-Farey-operations}
Let $(\tfrac{a}{b}, \tfrac{c}{d})$ be a pair of rationals, and denote by $d_F$ their Farey determinant. Assume that $\mathrm{gcd}(a+c,b+d)=d_F$. Then there are integers $\alpha, \beta$ such that 
$$\left[\frac{a}{b}\oplus_F \frac{c}{d}\right]_q^\sharp = \frac{(q^\alpha A^\sharp+q^\beta C^\sharp)/d_F^{\sharp\sharp}}{(q^\alpha B^\sharp+q^\beta D^\sharp)/d_F^{\sharp\sharp}},$$
where the fraction on the right hand side is reduced.
\end{theorem}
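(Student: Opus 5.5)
The plan is to reduce to a normal form using the $\PSL_2(\Z)$-symmetry, as in the proof of Theorem \ref{Thm:link-left-right-dF}. The hypothesis $\gcd(a+c,b+d)=d_F$ is $\mathrm{SL}_2(\Z)$-invariant, since the Farey sum is equivariant (Proposition \ref{Prop:invariance-Farey}) and $d_F$ is invariant. By the proof of the characterization of graph distance at most 2, the pair has a common Farey neighbour $\tfrac{x}{y}=\tfrac{a+c}{d_F}\big/\tfrac{b+d}{d_F}$, and $\tfrac{x}{y}$ is exactly the reduced Farey sum. Moreover $d_F(\tfrac{a}{b},\tfrac{x}{y})=d_F(\tfrac{c}{d},\tfrac{x}{y})=1$.

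\textbf{Normalisation.} Apply $M\in\PSL_2(\Z)$ sending $\tfrac{x}{y}\mapsto\infty$. Then $\tfrac{a}{b}$ and $\tfrac{c}{d}$, being Farey neighbours of $\infty$, become integers $m$ and $n$. Since $(a,b)+(c,d)=d_F\,(x,y)$ as vectors, and the vector of $\infty$ is $(1,0)$, the images (with signs fixed by the matrix action) are $(m,1)$ and $(-n+d_F\cdot\text{?},-1)$. More concretely, the images are $\pm(m,1)$ and $\mp(n,1)$, which forces $m-n=\pm d_F$. So the model pair is $(m,\tfrac{-n}{-1})$, whose unreduced Farey sum is $\tfrac{m-n}{0}=\infty$.

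\textbf{Computation in the model.} For integers we have $[m]_q^\sharp=\tfrac{[m]_q}{1}$, with the usual $q$-integer conventions for negative $m$ up to powers of $q$. Take $\alpha,\beta$ so that
$$q^\alpha M^\sharp - q^\beta N^\sharp \equiv_q q^{\min(m,n)}[\,|m-n|\,]_q = q^{\min(m,n)}\,d_F^{\sharp\sharp}(m,n),$$
while the denominators $q^\alpha\cdot1-q^\beta\cdot1=0$. The quotient is then $\tfrac{1}{0}=[\infty]^\sharp_q$, as desired. By Proposition \ref{Prop:invariance-q-Farey-det}, $d_F^{\sharp\sharp}$ here equals $M^\sharp\cdot1-1\cdot N^\sharp$, so dividing by it gives exactly the reduced vector $(1,0)$ up to a monomial. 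Note the sign: in the model, one of the two vectors is $-(n,1)$, which supplies the $+$ sign in the statement.

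\textbf{Transport back.} The vector $(A^\sharp,B^\sharp)$ of a $q$-rational transforms under $T_q,S_q$ as $M_q$ applied to the column vector, up to a monomial factor. This monomial factor is exactly what was tracked in the proof of Proposition \ref{Prop:invariance-q-Farey-det}. Since $M_q$ is linear, $M_q^{-1}$ applied to $q^{\alpha}(A^\sharp,B^\sharp)+q^\beta(C^\sharp,D^\sharp)$ is a monomial combination of the model vectors. By the model computation, this is $d_F^{\sharp\sharp}$ times the vector of $\infty$, up to a monomial. Applying $M_q$ then gives $d_F^{\sharp\sharp}$ times a monomial times $(R^\sharp,S^\sharp)$, where $\tfrac{R^\sharp}{S^\sharp}=M_q\cdot\tfrac10$ is reduced by definition. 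Dividing by $d_F^{\sharp\sharp}$ yields a reduced fraction: it differs from the coprime pair $(R^\sharp,S^\sharp)$ only by a monomial, which can be absorbed into $\alpha,\beta$.

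\textbf{Main obstacle.} The hard part is the bookkeeping of monomial factors. The ``up to $q^k$'' normalisations of the vectors $(A^\sharp,B^\sharp)$ differ between $\tfrac ab$ and $\tfrac cd$ after applying $M_q$. One must check that these normalisations can be compensated by choosing $\alpha,\beta$, which is what makes the exponents depend on the pair. This works because each vector is only rescaled by its own monomial, and rescalings on the two summands can be absorbed independently into $q^\alpha$ and $q^\beta$.

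One should also confirm that the result has positive leading coefficient, matching the sign convention of Definition \ref{def:q-numbers}. This follows from Proposition \ref{Prop:positivity-q-dF} and Proposition \ref{Prop:cst-sign-coeffs}.
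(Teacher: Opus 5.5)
Your proof is correct and follows the paper's strategy: invariance of the identity under the deformed $\PSL_2(\Z)$-action, then a direct check on a normal form. The only difference is the normal form. You send the reduced Farey sum to $\infty$, so the pair becomes two integers $m,n$ with $|m-n|=d_F$, whereas the paper sends $\tfrac{a}{b}$ to $\infty$ and $\tfrac{c}{d}$ to $-\tfrac{1}{n}$, so that the sum becomes $0$.

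One small correction: your opening claim that the hypothesis $\gcd(a+c,b+d)=d_F$ is $\mathrm{SL}_2(\Z)$-invariant is false with the convention $b,d\geq 0$. For example, $S$ sends $(\tfrac{0}{1},\tfrac{3}{2})$ to $(\tfrac{1}{0},\tfrac{-2}{3})$, where only the difference condition holds. Your argument never uses this claim: tracking the vectors $\pm(m,1)$ and $\mp(n,1)$ with their signs, which are fixed by evaluating at $q=1$, is exactly what handles it.
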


The idea of the proof is to show that the property is invariant under the $\PSL_2(\Z)$-action, which allows to reduce to the pair $(\tfrac{1}{0}, \tfrac{-1}{n})$. Finding an explicit formula for $\alpha$ and $\beta$ seems challenging.

\begin{proof}
Put $\tfrac{r}{s}=\tfrac{a}{b}\oplus_F \tfrac{c}{d}$, i.e. $r=(a+c)/d_F$ and $s=(b+d)/d_F$. We have to show that there are integers $\alpha$ and $\beta$ such that
\begin{align}\label{Eq:alpha-beta}
 d_F^{\sharp\sharp}R^\sharp &= q^\alpha A^\sharp+q^\beta C^\sharp ; \\ \nonumber
 d_F^{\sharp\sharp}S^\sharp &= q^\alpha B^\sharp+q^\beta D^\sharp.
\end{align}
This implies in particular that $\mathrm{gcd}(q^\alpha A^\sharp+q^\beta C^\sharp,q^\alpha B^\sharp+q^\beta D^\sharp)=d_F^{\sharp\sharp}$. 

Let us show that the equations \eqref{Eq:alpha-beta} are invariant under the deformed $\PSL_2(\Z)$-action. The crucial fact is that the classical Farey operation is equivariant with respect to any M\"obius transformation $M\in\PSL_2(\Z)$, see Proposition \ref{Prop:invariance-Farey}. Hence the action of $S$ or $T$ has the same effect on $\tfrac{a}{b}, \tfrac{c}{d}$ and $\tfrac{r}{s}$.

By Proposition \ref{Prop:invariance-q-Farey-det}, we know that the $q$-Farey determinant is invariant, up to some power of $q$ which can be absorbed in the integers $\alpha$ and $\beta$.

The transformation $S_q$ sends any $q$-rational $(X^\sharp, Y^\sharp)$ to $(-Y^\sharp, qX^\sharp)$ or $(-Y^\sharp/q, X^\sharp)$ (depending on whether $q$ divides the denominator $Y^\sharp$). Applying this to $(A^\sharp, B^\sharp)$, $(C^\sharp, D^\sharp)$ and $(R^\sharp, S^\sharp)$, we see that Equations \eqref{Eq:alpha-beta} can be satisfied after the action of $S_q$, changing $\alpha$ or $\beta$ by 1 if necessary.

Similarly, the transformation $T_q$ sends any $q$-rational $(X^\sharp, Y^\sharp)$ to $(qX^\sharp+Y^\sharp, Y^\sharp)$ or $(X+Y^\sharp/q, Y^\sharp/q)$. Applying this to $(A^\sharp, B^\sharp)$, $(C^\sharp, D^\sharp)$ and $(R^\sharp, S^\sharp)$, we see that Equations \eqref{Eq:alpha-beta} can be satisfied after the action of $T_q$, changing $\alpha$ or $\beta$ by 1 if necessary.

\medskip
Using the $\PSL_2(\Z)$-invariance, we can reduce to the case where $\tfrac{a}{b}=\infty=\tfrac{1}{0}$. Then $\tfrac{c}{d}=\tfrac{k}{n}$ with $n=d_F$. By the assumption on the greatest common divisor, we see that $k\equiv -1 \!\mod n$. Using $T$, we can assume $k=-1$.
Then $\alpha=\beta=0$ satisfy Equations \eqref{Eq:alpha-beta}. Indeed, $[\infty]^\sharp_q = \tfrac{1}{0}$ and $[-\tfrac{1}{n}]^\sharp_q = \frac{-1}{q[n]_q^\sharp}$. Hence $d_F^{\sharp\sharp}=q[n]_q^\sharp$. Finally, $\tfrac{1}{0}\oplus_F\tfrac{-1}{n}=\tfrac{0}{1}$, with deformation $[0]_q^\sharp = \tfrac{0}{1}$.
\end{proof}

\begin{Remark}
There are five more identities, proven similarly to that of Theorem \ref{Thm:q-Farey-operations}. Under the same assumptions, we have
$$\left[\frac{a}{b}\oplus_F \frac{c}{d}\right]_q^\sharp = \frac{(q^\alpha A^\triangle+q^\beta C^\square)/d_F^{\triangle\square}}{(q^\alpha B^\triangle+q^\beta D^\square)/d_F^{\triangle\square}},$$
where $(\triangle,\square) \in \{(\sharp,\sharp), (\sharp,\flat), (\flat, \sharp)\}$. Note that the values  $\alpha, \beta \in \Z$ change depending on the case $\{\triangle, \square\}$ . Similarly,
$$\left[\frac{a}{b}\oplus_F \frac{c}{d}\right]_q^\flat = \frac{(q^\alpha A^\triangle+q^\beta C^\square)/d_F^{\triangle\square}}{(q^\alpha B^\triangle+q^\beta D^\square)/d_F^{\triangle\square}},$$
where $(\triangle,\square) \in \{(\sharp,\sharp), (\flat, \flat)\}$.

\noindent Finally, under the assumption $\mathrm{gcd}(a-c,b-d)=d_F$, the expressions on the right are reduced.
\end{Remark}

\section{Classical Springborn operations}\label{Sec:Springborn-classic}

In this section, we introduce a quadratic version of the Farey operations, which we call the Springborn operations. The Springborn sum has been used by Springborn in \cite{Springborn}. Our motivation comes from the geometric picture, seeing $q$-rationals as discs.

\subsection{Motivation: homothetic centers}\label{Sec:homothetic-centers}

Consider two circles $C_1$ and $C_2$. Denote by $M_1$ and $M_2$ their centers and by $r_1$ and $r_2$ their radii.

\begin{definition}\label{Def:i-and-e}
The \emph{inner homothety center} of $C_1$ and $C_2$, denoted by $i(C_1,C_2)$, is the fixed point of the unique homothety with negative factor exchanging $C_1$ and $C_2$. Similarly, the \emph{outer homothety center}, denoted by $e(C_1,C_2)$, is the fixed point of the unique homothety with positive factor exchanging $C_1$ and $C_2$.
\end{definition}

In the case when $C_1$ and $C_2$ have disjoint interiors, $i(C_1,C_2)$ (resp. $e(C_1,C_2)$) is the intersection of the inner (resp. outer) common tangents of $C_1$ and $C_2$, see Figure \ref{fig:homothetic_center}.

\begin{figure}[H]
    \centering
    \begin{tikzpicture}[scale=0.6]
    \draw (-1,0) -- (16,0);
    \draw[line width=1pt] (2,0) circle (2);
    \draw[line width=1pt] (8,0) circle (1);
    \node at(2,1) {$C_1$};
    \node at(8,0.5) {$C_2$};
    \node[color=orange] at(14,0) {$\bullet$};
    \node[color=purple] at(6,0) {$\bullet$};
    \node at(2.333,1.972) (O1) {\rotatebox{45}{~}};
    \node at(3,1.732) (I1) {};
    \node at(8.167,0.986) (O2) {\rotatebox{45}{~}};
    \node at(7.5,0.866) (I2) {\rotatebox{50}{~}};
    \draw[color=orange] (-1,2.535) -- (16,-0.338);
    \draw[color=orange] (-1,-2.535) -- (16,0.338);
    \draw[color=purple] (-1,-4.041) -- (10,2.31);
    \draw[color=purple] (-1,4.041) -- (10,-2.31);
    \end{tikzpicture}
    \caption{Inner (in purple) and outer (in orange) homothety centers.}
    \label{fig:homothetic_center}
\end{figure}

\begin{observation}\label{Conj-real-line}
For many pairs $\left(\frac{a}{b}, \frac{c}{d}\right) \in \Q^2$, we find 
$$
    i\left(\left[\frac{a}{b}\right],\left[\frac{c}{d}\right]\right) = \left[\frac{ab+cd}{b^2+d^2}\right]_q^\sharp  \text{ and }
   e\left(\left[\frac{a}{b}\right],\left[\frac{c}{d}\right]\right) = \left[\frac{ab-cd}{b^2-d^2}\right]_q^\flat \, .
$$
\end{observation}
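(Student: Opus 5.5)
The plan is to give the equality a Möbius-invariant meaning and then use the equivariance of $q$-rationals. We only aim at those pairs $(\tfrac{a}{b},\tfrac{c}{d})$ that are exchanged by an involution of $\PGL_2(\Z)$: an orientation-preserving one for the inner center, an orientation-reversing one for the outer center. These should be the "many pairs" of the observation (later: the inner/outer regular pairs).

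\textbf{Step 1 (geometric characterization of homothety centers).} Let $C_1, C_2$ be disjoint circles orthogonal to $\R$, seen as geodesics $\gamma_1,\gamma_2$ of $\H^2$. For a point $h\in\R$, the map $z\mapsto h+k/(\bar z-h)$ with $k>0$ is an inversion centered at $h$. Composed with complex conjugation, $z\mapsto h-k/(\bar z-h)$ is a negative-power inversion. Both fix $\R$.
\begin{itemize}
\item A negative-power inversion centered at $h$ preserves lines through $h$ and reverses direction along them. So it exchanges $C_1,C_2$ exactly when $h=i(C_1,C_2)$ and $k$ is suitable.
\item Similarly, a positive-power inversion exchanges $C_1,C_2$ exactly when $h=e(C_1,C_2)$.
\end{itemize}
In hyperbolic terms, the first map restricted to $\H^2$ is $z\mapsto h-k/(z-h)$, which is the rotation by $\pi$ about $h+i\sqrt{k}$. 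The second is the reflection in the geodesic centered at $h$. In both cases $h$ is the image of $\infty$. Hence
$$i(C_1,C_2)=R(\infty),\qquad e(C_1,C_2)=\rho(\infty),$$
where $R$ is the unique $\pi$-rotation exchanging $\gamma_1,\gamma_2$ (it is centered at the midpoint of their common perpendicular $\gamma(\gamma_1,\gamma_2)$), and $\rho$ is the unique reflection exchanging them.

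\textbf{Step 2 (identifying the deformed involution).} Suppose $I\in\PSL_2(\Z)$ is an orientation-preserving involution with $I(\tfrac{a}{b})=\tfrac{c}{d}$. Its deformation $I_q$ satisfies $I_q[x]^\square_q=[Ix]^\square_q$, so it maps the geodesic $[\tfrac{a}{b}]$ onto $[\tfrac{c}{d}]$. By Corollary \ref{Coro:no-extra-relations}, $I_q$ is again an involution, hence a $\pi$-rotation. By the uniqueness in Step 1, $I_q=R$.

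Therefore
$$i\left(\left[\tfrac{a}{b}\right],\left[\tfrac{c}{d}\right]\right)=I_q([\infty]^\sharp_q)=[I(\infty)]^\sharp_q.$$
Classically, Proposition \ref{Prop:exchange-under-involution} says the fixed point of $I$ is $\tfrac{a+ci}{b+di}=x+iy$. Writing $I$ as $z\mapsto x-y^2/(z-x)$ gives
$$I(\infty)=x=\mathrm{Re}\,\frac{a+ci}{b+di}=\frac{ab+cd}{b^2+d^2}.$$

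\textbf{Step 3 (the outer center).} Take instead an orientation-reversing involution $J\in\PGL_2(\Z)$ exchanging $\tfrac{a}{b},\tfrac{c}{d}$. By Equation \eqref{Eq:N_q-equivariance}, its deformation sends $[x]^\sharp_q$ to $[Jx]^\flat_q$. It therefore still maps the geodesic $[\tfrac{a}{b}]$ onto $[\tfrac{c}{d}]$, and so it is $\rho$. This gives
$$e\left(\left[\tfrac{a}{b}\right],\left[\tfrac{c}{d}\right]\right)=J_q([\infty]^\sharp_q)=[J(\infty)]^\flat_q.$$
Classically $J(\infty)$ is the center of the circle through the fixed points $\tfrac{a\pm c}{b\pm d}$. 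Their midpoint is
$$\frac{(a+c)(b-d)+(a-c)(b+d)}{2(b^2-d^2)}=\frac{ab-cd}{b^2-d^2},$$
as claimed.

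\textbf{Main obstacle.} I expect the delicate part to be Step 1, and its compatibility with the $q$-action in Steps 2--3. One must check three things:
\begin{itemize}
\item the exchanging rotation or reflection is unique;
\item its negative versus positive inversion power really matches inner versus outer tangents, including the degenerate case where one disk is the half-plane $[\infty]$;
\item $\infty$ corresponds to $[\infty]^\sharp_q=\tfrac10$ and not to $[\infty]^\flat_q$ when transported by orientation-reversing maps.
\end{itemize}
The remaining work is to describe exactly which pairs admit such involutions, via Proposition \ref{Prop:inversion-sym}; this is what "many pairs" should mean.
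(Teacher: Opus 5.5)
Your proposal is correct and is essentially the paper's proof of Theorem \ref{Thm:main}. Like the paper, you restrict to pairs exchanged by an involution of $\PGL_2(\Z)$ (the regular pairs, by Theorem \ref{Thm:charact-regularity}), realize the homothety center as the image of $\infty$ under the unique rotation or reflection exchanging the two geodesics (Proposition \ref{prop:homothety_geometric}), and identify the deformed involution $I_q$ with that isometry. The one difference is how you name the circle: you get it directly from $I_q([\infty]^\sharp_q)=[I(\infty)]^\sharp_q$ and the classical value $I(\infty)=\tfrac{ab+cd}{b^2+d^2}$ (resp.\ $\tfrac{ab-cd}{b^2-d^2}$), whereas the paper reads the label off from the limit $q\to 1$, so your route is slightly more direct. (In Step 1, note that the $\pi$-rotation $z\mapsto h-k/(z-h)$ agrees with the negative inversion only after composing with conjugation, which is harmless since the circles are conjugation-invariant.)
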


One of the main result of our work is to prove this observation for pairs satisfying an arithmetic condition, that we call regular pairs, see Theorem \ref{Thm:main}.

\begin{Remark}
The formula $\frac{ab+cd}{b^2+d^2}$ first appears in the study of Ford circles \cite{Ford}. It seems that the first time it was used as an iterative operation was only recently by Springborn \cite{Springborn} in the context of Diophantine approximation of rationals.
Springborn defines the notion of Markov fractions, which can be obtained as iterations of this formula on the pair $(\frac{0}{1},\frac{1}{1})$. 
Then, this formula was explored in the subsequent work of Veselov \cite[Equation (1)]{Veselov} on Markov fractions. We study $q$-deformed Markov fractions in Section \ref{Sec:Markov}.
\end{Remark}

\subsection{Springborn operations and regular pairs}

Motivated by Observation \ref{Conj-real-line}, we define the Springborn sum and difference as follows:

\begin{definition}
Let $\frac{a}{b}$ and $\frac{c}{d}$ be two rationals in reduced form (or equal to $\infty=\tfrac{1}{0}$). We define the \emph{Springborn sum} and \emph{Springborn difference} respectively by 
\[
\frac{a}{b}\oplus_S \frac{c}{d} = \frac{ab+cd}{b^2+d^2} \;\;\text{ and }\;\; \frac{a}{b}\ominus_S \frac{c}{d} = \frac{ab-cd}{b^2-d^2}.
\]
\end{definition}

Note that these expressions are in general not reduced. 

\begin{Remark}
The Springborn operations are \emph{not} equivariant with respect to M\"obius transformations.
\end{Remark}

We define now a family of pairs for which the reduced form of Springborn operations is easy to compute. 

\begin{definition}\label{Def:reg}
A pair $(\tfrac{a}{b},\tfrac{c}{d})\in\mathbb{Q}^2$ with Farey determinant $d_F=\abs{ad-bc}$ is said to be \emph{inner regular} if $$\gcd(ab+cd,b^2+d^2,a^2+c^2)=d_F.$$
Similarly, the pair is \emph{outer regular} if $\gcd(ab-cd,b^2-d^2,a^2-c^2)=d_F.$
\end{definition}

The following lemma will be useful to check regularity.
\begin{lemma}\label{Lemma:simplify-Springborn-cond}
For two rational numbers $\tfrac{a}{b}, \tfrac{c}{d}$, the quantities $\gcd(a^2+c^2,ab+cd) $ and $\gcd(b^2+d^2,ab+cd)$  divide $d_F$.\\
\noindent If moreover $\mathrm{gcd}(b,d) = 1$, then $(\tfrac{a}{b},\tfrac{c}{d})$ is inner regular if and only if $d_F \mid b^2 + d^2.$ \\
\noindent A similar statement holds for the outer regular case.
\end{lemma}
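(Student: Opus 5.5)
The whole lemma rests on two polynomial identities, which I will verify by direct expansion:
\begin{align*}
a(ab+cd)-b(a^2+c^2) &= c(ad-bc),\\
c(ab+cd)-d(a^2+c^2) &= -a(ad-bc),
\end{align*}
and, in the same way,
\begin{align*}
d(ab+cd)-c(b^2+d^2) &= b(ad-bc),\\
b(ab+cd)-a(b^2+d^2) &= -d(ad-bc).
\end{align*}
Put $g=\gcd(a^2+c^2,ab+cd)$. By the first pair of identities, $g$ divides both $c\,d_F$ and $a\,d_F$, hence $g$ divides $\gcd(a,c)\,d_F$.

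It remains to remove the factor $\gcd(a,c)$. Let $p$ be a prime with $p\mid\gcd(a,c)$. Then $p\nmid b$ and $p\nmid d$, since the fractions are reduced. I will compare $p$-adic valuations. Write $v=\min(v_p(a),v_p(c))\ge 1$. Then $v_p(a^2+c^2)\ge 2v$, and $v_p(ab+cd)\ge v$. The cleanest route is to reduce modulo $p$ after dividing out $p^{v}$. Set $a=p^v a'$ and $c=p^v c'$, with one of $a',c'$ not divisible by $p$. Then
$$ab+cd=p^v(a'b+c'd), \qquad d_F=p^v\lvert a'd-bc'\rvert.$$
The identities applied to $a',c'$ give
$$\gcd\bigl(p^{2v}(a'^2+c'^2),\,p^v(a'b+c'd)\bigr)\ \Big|\ p^v\gcd\bigl(p^v(a'^2+c'^2),\,a'b+c'd\bigr).$$
I then argue prime by prime that the $p$-part of $\gcd(p^v(a'^2+c'^2),a'b+c'd)$ divides $\lvert a'd-bc'\rvert$. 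If $p\mid a'b+c'd$, then by the identities with the unit $a'$ or $c'$ (one of them is coprime to $p$), $p$-powers dividing both $a'b+c'd$ and $p^v(\cdot)$ are controlled. This valuation bookkeeping is the main obstacle. As a fallback, I could simply note that any common prime power of $a^2+c^2$ and $ab+cd$ must also divide $b(a^2+c^2)-a(ab+cd)=-c\,d_F$ and $a\,d_F$, so $v_p(g)\le v+v_p(d_F)$ only gives a weaker bound. The precise claim therefore needs the finer argument. The symmetric statement for $\gcd(b^2+d^2,ab+cd)$ follows identically from the second pair of identities, using $\gcd(b,d)$ in place of $\gcd(a,c)$.

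For the second claim, assume $\gcd(b,d)=1$. By the second pair of identities, $\gcd(b^2+d^2,ab+cd)$ divides $\gcd(b,d)\,d_F=d_F$ with no correction needed. Conversely, $d_F$ always divides $ab+cd$ and $a^2+c^2$ whenever it divides $b^2+d^2$. Indeed, since $\gcd(b,d)=1$ and $d_F\mid b^2+d^2$, we have $\gcd(b,d_F)=1$, so $b$ is invertible modulo $d_F$. Then $ad\equiv bc$ gives $a\equiv bc\,d^{-1}$ modulo $d_F$, where $d$ is also invertible because $\gcd(d,d_F)=1$. Substituting yields $ab+cd\equiv c\,d^{-1}(b^2+d^2)\equiv0$ and $a^2+c^2\equiv c^2d^{-2}(b^2+d^2)\equiv 0$. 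So the triple gcd equals $d_F$ exactly when $d_F\mid b^2+d^2$. For the other direction, inner regularity trivially forces $d_F\mid b^2+d^2$.

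The outer case is the same argument after replacing $c,d$ by $-c,d$. This preserves $d_F$ and turns $b^2+d^2$, $ab+cd$, $a^2+c^2$ into $b^2-d^2$ (up to the sign conventions), $ab-cd$, $a^2-c^2$. The analogous identities are checked with the signs flipped.
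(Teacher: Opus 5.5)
Your proposal does not prove the first claim. After correctly deriving $\gcd(a^2+c^2,ab+cd)\mid\gcd(a,c)\,d_F$, you leave the removal of $\gcd(a,c)$ to a ``finer'' valuation argument that you never give. No such argument exists, because the claim as stated is false.

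For $\tfrac{a}{b}=\tfrac{2}{1}$, $\tfrac{c}{d}=\tfrac{2}{3}$ we have $a^2+c^2=ab+cd=8$, so the gcd is $8$, while $d_F=\lvert 6-2\rvert=4$. Exchanging numerators and denominators ($\tfrac{1}{2},\tfrac{3}{2}$) gives $\gcd(b^2+d^2,ab+cd)=8\nmid 4$ as well. The outer analogue also fails: for $\tfrac{2}{3},\tfrac{6}{1}$ one gets $\gcd(a^2-c^2,ab-cd)=\gcd(-32,0)=32$ with $d_F=16$.

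The paper's own one-line justification has the same defect. From $(a^2+c^2)(b^2+d^2)=(ab+cd)^2+d_F^2$ one only gets that the gcd divides $d_F^2$. Your ``weaker bound'' $v_p(g)\le v+v_p(d_F)$ is in fact sharp; in the example above, $3=1+2$ at $p=2$.

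What is true is exactly what your two pairs of identities give: $\gcd(a^2+c^2,ab+cd)\mid\gcd(a,c)\,d_F$ and $\gcd(b^2+d^2,ab+cd)\mid\gcd(b,d)\,d_F$. Since $\gcd(a,c)$ and $\gcd(b,d)$ are coprime (a common prime would divide $\gcd(a,b)=1$), the triple gcd $\gcd(ab+cd,b^2+d^2,a^2+c^2)$ always divides $d_F$. That corrected statement, together with the case $\gcd(b,d)=1$, is all the paper later needs, e.g.\ in the proof of Theorem~\ref{Thm:charact-regularity}. You should state and prove it in place of the first claim rather than chase valuations.

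Your treatment of the case $\gcd(b,d)=1$ is correct, and sounder than the paper's. The paper gets $\gcd(b^2+d^2,ab+cd)\mid d_F$ by invoking the false general first claim. You get it directly from $d(ab+cd)-c(b^2+d^2)=b(ad-bc)$ and $b(ab+cd)-a(b^2+d^2)=-d(ad-bc)$. Your modular-inverse substitution showing $d_F\mid ab+cd$ and $d_F\mid a^2+c^2$ is equivalent to the paper's use of $b(ab+cd)=a(b^2+d^2)-d(ad-bc)$, $a(ab+cd)=b(a^2+c^2)+c(ad-bc)$ and $\gcd(b,d_F)=1$.

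There is one slip in the outer case. The substitution $c\mapsto -c$ does not do what you say: it leaves $a^2+c^2$ and $b^2+d^2$ unchanged and turns $d_F$ into $\lvert ad+bc\rvert$. Instead, redo the identities with signs, e.g.\ $d(ab-cd)-c(b^2-d^2)=b(ad-bc)$ and $b(ab-cd)-a(b^2-d^2)=d(ad-bc)$. The rest of your argument then goes through verbatim.
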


\begin{proof} We have the following identities:
\begin{equation}
\label{eq:technicalrelations}
\begin{cases}
b(ab+cd) = a(b^2+d^2) - d\, d_F\\
a(ab+cd) = b(a^2+c^2) + c\, d_F,\\
(a^2+c^2)(b^2+d^2) = (ab+cd)^2 + d_F^2.
\end{cases}
\end{equation}
The third one implies the first statement of the lemma.

Now suppose $\gcd(b,d) = 1$. We only have to prove that $d_F\mid b^2+d^2$ implies inner regularity. By the first relation in \eqref{eq:technicalrelations}, we see that $d_F$ divides $b(ab+cd)$, but $\gcd(b,d) = 1$ so $d_F$ and $b$ are coprime, thus $d_F$ divides $\gcd(b^2+d^2,ab+cd)$. By the first part of the lemma, we get $\mathrm{gcd}(b^2+d^2,ab+cd)=d_F$. Finally, the second relation in \eqref{eq:technicalrelations} implies that $d_F$ divides $a^2+c^2$, hence the inner regularity condition is satisfied.
\end{proof}

Let us now see some special cases :

\begin{example}
When $d_F=1$, the pair is both inner and outer regular by the last part of Lemma \ref{Lemma:simplify-Springborn-cond} ($d_F=1$ implies $\mathrm{gcd}(b,d)=1$).
\end{example}

\begin{example}
In the special case $d_F=2$, the pair is also inner and outer regular. Indeed, if $\gcd(b,d)=1$, then again by Lemma \ref{Lemma:simplify-Springborn-cond} it is sufficient to show that $2\mid b^2+d^2$. If $b$ is even and $d$ is odd, then $a$ is also odd, which contradicts $d_F=ad-bc=2$. Hence both $b$ and $d$ are odd, so $2\mid b^2+d^2$. Now if $\gcd(b,d)>1$, then $\gcd(b,d)=2$ and $a,c$ are both odd. Put $b=2b'$ and $d=2d'$. It is then easy to see that $ab'+cd'$ is odd, hence $\gcd(ab+cd,b^2+d^2)/2=\gcd(ab'+cd',b'^2+d'^2)=1$, where the last equality comes from Lemma \ref{Lemma:simplify-Springborn-cond}. Therefore the pair is inner regular. Similar arguments show the same for the outer case.
\end{example}

\begin{proposition}\label{Prop:represents-reg}
The set of inner regular pairs is invariant under the modular group action. A set of representatives is given by $(\tfrac{1}{0},\tfrac{k}{n})$ where $n\mid k^2+1$, and $0\leq k<n$.

Similarly, the set of outer regular pairs is invariant under $\PSL_2(\Z)$ and a set of representatives is given by $(\tfrac{1}{0},\tfrac{k}{n})$ where $n\mid k^2-1$, and $0\leq k<n$.
\end{proposition}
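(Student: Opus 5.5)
The plan is to show that the three quantities $d_F$, $\gcd(ab+cd,b^2+d^2,a^2+c^2)$ (resp. with minus signs) are $\mathrm{SL}_2(\Z)$-invariant. Then the classification follows by moving the first element to $\infty$, as in Proposition \ref{Prop:invariance-Farey}.

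\textbf{Step 1: invariance.} Encode the pair by $M=\left(\begin{smallmatrix}a & c\\ b & d\end{smallmatrix}\right)$, on which $g\in\mathrm{SL}_2(\Z)$ acts by left multiplication. Replacing a reduced fraction $\tfrac{a}{b}$ by $\tfrac{-a}{-b}$ does not change the three quantities $ab$, $a^2$, $b^2$, so the sign convention is harmless. Observe that
$$MM^T=\begin{pmatrix}a^2+c^2 & ab+cd\\ ab+cd & b^2+d^2\end{pmatrix}.$$
Hence the inner gcd is exactly the gcd $g_+(M)$ of the entries of the symmetric matrix $MM^T$. Under the action we get $(gM)(gM)^T=g(MM^T)g^T$. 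The entries of $g X g^T$ are integer combinations of the entries of $X$, and conversely via $g^{-1}$, so the gcd of the entries is preserved.

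For the outer case, use $M J M^T$ with $J=\mathrm{diag}(1,-1)$. This gives the matrix with entries $a^2-c^2$, $ab-cd$, $b^2-d^2$, which transforms as $g(MJM^T)g^T$ by the same argument. Since $d_F=\lvert\det M\rvert$ is invariant by Proposition \ref{Prop:invariance-Farey}, both regularity conditions are invariant. The swap of the two entries, i.e.\ right multiplication by a permutation matrix, also preserves both, up to the sign of the outer matrix.

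\textbf{Step 2: normal form.} By Proposition \ref{Prop:invariance-Farey}, every pair with Farey determinant $n$ is equivalent to $(\tfrac{1}{0},\tfrac{k}{n})$ with $0\le k<n$. For this pair the inner triple is $(1+k^2,\,k,\,n^2)$, whose gcd is $\gcd(1+k^2,k,n^2)=\gcd(1,\dots)=1$.

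This looks wrong at first, but it comes from the quantities in the definition being computed with $a=1,b=0,c=k,d=n$. They should be $ab+cd=kn$, $b^2+d^2=n^2$, $a^2+c^2=1+k^2$. So the gcd is $\gcd(kn,n^2,1+k^2)$. Since $\gcd(k,1+k^2)=1$, this equals $\gcd(n,1+k^2)$ after noting $\gcd(kn,n^2)=n\gcd(k,n)$ with $\gcd(k,n)=1$. It equals $n=d_F$ iff $n\mid k^2+1$. The outer case gives $\gcd(-kn,-n^2,1-k^2)=\gcd(n,k^2-1)$, which equals $n$ iff $n\mid k^2-1$.

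\textbf{Step 3: uniqueness of representatives.} It remains to check that the representative with $0\le k<n$ is unique, i.e.\ that these are genuinely distinct orbits. The stabilizer of $\infty$ in $\PSL_2(\Z)$ is generated by $T$, which shifts $k$ by multiples of $n$. So $k \bmod n$ is an invariant of the ordered pair. This is the only delicate point.

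If unordered pairs are meant, the swap maps $(\infty,\tfrac{k}{n})$ to a pair equivalent to $(\infty,\tfrac{k'}{n})$ with $kk'\equiv -1 \pmod n$. Under $n\mid k^2+1$ this gives $k'\equiv k$, so the representatives remain unique. In the outer case $kk'\equiv -1$ and $k^2\equiv 1$ give $k'\equiv -k$. There I would state the result for ordered pairs, which is what the statement's phrasing supports.
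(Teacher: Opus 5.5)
Your argument is correct and is essentially the paper's. You prove invariance of $d_F$ and of the relevant gcd under the action, packaged as $MM^T\mapsto g(MM^T)g^T$ and $MJM^T\mapsto g(MJM^T)g^T$ instead of the paper's separate checks for $S$ and $T$. You then reduce to $(\tfrac{1}{0},\tfrac{k}{n})$ and compute $\gcd(kn,n^2,k^2\pm 1)=\gcd(n,k^2\pm1)$. Your Step 3, showing that $k \bmod n$ is an orbit invariant, is a correct addition that the paper leaves implicit; you should delete the false start at the beginning of Step 2.
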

\begin{proof}
Applying $S$ to a pair $(\tfrac{a}{b},\tfrac{c}{d})$ exchanges $a$ with $b$ and $c$ with $d$. Hence it does not change the regularity condition. 
Applying $T$ changes $a$ into $a+b$ and $c$ into $c+d$, but keeps $b$ and $d$ the same. By elementary operations in the greatest common divisor, we see that regularity is preserved under $T$.

By Proposition \ref{Prop:invariance-Farey}, the Farey determinant $d_F$ is invariant under $\PSL_2(\Z)$-action with representative $(\tfrac{1}{0},\tfrac{k}{n})$, with $0\leq k<n$, for a pair of Farey determinant $n$. Inner regularity implies that $\gcd(kn,n^2,1+k^2)=n$. This is equivalent to $n\mid k^2+1$.
Similarly, outer regulartiy implies that $\gcd(kn,n^2,-1+k^2)=n$, which is equivalent to $n\mid k^2-1$.
\end{proof}

\subsection{Characterisation of regular pairs}

Regular pairs can be characterized using involutions, which will make the link to the symmetries of the Farey tesselation.

\begin{theorem}\label{Thm:charact-regularity}
A pair $(\tfrac{a}{b},\tfrac{c}{d})\in\Q^2$ is inner (resp. outer) regular if and only if there exists an orientation-preserving (resp. orientation-reversing) involution in $\PGL_2(\Z)$ exchanging $\tfrac{a}{b}$ with $\tfrac{c}{d}$.
\end{theorem}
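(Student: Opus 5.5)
Both conditions, regularity and the existence of an exchanging involution of the prescribed type, are invariant under the $\PSL_2(\Z)$-action. For regularity this is Proposition \ref{Prop:represents-reg}. For involutions, if $I$ exchanges $x,y$ then $MIM^{-1}$ exchanges $Mx,My$, and conjugation preserves both being an involution and the orientation type. We therefore reduce to the representatives $(\tfrac{1}{0},\tfrac{k}{n})$ with $0\le k<n$ and $n=d_F$. By Proposition \ref{Prop:represents-reg}, inner regularity becomes $n\mid k^2+1$ and outer regularity becomes $n\mid k^2-1$. It then suffices to show that some orientation-preserving (resp. orientation-reversing) involution of $\PGL_2(\Z)$ swaps $\infty$ and $\tfrac{k}{n}$ if and only if $n\mid k^2+1$ (resp. $n\mid k^2-1$).

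Next I parametrize the involutions sending $\infty$ to $\tfrac{k}{n}$. A matrix $M=\left(\begin{smallmatrix}\alpha&\beta\\ \gamma&\delta\end{smallmatrix}\right)\in\mathrm{GL}_2(\Z)$ with $M\infty=\tfrac{k}{n}$ has first column $\pm(k,n)$, so up to sign $\alpha=k$ and $\gamma=n$. A non-identity element of $\PGL_2(\Z)$ is an involution if and only if its trace vanishes, so $\delta=-k$. The determinant condition then reads $-k^2-\beta n=\det M=\pm1$.

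In the case $\det M=+1$ (orientation-preserving), we need $\beta=-(k^2+1)/n\in\Z$, which is equivalent to $n\mid k^2+1$. Conversely, if $n\mid k^2+1$, this $M$ is a trace-zero element of $\mathrm{SL}_2(\Z)$, hence an involution in $\PSL_2(\Z)$ sending $\infty\mapsto \tfrac{k}{n}$ and therefore also $\tfrac{k}{n}\mapsto\infty$.

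In the case $\det M=-1$ (orientation-reversing, realized on $\H^2$ by composing with $c$ as in the paper's convention), we need $\beta=(1-k^2)/n$, which is equivalent to $n\mid k^2-1$. The converse direction works the same way.

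I expect the only step needing care to be the claim that every involution exchanging the two points is of this trace-zero form. An element swapping two distinct points has square fixing both of them. If it is not the identity, the square is either the identity or hyperbolic/parabolic with those fixed points. An element swapping the two fixed points of a non-trivial hyperbolic or parabolic element cannot square to it, so the square is the identity. An involution in $\PGL_2$ satisfies $M^2=\lambda I$, and for $M\neq\pm I$ this forces $\operatorname{tr}M=0$ by Cayley–Hamilton. Together with the small degenerate cases ($n=1$, where $k=0$), this completes both directions.
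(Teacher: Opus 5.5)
Your proof is correct. Its core computation is the same as the paper's: specializing the paper's explicit involution $M_I=\frac{1}{ad-bc}\left(\begin{smallmatrix}ab+cd & -a^2-c^2\\ b^2+d^2 & -ab-cd\end{smallmatrix}\right)$ to $(a,b,c,d)=(1,0,k,n)$ gives exactly your matrix $\left(\begin{smallmatrix}k & -(k^2+1)/n\\ n & -k\end{smallmatrix}\right)$, and the orientation-reversing case matches in the same way.

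What differs is how each of you reaches that matrix.
\begin{itemize}
\item The paper keeps the pair general. It uses the horocycle/Ford-circle correspondence to lift the involution to a linear map with $\binom{a}{b}\mapsto\binom{c}{d}\mapsto\binom{-a}{-b}$ (resp.\ one swapping the two vectors). It then invokes Lemma \ref{Lemma:simplify-Springborn-cond} to turn ``$d_F$ divides the three entries'' into ``the gcd equals $d_F$''. The payoff is a closed formula for the exchanging involution of every regular pair, with no normalizing element to choose.
\item You first move to $(\tfrac10,\tfrac kn)$ by invariance. You then pin down the matrix with two elementary facts: an invertible integer matrix has a primitive first column, so it equals $\pm\binom{k}{n}$; and a non-trivial involution of $\PGL_2$ has trace zero by Cayley--Hamilton. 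Proposition \ref{Prop:represents-reg} then does the arithmetic.
\end{itemize}

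Your route avoids horocycles and Lemma \ref{Lemma:simplify-Springborn-cond} altogether, which is a small bonus. The lemma's first assertion is too strong as stated: for $(\tfrac23,\tfrac25)$ one gets $\gcd(a^2+c^2,ab+cd)=8\nmid 4=d_F$. The three-term gcd does always divide $d_F$, since it divides both $c\,d_F$ and $d\,d_F$ and $\gcd(c,d)=1$. That weaker fact is all the paper's argument actually uses.

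Your last paragraph is superfluous. The statement already hands you an involution, so trace zero follows directly from Cayley--Hamilton. Also, a parabolic element fixes only one boundary point, so it could never be the square of a map swapping two points.
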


The main ingredient of the proof is a description of horocycles in the hyperbolic plane $\H^2$ via points in the plane. It is well-known that horocycles correspond to non-zero vectors in the positive null cone of $\mathbb{R}^{1,2}$ (using the hyperboloid model of $\H^2$). This cone can be described via the plane $\R^2$ where one identifies $(x,y)$ with $(-x,-y)$. The following proposition summarizes this description. We refer to \cite[Section 5]{Springborn_2018} for details and the proof.

\begin{proposition}\label{Prop:horocycle-bij}
The map
\begin{align*}
    \R^2\backslash\{0\}/\pm\mathrm{id} &\longrightarrow \{\text{horocycles in } \H^2\} \\
    (x,y) &\longmapsto C\left(\frac{x}{y}+\frac{i}{2y^2},\frac{1}{2y^2}\right)
\end{align*}
is a bijection, where we use the upper half-plane model for the hyperbolic plane and $C(z,r)$ denotes the Euclidean circle with center $z$ and radius $r$. Further, this bijection is equivariant with respect to the $\PSL_2(\R)$-action, acting by projective linear action on the right and by Möbius transformations on the left.
\end{proposition}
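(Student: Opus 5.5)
The plan is to first fix the convention for the degenerate case $y=0$, then prove bijectivity directly, and finally check equivariance on a generating set of $\PSL_2(\R)$. For $y=0$ the formula makes no sense as written. Guided by equivariance, I will interpret the image of $(x,0)$ as the horocycle based at $\infty$, namely the horizontal line $\{\mathrm{Im}\, z = x^2\}$. This is what one gets by applying $S$ to the horocycle of $(0,x)$, which is the circle $C(\tfrac{i}{2x^2},\tfrac{1}{2x^2})$ based at $0$.

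\textbf{Well-definedness and bijectivity.} Well-definedness is immediate, since the formula only involves $x/y$ and $y^2$ (respectively $x^2$), which are invariant under $(x,y)\mapsto(-x,-y)$. Moreover, $C(\tfrac{x}{y}+\tfrac{i}{2y^2},\tfrac{1}{2y^2})$ is the horocycle based at $p=x/y\in\R$ with Euclidean diameter $h=1/y^2$.

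Every horocycle is either a Euclidean circle tangent to $\R$ at some point $p$ with diameter $h>0$, or a horizontal line at height $t>0$. In the first case, the preimages are exactly $y=\pm h^{-1/2}$ and $x=py$, a single $\pm$-class. In the second case, the preimages are $(\pm\sqrt{t},0)$, again a single class. This proves bijectivity.

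\textbf{Equivariance.} I will use the description of the horocycle based at $p\in\R$ with diameter $h$ as the set $\{z : \mathrm{Im}\,z/\lvert z-p\rvert^2 = 1/h\}$. It suffices to check equivariance on generators of $\PSL_2(\R)$: translations $z\mapsto z+t$, dilations $z\mapsto \lambda^2 z$ (matrix $\mathrm{diag}(\lambda,\lambda^{-1})$), and $S:z\mapsto -1/z$.

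For a translation, $(x,y)\mapsto (x+ty,y)$ shifts the base point by $t$ and keeps the diameter, as required. For a dilation, $(x,y)\mapsto(\lambda x,\lambda^{-1}y)$ multiplies both the base point and the diameter by $\lambda^2$, as required. Horizontal lines are handled the same way: translations fix them, and dilations multiply the height $x^2$ by $\lambda^2$.

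For $S$, the linear action is $(x,y)\mapsto(-y,x)$. When $p\neq 0$ and $y\neq0$, this should send the data $(p,h)$ to $(-1/p,\,h/p^2)$. To verify it on the Möbius side, put $w=-1/z$. Then $\mathrm{Im}\,w=\mathrm{Im}\,z/\lvert z\rvert^2$ and $\lvert w+1/p\rvert^2 = \lvert z-p\rvert^2/(p^2\lvert z\rvert^2)$, so
\[
\frac{\mathrm{Im}\,w}{\lvert w+1/p\rvert^2} = \frac{p^2}{h}.
\]
Hence the image is the horocycle based at $-1/p$ with diameter $h/p^2$, which matches.

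The remaining cases are $p=0$, i.e. $x=0$, and $y=0$. These are exactly the pair exchanged by our convention, and the convention was chosen precisely so that $S$ swaps them correctly.

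The only delicate point is the treatment of the horocycles based at $\infty$. Once the convention is set up compatibly with $S$ as above, all remaining verifications are routine computations.
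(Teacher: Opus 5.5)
Your proof is correct, but it takes a different route from the one the paper relies on. The paper gives no argument of its own and defers to Springborn's light-cone picture. There, $v=(x,y)$ is sent to the rank-one symmetric matrix $vv^{T}$, a positive null vector of $\mathbb{R}^{1,2}$. So $\pm v$ give the same vector, and the linear $\mathrm{SL}_2(\R)$-action becomes the isometric action $X\mapsto MXM^{T}$. Horocycles are then level sets of $\langle\cdot,\ell\rangle$ for null $\ell$. In that setting equivariance is automatic, $(1,0)$ is a null vector like any other, and the explicit circle formula only appears after translating to the upper half-plane.

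You instead work directly in the upper half-plane with the invariant $\mathrm{Im}\,z/\lvert z-p\rvert^2=1/h$ and check generators, which is elementary and self-contained. It also forces you to confront the case $y=0$, where the printed formula is meaningless. Your convention there, the line $\mathrm{Im}\,z=x^2$ (so that $(1,0)$ gives the Ford ``circle'' at $\infty$), is the only one compatible with equivariance, so it necessarily agrees with the cited source.

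Your terse claim that ``$S$ swaps them correctly'' needs one more line to be complete. Since $S^2=-I$ acts trivially on both sides, checking $S$ on the vectors $(0,y)$ already yields the case of the vectors $(x,0)$.

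You could also avoid generators altogether. For $M=\left(\begin{smallmatrix}a&b\\c&d\end{smallmatrix}\right)$ with $cp+d\neq 0$, the same computation as in your $S$-step gives $\mathrm{Im}(Mz)/\lvert Mz-Mp\rvert^2=(cp+d)^2\,\mathrm{Im}\,z/\lvert z-p\rvert^2$. That is a new diameter $1/(cx+dy)^2$, exactly as the linear action predicts. Your generator approach is simply the cleaner way to handle the cases where $M$ moves a base point to or from $\infty$.
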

Note that the integer lattice corresponds to the Ford circles.

\medskip
We are now ready for the proof of Theorem \ref{Thm:charact-regularity}.
We are grateful to Boris Springborn for the argument, which simplified considerably our original proof.

\begin{proof}[Proof of Theorem \ref{Thm:charact-regularity}]
Suppose the existence of an orientation-preserving involution $I$ in $\mathrm{PGL}_2(\Z)$ exchanging two rationals $\tfrac{a}{b}$ and $\tfrac{c}{d}$. Since $I$ is a symmetry of the Farey triangulation, we get that the Ford circles associated to $\tfrac{a}{b}$ and $\tfrac{c}{d}$ are exchanged.

By the correspondence \ref{Prop:horocycle-bij}, these Ford circles are represented by $[(a,b)]$ and $[(c,d)]$ and the involution $I$ corresponds to some linear map $M_I\in\mathrm{SL}_2(\Z)$ exchanging $[(a,b)]$ and $[(c,d)]$. Since $M_I$ has positive determinant, we get
$$\binom{a}{b} \xmapsto{M_I} \binom{c}{d} \xmapsto{M_I} \binom{-a}{-b}.$$
This determines $M_I$ completely:
\begin{equation}\label{Eq:matrix-exchange}
    M_I = \frac{1}{ad-bc}\begin{pmatrix}ab+cd & -a^2-c^2\\ b^2+d^2 & -ab-cd\end{pmatrix}.
\end{equation}
Therefore, $M_I\in\mathrm{SL}_2(\Z)$ implies that $ad-bc$ divides each of the terms $ab+cd, b^2+d^2$ and $a^2+c^2$. By Lemma \ref{Lemma:simplify-Springborn-cond}, this implies that $\gcd(ab+cd, b^2+d^2, a^2+c^2)=ad-bc$.

Conversely, if $(\tfrac{a}{b},\tfrac{c}{d})$ is inner regular, then the matrix given by Equation \eqref{Eq:matrix-exchange} is in $\mathrm{SL}_2(\Z)$ and defines an involution exchanging $\tfrac{a}{b}$ with $\tfrac{c}{d}$.

A similar argument holds for an outer regular pair, with the only difference that $M_I\in\mathrm{PGL}_2(\Z)$ exchanges the vectors $\binom{a}{b}$ and $\binom{c}{d}$. Hence it is given by
$$M_I = \frac{1}{ad-bc}\begin{pmatrix}-ab+cd & a^2-c^2\\ -b^2+d^2 & ab-cd\end{pmatrix}.$$
\end{proof}

\begin{Remark}\label{Rk:reg-characterizations}
Using Proposition \ref{Prop:inversion-sym} and \ref{Prop:exchange-under-involution}, one can also prove that outer regularity is equivalent to either of the following:
\begin{enumerate}
    \item $\mathrm{gcd}(a+c,b+d)\,\mathrm{gcd}(a-c,b-d) \in\{d_F,2d_F\}$,
    \item $d_F\left(\frac{a+c}{b+d},\frac{a-c}{b-d}\right) \in \{1,2\}$.
\end{enumerate}
Similarly, inner regularity is equivalent to either of the following, where we use Definition \ref{Def:dF-in-Qi}:
\begin{enumerate}
    \item[(1')] $\mathrm{gcd}(a+ic,b+id)\,\mathrm{gcd}(a-ic,b-id)=d_F$,
    \item[(2')] $d_F\left(\frac{a+ic}{b+id},\frac{a-ic}{b-id}\right) =2$.
\end{enumerate}
\end{Remark}

\subsection{Geometric interpretations}

We give a list of geometric constructions giving the Springborn addition and difference.

The first one is not geometric, but algebraic, and can be obtained by a direct computation:
\begin{proposition}\label{prop:algebraic_springborn}
We have 
\begin{align*}
    \frac{a}{b}\oplus_S \frac{c}{d} &= \frac{1}{2}\left(\frac{a+ic}{b+id}+\frac{a-ic}{b-id}\right) =  \mathrm{Re}\left(\frac{a+ic}{b+id}\right) \\
    \frac{a}{b}\ominus_S \frac{c}{d} &= \frac{1}{2}\left(\frac{a+c}{b+d}+\frac{a-c}{b-d}\right). 
\end{align*}
\end{proposition}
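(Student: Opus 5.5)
The plan is a direct algebraic verification of both identities. No hyperbolic geometry is needed, only the conjugate-pair structure of the expressions.

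For the Springborn sum, set $z = \frac{a+ic}{b+id}$. Since $a,b,c,d$ are real, the second fraction $\frac{a-ic}{b-id}$ is exactly $\bar z$. Hence $\tfrac12(z+\bar z)=\mathrm{Re}(z)$, and the second equality is immediate. For the first equality, multiply numerator and denominator of $z$ by $b-id$:
$$z=\frac{(a+ic)(b-id)}{b^2+d^2}=\frac{(ab+cd)+i(bc-ad)}{b^2+d^2}.$$
Taking the real part gives $\frac{ab+cd}{b^2+d^2}=\frac{a}{b}\oplus_S\frac{c}{d}$. In passing, the imaginary part is $\pm d_F/(b^2+d^2)$, which ties in with Remark \ref{Rk:reg-characterizations}.

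For the Springborn difference, bring the two fractions to the common denominator $(b+d)(b-d)=b^2-d^2$. The numerator is
$$(a+c)(b-d)+(a-c)(b+d)=2(ab-cd),$$
because the cross terms $\pm(bc-ad)$ cancel. Halving gives $\frac{ab-cd}{b^2-d^2}=\frac{a}{b}\ominus_S\frac{c}{d}$.

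The only step needing care is degenerate denominators. When $b=d$, the Farey difference $\frac{a-c}{b-d}$ equals $\infty$ (or $b=d=0$, which is excluded for distinct points). In that case both sides are $\infty$, read projectively in $\mathbb{QP}^1$. The case $\frac{a}{b}=\infty$ with $b=0$ is handled by the same formulas, since they are polynomial identities in $(a,b,c,d)$ that hold whenever the denominators are nonzero. Neither fraction need be reduced; this is harmless here because both sides are evaluated as real numbers.
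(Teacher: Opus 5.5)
Your proposal is correct and is exactly the direct computation the paper invokes; the paper gives no detail beyond ``a direct computation'': multiply $\frac{a+ic}{b+id}$ by the conjugate $b-id$ for the sum, and clear the common denominator $b^2-d^2$ for the difference. Your treatment of the degenerate case $b=d$, read projectively in $\mathbb{QP}^1$, is a harmless extra that the paper does not spell out.
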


A simple computation gives the following:
\begin{proposition}\label{Prop:harmonic_points}
The Farey sum $\tfrac{a}{b}\oplus_F\tfrac{c}{d}$ divides the segment $[\tfrac{a}{b},\tfrac{c}{d}]$ into two parts of ratio ${d}:{b}$, while the Springborn sum $\tfrac{a}{b}\oplus_S\tfrac{c}{d}$ divides it into parts of ratio ${d^2}:{b^2}$.
In particular, the four points $\left(\tfrac{a}{b},\tfrac{c}{d},\tfrac{a}{b}\oplus_S \tfrac{c}{d}, \tfrac{a}{b}\ominus_S \tfrac{c}{d}\right)$ are harmonic, i.e. their cross ratio is $-1$. 
\end{proposition}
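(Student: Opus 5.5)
Both claims reduce to direct computations with real numbers. Write $x=\tfrac{a}{b}$, $y=\tfrac{c}{d}$, the Farey sum $f_+=\tfrac{a+c}{b+d}$, and the Springborn sum $s_+=\tfrac{ab+cd}{b^2+d^2}$. The plan is to compute $f_+-x$ and $y-f_+$ in terms of the determinant $\Delta=bc-ad$. This gives
$$f_+-\frac{a}{b}=\frac{bc-ad}{b(b+d)},\qquad \frac{c}{d}-f_+=\frac{bc-ad}{d(b+d)}.$$
The ratio of the two parts is therefore $\tfrac{1/b}{1/d}=d:b$.

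We do the same for the Springborn sum:
$$s_+-\frac{a}{b}=\frac{d(bc-ad)}{b(b^2+d^2)},\qquad \frac{c}{d}-s_+=\frac{b(bc-ad)}{d(b^2+d^2)}.$$
Their ratio is $\tfrac{d/b}{b/d}=d^2:b^2$. Both points lie inside the segment, since the two differences have the same sign as $\Delta$. The fractions need not be reduced, but this is irrelevant because we only manipulate real values.

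For the harmonic property, we repeat the computation for the Springborn difference $s_-=\tfrac{ab-cd}{b^2-d^2}$. Replacing $d^2$ by $-d^2$ in the formulas above gives
$$s_--\frac{a}{b}=\frac{-d(bc-ad)}{b(b^2-d^2)},\qquad \frac{c}{d}-s_-=\frac{b(bc-ad)}{d(b^2-d^2)}.$$
So $s_-$ divides the segment externally in the same ratio $d^2:b^2$, but with opposite sign. Equivalently, the signed ratios satisfy
$$\frac{s_+-x}{s_+-y}=-\frac{d^2}{b^2},\qquad \frac{s_--x}{s_--y}=+\frac{d^2}{b^2}.$$
With the convention $(x,y;z,w)=\tfrac{(z-x)(w-y)}{(z-y)(w-x)}$, the cross ratio $(x,y;s_+,s_-)$ is the quotient of these two signed ratios, hence equals $-1$.

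The degenerate cases need separate treatment. If $b=d$, then $s_-=\infty$ and $s_+$ is the midpoint, so the configuration is still harmonic in the projective sense. If one of the points is $\infty$, the same conclusion follows by continuity, or by reading the formulas projectively.

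The only delicate point is fixing the cross-ratio convention and the signs consistently; the algebra itself is routine. As a consistency check, Proposition \ref{prop:algebraic_springborn} exhibits $s_-$ as the midpoint of $f_+$ and $f_-$, the two fixed points of the reflection exchanging $x$ and $y$ (Proposition \ref{Prop:exchange-under-involution}). However, the direct computation above suffices, so I would not rely on that observation.
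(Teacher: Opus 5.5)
Your computation is correct, and it is the ``simple computation'' the paper invokes without writing it out: the signed ratios $-d^2/b^2$ and $+d^2/b^2$ for $\tfrac{a}{b}\oplus_S\tfrac{c}{d}$ and $\tfrac{a}{b}\ominus_S\tfrac{c}{d}$ give cross ratio $-1$, and your handling of $b=d$ is fine. The one slip is your aside about $\infty$: if $\tfrac{c}{d}=\tfrac{1}{0}$, then $\tfrac{a}{b}\oplus_S\tfrac{c}{d}=\tfrac{a}{b}\ominus_S\tfrac{c}{d}=\tfrac{a}{b}$, so the four points degenerate and no continuity or projective argument gives $-1$. The statement, which refers to the segment $[\tfrac{a}{b},\tfrac{c}{d}]$, should therefore be read for finite, distinct endpoints, and your argument covers that case completely.
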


Other geometric interpretations rely on the following explicit formula for the inner and outer homothety points of two circles:
\begin{proposition}\label{Prop-expl-formula-i-e}
For two circles $C_1$ and $C_2$, with centers $M_1$ and $M_2$ and radii $r_1$ and $r_2$ respectively, we have:
$$i(C_1,C_2) = \frac{r_1M_2+r_2M_1}{r_1+r_2} \;\;\text{ and }\;\; e(C_1,C_2) = \frac{r_1M_2-r_2M_1}{r_1-r_2}.$$
\end{proposition}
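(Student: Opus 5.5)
The formula follows from the defining property of a homothety with the center as its fixed point, so I would work directly from Definition \ref{Def:i-and-e}. A homothety with center $P$ and ratio $\lambda$ sends a point $X$ to $P+\lambda(X-P)$, and it maps circles to circles. It sends the circle $C_1$ (center $M_1$, radius $r_1$) to the circle with center $P+\lambda(M_1-P)$ and radius $\lvert\lambda\rvert r_1$.

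Requiring the image to be $C_2$ gives two conditions. The radius condition forces $\lvert\lambda\rvert = r_2/r_1$. For the inner center the ratio is negative, so $\lambda=-r_2/r_1$; for the outer center it is positive, so $\lambda=r_2/r_1$. The center condition reads
$$M_2 = P+\lambda(M_1-P).$$
Solving for $P$ gives
$$P=\frac{M_2-\lambda M_1}{1-\lambda}.$$

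Substituting the two values of $\lambda$ and multiplying numerator and denominator by $r_1$ gives the two formulas:
\begin{align*}
\lambda=-\tfrac{r_2}{r_1}:&\quad P=\frac{r_1M_2+r_2M_1}{r_1+r_2},\\
\lambda=\tfrac{r_2}{r_1}:&\quad P=\frac{r_1M_2-r_2M_1}{r_1-r_2}.
\end{align*}

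The only subtle point is well-definedness. The outer formula requires $r_1\neq r_2$; when the radii are equal the outer center is at infinity, which is consistent with the formula degenerating. The homothety (ratio $\lambda$, center $P$) is unique as soon as $\lambda\neq 1$, because the linear equation for $P$ then has a unique solution. Here $\lambda=1$ would occur only in the excluded equal-radii outer case, so uniqueness holds otherwise.

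For disjoint circles one could instead check that these points lie on the common tangents, using similar triangles: the distances from $P$ to $M_1$ and $M_2$ are in ratio $r_1:r_2$. This is not needed under the homothety definition. I expect no real obstacle; the argument is a routine computation.
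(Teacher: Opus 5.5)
Your proof is correct. Solving $M_2 = P+\lambda(M_1-P)$ with $\lambda=\mp r_2/r_1$ gives exactly the two stated formulas, and you rightly read ``exchanging'' in Definition \ref{Def:i-and-e} as ``sending $C_1$ onto $C_2$''. The inverse homothety has the same center, so the formulas are symmetric in the two circles. The paper states this proposition without proof, treating it as a standard fact, and your computation is the natural argument to supply.
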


We can recover the Springborn operations using Ford circles.
The \emph{Ford circle} $F_{a/b}$ associated to a rational number $\frac{a}{b}$ is the circle with center $\left(\frac{a}{b},\frac{1}{2b^2}\right)$ and radius $\frac{1}{2b^2}$.

\begin{proposition}\label{Prop:geom-interpretation-ford}
For two rational numbers $\tfrac{a}{b}, \tfrac{c}{d}$, we have
$$
    \frac{a}{b}\ominus_S \frac{c}{d} = e(F_{a/b},F_{c/d}) \;\; \text{ and }\;\;
    \frac{a}{b}\oplus_S \frac{c}{d} = \mathrm{Re}(i(F_{a/b},F_{c/d})).
$$
\end{proposition}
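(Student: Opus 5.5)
The statement reduces to a direct computation with the explicit formula of Proposition \ref{Prop-expl-formula-i-e}, applied to the two Ford circles $F_{a/b}$ and $F_{c/d}$. I would write $M_1=\left(\tfrac{a}{b},\tfrac{1}{2b^2}\right)$ with $r_1=\tfrac{1}{2b^2}$, and $M_2=\left(\tfrac{c}{d},\tfrac{1}{2d^2}\right)$ with $r_2=\tfrac{1}{2d^2}$. Multiplying numerator and denominator by $2b^2d^2$ clears all the factors $\tfrac12$, which keeps the algebra short.

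For the inner center, the formula gives
$$i(F_{a/b},F_{c/d})=\frac{r_1M_2+r_2M_1}{r_1+r_2}=\frac{b^2M_1+d^2M_2}{b^2+d^2}.$$
Its real part is $\frac{b^2\cdot\frac{a}{b}+d^2\cdot\frac{c}{d}}{b^2+d^2}=\frac{ab+cd}{b^2+d^2}=\frac{a}{b}\oplus_S\frac{c}{d}$. I would also record the imaginary part, $\frac{b^2\cdot\frac{1}{2b^2}+d^2\cdot\frac{1}{2d^2}}{b^2+d^2}=\frac{1}{b^2+d^2}$. It is nonzero, which explains why the statement only claims equality for the real part in this case.

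For the outer center, I would first assume $b\neq d$. Then
$$e(F_{a/b},F_{c/d})=\frac{r_1M_2-r_2M_1}{r_1-r_2}=\frac{b^2M_1-d^2M_2}{b^2-d^2}.$$
The real part is $\frac{ab-cd}{b^2-d^2}=\frac{a}{b}\ominus_S\frac{c}{d}$. The imaginary part is $\frac{\frac12-\frac12}{b^2-d^2}=0$, so the outer center lies on the real axis and equals the Springborn difference exactly.

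The only real obstacle is the degenerate case $b=d$. There the two radii coincide, so the outer homothety center is the point at infinity, and the Springborn difference $\frac{ab-cd}{0}$ is $\infty$ as well, since $ab-cd=b(a-c)\neq 0$ for distinct rationals. I would dispose of this with one sentence interpreting both sides in $\mathbb{RP}^1$. A single coordinate check is enough to confirm the weights $b^2,d^2$ come out right.
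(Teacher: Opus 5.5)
Your proposal is correct and follows the paper's proof: substitute the Ford circle centers and radii into the explicit homothety-center formulas, which gives $i(F_{a/b},F_{c/d})=\frac{ab+cd}{b^2+d^2}+\frac{i}{b^2+d^2}$ and the real outer center $\frac{ab-cd}{b^2-d^2}$. You also treat the degenerate case $b=d$ separately (equal radii, so both sides equal $\infty$), which the paper leaves implicit.
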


In particular, if $d_F(\tfrac{a}{b}, \tfrac{c}{d})=1$, the two Ford circles are tangent and the $x$-coordinate of the contact point is given by $\tfrac{ac+bd}{b^2+d^2}$, as already stated in the original paper by Ford \cite{Ford}.

\begin{figure}[h!]
    \centering
    \includegraphics[height=5cm]{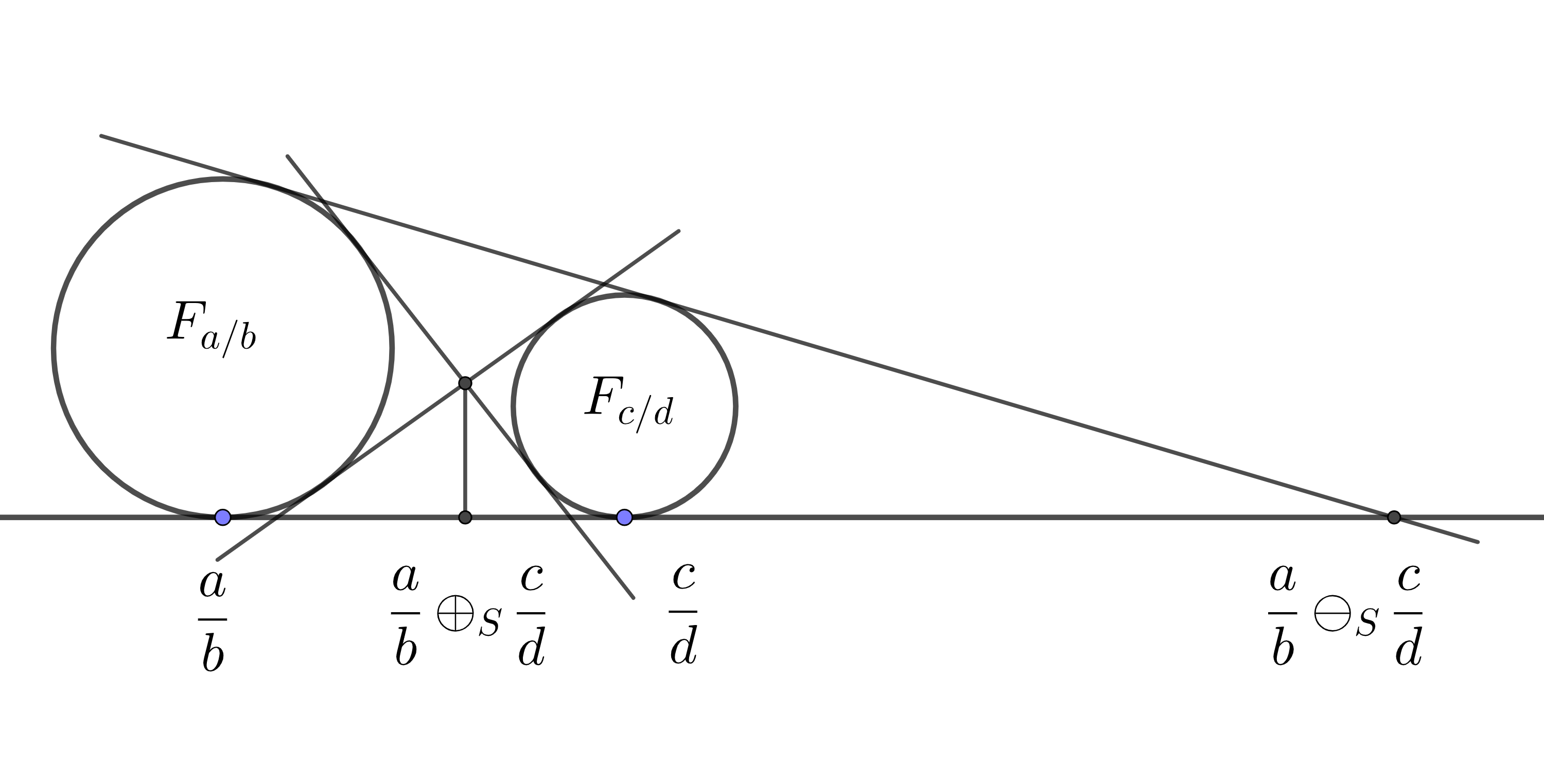}
    \caption{Springborn operations and Ford circles}
    \label{Fig:Springborn-Ford}
\end{figure}

\begin{proof}
We simply use the explicit formulas for inner and outer homothety center. We get
$$i(F_{a/b},F_{c/d}) = \frac{\tfrac{1}{2b^2}(\tfrac{c}{d}+\tfrac{i}{2d^2})+\tfrac{1}{2d^2}(\tfrac{a}{b}+\tfrac{i}{2b^2})}{\tfrac{1}{2b^2}+\tfrac{1}{2d^2}} = \frac{ab+cd}{b^2+d^2}+\frac{i}{b^2+d^2}.$$
Similarly, we get $e(F_{a/b},F_{c/d}) = \frac{ab-cd}{b^2-d^2}$.
\end{proof}

It turns out that in the proof of Proposition \ref{Prop:geom-interpretation-ford}, we have only used the $x$-coordinate of the center of the Ford circles (and the radius). So we can move them vertically without changing the result.

\begin{proposition}\label{Prop:geom-caract-Cab}
Denote by $C_{a/b}$ the circle with center $\tfrac{a}{b}$ and radius $\tfrac{1}{2b^2}$. Then 
$$\frac{a}{b}\oplus_S \frac{c}{d} = i(C_{a/b},C_{c/d}) \;\;\text{ and }\;\; \frac{a}{b}\ominus_S \frac{c}{d} = e(C_{a/b},C_{c/d}).$$
\end{proposition}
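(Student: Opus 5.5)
This is a direct application of Proposition \ref{Prop-expl-formula-i-e} to the circles $C_{a/b}$ and $C_{c/d}$, both centred on the real axis. Let $M_1=\tfrac{a}{b}$ and $r_1=\tfrac{1}{2b^2}$, and let $M_2=\tfrac{c}{d}$ and $r_2=\tfrac{1}{2d^2}$. Since both centres are real, both homothety centres are real. No imaginary part has to be discarded, which is why the statement holds without taking $\mathrm{Re}$, unlike Proposition \ref{Prop:geom-interpretation-ford}.

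For the inner centre, the formula gives
$$i(C_{a/b},C_{c/d})=\frac{\tfrac{1}{2b^2}\tfrac{c}{d}+\tfrac{1}{2d^2}\tfrac{a}{b}}{\tfrac{1}{2b^2}+\tfrac{1}{2d^2}}.$$
Multiplying numerator and denominator by $2b^2d^2$ gives $\frac{bc+ad\cdot\tfrac{d}{b}\cdot\tfrac{b}{d}}{\cdots}$. More carefully, the numerator becomes $\tfrac{cd^2}{d}\cdot\tfrac{1}{1}$: $2b^2d^2\cdot\tfrac{c}{2b^2d}=cd$ and $2b^2d^2\cdot\tfrac{a}{2d^2b}=ab$, so the numerator is $ab+cd$ and the denominator is $d^2+b^2$. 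This gives exactly $\tfrac{ab+cd}{b^2+d^2}=\tfrac{a}{b}\oplus_S\tfrac{c}{d}$.

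The outer centre is the same computation with minus signs. The numerator becomes $cd-ab$ and the denominator $d^2-b^2$, so the quotient equals $\tfrac{ab-cd}{b^2-d^2}$.

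The only real obstacle is degenerate cases, which I would handle by convention. If $b=d$, the two radii are equal and the outer centre is at infinity. This is consistent with $\ominus_S$ giving denominator $0$, read as $\infty\in\mathbb{QP}^1$. The case where one fraction is $\tfrac{1}{0}$ makes the corresponding circle degenerate and is excluded or treated by the same limiting convention.

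Alternatively, I could just note that the computation in the proof of Proposition \ref{Prop:geom-interpretation-ford} used only the $x$-coordinates of the centres and the radii. Shifting the centres vertically by $\tfrac{i}{2b^2}$ and $\tfrac{i}{2d^2}$ changes the homothety centres by that same weighted combination. Removing the shifts therefore removes the term $\tfrac{i}{b^2+d^2}$, leaving the real value above.
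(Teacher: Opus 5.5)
Your proof is correct and is essentially the paper's argument: the paper simply notes that the Ford-circle computation via Proposition \ref{Prop-expl-formula-i-e} uses only the $x$-coordinates of the centres and the radii, which amounts to your direct computation and to your closing remark. Delete the garbled intermediate expression before ``More carefully''; the computation that follows it, giving $\frac{ab+cd}{b^2+d^2}$ and $\frac{ab-cd}{b^2-d^2}$, is the correct one.
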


\begin{figure}[h!]
    \centering
    \includegraphics[height=5cm]{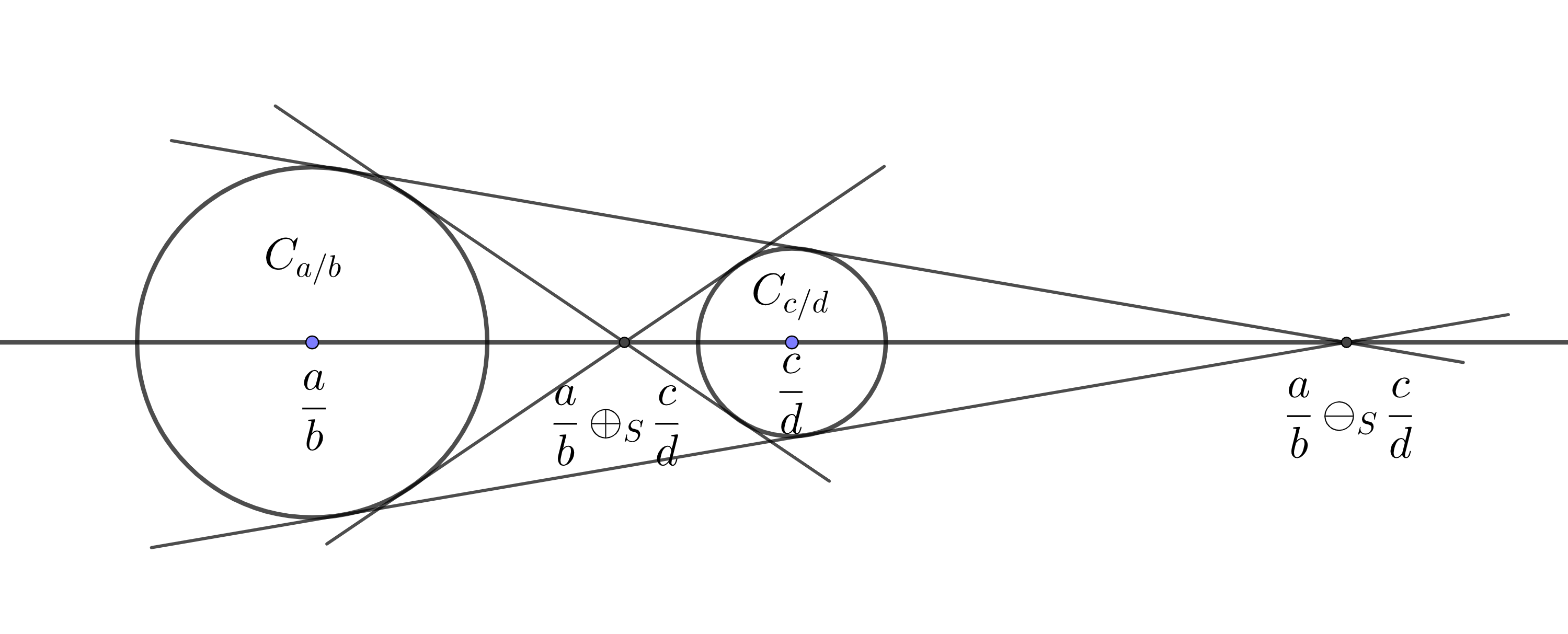}
    \caption{Springborn operations from Ford-like circles orthogonal to the real line}
    \label{Fig:Springborn-Ford-down}
\end{figure}

We are grateful to Boris Springborn, who mentioned an equivalent formulation using hyperbolic involutions:

\begin{proposition}\label{prop:homothety_geometric} Let $C_1$, $C_2$ be two circles centered on the real line, with disjoint interiors. The outer homothety center $e(C_1,C_2)$ is the image of $\infty$ under the unique orientation-reversing isometry of $\mathbb{H}^2$ exchanging $C_1$ and $C_2$.

Similarly, the inner homothety center $i(C_1,C_2)$ is the image of $\infty$ under the unique orientation-preserving isometry of $\mathbb{H}^2$ exchanging $C_1$ and $C_2$.
\end{proposition}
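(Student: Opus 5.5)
We interpret $C_1,C_2$ as circles centered on $\R$, so each bounds a geodesic of $\H^2$ in the upper half-plane model. We treat the orientation-reversing case first and deduce the orientation-preserving one from it. For the existence and uniqueness of the isometries, note that $C_1$ and $C_2$ have disjoint interiors, so the two geodesics are disjoint and not asymptotic (or, in the tangent case, asymptotic). In the disjoint case there is a unique common perpendicular $\gamma$. Let $m$ be the midpoint of the shortest segment between the two geodesics along $\gamma$, and let $\rho_m$ be the geodesic through $m$ orthogonal to $\gamma$. The reflection in $\rho_m$ exchanges the two geodesics; it is orientation-reversing. The half-turn about $m$ is the orientation-preserving isometry exchanging them. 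Uniqueness follows because any isometry exchanging the two geodesics preserves $\gamma$ and the segment, hence fixes $m$. The stabilizer of $m$ that preserves $\gamma$ and swaps its two ends consists of exactly these two maps.

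The main computation is Euclidean. For the orientation-reversing map, the reflection in $\rho_m$ is an inversion in a circle $\Sigma$ centered at a point $O\in\R$ (or a Euclidean reflection if $r_1=r_2$). Inversion in $\Sigma$ sends $\infty$ to $O$. So it suffices to show that the center $O$ of an inversion exchanging $C_1$ and $C_2$ is the homothety center of the two circles. The key fact is that an inversion centered at $O$ maps a circle $C$ to a circle which is the image of $C$ under a homothety centered at $O$, of ratio $\pm R^2/\mathrm{pow}_O(C)$. This is classical: a line through $O$ meets $C$ at points $P,P'$, and the image of $P$ is $R^2/|OP|^2$ times $P$, which lies on the line through $O$ and $P'$ scaled by $R^2/\mathrm{pow}_O(C)$. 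Hence $C_2$ is a homothetic image of $C_1$ with center $O$. The ratio is positive exactly when $O$ lies outside both circles, which happens because $\Sigma$ separates the two disjoint disks and $O$ is outside both. Therefore $O=e(C_1,C_2)$.

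For the orientation-preserving map, the half-turn about $m$ equals the composition of reflection in $\rho_m$ with reflection in $\gamma$. Reflection in $\gamma$ is inversion in the circle $\gamma$, which is centered on $\R$ and orthogonal to both $C_1$ and $C_2$. It fixes $C_1$ and $C_2$, and it sends $\infty$ to the center $c_\gamma$ of $\gamma$. Its composition with the previous inversion sends $\infty$ first to $c_\gamma$, then to its image under inversion in $\Sigma$. I expect this identification to be the main obstacle. A cleaner route is to write the composition in the other order, as the reflection in $\gamma$ followed by the reflection in $\rho_m$. Since the two reflections commute, this is still the half-turn, and it sends $\infty$ to the image of $O=e$ under inversion in $\gamma$. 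It then remains to check that inversion in $\gamma$ sends $e(C_1,C_2)$ to $i(C_1,C_2)$. The two homothety centers are harmonic conjugates with respect to the pair of centers $M_1,M_2$; this follows from Proposition \ref{Prop-expl-formula-i-e}, since they divide $[M_1,M_2]$ internally and externally in ratio $r_1:r_2$. They are also harmonic conjugates with respect to the endpoints of $\gamma$ on $\R$. I would verify the latter by direct computation, normalizing by a real affine map $M_1=-1$, $M_2=1$. Then $\gamma$ is the circle of points whose powers with respect to $C_1$ and $C_2$ are both $0$, and the center $c_\gamma$ lies on the radical axis. Inversion in $\gamma$ restricted to $\R$ is exactly the map exchanging harmonic conjugates with respect to its endpoints, which gives $e\mapsto i$.

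The degenerate cases need a short separate check. When $r_1=r_2$, the point $e$ is $\infty$: $\rho_m$ is a vertical line, so the reflection fixes $\infty$. When the circles are tangent, $m$ is ideal and the isometries are parabolic or limiting cases, and the result follows by continuity in the radii.
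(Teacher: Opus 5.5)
Your argument is correct for disjoint closed disks and follows the paper's proof essentially step for step. The orientation-reversing exchange is an inversion whose center must be $e(C_1,C_2)$; you get this from the inversion--homothety lemma, where the paper uses the common tangents through the center. The half-turn is that inversion composed with the commuting inversion in the common perpendicular $\gamma$, so everything reduces to $e$ and $i$ being harmonic conjugates with respect to $\gamma\cap\mathbb{R}$, which the paper likewise leaves to direct computation.

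Two small corrections. First, $\gamma$ is not a zero-power locus: it is the circle centered at the radical-axis point $c_\gamma\in\mathbb{R}$ with radius squared $\mathrm{pow}_{c_\gamma}(C_1)=\mathrm{pow}_{c_\gamma}(C_2)$. Second, in the tangent case no orientation-preserving isometry exchanges the circles at all. Such a map would have to fix the common ideal endpoint and square to the identity, and no nontrivial orientation-preserving involution of $\mathbb{H}^2$ fixes an ideal point. So that case must be excluded rather than obtained ``by continuity''; the paper tacitly excludes it too when it invokes the common perpendicular.
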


\begin{proof}
The space of orientation-reversing isometries of $\mathbb{H}^2$ consists of inversions in geodesics (circle inversions or axial reflections from Euclidean viewpoint).
Suppose that there is an inversion $I_e$ with respect to a circle $C_e$ exchanging $C_1$ and $C_2$. Denote by $M_e$ the midpoint of $C_e$. Since the image $P'$ of any point $P$ under the inversion $I_e$ is on $PM_e$, we see that the two tangent lines from $M_e$ to $C_2$ are also tangent to $C_1$. Hence $M_e=e(C_1,C_2)$. The radius is then uniquely determined. Conversely, this data determines $C_e$ uniquely. Since an inversion exchanges the center of the circle with infinity, we get $e(C_1,C_2) = M_e = I_e(\infty)$.

For the inner homothety center, consider the geodesic $\gamma_3$ between $C_1$ and $C_2$ (which exists since $C_1$ and $C_2$ have disjoint interiors). This geodesic is part of a circle $C_3$ which is orthogonal to $C_1$ and $C_2$. Denote by $I_3$ the inversion in $C_3$. This inversion fixes $C_1$ and $C_2$, so also $C_e$. Hence $C_3$ is orthogonal to $C_e$.
Consider the composition $I_i = I_e\circ I_3$. Since $C_e$ is orthogonal to $C_3$, we also have $I_i = I_3\circ I_e$. Clearly $I_i$ is an orientation-preserving isometry of $\H^2$. It also exchanges $C_1$ with $C_2$. Finally, 
$$I_i(\infty) = I_3(I_e(\infty)) = I_3(e(C_1,C_2)) = i(C_1,C_2),$$
where we used that $i(C_1,C_2), e(C_1,C_2)$ together with $C_3\cap \R$ form four harmonic points, which can be proven by explicit computation.
\end{proof}

\begin{figure}[h!]
\centering
\begin{tikzpicture}[scale=7]

\path[use as bounding box] (-0.32,-0.25) rectangle (2,0.5);
\def\q{0.42}

\draw[->] (-0.2,0) -- (1.6,0) node[right] {$x$};
\draw[->] (0,-0.2) -- (0,0.5) node[above] {$y$};

\pgfmathsetmacro{\cA}{(\q+1)/2}
\pgfmathsetmacro{\rA}{(1-\q)/2}

\fill[gray!20] (\cA,0) circle (\rA);
\draw[blue, thick] (\cA,0) circle (\rA);

\node at (\cA,0) {\Large $\left[\frac{a}{b}\right]$};

\pgfmathsetmacro{\a}{1+\q*\q}
\pgfmathsetmacro{\b}{1+\q}
\pgfmathsetmacro{\cB}{(\a+\b)/2}
\pgfmathsetmacro{\rB}{(\b-\a)/2}

\fill[gray!20] (\cB,0) circle (\rB);
\draw[blue, thick] (\cB,0) circle (\rB);

\node at (\cB,0) {\Large $\left[\frac{c}{d}\right]$};

\pgfmathsetmacro{\cC}{(1+(1+\q))/2}
\pgfmathsetmacro{\rC}{((1+\q)-1)/2}

\path[name path=semiA] (\cC,0) circle (\rC);

\begin{scope}
\clip (-1,0) rectangle (2,2);
\draw[thick] (\cC,0) circle (\rC);
\end{scope}

\pgfmathsetmacro{\cD}{(\q+(1+\q*\q))/2}
\pgfmathsetmacro{\rD}{((1+\q*\q)-\q)/2}

\path[name path=semiB] (\cD,0) circle (\rD);

\begin{scope}
\clip (-1,0) rectangle (2,2);
\draw[thick] (\cD,0) circle (\rD);
\end{scope}

\path[name intersections={of=semiA and semiB, by={P1,P2}}];

\coordinate (P) at (P1);

\fill (P) circle (0.5pt);
\node[above right] at (P) {$P$};

\draw[dashed, thick] (P|-0,0.5)-- (P) -- (P |- 0,0);

\fill[red] (P |- 0,0) circle (0.5pt);

\node[below, xshift=-6pt, yshift=-2pt, text=red] at (P |- 0,-0.03)
{$\frac{a}{b} \oplus_S \frac{c}{d}$};
\end{tikzpicture}
    \caption{Two rational $q$-disks and a corresponding Springborn sum obtained by a hyperbolic geometric construction.}
    \label{fig:Springborn_illustration}
\end{figure}

\begin{coro}
For two rational numbers $\frac{a}{b}$ and $\frac{c}{d}$, the Springborn sum  $\frac{a}{b} \oplus_S \frac{c}{d}$ is defined via a following (hyperbolic) geometric construction. Take two geodesics $\gamma_1$ and $\gamma_2$, connecting $[\frac{a}{b}]_q^{\flat}$ with $[\frac{c}{d}]_q^{\flat}$, and  $[\frac{a}{b}]_q^{\sharp}$ with  $[\frac{c}{d}]_q^{\sharp}$, respectively. Let $P:=\gamma_1 \cap \gamma_2$. Then, the geodesic line from $\infty$ to $P$ intersects the the absolute in the point $\frac{a}{b} \oplus_S \frac{c}{d}$. 
\end{coro}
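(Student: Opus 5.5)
The plan is to prove a purely geometric statement about two circles, and then specialize it to the $q$-disks. Let $C_1, C_2$ be two circles centered on the real line with disjoint interiors, meeting $\R$ at $x_1<y_1<x_2<y_2$. Let $\gamma_1$ be the geodesic from $x_1$ to $x_2$ and $\gamma_2$ the geodesic from $y_1$ to $y_2$. The endpoints of these two geodesics interlace, so they meet in a single point $P\in\H^2$. The claim to establish is $\mathrm{Re}(P)=i(C_1,C_2)$. Applied to $C_1=[\tfrac{a}{b}]$ and $C_2=[\tfrac{c}{d}]$, with $x_1=[\tfrac{a}{b}]^\flat_q$, $y_1=[\tfrac{a}{b}]^\sharp_q$, $x_2=[\tfrac{c}{d}]^\flat_q$, $y_2=[\tfrac{c}{d}]^\sharp_q$ (well-ordered by Corollary \ref{Prop:well-orderedness}), the point of $\R$ reached by the vertical geodesic through $P$ is then $i([\tfrac{a}{b}],[\tfrac{c}{d}])$.

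\textbf{Step 1 (the half-turn).} I will use Proposition \ref{prop:homothety_geometric} and its proof. The orientation-preserving isometry $I_i=I_e\circ I_3$ exchanges $C_1$ and $C_2$ and satisfies $I_i(\infty)=i(C_1,C_2)$. Since $C_e\perp C_3$, the reflections $I_e$ and $I_3$ commute. Hence $I_i$ is an involution, namely the rotation by $\pi$ about the point $z_0=C_e\cap C_3$.

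\textbf{Step 2 (locating the center).} An orientation-preserving Möbius map preserves the cyclic order on $\partial\H^2$. The map $I_i$ sends $\{x_1,y_1\}$ to $\{x_2,y_2\}$ and, being an involution, sends $\{x_2,y_2\}$ back to $\{x_1,y_1\}$. Suppose $I_i(x_1)=y_2$ and $I_i(y_1)=x_2$. Then $I_i$ would send the cyclic quadruple $(x_1,y_1,x_2,y_2)$ to $(y_2,x_2,y_1,x_1)$, which has the reversed cyclic order, a contradiction. Therefore $I_i$ swaps $x_1\leftrightarrow x_2$ and $y_1\leftrightarrow y_2$, so it maps each of $\gamma_1$ and $\gamma_2$ to itself, reversing its direction. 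A half-turn that maps a geodesic to itself with reversed direction has its center on that geodesic. So $z_0\in\gamma_1\cap\gamma_2$, that is, $z_0=P$.

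\textbf{Step 3 (conclusion).} The half-turn about $P$ maps the geodesic through $\infty$ and $P$ to itself, swapping its two endpoints. That geodesic is the vertical line $\mathrm{Re}(z)=\mathrm{Re}(P)$, so $I_i(\infty)=\mathrm{Re}(P)$. Combined with Step 1, this gives $\mathrm{Re}(P)=i(C_1,C_2)$. Finally, the endpoint $i([\tfrac{a}{b}],[\tfrac{c}{d}])$ must be identified with the (deformed) Springborn sum. This is exactly the content of Observation \ref{Conj-real-line}, proved for inner regular pairs in Theorem \ref{Thm:main}: there $i([\tfrac{a}{b}],[\tfrac{c}{d}])=[\tfrac{a}{b}\oplus_S\tfrac{c}{d}]^\sharp_q$. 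As $q\to 1$, the same computation with the circles $C_{a/b}$ of Proposition \ref{Prop:geom-caract-Cab} recovers the classical $\tfrac{a}{b}\oplus_S\tfrac{c}{d}$.

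\textbf{Main obstacle.} The delicate point is Step 2: ruling out the crossed pairing, and justifying that the center of the half-turn lies on each invariant geodesic. I plan to settle both with the cyclic-order argument above. The remaining steps are routine given Proposition \ref{prop:homothety_geometric}. The identification with the Springborn sum is deferred to Theorem \ref{Thm:main}, so the statement should be read with that identification for inner regular pairs.
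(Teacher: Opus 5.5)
Your proposal is correct and follows the paper's own route. The unique orientation-preserving isometry exchanging $[\tfrac{a}{b}]$ and $[\tfrac{c}{d}]$ (Proposition \ref{prop:homothety_geometric}) is a half-turn whose centre must be $P$, so it sends $\infty$ to $\mathrm{Re}(P)$, which is the inner homothety centre. Your cyclic-order argument supplies the justification, missing in the paper, that this half-turn preserves $\gamma_1$ and $\gamma_2$. You read the conclusion as $i([\tfrac{a}{b}],[\tfrac{c}{d}])=[\tfrac{a}{b}\oplus_S\tfrac{c}{d}]^\sharp_q$ for inner regular pairs, via Theorem \ref{Thm:main}, whereas the paper appeals to Proposition \ref{Prop:geom-caract-Cab}, which concerns the undeformed circles $C_{a/b}$. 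Your reading is the accurate one: for $q\neq 1$ the endpoint is not the classical sum (for the pair $(0,1)$ it is $q/(1+q)$ rather than $1/2$).
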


\begin{proof}
Apply Proposition \ref{prop:homothety_geometric} to the two disks $C_1 = [\tfrac{a}{b}]$ and $C_2 = [\tfrac{c}{d}]$, as on Figure \ref{fig:Springborn_illustration}. The only orientation-preserving isometry exchanging these circles is the rotation of angle $\pi$ around the midpoint of the common perpendicular of $C_1$ and $C_2$. This rotation preserves the geodesics $\gamma_1$ and $\gamma_2$, hence $P$ is the rotation center. Hence, by Proposition \ref{Prop:geom-caract-Cab} the infinity is sent to the Springborn sum under such an involution.
\end{proof}

\subsection{Reversion and iteration of Springborn operations}

We can ask whether any rational number $\tfrac{x}{y}$ can be represented as the Springborn sum of two others. We answer this question positively and analyze under which condition one can iterate the operation.

\begin{proposition}
The equation $\tfrac{a}{b}\oplus_S \tfrac{c}{d}=\tfrac{x}{y}$, where $a, b, x, y$ are given and $c, d$ are unknowns, has as solution
$$c= ax-\frac{b(1+x^2)}{y}\;\;\text{ and }\;\; d=ay-bx \;.$$
Hence the equation admits a regular solution if and only if $y\mid b(1+x^2)$ and $\gcd(c,d)=1$.

Similarly, $\tfrac{a}{b}\ominus_S \tfrac{c}{d}=\tfrac{x}{y}$ has solution $c=-ax+\frac{b(x^2-1)}{y}$ and $d=bx-ay$, which is well-defined and regular if and only if $y\mid b(x^2-1)$ and $\gcd(c,d)=1$ and $b\neq d$.
\end{proposition}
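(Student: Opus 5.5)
The plan is to read the equation through the involution characterization of Theorem \ref{Thm:charact-regularity} and then confirm the resulting formula by a direct computation. For an inner regular pair, formula \eqref{Eq:matrix-exchange} gives an integer matrix $M_I$ of determinant $1$ and trace $0$ sending $\binom{a}{b}\mapsto\binom{c}{d}\mapsto\binom{-a}{-b}$. Its first column is proportional to $\binom{ab+cd}{b^2+d^2}$, so as a M\"obius map $M_I(\infty)=\tfrac{a}{b}\oplus_S\tfrac{c}{d}$. This matches Proposition \ref{prop:homothety_geometric}: the inner homothety center is the image of $\infty$ under the orientation-preserving involution.

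\textbf{Construction of the solution.} Suppose the sum is to equal $\tfrac{x}{y}$. I look for a matrix $M$ with $\det M=1$, $\mathrm{tr} M=0$ and first column $\binom{x}{y}$. These three conditions force
$$M=\begin{pmatrix} x & -\tfrac{1+x^2}{y}\\ y & -x\end{pmatrix}.$$
Hence $M$ is the unique rotation by $\pi$ exchanging $\infty$ and $\tfrac{x}{y}$. Setting $\binom{c}{d}:=M\binom{a}{b}$ gives exactly
$$c=ax-\frac{b(1+x^2)}{y},\qquad d=ay-bx.$$
I then check directly that this is a solution: expanding shows $y(ab+cd)-x(b^2+d^2)=0$. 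The computation is routine, using $d=ay-bx$ and $cy=axy-b(1+x^2)$. Alternatively it follows from $M^2=-\mathrm{id}$, which makes $M$ coincide (up to the factor $d_F$) with the matrix \eqref{Eq:matrix-exchange} built from $(a,b)$ and $(c,d)$. Since $\det M=1$, this also gives $ad-bc=b^2+\dots\neq0$, so the pair is non-degenerate.

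\textbf{Uniqueness and the regularity condition.} For uniqueness among regular solutions, take any inner regular solution $\tfrac{c'}{d'}$. By Theorem \ref{Thm:charact-regularity} there is an orientation-preserving involution exchanging $\tfrac{a}{b}$ and $\tfrac{c'}{d'}$, and by the remark above it sends $\infty$ to $\tfrac{x}{y}$. A rotation by $\pi$ is determined by one pair of points it exchanges. So this involution equals $M$, and $\tfrac{c'}{d'}=M(\tfrac{a}{b})$.

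It remains to see when $(c,d)$ is an admissible reduced pair. The entry $d$ is always an integer. The entry $c$ is an integer exactly when $y\mid b(1+x^2)$. One also needs $\gcd(c,d)=1$ for $\tfrac{c}{d}$ to be in reduced form. Under these conditions the pair is regular, since the exchanging involution $\pm M$ lies in $\PGL_2(\Z)$ when acting on this pair. I expect this bookkeeping of the divisibility condition to be the only delicate point: $M$ itself need not be integral, so regularity has to be verified on the pair rather than read off from $M$.

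\textbf{Outer case.} The difference case is identical, using a determinant $-1$, trace-zero matrix with first column $\binom{x}{y}$:
$$\begin{pmatrix} x & \tfrac{x^2-1}{y}\\ y & -x\end{pmatrix}.$$
Applying it to $\binom{a}{b}$ gives, up to the sign $-1$ (irrelevant projectively), $c=-ax+\tfrac{b(x^2-1)}{y}$ and $d=bx-ay$. The extra condition $b\neq d$ is needed because the expression $\tfrac{ab-cd}{b^2-d^2}$ must not have vanishing denominator.
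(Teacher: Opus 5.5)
Your existence step is correct and differs from the paper's route. The paper eliminates directly: it solves $ab+cd=x\,d_F$, $b^2+d^2=y\,d_F$ for $c$ in terms of $d$, and then $d_F=ad-bc$ forces $d=ay-bx$. You instead obtain $\binom{c}{d}=M\binom{a}{b}$ from the trace-zero matrix and verify $y(ab+cd)=x(b^2+d^2)$.

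The gap is in the regularity part. In this proposition a ``regular solution'' means, as the paper's proof makes explicit, a solution of that system with $d_F=ad-bc$. In other words, the Springborn sum becomes reduced after dividing by exactly $d_F$. You instead read it as inner regularity in the sense of Definition~\ref{Def:reg}, via Theorem~\ref{Thm:charact-regularity}. With that reading, your claim ``under these conditions the pair is regular'' is false. For $\tfrac{a}{b}=\tfrac{4}{3}$ and $\tfrac{x}{y}=\tfrac{1}{3}$ one gets $c=2$, $d=9$, which are coprime, and $y=3$ divides $b(1+x^2)=6$. Yet $\gcd(ab+cd,b^2+d^2,a^2+c^2)=\gcd(30,90,20)=10\neq 30=d_F$. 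In fact, for an inner regular pair the exchanging involution lies in $\mathrm{SL}_2(\Z)$ and equals $\pm M$, which forces $y\mid 1+x^2$. So the sentence ``$\pm M$ lies in $\PGL_2(\Z)$ when acting on this pair'' justifies nothing, and it clashes with your own remark that $M$ need not be integral.

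Your uniqueness step also rests on a false claim. A rotation by $\pi$ is not determined by one pair of points it exchanges, because its center may be any point of the geodesic joining them: $z\mapsto -1/z$ and $z\mapsto -4/z$ both swap $0$ and $\infty$. The fix is the identification $M=M_I$ you already mention, used in the converse direction and over $\Q$. For any pair, the matrix $M_I$ of \eqref{Eq:matrix-exchange}:
\begin{itemize}
\item has rational entries and trace $0$;
\item has determinant $1$, by $(a^2+c^2)(b^2+d^2)=(ab+cd)^2+(ad-bc)^2$;
\item sends $\binom{a}{b}$ to $\binom{c}{d}$;
\item has first column $\frac{1}{ad-bc}\binom{ab+cd}{b^2+d^2}$.
\end{itemize}
Hence the pair solves the system iff that column is $\binom{x}{y}$, iff $M_I=M$. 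This yields uniqueness, $d_F=(b^2+d^2)/y$, and the stated criterion (integrality of $c$ and $\gcd(c,d)=1$), with no geometry and no integrality of $M$ needed.

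Finally, in the outer case the matrix must be $\begin{pmatrix} x & \frac{1-x^2}{y}\\ y & -x\end{pmatrix}$. Yours has determinant $1-2x^2$ and gives the wrong sign in front of $\frac{b(x^2-1)}{y}$.
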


\begin{proof}
Put $d_F=ad-bc$. We want to solve
$$
\left\{
\begin{aligned}
ab + cd &= x d_F \\
b^2 + d^2 &= y d_F
\end{aligned}
\right..
$$
Fix $d\in\mathbb{Z}$ such that $y\mid b^2+d^2$. Then
$$c=\frac{1}{d}\left(x\tfrac{b^2+d^2}{y}-ab\right)=\frac{x(b^2+d^2)-aby}{yd}.$$
It follows that $d_F=ad-bc=\frac{(b^2+d^2)(ay-bx)}{yd}$. Hence $d_F=\frac{b^2+d^2}{y}$ if and only if $d=ay-bx$. We then get $c=ax-\frac{b(1+x^2)}{y}$ by a direct computation.

Therefore, we see that $y\mid b(1+x^2)$ is a necessary condition to solve the system. Note that this also implies $y\mid b^2+d^2$ for $d=ay-bx$. If $y\mid b(1+x^2)$, the system has a solution if and only if $\tfrac{c}{d}$ is a reduced fraction, i.e. if and only if $\gcd(c,d)=1$.

The same argument holds for the Springborn difference.
\end{proof}

\begin{example}
    Consider $x=5, y=2$ and $a,b$ arbitrary (coprime). Then $d=5b-2a$ and $c=5a-13b$ and a direct computation gives that $\gcd(c,d)=\gcd(a,b)=1$. Hence $\tfrac{5}{2}$ can be written as Springborn sum in infinitely many ways.
\end{example}

\begin{proposition}
Every rational number is the Springborn sum or Springborn difference of two other numbers.
\end{proposition}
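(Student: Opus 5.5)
The plan is to use the inversion formula of the previous proposition for the Springborn sum, and to choose the free data $(a,b)$ so that the divisibility and coprimality conditions there hold automatically. The statement asks only that the two numbers be different from the target; regularity of the pair is not required. For a finite rational $\tfrac{x}{y}$ in reduced form with $y\geq 1$, I would take $b=y$. Then the condition $y\mid b(1+x^2)$ is trivially satisfied, and the solution becomes
$$c=ax-(1+x^2),\qquad d=y(a-x).$$

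The key step is choosing $a$ so that $\tfrac{a}{b}$ is reduced and $\gcd(c,d)=1$.

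\textbf{Choice of $a$.} I would take $a=x+y$. Then $\gcd(a,b)=\gcd(x+y,y)=\gcd(x,y)=1$, so $\tfrac{a}{b}=\tfrac{x}{y}+1$ is reduced. This gives $d=y^2$ and $c=xy-1$.

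\textbf{Coprimality.} Every prime dividing $d=y^2$ divides $y$, and $c=xy-1\equiv -1 \pmod y$. Hence $\gcd(c,d)=1$.

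\textbf{Direct check.} Independently of the previous proposition, one can compute
$$ab+cd=(x+y)y+(xy-1)y^2=xy(1+y^2),\qquad b^2+d^2=y^2(1+y^2),$$
so that
$$\frac{x+y}{y}\oplus_S\frac{xy-1}{y^2}=\frac{x}{y}.$$

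\textbf{Distinctness.} Both summands differ from $\tfrac{x}{y}$. The first is $\tfrac{x}{y}+1$. For the second, $\tfrac{xy-1}{y^2}$ is a reduced fraction with denominator $y^2$. It could equal $\tfrac{x}{y}$ only if $y^2=y$, that is $y=1$, in which case it equals $x-1\neq x$.

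\textbf{The point $\infty$.} This is the one case not covered by the finite construction, since $b=y=0$ degenerates there. I would handle it with the Springborn difference, using $b^2=d^2$:
$$\frac{1}{1}\ominus_S\frac{-1}{1}=\frac{1\cdot1-(-1)\cdot1}{1-1}=\frac{2}{0}=\infty.$$

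So every element of $\mathbb{QP}^1$ arises: every finite rational as a Springborn sum, and $\infty$ as a Springborn difference. I do not expect a real obstacle. The only delicate point is choosing $a$ so that $\gcd(c,d)=1$ holds uniformly, and $a=x+y$ does this because it makes $c\equiv -1$ modulo every prime dividing $d$.
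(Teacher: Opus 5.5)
Your proof is correct and follows the same route as the paper: invert the Springborn sum via the preceding proposition with $b=y$, then choose $a$ so that $\gcd(a,y)=\gcd(c,d)=1$. The main difference is the choice of $a$. The paper takes a Bézout coefficient $\alpha$ with $\alpha x+\beta y=1$, whereas your $a=x+y$ gives the explicit identity $\frac{x}{y}=\frac{x+y}{y}\oplus_S\frac{xy-1}{y^2}$ without any Bézout step and with both summands visibly different from $\frac{x}{y}$. The paper's proof leaves that distinctness unchecked, and it can fail there: for $x=1$ the Bézout choice $\alpha=1$ only returns $\frac{1}{y}=\frac{1}{y}\oplus_S\infty$.
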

\begin{proof}
Let $\tfrac{x}{y}\in\Q$ be a reduced fraction. We use the previous proposition with $b=y$. Then $c=x(a-x)-1$ and $d=y(a-x)$. In particular $\gcd(c,d)=\gcd(x^2-ax+1,y)$ because of $\gcd(c,a-x)=1$. Since $x$ and $y$ are coprime, by Bézout's identity, there are integers $\alpha,\beta$ such that $\alpha x+ \beta y=1$. Choose $a=\alpha$. Then $\gcd(a,b)=\gcd(\alpha,y)=1$ by Bézout's identy, and $\gcd(c,d)=\gcd(x^2+\beta y,y)=\gcd(x^2,y)=1$.
Therefore we have two reduced fractions and $$\frac{\alpha}{y}\oplus_S\frac{x^2+\beta y}{y(y-\alpha)}=\frac{x}{y}.$$
Similary for the Springborn difference, $b=y$ and $a=-\alpha$ gives
$$\frac{-\alpha}{y}\ominus_S\frac{x^2-\beta y}{y(y+\alpha)}=\frac{x}{y}.$$
\end{proof}

The following proposition gives the conditions under which we can iterate the Springborn sum or difference.

\begin{proposition} \label{prop:iteration}
Let $\left(\frac{a}{b},\frac{c}{d}\right)$ be an inner regular pair with $\gcd(b,d)=1$. If in addition $b \mid a^2+1$ and $d\mid c^2+1$, then the pairs $\left(\frac{a}{b},\frac{a}{b}\oplus_S \frac{c}{d}\right)$ and $\left(\frac{a}{b}\oplus_S \frac{c}{d},\frac{c}{d}\right)$ are inner regular.

Similarly, if $\left(\frac{a}{b},\frac{c}{d}\right)$ is outer regular, $\gcd(b,d)=1$,  $b\mid a^2-1$ and $d\mid c^2-1$, then one can iterate the Springborn difference.
\end{proposition}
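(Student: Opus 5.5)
The plan is to make the reduced form of the Springborn sum explicit and then check regularity of the new pairs with Lemma \ref{Lemma:simplify-Springborn-cond}. Put $d_F=\lvert ad-bc\rvert$. Since $\gcd(b,d)=1$, the proof of Lemma \ref{Lemma:simplify-Springborn-cond} together with inner regularity gives $\gcd(ab+cd,b^2+d^2)=d_F$. Hence the reduced form is
$$\frac{x}{y}:=\frac{a}{b}\oplus_S\frac{c}{d},\qquad x=\frac{ab+cd}{d_F},\quad y=\frac{b^2+d^2}{d_F},$$
and $\gcd(x,y)=1$.

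Next I compute the Farey determinants of the new pairs. From the first relation in \eqref{eq:technicalrelations},
$$ay-bx=\frac{a(b^2+d^2)-b(ab+cd)}{d_F}=d,$$
up to a sign depending on the sign convention for $d_F$. Symmetrically, $cy-dx=\pm b$. So $d_F(\tfrac{a}{b},\tfrac{x}{y})=d$ and $d_F(\tfrac{x}{y},\tfrac{c}{d})=b$.

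To apply the second part of Lemma \ref{Lemma:simplify-Springborn-cond} to the pair $(\tfrac{a}{b},\tfrac{x}{y})$, I need two facts: $\gcd(b,y)=1$, and $d\mid b^2+y^2$. For the first, a common prime $p$ of $b$ and $y$ divides $d^2=yd_F-b^2$, contradicting $\gcd(b,d)=1$. For the second, reduce modulo $d$. The condition $d\mid c^2+1$ lets me treat $c$ as a square root of $-1$ modulo $d$, and $\gcd(c,d)=1$. From $d_F=ad-bc$ we get $d_F\equiv -bc\pmod d$. Then $yd_F=b^2+d^2\equiv b^2$, so $y(-bc)\equiv b^2$. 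Since $b$ is invertible mod $d$, this gives $y\equiv -b/c\equiv bc\pmod d$, using $c^{-1}\equiv -c$. Hence $y^2\equiv b^2c^2\equiv -b^2$, so $d\mid b^2+y^2$. The second pair $(\tfrac{x}{y},\tfrac{c}{d})$ is handled symmetrically: $\gcd(d,y)=1$ by the same argument, and $b\mid y^2+d^2$ follows from $b\mid a^2+1$ by the same computation modulo $b$.

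The outer case follows the same pattern with $y=(b^2-d^2)/d_F$ and $x=(ab-cd)/d_F$. Here the relevant identities have the signs flipped, the hypotheses are $b\mid a^2-1$ and $d\mid c^2-1$, and one obtains $y\equiv \pm b c\pmod d$ with $y^2\equiv b^2$, so $d\mid b^2-y^2$. The main obstacle I expect is sign bookkeeping, namely the orientation of $d_F$ and whether the Farey determinant comes out as $d$ or $-d$. Since only absolute values and divisibility matter, this is harmless. I also need to make sure the outer version of Lemma \ref{Lemma:simplify-Springborn-cond} (its last sentence) applies, and to exclude the degenerate case $b=d$. That case is impossible here, since $\gcd(b,d)=1$ forces $b=d=1$, where the difference is undefined.
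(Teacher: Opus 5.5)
Your proof is correct and is essentially the paper's argument. It uses the same reduced form $\tfrac{x}{y}$, the same new Farey determinants $d$ and $b$, and the same application of the second part of Lemma~\ref{Lemma:simplify-Springborn-cond}, checked modulo $d$ using $c^2\equiv -1$. The only difference there is that you solve $y\equiv \pm bc \pmod d$ directly, whereas the paper multiplies through by $d_F^2$ and uses $\gcd(d,d_F)=1$.

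Two small remarks:
\begin{itemize}
\item The paper also checks that $y\mid x^2+1$. This is immediate from $x^2+1=y\,(a^2+c^2)/d_F$, and in the outer case from $x^2-1=y\,(a^2-c^2)/d_F$. Together with your $\gcd(b,y)=1$, this is what lets the hypotheses reproduce, so the operation can be iterated indefinitely, as is used later for Markov fractions.
\item Your claim that $b=d$ is impossible is not right. The case $b=d=1$ is compatible with the hypotheses; for example, $(\tfrac{0}{1},\tfrac{2}{1})$ is outer regular. There the Springborn difference is $\infty=\tfrac{1}{0}$ rather than undefined, and your argument still goes through with $y=0$.
\end{itemize}
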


\begin{proof}
For the Springborn addition, we have to show the following:
\begin{enumerate}
    \item $\mathrm{gcd}(b,(b^2+d^2)/d_F)=1$
    \item $\mathrm{gcd}\left(ab+\frac{(ab+cd)(b^2+d^2)}{d_F^2},b^2+\frac{(b^2+d^2)^2}{d_F^2}\right)=d_F\left(\frac{a}{b},\frac{(ab+cd)/d_F}{(b^2+d^2)/d_F}\right)$
    \item $(b^2+d^2)/d_F$ divides $\left(\frac{ab+cd}{d_F}\right)^2+1$
\end{enumerate}
and the same items with $\frac{a}{b}$ exchanged by $\frac{c}{d}$. Since all hypotheses are symmetric in this exchange, we will get automatically these items from the list above.
In addition, we have to prove similar items for the Springborn difference. 

Since $\mathrm{gcd}(b,d)=1$, we see that $b^2+d^2$ and $b^2-d^2$ are relatively prime to both $b$ and $d$. Hence, $(b^2+d^2)/d_F$ and $(b^2-d^2)/d_F$ too, which gives (1).

Let us compute the new Farey determinants.
$$d_F\left(\frac{a}{b},\frac{(ab+cd)/d_F}{(b^2+d^2)/d_F}\right)=\frac{a(b^2+d^2)-b(ab+cd)}{d_F}=\frac{d(ad-bc)}{d_F}=d.$$
Similarly we get
$$d_F\left(\frac{a}{b},\frac{(ab-cd)/d_F}{(b^2-d^2)/d_F}\right)=\frac{a(b^2-d^2)-b(ab-cd)}{d_F}=\frac{-d(ad-bc)}{d_F}=-d.$$
We use Lemma \ref{Lemma:simplify-Springborn-cond} to check the second point. Since $\gcd(d,d_F)=\gcd(d,bc)=1$, we are left with $ab d_F^2+(ab+cd)(b^2+d^2)$ and $b^2d_F^2+(b^2+d^2)^2$. Modulo $d$, the first one reduces to $ab^3(c^2+1)$ and the second to $b^4(c^2+1)$. Since $\mathrm{gcd}(c,d)=1$, we see that $d$ divides both expressions if and only if $d$ divides $c^2+1$, which gives (2).
In the case of the Springborn difference, very similar arguments hold and give $d$ divides $c^2-1$.

Finally for (3), we compute
$$\left(\frac{ab+cd}{d_F}\right)^2+1 = \frac{1}{d_F^2}((ab+cd)^2+(ad-bc)^2)=\frac{1}{d_F^2}(a^2+c^2)(b^2+d^2),$$
which is divisible by $(b^2+d^2)/d_F$. A similar computation holds for the Springborn difference:
$$\left(\frac{ab-cd}{d_F}\right)^2-1 = \frac{1}{d_F^2}((ab-cd)^2-(ad-bc)^2)=\frac{1}{d_F^2}(a^2-c^2)(b^2-d^2).$$
\end{proof}

\begin{example}
Starting from $(\tfrac{0}{1},\tfrac{1}{1})$ with Springborn addition, we get the Markov fractions described in the initial paper by Springborn \cite{Springborn}. Up to 3 iterations this gives the following $9$ numbers that we represent via a graph : 

\begin{center}
   \begin{tikzpicture}
    \draw (0,0) arc[start angle=180, end angle=0, x radius=4, y radius=3];

    \draw (0,0) arc (180:0:2);
    \draw (4,0) arc (180:0:2);
    
    \draw (0,0) arc (180:0:1);
    \draw (2,0) arc (180:0:1);
    \draw (4,0) arc (180:0:1);
    \draw (6,0) arc (180:0:1);
    
    \draw (0,0) arc (180:0:0.5);
    \draw (1,0) arc (180:0:0.5);
    \draw (2,0) arc (180:0:0.5);
    \draw (3,0) arc (180:0:0.5);
    \draw (4,0) arc (180:0:0.5);
    \draw (5,0) arc (180:0:0.5);
    \draw (6,0) arc (180:0:0.5);
    \draw (7,0) arc (180:0:0.5);
    
    \node at(0,-0.5) {$\frac{0}{1}$};
    \node at(1,-0.5) {$\frac{5}{13}$};
    \node at(2,-0.5) {$\frac{2}{5}$};
    \node at(3,-0.5) {$\frac{12}{29}$};
    \node at(4,-0.5) {$\frac{1}{2}$};
    \node at(5,-0.5) {$\frac{17}{29}$};
    \node at(6,-0.5) {$\frac{3}{5}$};
    \node at(7,-0.5) {$\frac{8}{13}$};
    \node at(8,-0.5) {$\frac{1}{1}$};
\end{tikzpicture}
   \end{center}
\end{example}
We draw the edges of the graph as curves in order to better represent its combinatorial structure. The same is done in the following two examples.
\begin{example}
Starting from $(\tfrac{0}{1},\tfrac{1}{2})$ with Springborn difference, we get fractions of the form $\frac{n}{n+1}$. Up to 3 iterations this gives:
\begin{center}
   \begin{tikzpicture}
    \draw (0,0) arc[start angle=180, end angle=0, x radius=4, y radius=3];

    \draw (0,0) arc (180:0:2);
    \draw (4,0) arc (180:0:2);
    
    \draw (0,0) arc (180:0:1);
    \draw (2,0) arc (180:0:1);
    \draw (4,0) arc (180:0:1);
    \draw (6,0) arc (180:0:1);
    
    \draw (0,0) arc (180:0:0.5);
    \draw (1,0) arc (180:0:0.5);
    \draw (2,0) arc (180:0:0.5);
    \draw (3,0) arc (180:0:0.5);
    \draw (4,0) arc (180:0:0.5);
    \draw (5,0) arc (180:0:0.5);
    \draw (6,0) arc (180:0:0.5);
    \draw (7,0) arc (180:0:0.5);
    
    \node at(0,-0.5) {$\frac{0}{1}$};
    \node at(1,-0.5) {$\frac{4}{5}$};
    \node at(2,-0.5) {$\frac{3}{4}$};
    \node at(3,-0.5) {$\frac{6}{7}$};
    \node at(4,-0.5) {$\frac{2}{3}$};
    \node at(5,-0.5) {$\frac{7}{8}$};
    \node at(6,-0.5) {$\frac{4}{5}$};
    \node at(7,-0.5) {$\frac{6}{7}$};
    \node at(8,-0.5) {$\frac{1}{2}$};
\end{tikzpicture}
\end{center}

Note that the order is not respected, and that the sequence is not injective.
\end{example}

\begin{example}
Starting from $(\tfrac{0}{1},\tfrac{1}{3})$ with Springborn difference, we get all companions of $\frac{0}{1}$ as described in Springborn's original paper. Up to 3 iterations this gives:
\begin{center}
   \begin{tikzpicture}
    \draw (0,0) arc[start angle=180, end angle=0, x radius=4, y radius=3];

    \draw (0,0) arc (180:0:2);
    \draw (4,0) arc (180:0:2);
    
    \draw (0,0) arc (180:0:1);
    \draw (2,0) arc (180:0:1);
    \draw (4,0) arc (180:0:1);
    \draw (6,0) arc (180:0:1);
    
    \draw (0,0) arc (180:0:0.5);
    \draw (1,0) arc (180:0:0.5);
    \draw (2,0) arc (180:0:0.5);
    \draw (3,0) arc (180:0:0.5);
    \draw (4,0) arc (180:0:0.5);
    \draw (5,0) arc (180:0:0.5);
    \draw (6,0) arc (180:0:0.5);
    \draw (7,0) arc (180:0:0.5);
    
    \node at(0,-0.5) {$\frac{0}{1}$};
    \node at(1,-0.5) {$\frac{21}{55}$};
    \node at(2,-0.5) {$\frac{8}{21}$};
    \node at(3,-0.5) {$\frac{144}{377}$};
    \node at(4,-0.5) {$\frac{3}{8}$};
    \node at(5,-0.5) {$\frac{377}{987}$};
    \node at(6,-0.5) {$\frac{21}{55}$};
    \node at(7,-0.5) {$\frac{144}{377}$};
    \node at(8,-0.5) {$\frac{1}{3}$};
\end{tikzpicture}
\end{center}
\end{example}

\section{Springborn operations for $q$-rationals}\label{Sec:q-Springborn}

We use our geometric picture of $q$-rationals, seen as circles, and apply the Springborn operations to them. For regular pairs, we get an explicit reduced formula.

\subsection{Springborn operations for regular pairs}

Recall that to each rational $x$, we associate the hyperbolic geodesic $[x]$, from $[x]_q^\sharp $ to $[x]^\flat_q$. Let us work in the upper-half plane model. So $[x]$ is a half circle, which we can complete to a full circle (using complex conjugation). We still denote by $[x]$ the full circle. The union of all $[x]$ for $x\in \mathbb{QP}^1$ is denoted by $\mathcal{Q}$.

Using Definition \ref{Def:i-and-e} we can then speak about the inner and outer homothety centers $i([x],[y])$ and $e([x],[y])$ of two $q$-rationals, where $x,y\in\Q$, $x\neq y$. We extend to the case of $y=\infty$ by $i([x],[\infty])=[x]_q^\sharp $ and $e([x],[\infty])=[x]_q^\flat$.

We can now relate precisely the homothety centers construction and its algebraic counterpart.

\begin{theorem}\label{Thm:main}
Let $(\tfrac{a}{b},\tfrac{c}{d})\in\Q^2$. If the pair is inner regular, then
$$\left[\frac{a}{b}\oplus_S\frac{c}{d}\right]^\sharp _q = i\left(\left[\frac{a}{b}\right],\left[\frac{c}{d}\right]\right).$$
If the pair is outer regular, then
$$\left[\frac{a}{b}\ominus_S\frac{c}{d}\right]^\flat_q = e\left(\left[\frac{a}{b}\right],\left[\frac{c}{d}\right]\right).$$
\end{theorem}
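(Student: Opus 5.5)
The plan is to use the geometric characterization of homothety centers from Proposition \ref{prop:homothety_geometric}, together with the involution characterization of regularity in Theorem \ref{Thm:charact-regularity}. This turns the statement into an equivariance statement for the deformed $\PGL_2(\Z)$-action on $\mathcal{Q}$.

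For the inner case, let $(\tfrac{a}{b},\tfrac{c}{d})$ be inner regular. By Theorem \ref{Thm:charact-regularity}, there is an orientation-preserving involution $I\in\PGL_2(\Z)$ exchanging $\tfrac{a}{b}$ and $\tfrac{c}{d}$. It is given explicitly by the matrix $M_I$ of Equation \eqref{Eq:matrix-exchange}. The key computation is
$$M_I\binom{1}{0}=\frac{1}{ad-bc}\binom{ab+cd}{b^2+d^2},$$
so $I(\infty)=\tfrac{a}{b}\oplus_S\tfrac{c}{d}$. Now pass to the deformed action. Its deformation $I_q$, obtained by writing $I$ as a word in $T,S$ and replacing them by $T_q,S_q$, is an orientation-preserving isometry of $\H^2$. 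By Corollary \ref{Coro:no-extra-relations} the deformed action is faithful, so $I_q$ is again an involution. By Definition \ref{def:q-numbers} it satisfies $I_q[x]^\square_q=[Ix]^\square_q$. Hence $I_q$ maps the geodesic $[\tfrac{a}{b}]$ onto $[\tfrac{c}{d}]$ and vice versa, with $\sharp$-endpoints going to $\sharp$-endpoints, so it exchanges the two full circles. Also $I_q(\infty)=I_q([\infty]^\sharp_q)=[I\infty]^\sharp_q$, since $[\infty]^\sharp_q=\tfrac10$. The two circles have disjoint interiors, by Proposition \ref{Prop:q-Farey-topo} and Corollary \ref{Prop:well-orderedness}. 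Proposition \ref{prop:homothety_geometric} then says $i([\tfrac ab],[\tfrac cd])$ is the image of $\infty$ under the unique orientation-preserving isometry exchanging the two circles. That isometry must be $I_q$, so $i([\tfrac ab],[\tfrac cd])=[\tfrac ab\oplus_S\tfrac cd]^\sharp_q$.

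For the outer case, Theorem \ref{Thm:charact-regularity} gives an orientation-reversing involution $J\in\PGL_2(\Z)$ exchanging the two rationals. Its matrix sends $\binom10$ to a multiple of $\binom{-ab+cd}{-b^2+d^2}$, so $J(\infty)=\tfrac ab\ominus_S\tfrac cd$. The deformed isometry is $J_q=M_qc$ with $\det M=-1$. By \eqref{Eq:N_q-equivariance}, $M_q$ exchanges $\sharp$ and $\flat$ versions, and complex conjugation fixes real points. Hence $J_q$ maps the geodesic $[x]$ onto $[Jx]$, so it exchanges the two circles. It also sends $\infty=[\infty]^\sharp_q$ to $[J\infty]^\flat_q$. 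The first part of Proposition \ref{prop:homothety_geometric} then gives $e([\tfrac ab],[\tfrac cd])=[\tfrac ab\ominus_S\tfrac cd]^\flat_q$.

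I expect the main obstacle to be the bookkeeping that the deformed element really exchanges the specific circles and sends $\infty$ to the claimed endpoint. This requires checking that $I_q$ is orientation-preserving on $\H^2$, which holds because $q>0$, and that in the reversing case composing with $c$ is compatible with \eqref{Eq:N_q-equivariance}. The degenerate cases also need care: when one of the rationals is $\infty$, the conventions $i([x],[\infty])=[x]^\sharp_q$ and $e([x],[\infty])=[x]^\flat_q$ should be checked directly. When $b=d$, the Springborn difference is $\infty$, and $e$ is at infinity, matching $[\infty]^\flat_q$ only up to convention, so this case should be verified separately. For $q>1$, I would either repeat the argument verbatim or reduce to $q\in(0,1)$ via the transition map of Theorem \ref{Prop:duality}.
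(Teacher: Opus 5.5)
Your proposal is correct and follows essentially the same route as the paper. Both arguments take the involution from Theorem \ref{Thm:charact-regularity}, observe that its deformation is a symmetry of $\mathcal{Q}$ exchanging the two circles, and apply Proposition \ref{prop:homothety_geometric} to identify the homothety center as the image of $\infty$. The one difference is how the label is found: you compute it directly from $M_I\binom{1}{0}$ and equivariance, whereas the paper reads it off from the $q\to 1$ limit of the center/radius formula; your way avoids that asymptotic step.

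Your worry about $b=d$ is unfounded. There the deformed circles have different radii (e.g.\ $[0]$ and $[1]$), so $e$ is the finite point $\tfrac{1}{1-q}=[\infty]^\flat_q$, and your general argument already covers this case.
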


Note that an important special case is when the Farey determinant $d_F(\tfrac{a}{b},\tfrac{c}{d})$ is 1 or 2. Then the pair is automatically inner and outer regular.

The proof is a combination of the characterization of regular pairs from Theorem~\ref{Thm:charact-regularity}, and the symmetries of the $q$-Farey tesselation $\mathcal{Q}$. 

\begin{proof}
Consider an outer regular pair $(\tfrac{a}{b},\tfrac{c}{d})$. From Theorem \ref{Thm:charact-regularity}, we know that there is an inversion $I$ in $\PGL_2(\Z)$ exchanging $\tfrac{a}{b}$ with $\tfrac{c}{d}$. So its $q$-deformation $I_q$ is a symmetry of $\mathcal{Q}$.
By Proposition \ref{prop:homothety_geometric}, we then get that $$e\left(\left[\frac{a}{b}\right],\left[\frac{c}{d}\right]\right)=I_q(\infty).$$
Since $I_q$ is a symmetry of $\mathcal{Q}$ and $\infty=[\infty]_q^\sharp$, we see that $I_q(\infty)$ is the left-most point of some circle belonging to $\mathcal{Q}$ (orientation-reversing isometries exchange left and right $q$-deformations). 

The exact same argument, using again Theorem \ref{Thm:charact-regularity}, works for an inner regular pair. The only difference is that orientation-preserving symmetries of $\mathcal{Q}$ preserve right $q$-rationals, so that the inner homothety center is the right-most point of some circle belonging to $\mathcal{Q}$.

The label of this circle can be deduced by considering the limit $q\to 1$, using the explicite formula \ref{Prop-expl-formula-i-e}. 
The center and radius of $[\tfrac{a}{b}]$ satisfy (see Equation \eqref{Eq:diam-near-1})
\begin{align*}
    M\left(\left[\frac{a}{b}\right]\right) &= \frac{a}{b} + o(1),\\
    r\left(\left[\frac{a}{b}\right]\right) &= \frac{\lvert 1-q\rvert}{2b^2} + O((1-q)^2).
\end{align*}

We then explicitly compute the label of $i([\tfrac{a}{b}],[\tfrac{c}{d}])$ as the limit $q\to 1$:
\begin{align*}
    i\left(\left[\frac{a}{b}\right],\left[\frac{c}{d}\right]\right) &= \frac{r\left(\left[\frac{a}{b}\right]\right)M\left(\left[\frac{c}{d}\right]\right)+r\left(\left[\frac{c}{d}\right]\right)M\left(\left[\frac{a}{b}\right]\right)}{r\left(\left[\frac{a}{b}\right]\right)+r\left(\left[\frac{c}{d}\right]\right)}\\
    &= \frac{\tfrac{\lvert 1-q\rvert}{2b^2}\tfrac{c}{d}+\tfrac{\lvert 1-q\rvert}{2d^2}\tfrac{a}{b}}{\tfrac{\lvert 1-q\rvert}{2b^2}+\tfrac{\lvert 1-q\rvert}{2d^2}} + o(1)\\
    &= \frac{ab+cd}{b^2+d^2}+o(1).
\end{align*}
A similar computation holds for $e([\tfrac{a}{b}],[\tfrac{c}{d}])$.
\end{proof}

\begin{Remark}
It is puzzling that the property described in Theorem \ref{Thm:main} is also valid for some non-regular pairs. An example is given by the pair $\left(\frac{1}{3},\tfrac{2}{9}\right)$, which is not regular (inner nor outer), but for which $i([\tfrac{1}{3}]_q,[\tfrac{2}{9}]_q)=[\tfrac{7}{30}]_q^\sharp  = [\tfrac{1}{3}\oplus_S \tfrac{2}{9}]_q^\sharp $.
Another example is given by the pair $(\tfrac{2}{7},\tfrac{3}{7})$ which is not regular, but for which $e([\tfrac{2}{7}]_q,[\tfrac{3}{7}]_q)=[\infty]_q^\flat = [\tfrac{2}{7}\ominus_S \tfrac{3}{7}]_q^\flat$.

\noindent Understanding these exceptional pairs seems a challenging task.
\end{Remark}

\begin{Remark}
Our original proof of Theorem \ref{Thm:main} was a bit different and used some known results from elementary number theory, which we find interesting to share.

\noindent The idea is to show that if (and only if) a pair $(\tfrac{a}{b}, \tfrac{c}{d})$ is inner regular, then there is another pair $(\tfrac{a'}{b'}, \tfrac{c'}{d'})$ of Farey determinant 1 which has the same Springborn sum. Using the $\PSL_2(\Z)$-symmetry, we can assume that $(\tfrac{a}{b}, \tfrac{c}{d})=(\tfrac{1}{0},\frac{k}{n})$ is a standard pair with $n\mid k^2+1$. Finding $\tfrac{a'}{b'}$ and $\tfrac{c'}{d'}$ becomes than solving a system of Diophantine equations
$$
\left\{
\begin{aligned}
    a'b'+c'd' &= k\\
    b'^2+d'^2 &= n\\
    a'd'-b'c' &= 1
\end{aligned}
\right..
$$
To solve the system, we need to write $n$ as a primitive sum of two squares (meaning that $\mathrm{gcd}(b',d')=1$), and then we get $(a', c')$ as the Bézout coefficients. To fulfill the first equation, we need the following well-known bijection (so well-known that we were not able to find an exact reference) :
$$\{(b,d)\in\N^2\mid b^2+d^2=n, \mathrm{gcd}(b,d)=1\} \overset{1:1}{\longleftrightarrow}  \{0<k<n\mid k^2\equiv -1 \!\mod n\}.$$
The case of outer regularity is similar, but a bit more complicated. A pair $(\tfrac{a}{b}, \tfrac{c}{d})$ is outer regular if and only if there is another pair $(\tfrac{a'}{b'}, \tfrac{c'}{d'})$ of Farey determinant \emph{1 or 2} which has the same Springborn difference. To solve the analogous system, we are led to write $n$ or $2n$ as a difference of two squares. This leads to the following bijection which seems much less known:
\begin{gather*} 
\{(b,d)\in\N^2\mid b^2-d^2=n, \mathrm{gcd}(b,d)=1\} \cup \{(b,d)\in \N^2\mid b^2-d^2=2n, \mathrm{gcd}(b,d)\leq 2\} \\
    \overset{1:1}{\longleftrightarrow}\\
    \{0 < k\leq \tfrac{n}{2}\mid k^2\equiv 1 \!\mod n\}.
\end{gather*}
\end{Remark}

\subsection{General non-reduced expression}
We determine an explicit expression for the inner and outer homothetic centers of two $q$-rationals, seen as circles.

\begin{proposition}\label{prop:homotheticformulas}
Let $\frac{a}{b},\frac{c}{d}\in\mathbb{Q}$. Denote by $\varepsilon_1$ (resp. $\varepsilon_2$) the integers associated to $a/b$ (resp. $\frac{c}{d}$), see Definition \eqref{defi:epsilon}. Then, we have
$$
i\left(\left[\frac{a}{b}\right],\left[\frac{c}{d}\right]\right) = \frac{q^{\varepsilon_2}A^\sharp B^{\flat}+q^{\varepsilon_1}C^{\flat} D^\sharp }{q^{\varepsilon_2}B^\sharp B^{\flat}+q^{\varepsilon_1}D^\sharp D^{\flat}}.$$
Similarly, for the outer homothetic center, we have
$$ e\left(\left[\frac{a}{b}\right],\left[\frac{c}{d}\right]\right) = \frac{q^{\varepsilon_2}A^\sharp B^{\flat}-q^{\varepsilon_1}C^{\sharp } D^\flat}{q^{\varepsilon_2}B^\sharp B^{\flat}-q^{\varepsilon_1}D^\sharp D^{\flat}}. $$
\end{proposition}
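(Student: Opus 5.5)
The plan is to apply Proposition \ref{Prop-expl-formula-i-e} directly to the two circles $[\tfrac{a}{b}]$ and $[\tfrac{c}{d}]$. For the circle $[x]$ with $x=\tfrac{a}{b}$, the center is the midpoint $M_1=\tfrac12\big(\tfrac{A^\sharp}{B^\sharp}+\tfrac{A^\flat}{B^\flat}\big)$ and the radius is $r_1=\tfrac12\ell_q(x)$. By Proposition \ref{prop:qradius}, this radius equals $\tfrac{\abs{1-q}}{2}\,\tfrac{q^{\varepsilon_1}}{B^\sharp B^\flat}$. The same formulas hold for $\tfrac{c}{d}$, with $\varepsilon_2$, $D^\sharp$ and $D^\flat$. (I read $\varepsilon_1=\varepsilon(\tfrac ab)$, $\varepsilon_2=\varepsilon(\tfrac cd)$.) One point needs care: $B^\sharp B^\flat>0$ for $q>0$, since Proposition \ref{Prop:cst-sign-coeffs} gives these denominators positive coefficients, so the absolute value in $\ell_q$ only affects the factor $\abs{1-q}$. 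That factor is common to both radii and cancels.

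Next I substitute into $i=\frac{r_1M_2+r_2M_1}{r_1+r_2}$ and multiply numerator and denominator by $\tfrac{2}{\abs{1-q}}\,B^\sharp B^\flat D^\sharp D^\flat$. This gives
$$i=\frac{q^{\varepsilon_1}D^\sharp D^\flat M_2+q^{\varepsilon_2}B^\sharp B^\flat M_1}{q^{\varepsilon_1}D^\sharp D^\flat+q^{\varepsilon_2}B^\sharp B^\flat}.$$
The denominator already has the claimed form. In the numerator, $B^\sharp B^\flat M_1=\tfrac12(A^\sharp B^\flat+A^\flat B^\sharp)$, and similarly $D^\sharp D^\flat M_2=\tfrac12(C^\sharp D^\flat+C^\flat D^\sharp)$. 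What remains is to see that the symmetric average can be replaced by the asymmetric expression $q^{\varepsilon_2}A^\sharp B^\flat+q^{\varepsilon_1}C^\flat D^\sharp$. The difference between the two numerators is
$$\tfrac12 q^{\varepsilon_2}(A^\sharp B^\flat-A^\flat B^\sharp)-\tfrac12 q^{\varepsilon_1}(C^\sharp D^\flat-C^\flat D^\sharp).$$
By the determinant computation in the proof of Proposition \ref{prop:qradius}, $A^\sharp B^\flat-A^\flat B^\sharp=(1-q)q^{\varepsilon_1}$ up to a sign that is independent of the rational. Likewise $C^\sharp D^\flat-C^\flat D^\sharp=(1-q)q^{\varepsilon_2}$ with the same sign. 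So both terms equal $\tfrac12(1-q)q^{\varepsilon_1+\varepsilon_2}$ and the difference vanishes. This cancellation is the main point. The obstacle is the sign and exponent bookkeeping: the exact identity $A^\sharp B^\flat-A^\flat B^\sharp=\pm(1-q)q^{\varepsilon}$, with the same sign for every rational (including $x\in\Z_{\le0}$), has to be pinned down from the proof of Proposition \ref{prop:qradius} rather than only its absolute value. I would fix the sign once by checking $x=0$, where $[0]^\sharp=0$ and $[0]^\flat=\tfrac{q-1}{q}$, and then use the invariance of this determinant up to monomials under $T$ and $S$ (as in Proposition \ref{Prop:invariance-q-Farey-det}) to propagate it.

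For the outer center I use $e=\frac{r_1M_2-r_2M_1}{r_1-r_2}$. The same manipulation gives numerator $q^{\varepsilon_1}D^\sharp D^\flat M_2-q^{\varepsilon_2}B^\sharp B^\flat M_1$ over $q^{\varepsilon_1}D^\sharp D^\flat-q^{\varepsilon_2}B^\sharp B^\flat$. Multiplying numerator and denominator by $-1$ puts this in the claimed orientation. Replacing the averages $M_1,M_2$ by one-sided expressions now uses the same determinant identity with the opposite combination, which yields $q^{\varepsilon_2}A^\sharp B^\flat-q^{\varepsilon_1}C^\sharp D^\flat$. The extension to $\tfrac cd=\infty$ would be a separate convention and is not covered by the statement, so I would not treat it.
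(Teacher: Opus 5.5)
Your argument is correct and is essentially the paper's proof: both substitute the diameter formula of Proposition~\ref{prop:qradius} into the homothety-center formula of Proposition~\ref{Prop-expl-formula-i-e}. The paper only shortcuts your midpoint step by writing $M_1=[\tfrac ab]^\sharp_q-r_1$ and $M_2=[\tfrac cd]^\flat_q+r_2$ (resp.\ $M_2=[\tfrac cd]^\sharp_q-r_2$ for $e$), so the $r_1r_2$ terms cancel at once. This is equivalent to your identity $A^\sharp B^\flat-A^\flat B^\sharp=(1-q)q^{\varepsilon_1}$, which the paper records in Remark~\ref{rem:sym_sharp_and_flat}; its sign is fixed most easily by $[x]^\flat_q<[x]^\sharp_q$ for $q\in(0,1)$ together with $B^\sharp B^\flat>0$, rather than by propagating from $x=0$.
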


\begin{proof}
We combine the explicit formula for homothety centers (Proposition \ref{Prop-expl-formula-i-e}) with the formula in Proposition \ref{prop:qradius} for the (Euclidean) diameter of a $q$-rational.
This gives :
\begin{align*}
     i\left(\left[\frac{a}{b}\right],\left[\frac{c}{d}\right]\right) &= \frac{r_2M_1+r_1M_2}{r_1+r_2}\\
    &=\frac{r_2([\tfrac{a}{b}]^\sharp-r_1)+r_1([\tfrac{c}{d}]^{\flat}+r_2)}{r_1+r_2} \\
    &= \frac{\frac{q^{\varepsilon_2}\lvert q-1\rvert}{2D^\sharp D^\flat}\frac{A^\sharp}{B^\sharp}+\frac{q^{\varepsilon_1}\lvert q-1\rvert}{2B^\sharp B^\flat}\frac{C^\flat}{D^\flat}}{\frac{q^{\varepsilon_1}\lvert q-1\rvert}{2B^\sharp B^\flat}+\frac{q^{\varepsilon_2}\lvert q-1\rvert}{2D^\sharp D^\flat}} \\
    &= \frac{q^{\varepsilon_2}A^\sharp B^{\flat}+q^{\varepsilon_1}C^{\flat} D^\sharp}{q^{\varepsilon_2}B^\sharp B^{\flat}+q^{\varepsilon_1}D^\sharp D^{\flat}}\, .
\end{align*}
The same argument holds for the Springborn difference.
\end{proof}

\begin{Remark}\label{rem:sym_sharp_and_flat}
Using the commutativity of the homothetic centers construction, we also get
$$i\left(\left[\frac{a}{b}\right],\left[\frac{c}{d}\right]\right) = \frac{q^{\varepsilon_2}A^\flat B^{\sharp}+q^{\varepsilon_1}C^{\sharp} D^\flat}{q^{\varepsilon_2}B^\sharp B^{\flat}+q^{\varepsilon_1}D^\sharp D^{\flat}} \, .$$
Both formulas give the same result since $A^\sharp B^\flat - A^\flat B^\sharp = q^{\varepsilon_1}(1-q)$ and the same for $\frac{c}{d}$. 

For the difference, we also get
$$e\left(\left[\frac{a}{b}\right],\left[\frac{c}{d}\right]\right)  = \frac{q^{\varepsilon_2}A^\flat B^{\sharp}-q^{\varepsilon_1}C^{\flat} D^\sharp}{q^{\varepsilon_2}B^\sharp B^{\flat}-q^{\varepsilon_1}D^\sharp D^{\flat}} \, .$$
\end{Remark}

The formulas given above for homothetic centers are $q$-deformations of the formulas for Springborn operations. As in the classical case, they are not reduced in general. This is a challenging problem to find the reduced forms. We partially solve this problem for regular pairs.

\subsection{Reduced form}

In this subsection, we determine the reduced version of the $q$-deformed Springborn operations for regular pairs. 

\begin{theorem}\label{Thm:q-gcd}
Let $(\tfrac{a}{b},\tfrac{c}{d})\in\mathbb{Q}^2$ be an inner regular pair in the sense of Definition \ref{Def:reg}. Denote by $\varepsilon_1=\varepsilon(\tfrac{a}{b})$ and $\varepsilon_2=\varepsilon(\tfrac{c}{d})$. Then
\begin{equation*}
    \gcd(q^{\varepsilon_2}A^\sharp B^\flat+q^{\varepsilon_1}C^\flat D^\sharp,q^{\varepsilon_2}B^\sharp B^\flat+q^{\varepsilon_1}D^\flat D^\sharp,q^{\varepsilon_2}A^\sharp A^\flat+q^{\varepsilon_1}C^\flat C^\sharp )\equiv_q d_F^{\sharp \flat} \equiv_q d_F^{\flat\sharp }.
\end{equation*}

Similarly for an outer regular pair, we have
\begin{equation*}
    \gcd(q^{\varepsilon_2}A^\sharp B^\flat-q^{\varepsilon_1}C^\sharp  D^\flat,q^{\varepsilon_2}B^\sharp B^\flat-q^{\varepsilon_1}D^\flat D^\sharp ,q^{\varepsilon_2}A^\sharp A^\flat-q^{\varepsilon_1}C^\flat C^\sharp )\equiv_q d_F^{\sharp \sharp } \equiv_q d_F^{\flat\flat}.
\end{equation*}
\end{theorem}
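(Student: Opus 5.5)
Since the statement concerns polynomials in a formal variable, I will work with the three polynomials
$$P_1=q^{\varepsilon_2}A^\sharp B^\flat+q^{\varepsilon_1}C^\flat D^\sharp,\quad P_2=q^{\varepsilon_2}B^\sharp B^\flat+q^{\varepsilon_1}D^\flat D^\sharp,\quad P_3=q^{\varepsilon_2}A^\sharp A^\flat+q^{\varepsilon_1}C^\flat C^\sharp.$$
The plan is to reduce, as in the proofs of Theorem~\ref{Thm:link-left-right-dF} and Theorem~\ref{Thm:q-Farey-operations}, to a standard representative. The first step is to check that the triple $(P_1,P_2,P_3)$ transforms well under $T_q$ and $S_q$. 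Under $T_q$ we have $(A,B)\mapsto(qA+B,B)$ up to monomials, and likewise for $C,D$. The quantities $q^{\varepsilon}$ change by the factor $\det(T_q)=q$, consistently with Proposition~\ref{prop:qradius}, since they measure $(A^\sharp B^\flat-A^\flat B^\sharp)/(1-q)$. Hence $P_2$ is unchanged, $P_1\mapsto qP_1+P_2$, and $P_3\mapsto q^2P_3+q(P_1+P_1')+P_2$, up to a global monomial. Here $P_1'$ is the symmetric expression of Remark~\ref{rem:sym_sharp_and_flat}, which equals $P_1$ once the determinant identity is used. Under $S_q$ the roles of numerators and denominators are swapped (with a factor $q$), so $P_2\leftrightarrow P_3$ and $P_1\mapsto -P_1$, up to monomials. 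In both cases the ideal, hence the gcd, is preserved up to $\equiv_q$. By Proposition~\ref{Prop:invariance-q-Farey-det}, $d_F^{\sharp\flat}$ is also invariant up to monomials.

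By Proposition~\ref{Prop:represents-reg}, we may then assume $\tfrac{a}{b}=\tfrac{1}{0}$ and $\tfrac{c}{d}=\tfrac{k}{n}$ with $n\mid k^2+1$ and $0\le k<n$. In this case $A^\sharp=1$, $B^\sharp=0$, $A^\flat=1$ and $B^\flat=1-q$, so the triple becomes, up to monomials,
$$P_2=q^{\varepsilon_1}D^\flat D^\sharp,\qquad P_1=q^{\varepsilon_2}(1-q)+q^{\varepsilon_1}C^\flat D^\sharp,\qquad P_3=q^{\varepsilon_2}+q^{\varepsilon_1}C^\flat C^\sharp.$$
The target is $d_F^{\sharp\flat}=A^\sharp D^\flat-B^\sharp C^\flat=D^\flat=N^\flat$. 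We have $N^\flat\mid P_2$ trivially. For $P_1$ and $P_3$, I will use the determinant identity $C^\sharp D^\flat-C^\flat D^\sharp\equiv_q(1-q)q^{\varepsilon_2}$ (modulo the convention at $\infty$), which gives $P_1\equiv q^{\varepsilon_1}C^\sharp D^\flat$ modulo monomial bookkeeping. This requires carefully fixing $\varepsilon(\infty)$ so that the signs cancel, and it is the step I expect to need the most care. From this, $N^\flat\mid P_1$. For $P_3$, multiply by $D^\sharp$ and reduce using the same identity, so that $N^\flat$ divides $P_3 D^\sharp$. Then $\gcd(N^\flat,N^\sharp)$ must be controlled: by Corollary~\ref{Prop:id-flat-sharp}(i), $N^\flat\equiv_q N^\sharp+(q-1)K^\sharp$, and therefore $\gcd(N^\flat,N^\sharp)=\gcd(N^\sharp,K^\sharp)=1$. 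It follows that $N^\flat\mid P_3$.

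The converse direction, namely that the gcd is no larger than $N^\flat$, follows from $\gcd(P_1/N^\flat,P_2/N^\flat)$. Here $P_2/N^\flat\equiv N^\sharp$ and $P_1/N^\flat\equiv K^\sharp$ up to monomials, which are coprime. This is where the hypothesis $n\mid k^2+1$ is genuinely used, through Ren's palindromicity result (Theorem~\ref{thm:palindromicity}) entering via Corollary~\ref{Prop:id-flat-sharp}. The equivalence $d_F^{\sharp\flat}\equiv_q d_F^{\flat\sharp}$ then follows from Theorem~\ref{Thm:link-left-right-dF}, $d_F^{\flat\sharp}(q)\equiv_q d_F^{\sharp\flat}(q^{-1})$, combined with palindromicity of $N^\flat$, transported by invariance.

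The outer case runs in parallel. The representative is $(\tfrac{1}{0},\tfrac{k}{n})$ with $n\mid k^2-1$, the target is $d_F^{\sharp\sharp}=N^\sharp$, and one uses Corollary~\ref{Prop:id-flat-sharp}(ii) together with the factor $q^2-q+1$ that defines $d_F^{\flat\flat}$. The main obstacle will be the exact monomial exponents, such as $\varepsilon$ at $\infty$, which must be set so that the cancellation producing $P_1\equiv q^{\varepsilon_1}C^\sharp D^\flat$ actually occurs rather than leaving a stray $(1-q)$ term.
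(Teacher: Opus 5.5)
Your architecture is the paper's. You show covariance of $(P_1,P_2,P_3)$ under $T_q,S_q$, then reduce to $(\tfrac{1}{0},\tfrac{k}{n})$ with $\varepsilon(\infty)=0$. There the exact identity $K^\sharp N^\flat-K^\flat N^\sharp=(1-q)q^{\varepsilon_2}$ gives $P_1=K^\sharp N^\flat$, and the upper bound comes from $\gcd(K^\sharp,N^\sharp)=1$. For the covariance step, reading $q^{\varepsilon}$ as $(A^\sharp B^\flat-A^\flat B^\sharp)/(1-q)$ makes the triple transform linearly up to one global monomial under any normalization, which is a tidy substitute for the paper's case analysis on signs.

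The gap is in the $P_3$ step. Reducing with the determinant identity alone gives
\[
P_3N^\sharp=(K^\sharp)^2N^\flat+q^{\varepsilon_2}\bigl(N^\sharp+(q-1)K^\sharp\bigr),
\]
so $N^\flat\mid P_3N^\sharp$ is \emph{equivalent} to $N^\flat\mid N^\sharp+(q-1)K^\sharp$. That is exactly Corollary~\ref{Prop:id-flat-sharp}(i), the one place where $n\mid k^2+1$ is needed, and it does not follow from ``the same identity''. Take the non-regular pair $(\tfrac{1}{0},\tfrac{1}{3})$: there $K^\sharp=q^2$, $N^\sharp=1+q+q^2$, $K^\flat=q^3$, $N^\flat=1+q^2+q^3$ and $\varepsilon_2=2$. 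The remainder is $q^2(1+q+q^3)$, and $P_3=q^2(1+q^3)$ is coprime to the irreducible cubic $N^\flat$. So the gcd is $1$, not $N^\flat$, and your step as justified would have ``proved'' a false divisibility.

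Conversely, the places where you do invoke the hypothesis do not need it. Any common factor of $N^\flat$ and $N^\sharp$ divides $(1-q)q^{\varepsilon_2}$, and it cannot involve $1-q$ because $N^\flat(1)=n\neq 0$. Hence $\gcd(N^\flat,N^\sharp)\equiv_q 1$ for every pair. The upper bound likewise uses only that $K^\sharp/N^\sharp$ is reduced, so the hypothesis is not ``genuinely used'' in the converse direction as you claim.

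With Corollary~\ref{Prop:id-flat-sharp}(i) moved into the $P_3$ step, your argument closes. The paper handles that step differently: it substitutes $q^{\varepsilon_2}=(K^\sharp N^\flat-K^\flat N^\sharp)/(1-q)$ into $P_3$ and applies the corollary to get $P_3=N^\flat\bigl(K^\sharp-q^{\alpha}K^\flat\bigr)/(1-q)$. That route needs $\gcd(N^\flat,1-q)=1$ instead of your $\gcd(N^\flat,N^\sharp)=1$; either works.

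The same correction applies to your outer case. There Corollary~\ref{Prop:id-flat-sharp}(ii) is what makes $N^\sharp$ divide the third polynomial, which equals $N^\sharp\bigl(q^{\alpha}(q^2-q+1)K^\sharp-K^\flat\bigr)/(1-q)$.
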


The proof strategy goes as follows: we first prove that the equalities are invariant under transformations by $T$ and $S$. Second, we check them on the representatives given in Proposition \ref{Prop:represents-reg}.

\begin{proof}
We start by showing invariance of the identities under transformations $T$ and $S$. By Proposition \ref{Prop:invariance-q-Farey-det}, the Farey determinants are invariant under the $\PSL_2(\Z)$-action (up to a some power of $q$). 
\medskip
Consider a fraction $\tfrac{a}{b}\in\mathbb{Q}$. Under $T$, the parameter $\varepsilon=\varepsilon(\tfrac{a}{b})$ and the (left or right) quantization $\frac{A}{B}$ change into
$$\varepsilon \mapsto \left \{ \begin{array}{cl}
\varepsilon+1 & \text{ if } \tfrac{a}{b}\geq 0 \\
\varepsilon-1 & \text{ if } \tfrac{a}{b}< 0 .
\end{array} \right.
\;\; \text{ and }\;\;\;
\frac{A}{B} \mapsto \left \{ \begin{array}{cl}
\frac{qA+B}{B} & \text{ if } \tfrac{a}{b}> 0 \\
\frac{A+B/q}{B/q} & \text{ if } \tfrac{a}{b}< 0 .
\end{array} \right.$$
The second part follows from $B^\sharp (0)=1$ if $\tfrac{a}{b}\geq 0$, $B^\sharp (0)=0$ if $\tfrac{a}{b}< 0$, $B^\flat(0)=1$ if $\tfrac{a}{b}> 0$ and $B^\flat(0)=0$ if $\tfrac{a}{b}\leq 0$. This also shows that for $\tfrac{a}{b}=0$, we are in the first case for the right version, and in the second case for the left version.

With these transformation rules, we prove that the two greatest common divisors do not change under $T$, modulo some power of $q$.
For this, we distinguish three cases.\\
~\\
\underline{First case:} $\tfrac{ac}{bd}>0$. We focus on the case when both $\frac{a}{b}$ and $\frac{c}{d}$ are positive, the negative case works exactly the same.\\
\noindent Consider the first term in the greatest common divisor for the pair $\left(T\cdot\frac{a}{b},T\cdot\frac{c}{d}\right)$:
\begin{align*}
q^{\varepsilon_2+ 1}(qA^{\sharp}+ B^{\sharp})B^{\flat} + q^{\varepsilon_1+ 1}(qC^{\flat} + D^{\flat})D^{\sharp} &= q^{2}(q^{\varepsilon_2}A^{\sharp}B^{\flat} + q^{\varepsilon_1}C^{\flat}D^\sharp) \\
&+ q(q^{\varepsilon_2}B^{\sharp}B^{\flat} + q^{\varepsilon_1}D^{\sharp}D^{\flat}).
\end{align*}
\noindent The second term in the gcd changes by a factor $q$ under $T$. The third one changes by
\begin{align*}
    &q^{\varepsilon_2+ 1}(qA^{\sharp} + B^{\sharp})(qA^{\flat} + B^{\flat}) + q^{\varepsilon_1+ 1}(qC^{\sharp} + D^{\sharp})(qC^{\flat} + D^{\flat})\\
    &= q^{3}(q^{\varepsilon_2}A^{\sharp}A^{\flat}+ q^{\varepsilon_1}C^{\sharp}C^{\flat}) + q(q^{\varepsilon_2}B^{\sharp}B^\flat + q^{\varepsilon_1}D^{\sharp}D^{\flat}) \\
    &+ q^{2}(q^{\varepsilon_2}A^{\sharp}B^{\flat} + q^{\varepsilon_1}C^{\flat}D^{\sharp}) + q^2(q^{\varepsilon_2}A^{\flat}B^{\sharp} + C^{\sharp}D^{\flat}).
\end{align*}
\noindent Then taking the greatest common divisor between these three terms, we get
$$
q\gcd(q(q^{\varepsilon_2}A^{\sharp}B^{\flat} + q^{\varepsilon_1}C^{\flat}D^\sharp),q^{\varepsilon_2}B^{\sharp}B^{\flat} + q^{\varepsilon_1}D^{\sharp}D^{\flat},q^{2}(q^{\varepsilon_2}A^{\sharp}A^{\flat}+ q^{\varepsilon_1}C^{\sharp}C^{\flat}) + q(q^{\varepsilon_2}A^{\flat}B^{\sharp} + C^{\sharp}D^{\flat})).
$$
\noindent But according to Remark \ref{rem:sym_sharp_and_flat}, $q^{\varepsilon_2}A^{\flat}B^{\sharp} + q^{\varepsilon_1}C^{\sharp}D^{\flat} = q^{\varepsilon_2}A^{\sharp}B^{\flat} + q^{\varepsilon_1}C^{\flat}D^{\sharp}$, so in the end, the greatest common divisor of the pair changed by $T$ is the same (up to a factor $q$) as the one for the initial pair.\\
~\\
\underline{Second case:} $\tfrac{ac}{bd}<0$. We focus on the case when $\frac{a}{b}>0$ and $\frac{c}{d}<0$, the symmetric case is similar. \\
\noindent The first term in the greatest common divisor for the pair shifted by $T$ is 
\begin{align*}
q^{\varepsilon_2- 1}(qA^{\sharp}+ B^{\sharp})B^{\flat} + q^{\varepsilon_1- 1}(C^{\flat} + q^{-1}D^{\flat})q^{-1}D^{\sharp} &= (q^{\varepsilon_2}A^{\sharp}B^{\flat} + q^{\varepsilon_1}C^{\flat}D^\sharp) \\
&+ q^{-1}(q^{\varepsilon_2}B^{\sharp}B^{\flat} + q^{\varepsilon_1}D^{\sharp}D^{\flat}).
\end{align*}
\noindent The second term changes by $q^{-1}$ under $T$. The third one changes by 
\begin{align*}
    &q^{\varepsilon_2- 1}(qA^{\sharp} + B^{\sharp})(qA^{\flat} + B^{\flat}) + q^{\varepsilon_1+ 1}(C^{\sharp} + q^{-1}D^{\sharp})(C^{\flat} + q^{-1}D^{\flat})\\
    &= q(q^{\varepsilon_2}A^{\sharp}A^{\flat}+ q^{\varepsilon_1}C^{\sharp}C^{\flat}) + q^{-1}(q^{\varepsilon_2}B^{\sharp}B^\flat + q^{\varepsilon_1}D^{\sharp}D^{\flat}) \\
    &+ 2(q^{\varepsilon_2}A^{\sharp}B^{\flat} + q^{\varepsilon_1}C^{\flat}D^{\sharp}).\\
\end{align*}
\noindent Therefore the greatest common divisor for the pair shifted by $T$ is the same as the gcd for the initial pair.\\ 
~\\
\underline{Third case:} $\tfrac{ac}{bd}=0$. Here we can explicitly compute the transformation using $\tfrac{a}{b}=0$.\\
~\\
A similar argument holds for the transformation $S$.
Under $S$, the parameter $\varepsilon=\varepsilon(\tfrac{a}{b})$ and the quantization $\frac{A}{B}$ change into
$$\varepsilon \mapsto \left \{ \begin{array}{cl}
\varepsilon+1 & \text{ if } \tfrac{a}{b}> 0 \\
\varepsilon-1 & \text{ if } \tfrac{a}{b}\leq 0 .
\end{array} \right.
\;\; \text{ and }\;\;\;
\frac{A}{B} \mapsto \left \{ \begin{array}{cl}
\frac{-B}{qA} & \text{ if } \tfrac{a}{b}> 0 \\
\frac{-B/q}{A} & \text{ if } \tfrac{a}{b}< 0 .
\end{array} \right.$$
The evolution of $\varepsilon$ can be deduced from the direct computation of the $q$-diameter $\ell_q\left(\frac{-b}{a}\right)$ in terms of $\ell_q\left(\frac{a}{b}\right)$, see Proposition \ref{prop:qradius}.

The same distinction of cases shows invariance of the two greatest common divisors.

\medskip
To finish the proof, we have to check the identities on some representatives of the equivalence classes of regular pairs under the modular group action. Such representatives are given in Proposition \ref{Prop:represents-reg}.

Consider first the case of inner regular pairs. A representative is $\left(\tfrac{1}{0},\tfrac{k}{n}\right)$, where $n\mid k^2+1$. Since $[\tfrac{1}{0}]^\sharp =\tfrac{1}{0}$, $[\tfrac{1}{0}]^\flat=\tfrac{1}{1-q}$ and $\varepsilon(\infty)=0$, we get 
$$d_F^{\sharp \flat} = N^\flat \;\;\text{ and }\;\; d_F^{\flat\sharp } = N^\sharp -(1-q)K^\sharp .$$
By Proposition \ref{Prop:id-flat-sharp}, we see that these two Farey determinants are equal, up to some power of $q$. 
It remains to compute the greatest common divisor
$$G_1=\gcd(q^{\varepsilon_2}(q-1)+K^\flat N^\sharp ,N^\sharp  N^\flat, q^{\varepsilon_2}+K^\sharp  K^\flat).$$
Since $q^{\varepsilon_2}(1-q)=K^\sharp  N^\flat-K^\flat N^\sharp $, the first term of $G_1$ equals $K^\sharp  N^\flat$. The third term of $G_1$ equals
\begin{align*}
    q^{\varepsilon_2}+K^\sharp  K^\flat &= \frac{K^\sharp  N^\flat-K^\flat N^\sharp }{1-q} +K^\sharp  K^\flat\\
    &= \frac{K^\flat((1-q)K^\sharp -N^\sharp )+K^\sharp N\flat}{1-q} \\
    &= \frac{-q^\alpha K^\flat N^\flat+K^\sharp N\flat}{1-q},
\end{align*}
where we used Proposition \ref{Prop:id-flat-sharp} in the last line. 
Since $N^\flat(1)\neq 0$, we have $\gcd(N^\flat,q-1)=1$. Therefore, we see that $N^\flat$ divides $G_1$. Since $\gcd(K^\sharp ,N^\sharp )=1$, we finally get that
$$G_1 \equiv_q N^\flat = q^\alpha d_F^{\sharp \flat}$$

Consider now the case of outer regular pairs. A representative is $\left(\tfrac{1}{0},\tfrac{k}{n}\right)$, where $n\mid k^2-1$. We then get
$$d_F^{\sharp \sharp } = N^\sharp  \;\;\text{ and }\;\; d_F^{\flat\flat} = \frac{N^\flat-(1-q)K^\flat}{q^2-q+1}.$$
By Proposition \ref{Prop:id-flat-sharp}, we see that $d_F^{\sharp \sharp }=d_F^{\flat\flat}$, up to some power of $q$. 
It remains to compute the greatest common divisor
$$G_2=\gcd(q^{\varepsilon_2}(1-q)-K^\sharp  N^\flat,N^\sharp  N^\flat, q^{\varepsilon_2}-K^\sharp  K^\flat).$$
Again, the first term of $G_2$ equals $-K^\flat N^\sharp $ and the third term equals
\begin{align*}
    q^{\varepsilon_2}-K^\sharp  K^\flat &= \frac{K^\sharp ((q-1)K^\flat+N^\flat)-K^\flat N^\sharp }{1-q} \\
    &= \frac{q^\alpha(q^2-q+1) K^\sharp  N^\sharp -K^\flat N^\sharp }{1-q},
\end{align*}
where we used Proposition \ref{Prop:id-flat-sharp}. Since $N^\sharp (1)\neq 0$, we have $\gcd(N^\sharp ,q-1)=1$, hence $N^\sharp $ divides $G_2$. Since $\gcd(K^\flat,N^\flat)=1$, we finally get
$$G_2=q^\beta N^\sharp \equiv_q d_F^{\sharp \sharp },$$
which concludes the proof.
\end{proof}

Combining Theorem \ref{Thm:main} with Proposition \ref{prop:homotheticformulas} and Theorem \ref{Thm:q-gcd}, we get explicit expressions for the $q$-rationals under Springborn operations. Apart from the $\mathrm{PSL}_2(\Z)$-structure of $q$-rationals (notably expressed via Farey sum and differences), this is the first instance of explicit formulas of this kind.

\begin{coro}
Let $(\tfrac{a}{b}, \tfrac{c}{d})$ be an inner regular pair. Put $\tfrac{k}{n}=\tfrac{a}{b}\oplus_S \tfrac{c}{d}$, i.e. $k=\frac{ab+cd}{ad-bc}$ and $n=\frac{b^2+d^2}{ad-bc}$. Then
\begin{align}\label{Eq:numerator-denominator-ids}
    K^\sharp &\equiv_q \frac{q^{\varepsilon_2}A^{\sharp}B^{\flat} + q^{\varepsilon_1}C^{\flat}D^{\sharp}}{d_F^{\sharp\flat}}, \\
    N^\sharp &\equiv_q \frac{q^{\varepsilon_2}B^{\sharp}B^{\flat} + q^{\varepsilon_1}D^{\flat}D^{\sharp}}{d_F^{\sharp\flat}}.
\end{align}
A similar formula holds for an outer regular pair.
\end{coro}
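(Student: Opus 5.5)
The plan is to combine three results already established: Theorem \ref{Thm:main}, Proposition \ref{prop:homotheticformulas} and Theorem \ref{Thm:q-gcd}.

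First, Theorem \ref{Thm:main} shows that for an inner regular pair the right $q$-rational $[\tfrac{k}{n}]^\sharp_q=K^\sharp/N^\sharp$ equals the inner homothety center $i([\tfrac{a}{b}],[\tfrac{c}{d}])$. Proposition \ref{prop:homotheticformulas} expresses this center as the fraction
$$\frac{P}{Q}:=\frac{q^{\varepsilon_2}A^\sharp B^\flat+q^{\varepsilon_1}C^\flat D^\sharp}{q^{\varepsilon_2}B^\sharp B^\flat+q^{\varepsilon_1}D^\sharp D^\flat}.$$
Since $K^\sharp$ and $N^\sharp$ are coprime by convention, we get $P=G\,K^\sharp$ and $Q=G\,N^\sharp$ up to a unit, where $G=\gcd(P,Q)$. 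The corollary therefore reduces to showing $\gcd(P,Q)\equiv_q d_F^{\sharp\flat}$, with the sign and monomial normalisations matching.

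Theorem \ref{Thm:q-gcd} only gives the gcd of the \emph{three} polynomials $P$, $Q$ and $R:=q^{\varepsilon_2}A^\sharp A^\flat+q^{\varepsilon_1}C^\flat C^\sharp$. This is the main obstacle: we must check that adding $R$ does not lower the gcd, i.e. that $\gcd(P,Q)$ divides $R$.

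To handle it, I would reuse the invariance argument from the proof of Theorem \ref{Thm:q-gcd}. The transformation formulas under $T$ and $S$ given there show that the pair $(\gcd(P,Q),\,\text{data})$ transforms compatibly, up to powers of $q$: under $T$, $P$ maps to a combination of $P$ and $Q$, and $Q$ picks up only a monomial. This allows a reduction to the representative $(\tfrac{1}{0},\tfrac{k}{n})$ with $n\mid k^2+1$. There, by the computation in that proof, one has $P\equiv_q K^\sharp N^\flat$ and $Q=N^\sharp N^\flat$. Since $\gcd(K^\sharp,N^\sharp)=1$, it follows that $\gcd(P,Q)\equiv_q N^\flat\equiv_q d_F^{\sharp\flat}$ directly.

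One caveat: under $S$ the roles of the polynomials change, and $R$ and $Q$ get swapped. So it is cleaner to argue without invariance, as follows. Let $g=\gcd(P,Q)$. Consider the identity analogous to $(a^2+c^2)(b^2+d^2)=(ab+cd)^2+d_F^2$; I expect a $q$-version of the form $RQ-P\tilde P\equiv_q d_F^{\sharp\flat}d_F^{\flat\sharp}\cdot(\text{monomial})$, using Remark \ref{rem:sym_sharp_and_flat}. From it one deduces $g^2\mid$ the relevant quantity, and coprimality then forces $g\mid d_F^{\sharp\flat}$. On the other hand, $d_F^{\sharp\flat}$ divides $g$ by Theorem \ref{Thm:q-gcd}. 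Combining the two divisibilities gives $g\equiv_q d_F^{\sharp\flat}$.

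Finally, dividing $P$ and $Q$ by $d_F^{\sharp\flat}$ yields the stated formulas for $K^\sharp$ and $N^\sharp$, with the positivity of Proposition \ref{Prop:positivity-q-dF} fixing the signs. The outer case is handled identically, using $d_F^{\sharp\sharp}$.
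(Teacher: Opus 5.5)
Your skeleton matches the paper's proof. That proof consists only of combining Theorem~\ref{Thm:main}, Proposition~\ref{prop:homotheticformulas} and Theorem~\ref{Thm:q-gcd}, and it tacitly reads the three-term gcd of Theorem~\ref{Thm:q-gcd} as $\gcd(P,Q)$. You are right that this needs an argument: the $T$/$S$ invariance only controls the three-term gcd, since $S$ essentially exchanges the roles of $Q$ and $R$. The quadratic identity you anticipate is the right tool, and it holds exactly. Expanding the products gives
\[
RQ-P\tilde P=q^{\varepsilon_1+\varepsilon_2}\,d_F^{\flat\sharp}\,d_F^{\sharp\flat},
\]
and $\tilde P=P$ by Remark~\ref{rem:sym_sharp_and_flat}.

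The conclusion you draw from the identity is not valid as written. Put $g=\gcd(P,Q)$ and $D=d_F^{\sharp\flat}$. You have $g^2\mid P^2$, but $g^2\mid RQ$ would require $g\mid R$, which is exactly what is in question. You only get $g\mid RQ-P^2$, i.e.\ $D\mid g\mid D^2$ up to powers of $q$, and this does not force $g\equiv_q D$.

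The fix is to use Theorem~\ref{Thm:q-gcd} first rather than last. It gives $D\mid P,Q,R$ and $d_F^{\flat\sharp}\equiv_q D$. Dividing the identity by $D^2$ in $\Z[q,q^{-1}]$ then yields
\[
\frac{R}{D}\cdot\frac{Q}{D}-\Big(\frac{P}{D}\Big)^2=\pm q^{m}
\]
for some $m\in\Z$. Hence $\gcd(P/D,Q/D)$ is a unit and $\gcd(P,Q)\equiv_q D$. This completes the inner case, and it supplies a step the paper leaves implicit.

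The outer case is not ``identical'', though. Let $P_e,Q_e,R_e$ be the outer polynomials of Theorem~\ref{Thm:q-gcd} and $\tilde P_e$ the alternative numerator from Remark~\ref{rem:sym_sharp_and_flat}. Because of the normalisation in $d_F^{\flat\flat}$, the same expansion gives
\[
R_eQ_e-P_e\tilde P_e=-q^{\varepsilon_1+\varepsilon_2}(q^2-q+1)\,d_F^{\sharp\sharp}\,d_F^{\flat\flat}.
\]
So the same manipulation only shows that $\gcd(P_e,Q_e)/d_F^{\sharp\sharp}$ divides $q^2-q+1$, and excluding that factor needs an extra argument. Lemma~\ref{Lem:special-values} gives $R_e(\sigma)=\sigma P_e(\sigma)$, which excludes it when $d_F^{\sharp\sharp}(\sigma)\neq 0$. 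It does not settle the case where $q^2-q+1$ divides $d_F^{\sharp\sharp}$.
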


\subsection{Application: mid-point formula}

A first application of our main Theorem \ref{Thm:main} is an explicit formula for the midpoint between two rationals of Farey determinant 1:
\begin{theorem}\label{Coro:q-midpoint}
Consider two rational numbers $0 < \tfrac{a}{b} < \tfrac{c}{d}$ of Farey determinant 1. Then the arithmetic mean of $\frac{a}{b}$ and $\frac{c}{d}$ has left $q$-deformation 
\begin{equation}\label{Eq:midpoint-dF-1}
\left[\frac{1}{2}\left(\frac{a}{b} + \frac{c}{d}\right)\right]_q^\flat = \frac{A^\sharp D^\flat + q^{\varepsilon}B^\flat C^\sharp}{B^\sharp D^\flat + q^{\varepsilon}B^\flat D^\sharp},    
\end{equation}
\noindent where $\varepsilon = \abs{\varepsilon(\tfrac{a}{b}) - \varepsilon(\tfrac{c}{d})}$ is the difference between the integers associated to $\frac{a}{b}$ and $\frac{c}{d}$ as in \eqref{defi:epsilon}. This expression is in reduced form. A formula for the right $q$-deformation of the arithmetic mean can be deduced, using the transition map \ref{Prop:duality}.  
\end{theorem}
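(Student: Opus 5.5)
The plan is to realise the arithmetic mean as a Springborn difference of a suitable outer regular pair and then apply Theorem~\ref{Thm:main}. Put $x=\tfrac{a}{b}\oplus_F\tfrac{c}{d}=\tfrac{a+c}{b+d}$ and $y=\tfrac{a}{b}\ominus_F\tfrac{c}{d}=\tfrac{a-c}{b-d}$, which are reduced since $d_F(\tfrac ab,\tfrac cd)=1$. By Proposition~\ref{prop:algebraic_springborn}, applied to the pair $(x,y)$ with numerators $a\pm c$ and denominators $b\pm d$,
$$x\ominus_S y=\tfrac12\left(x\oplus_F y+x\ominus_F y\right)=\tfrac12\left(\tfrac{2a}{2b}+\tfrac{2c}{2d}\right)=\tfrac12\left(\tfrac ab+\tfrac cd\right).$$
By Lemma~\ref{Lemma:dF-for-Farey-op}, $d_F(x,y)=2$, so $(x,y)$ is outer regular (Example after Lemma~\ref{Lemma:simplify-Springborn-cond}).

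Theorem~\ref{Thm:main} therefore gives $\left[\tfrac12(\tfrac ab+\tfrac cd)\right]^\flat_q=e([x],[y])$. Geometrically, $x$ and $y$ are the fixed points of the inversion exchanging $\tfrac ab$ and $\tfrac cd$ (Proposition~\ref{Prop:exchange-under-involution}). One could try to use this directly, but I will compute algebraically. Proposition~\ref{prop:homotheticformulas} (or Remark~\ref{rem:sym_sharp_and_flat}) expresses $e([x],[y])$ via $X^\square,Y^\square,Z^\square,W^\square$, the $q$-numerators and denominators of $x$ and $y$, and $\varepsilon(x),\varepsilon(y)$. Theorem~\ref{Thm:q-gcd} says that the gcd of numerator and denominator is $d_F^{\sharp\sharp}(x,y)$ up to a power of $q$. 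Dividing by it yields a reduced expression.

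Next I express everything through $A,B,C,D$. The $q$-Farey addition for Farey determinant $1$ (Proposition after Theorem~\ref{Prop:duality}'s section, from \cite{MGO-2020}) gives $X^\square=A^\square+q^{\alpha}C^\square$ and $Z^\square=B^\square+q^\alpha D^\square$, with $\alpha$ computed from $\varepsilon(\tfrac ab)$ and $\varepsilon(\tfrac cd)$. For $y$ I would write $\tfrac ab=y\oplus_F\tfrac cd$ (or $\tfrac cd=y\oplus_F\tfrac ab$, according to the ordering). Here the hypothesis $0<\tfrac ab<\tfrac cd$ fixes which is the parent, so that $Y^\square,W^\square$ are obtained as $q$-Farey differences, e.g. $Y^\square\equiv_q A^\square-q^{\alpha'}C^\square$ up to sign. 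I also need $\varepsilon(x)$ and $\varepsilon(y)$ in terms of $\varepsilon_1=\varepsilon(\tfrac ab)$ and $\varepsilon_2=\varepsilon(\tfrac cd)$, via continued fractions or via Proposition~\ref{prop:qradius}.

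The main obstacle is the simplification step. After substitution, the homothety formula becomes a combination of products such as $A^\sharp B^\flat$, $C^\sharp D^\flat$, $A^\sharp D^\flat$, $B^\flat C^\sharp$. I must show it collapses, after dividing by $d_F^{\sharp\sharp}(x,y)$, to $A^\sharp D^\flat+q^\varepsilon B^\flat C^\sharp$ over $B^\sharp D^\flat+q^\varepsilon B^\flat D^\sharp$. The cancellations should come from $A^\square D^\triangle-B^\square C^\triangle=d_F^{\square\triangle}(\tfrac ab,\tfrac cd)$, which is a monomial here since the Farey determinant is $1$, and from $A^\sharp B^\flat-A^\flat B^\sharp=q^{\varepsilon_1}(1-q)$. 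If the brute-force algebra is messy, I would first reduce by the $\PSL_2(\Z)$-invariance argument of Theorem~\ref{Thm:q-gcd}. The identity need not be equivariant, but one can check a base case such as $(\tfrac01,\tfrac11)$, with midpoint $\tfrac12$, to pin down $\varepsilon=\lvert\varepsilon_1-\varepsilon_2\rvert$.

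Reducedness comes for free: the quotient by the exact gcd from Theorem~\ref{Thm:q-gcd} is coprime, and matching it with the claimed expression shows the latter is coprime as well. The right $q$-version then follows by applying $g_q$ (Theorem~\ref{Prop:duality}).
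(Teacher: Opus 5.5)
\textbf{Overall.} Your opening reduction is exactly the paper's. The mean equals $x\ominus_S y$ with $x=\tfrac{a+c}{b+d}$, $y=\tfrac{a-c}{b-d}$ and $d_F(x,y)=2$, so Theorem~\ref{Thm:main} gives $e([x],[y])$, and Proposition~\ref{prop:homotheticformulas} writes this as a ratio of polynomials. Everything after that is a plan rather than an argument, and the plan has a real gap: the formula you aim to reach is off by one.

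\textbf{The missing computation.} The paper reads $[x]^\sharp_q,[x]^\flat_q,[y]^\sharp_q,[y]^\flat_q$ and $\varepsilon(x),\varepsilon(y)$ off the right and left $q$-Farey trees. It does this separately for ``$\tfrac ab$ is a child of $\tfrac cd$'' and ``$\tfrac ab$ is the left parent of $\tfrac cd$''. Numerator and denominator then share a factor $q^k(1+q)$. Keep in mind that the $\sharp$ and $\flat$ Farey sums carry different exponents, so you need one for each $\square$. For $(\tfrac12,\tfrac11)$, $[\tfrac23]^\sharp_q=\tfrac{A^\sharp+q^2C^\sharp}{B^\sharp+q^2D^\sharp}$ but $[\tfrac23]^\flat_q=\tfrac{qA^\flat+C^\flat}{qB^\flat+D^\flat}$. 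Your fallback cannot replace this computation. The mean commutes with $T$ but not with $S$, so there is no reduction to an orbit representative, and one base case cannot fix an exponent that varies with the pair. Also, $(\tfrac01,\tfrac11)$ violates $0<\tfrac ab$.

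\textbf{The exponent is off by one.} The computation cannot end at the displayed formula. The exponent that actually comes out is $\lvert\varepsilon(\tfrac ab)-\varepsilon(\tfrac cd)\rvert+1$. Take $(\tfrac11,\tfrac21)$: there $A^\sharp=B^\sharp=B^\flat=D^\sharp=D^\flat=1$, $C^\sharp=1+q$, $\varepsilon(1)=0$ and $\varepsilon(2)=1$. The stated right-hand side is then $\tfrac{1+q+q^2}{1+q}=[\tfrac32]^\sharp_q$. But $[\tfrac32]^\flat_q=\tfrac{1+q^2+q^3}{1+q^2}$, which is the same expression with $q^2$. Likewise, $(\tfrac12,\tfrac11)$ gives $\tfrac{2q+q^3}{1+2q+q^3}$ instead of $[\tfrac34]^\flat_q=\tfrac{q+q^2+q^4}{1+q+q^2+q^4}$. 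Your own test pair $(\tfrac01,\tfrac11)$, with $[0]^\flat_q=\tfrac{q-1}{q}$, would already have exposed this. At exponent $0$ it returns $\tfrac{q}{1+q}=[\tfrac12]^\sharp_q$, not $[\tfrac12]^\flat_q$.

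The paper's simplification does produce the exponent $c_k-1$. Its last line then equates $c_k-1$ with $\varepsilon(\tfrac ab)-\varepsilon(\tfrac cd)$, but that difference is actually $c_k-2$; for $(\tfrac12,\tfrac11)$, $c_k=3$ while the difference is $1$. So your simplification target must read $q^{\varepsilon+1}$.

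\textbf{Reducedness.} Theorem~\ref{Thm:q-gcd} computes a gcd of three polynomials, so it does not by itself show that a two-term fraction is reduced. For the corrected $U/V$ you can argue directly:
\begin{itemize}
\item $UD^\sharp-VC^\sharp=D^\flat d_F^{\sharp\sharp}$ and $UB^\sharp-VA^\sharp=-q^{\varepsilon+1}B^\flat d_F^{\sharp\sharp}$, where $d_F^{\sharp\sharp}$ is a signed monomial because the Farey determinant is $1$.
\item $\gcd(B^\flat,D^\flat)=1$, since $d_F^{\flat\flat}$ is a monomial and $B^\flat(\sigma)\neq 0$ by Lemma~\ref{Lem:special-values}.
\item $V(0)=1$.
\end{itemize}
Together these give $\gcd(U,V)=1$.
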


\begin{proof}
Combining Proposition \ref{prop:algebraic_springborn}, Lemma \ref{Lemma:dF-for-Farey-op} and Theorem \ref{Thm:main}, we get
\begin{equation}\label{Eq:midpt-e}
\left[\frac{1}{2}\left(\frac{a}{b}+\frac{c}{d}\right)\right]_q^\flat = \left[\left(\frac{a+c}{b+d}\right)\ominus_S\left(\frac{a-c}{b-d}\right)\right]_q^\flat=e\left(\left[\frac{a}{b}\oplus_F\frac{c}{d}\right],\left[\frac{a}{b}\ominus_F\frac{c}{d}\right]\right). 
\end{equation}
\noindent Let $\frac{a+c}{b+d} = [a_1,\cdots,a_{2m}] = [\![c_1,\cdots,c_k]\!]$ be the even positive and negative continued fractions of the Farey sum of $\frac{a}{b}$ and $\frac{c}{d}$. We distinguish cases according to the relationship between $\frac{a}{b}$ and $\frac{c}{d}$.\\
~\\
Consider first a case when $\frac{a}{b}$ is a child of $\frac{c}{d}$. The situation in the left and right $q$-Farey trees is as follows. 
\begin{center}
\begin{tikzpicture}
\begin{scope}
\vspace{-1cm}
	\draw (0,0) arc (0:180:2) node[above,midway] {\scriptsize{$q^{a_{2m-1}-1}$}};
	\draw (0,0) node[below] {$\frac{C^\flat}{D^\flat}$};
	\draw (-4,0) node[below] {$[\tfrac{a-c}{b-d}]^\flat$};
	\draw (0,0) arc (0:180:1) node[above,midway] {\scriptsize{$1$}};
	\draw (-2,0) arc (0:180:1) node[above,midway] {\footnotesize{$q^{a_{2m-1}}$}};
	\draw (-2,0) node[below] {$\frac{A^\flat}{B^\flat}$};
    \draw (0,0) arc (0:180:0.5) node[above,midway] {\scriptsize{$1$}};
    \draw (-1,0) arc (0:180:0.5) node[above,midway] {\scriptsize{$q$}};
    \draw (-1,0) node[below] {$[\tfrac{a+c}{b+d}]^\flat$};
\end{scope}
\begin{scope}[shift={(6,0)}]
\vspace{-1cm}
	\draw (0,0) arc (0:180:2) node[above,midway] {\scriptsize{$q^{c_k-3}$}};
	\draw (0,0) node[below] {$\frac{C^\sharp}{D^\sharp}$};
	\draw (-4,0) node[below] {$[\tfrac{a-c}{b-d}]^\sharp$};
	\draw (0,0) arc (0:180:1) node[above,midway] {\scriptsize{$q^{c_k-2}$}};
	\draw (-2,0) arc (0:180:1) node[above,midway] {\scriptsize{$1$}};
	\draw (-2,0) node[below] {$\frac{A^\sharp}{B^\sharp}$};
    \draw (0,0) arc (0:180:0.5) node[below,midway] {\scriptsize{$q^{c_k-1}$}};
    \draw (-1,0) arc (0:180:0.5) node[below,midway] {\scriptsize{$1$}};
    \draw (-1,0) node[below] {$[\tfrac{a+c}{b+d}]^\sharp$};
\end{scope}
\end{tikzpicture}
\end{center}
\noindent Therefore, $\varepsilon(\tfrac{a+c}{b+d}) = \varepsilon(\tfrac{a}{b}) +1$, and $\varepsilon(\tfrac{a-c}{b-d}) = \varepsilon(\tfrac{a}{b}) - a_{2m-1}$. Then

\begin{align*}
    \left[\frac{a+c}{b+d}\right]^{\sharp} = \frac{A^\sharp + q^{c_k-1}C^\sharp}{B^\sharp + q^{c_k-1}D^\sharp} &\text{ , }& \left[\frac{a+c}{b+d}\right]^{\flat} = \frac{qA^\flat + C^\flat}{qB^\flat + D^\flat}, \\
\left[\frac{a-c}{b-d}\right]^{\sharp} = \frac{A^\sharp - q^{c_k-2}C^\sharp}{B^\sharp - q^{c_k-2}D^\sharp} &\text{ , }&  \left[\frac{a-c}{b-d}\right]^{\flat} = \frac{q^{-a_{2m-1}}(A^\flat - C^\flat)}{q^{-a_{2m-1}}(B^\flat - D^\flat)}.
\end{align*}

\noindent Denote $\varepsilon_1 = \varepsilon(\tfrac{a-c}{b-d})$ and $\varepsilon_2 = \varepsilon(\tfrac{a+c}{b+d})$. Then by Proposition \ref{prop:homotheticformulas}, 
$$
e\left(\left[\frac{a\!+\!c}{b\!+\!d}\right],\left[\frac{a\!-\!c}{b\!-\!d}\right]\right) = \frac{q^{\varepsilon_1}(A^\sharp \!+\! q^{c_k-1}C^\sharp)(qB^\flat \!+\! D^\flat) - q^{\varepsilon_2}(A^\sharp - q^{c_k-2}C^\sharp)(B^\flat \!-\! D^\flat)}{q^{\varepsilon_1}(B^\sharp \!+\! q^{c_k-1}D^\sharp)(qB^\flat \!+\! D^\flat)\! -\! q^{\varepsilon_2-a_{2m-1}}(B^\sharp \!-\! q^{c_k-2}D^\sharp)(B^\flat \!-\! D^\flat)}.
$$
\smallskip

\noindent Using $\varepsilon_1 + 1 = \varepsilon_2 - a_{2m-1}$ and $\varepsilon_1 + c_k-1 = \varepsilon_2 + c_k-2 -a_{2m-1}$, the formula simplifies:
$$
e\left(\left[\frac{a\!+\!c}{b\!+\!d}\right],\left[\frac{a\!-\!c}{b\!-\!d}\right]\right) = \frac{q^{\varepsilon_1}(1+q)A^\sharp D^\flat + q^{\varepsilon_1 + c_k-1}(1+q)C^\sharp B^\flat}{q^{\varepsilon_1}(1+q)B^\sharp D^\flat + q^{\varepsilon_1 + c_k-1}(1+q)D^\sharp B^\flat},
$$
\noindent which concludes, since $\varepsilon(\tfrac{a}{b}) = \varepsilon(\tfrac{c}{d})+c_k-1$.\\
~\\
\noindent The case where $\frac{a}{b}$ is the left parent of $\frac{c}{d}$ is similar, with $\frac{a}{b} < \frac{a+c}{b+d} < \frac{c}{d} < \frac{a-c}{b-d}$.
\end{proof}

\begin{Remark}
Equation \eqref{Eq:midpt-e} also holds for an outer regular pair $(\tfrac{a}{b},\tfrac{c}{d})$, since in that case $d_F(\tfrac{a+c}{b+d},\tfrac{a-c}{b-d})\in\{1,2\}$ (see Remark \ref{Rk:reg-characterizations}). This means that there exists an explicit formula for $\left[\frac{1}{2}\left(\frac{a}{b} + \frac{c}{d}\right)\right]_q^\flat$ when $(\tfrac{a}{b},\tfrac{c}{d})$ is outer regular, but the general expression is complicated.
\end{Remark}

\begin{question}
Is there a combinatorial interpretation of the identities \eqref{Eq:numerator-denominator-ids} -- \eqref{Eq:midpoint-dF-1}?
\end{question}

In the final section, we will address this question in the particular case of Markov fractions.

\subsection{Application: size of $q$-rationals}

Another application of our main Theorem \ref{Thm:main} shows that the diameters of $q$-rationals, given by the jump function $\ell_q(\tfrac{a}{b})$ from Equation \eqref{Eq:diam-def}, decrease with the Farey sum of Farey neighbors:

\begin{theorem}\label{Thm:diam-decrease}
Let $\tfrac{a}{b}<\tfrac{c}{d}$ be such that $d_F := \abs{ad-bc} = 1$. Then
$$\ell_q\left(\frac{a+c}{b+d}\right) < \min \left(\ell_q\left(\frac{a}{b}\right), \ell_q\left(\frac{c}{d}\right)\right).$$
\end{theorem}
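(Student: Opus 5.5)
The plan is to use Theorem \ref{Thm:main} together with the geometry of homothety centers, as the phrase ``another application of our main Theorem'' suggests. Write $x=\tfrac{a}{b}$, $y=\tfrac{c}{d}$, $m=\tfrac{a+c}{b+d}$, and let $r_x,r_y,r_m$ be the Euclidean radii of the disks $[x],[y],[m]$, so that $\ell_q=2r$. By the transition map (Theorem \ref{Prop:duality}), or simply by symmetry of the argument, we may assume $q\in(0,1)$.

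\textbf{Step 1: place $[m]$ between two homothety centers.} Since $d_F(x,y)=1$, the pair $(x,y)$ is both inner and outer regular. By Theorem \ref{Thm:main}:
\begin{itemize}
\item the inner homothety center $i([x],[y])$ is $[x\oplus_S y]_q^\sharp$;
\item the outer homothety center $e([x],[y])$ is $[x\ominus_S y]_q^\flat$.
\end{itemize}
I would rather compare $[m]$ with the two disks directly. By the well-orderedness of Corollary \ref{Prop:well-orderedness}, the disk $[m]$ lies in the gap between $[x]$ and $[y]$, that is, inside the interval $([x]^\sharp_q,[y]^\flat_q)$. Moreover, $m$ is exchanged with itself by the involution swapping $x$ and $y$. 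This involution is the orientation-reversing inversion in the geodesic with endpoints $m$ and $x\ominus_F y$ (Proposition \ref{Prop:exchange-under-involution}).

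\textbf{Step 2: compare radii through that inversion.} The $q$-deformed inversion $I_q$ is a symmetry of $\mathcal{Q}$ (Corollary \ref{coro:symmetry_12}), and it maps $[x]$ to $[y]$. Its fixed circle is orthogonal to $\mathbb{R}$, with center $M_e=e([x],[y])$ and radius $\rho$ satisfying $\rho^2=|M_e-X|\,|M_e-Y|$ for corresponding points $X,Y$. The disk $[m]$ is invariant under $I_q$, so the fixed geodesic passes through it, and $[m]$ lies inside the circle $\partial[m]$ orthogonal to $C_e$.

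A cleaner route is to use $T$- and $S$-equivariance to normalise $x=\infty$ and $y=0$. This fails for radii, since Euclidean size is not invariant. So I would instead compute directly from Proposition \ref{prop:qradius} and the $q$-Farey sum formula. Write
\begin{equation*}
R^\sharp=A^\sharp+q^\alpha C^\sharp,\qquad S^\sharp=B^\sharp+q^\alpha D^\sharp,
\end{equation*}
and similarly $S^\flat=B^\flat+q^{\alpha'}D^\flat$. The goal is then the inequality
\begin{equation*}
\frac{q^{\varepsilon(m)}}{S^\sharp S^\flat}<\frac{q^{\varepsilon(x)}}{B^\sharp B^\flat},
\end{equation*}
and the analogue with $y$ in place of $x$.

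\textbf{Step 3: reduce to an $\varepsilon$ comparison.} As in the midpoint proof, I would treat separately the case where $x$ is a child of $y$ and the case where $y$ is a child of $x$. In the first case, $\varepsilon(m)=\varepsilon(x)+1$ and $S^\flat=qB^\flat+D^\flat$, while $S^\sharp=B^\sharp+q^{c_k-1}D^\sharp$. The inequality against $x$ then becomes
\begin{equation*}
qB^\sharp B^\flat<(B^\sharp+q^{c_k-1}D^\sharp)(qB^\flat+D^\flat),
\end{equation*}
which holds because all the polynomials have positive coefficients (Proposition \ref{Prop:cst-sign-coeffs}) and $q>0$. The comparison against $y$ uses $\varepsilon(x)=\varepsilon(y)+c_k-1$: the target becomes $q^{c_k}D^\sharp D^\flat<S^\sharp S^\flat$, and the cross term $q^{c_k-1}D^\sharp\cdot qB^\flat$ is one of the terms of $S^\sharp S^\flat$. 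The case $y$ a child of $x$ is symmetric.

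\textbf{Main obstacle.} The delicate part is keeping track of the exponents $\varepsilon$ and $\alpha$ correctly for the left and right versions in each case, including the boundary cases where $x$ or $y$ is $0$ or $\infty$, or is negative. For those cases I would use Proposition \ref{Prop:equivariance} to reduce to positive fractions, accepting the resulting monomial factors.
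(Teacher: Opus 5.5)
Your Step 3 takes a genuinely different route from the paper. Steps 1--2 are abandoned and can be dropped. Instead of homothety centers, you compare $\ell_q$ directly through Proposition \ref{prop:qradius} and the $q$-Farey sum, and let positivity do the work. The comparison against $x$ in your first case is correct. The comparison against $y$, however, has a concrete gap, and there are two further problems.

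\textbf{The relation you import is off by one.} You take $\varepsilon(x)=\varepsilon(y)+c_k-1$ from the midpoint proof. Check it on $x=\tfrac12$, $y=1$, $m=\tfrac23=[\![1,3]\!]$:
\begin{itemize}
\item one has $\ell_q(\tfrac12)=\tfrac{q(1-q)}{(1+q)(1+q^2)}$ and $\ell_q(1)=1-q$, so $\varepsilon(x)=1$, $\varepsilon(y)=0$ and $\varepsilon(m)=2$;
\item on the other hand $c_k-1=2$, and $S^\sharp=1+q+q^2=B^\sharp+q^{2}D^\sharp$.
\end{itemize}
So $\varepsilon(x)-\varepsilon(y)=1\neq c_k-1$. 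The correct target against $y$ is $q^{c_k-1}D^\sharp D^\flat<S^\sharp S^\flat$, not the one with $q^{c_k}$.

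\textbf{Your justification uses the wrong term.} You cite $q^{c_k-1}D^\sharp\cdot qB^\flat=q^{c_k}D^\sharp B^\flat$. This says nothing about $q^{c_k}D^\sharp D^\flat$ unless you compare $B^\flat(q)$ with $D^\flat(q)$, which you do not do. The term that works is $q^{c_k-1}D^\sharp D^\flat$, and it matches the corrected target exactly.

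\textbf{The second case is not symmetric.} Take $x=\tfrac12$, $y=\tfrac23$, $m=\tfrac35$, so that $y$ is a child of $x$. Then $S^\sharp=B^\sharp+qD^\sharp$ and $S^\flat=q^2B^\flat+D^\flat$, with $\varepsilon(m)-\varepsilon(x)=2$ and $\varepsilon(m)-\varepsilon(y)=1$. Here $\sharp$ and $\flat$ trade roles, and the paper does not supply these formulas.

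\textbf{The reductions need care.} To reduce to positive fractions, use $T_q$ only, since it scales all three diameters by the same factor $q$. The maps $S_q$, $N_q$ and the transition map $g_q$ are M\"obius transformations and do not transport inequalities between Euclidean diameters. In particular, ``by the transition map we may assume $q\in(0,1)$'' is not justified as stated.

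The missing ingredient is therefore an exponent identity. Write $S^\sharp=q^{u}B^\sharp+q^{v}D^\sharp$ and $S^\flat=q^{u'}B^\flat+q^{v'}D^\flat$. You need
\[
\varepsilon(m)-\varepsilon(x)=u+u' \quad\text{and}\quad \varepsilon(m)-\varepsilon(y)=v+v',
\]
proved in both parent--child configurations; for $q<1$, ``$\geq$'' suffices. It holds in all the examples above, but you must prove it rather than import it. Once you have it, both inequalities are immediate, because $q^{u+u'}B^\sharp B^\flat$ and $q^{v+v'}D^\sharp D^\flat$ are terms of $S^\sharp S^\flat$ and all other terms are positive. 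With exact equality the argument would even cover $q>1$, and it never uses Theorem \ref{Thm:main}.

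The paper avoids this bookkeeping entirely. After translating to $0<\tfrac ab<\tfrac cd$, it checks the classical inequality $\tfrac ab\ominus_S m>m$, which is equivalent to $bc-ad>0$. It then applies Theorem \ref{Thm:main} to the outer regular pair $(\tfrac ab,m)$, together with well-orderedness, to get $e([\tfrac ab],[m])=[\tfrac ab\ominus_S m]^\flat_q>[m]^\sharp_q$. So the outer homothety center lies to the right of both disks, which forces the right-hand disk $[m]$ to be the smaller one. The comparison with $\tfrac cd$ is symmetric.
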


\begin{proof}
Since $T_q$ scales all diameters by $q$, we can use it to suppose $0<\tfrac{a}{b}<\tfrac{c}{d}$. We then verify that
$$\frac{a}{b}\ominus_S \frac{a+c}{b+d} > \frac{a+c}{b+d}.$$
Indeed, we have 
\begin{align*}
    \frac{a}{b}\ominus_S \frac{a+c}{b+d} &= \frac{(a+c)(b+d) - ab}{(b+d)^2 - b^2} > \frac{a+c}{b+d}\\
    &\Leftrightarrow (b+d)((a+c)(b+d) - ab) > (a+c)((b+d)^2 - b^2) \\
    &\Leftrightarrow bc-ad > 0,
\end{align*}
which is true since $\tfrac{a}{b} < \tfrac{c}{d}$.

Since the pair $\tfrac{a}{b},\tfrac{a+c}{b+d}$ is outer-regular, we can apply Theorem \ref{Thm:main} to get
$$e\left(\left[\frac{a}{b}\right],\left[\frac{a+c}{b+d}\right]\right) = \left[\frac{a}{b}\ominus_S \frac{a+c}{b+d}\right]_q^\flat > \left[\frac{a+c}{b+d}\right]_q^\sharp,$$
where the last inequality comes from the well-orderedness of $q$-rationals. Hence the intersection of the outer common tangents between $[\tfrac{a}{b}]$ and $[\tfrac{a+c}{b+d}]$ lies to the right of these two disks, see Figure \ref{Fig:diam-decrease}. This is equivalent to 
$$\ell_q\left(\frac{a+c}{b+d}\right) < \ell_q\left(\frac{a}{b}\right).$$
Similarly, we can verify that $\tfrac{c}{d}\ominus_S \tfrac{a+c}{b+d}<\tfrac{a+c}{b+d}$, which gives the second inequality.
\end{proof}
\begin{figure}[h!]
    \centering
    \includegraphics[height=4cm]{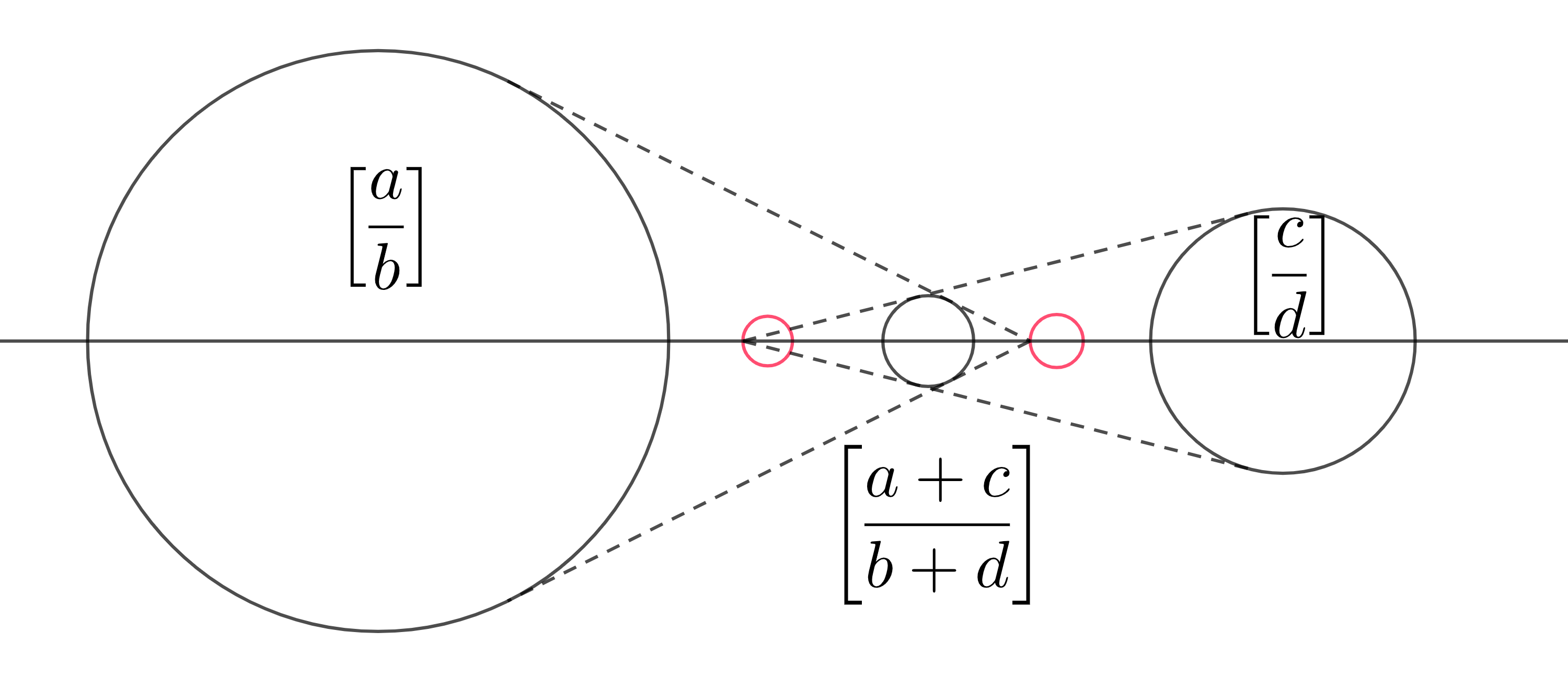}
    \caption{The diameter, or jump, of $q$-rationals decreases with the Farey sum.}
    \label{Fig:diam-decrease}
\end{figure}

\begin{Remark}
It is not true that the diameter is decreasing with the depth in the Farey tree, even for rationals in $[0,1]$: $\tfrac{1}{6}$ is deeper in the Farey tree than $\tfrac{3}{8}$, but for $q$ close to 1 we have
$$\ell_q\left(\frac{1}{6}\right) \underset{q\to 1}\sim \frac{\lvert 1-q\rvert}{36} >\frac{\lvert 1-q\rvert}{64} \underset{q\to 1}\sim \ell_q\left(\frac{3}{8}\right).$$
It is not true neither that the diameter decreases with increasing denominator: for $q$ close to 0, we have
$$\ell_q\left(\frac{1}{6}\right) \underset{q\to 0}\sim q^5 < q^4 \underset{q\to 0}\sim \ell_q\left(\frac{3}{8}\right).$$
\end{Remark}

As a consequence of the decreasing diameter of $q$-rationals, we can show that all $q$-rationals lie in a certain cone:
\begin{coro}
Let $K_q$ be the cone between the common outer tangents to $[0]$ and $[1]$. Then $[x]\subset K_q$ for all $x\in \R$, see Figure \ref{Fig:q-cone}.
\end{coro}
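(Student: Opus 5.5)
The plan is to turn the local statement of Theorem \ref{Thm:diam-decrease} (diameters shrink under Farey sums of neighbours) into a global statement about where the disks sit. First I would fix what $K_q$ is. For $q\in(0,1)$ the disks $[0]$ and $[1]$ are centred on the real axis, and $[1]=T_q[0]$. The map $T_q$ is the Euclidean homothety $z\mapsto qz+1$ centred at its fixed point $\tfrac{1}{1-q}=[\infty]^\flat_q$. Hence $T_q$ scales $[0]$ by the factor $q$, and the outer homothety centre $e([0],[1])$ is exactly $\tfrac{1}{1-q}$. So $K_q$ is the cone with apex $[\infty]^\flat_q$, opening to the left, symmetric about the real axis and tangent to $[0]$ and $[1]$. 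The key observation is that $K_q$ is invariant under $T_q$, since $T_q$ is a homothety centred at the apex with positive ratio. A disk centred on $\R$ lies in $K_q$ if and only if its radius is at most $\kappa\cdot(\tfrac{1}{1-q}-\text{centre})$, where $\kappa=\sin$ of the half-opening angle.

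Second, I would reduce to $x\in[0,1]$. Every rational is $T^n y$ with $y\in[0,1)$, and $T_q$ preserves both $K_q$ and the family $\mathcal{Q}$, so it suffices to treat $y\in[0,1]$. Irrationals give degenerate disks (points) that are limits of rational disks by Theorem \ref{Thm:convergence-q-numbers}, so they follow by closedness of $K_q$. For $q>1$ the transition map Theorem \ref{Prop:duality}, or a symmetric version of the argument, would handle things; I would state the claim for $q\in(0,1)$ first.

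Third, I would prove by induction along the Stern--Brocot (Farey) tree on $[0,1]$ that each disk $[x]$ lies in the cone $K(\tfrac{a}{b},\tfrac{c}{d})$ spanned by its two Farey parents. More precisely, I would show that $[\tfrac{a+c}{b+d}]$ lies in the convex hull $H$ of $[\tfrac{a}{b}]\cup[\tfrac{c}{d}]$. Two facts give this. The well-orderedness (Corollary \ref{Prop:well-orderedness}) places the centre of $[\tfrac{a+c}{b+d}]$ between the two parent disks. Theorem \ref{Thm:diam-decrease} gives a radius smaller than both parents' radii. A disk centred on the segment between two disks, touching neither's interior and with radius at most the smaller one, lies in their hull only if its radius is below the linear interpolation of the two radii at its centre. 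To get that, I would use the proof of Theorem \ref{Thm:diam-decrease} more sharply. There, $e([\tfrac{a}{b}],[\tfrac{a+c}{b+d}])$ lies to the right of $[\tfrac{a+c}{b+d}]$, and symmetrically $e([\tfrac{c}{d}],[\tfrac{a+c}{b+d}])$ lies to the left. This says the outer tangents from each parent to the child diverge outward, which is precisely the statement that the child fits under the tangent segments of $H$.

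Since convex hulls nest, $H(\tfrac{a}{b},\tfrac{a+c}{b+d})\subset H(\tfrac{a}{b},\tfrac{c}{d})$, and induction puts every disk inside $H([0],[1])\subset K_q$. The main obstacle is this last geometric step: converting ``the outer homothety centres lie on the correct sides'' into ``the child lies under the common outer tangent of the parents''. I would try to settle it by an explicit Euclidean computation with radii and centres, using Proposition \ref{Prop-expl-formula-i-e}.
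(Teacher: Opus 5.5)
Your proposal follows the paper's proof: a Farey-tree induction with nested regions bounded by outer common tangents (your hulls $H$ play the role of the paper's zones $Z$), initialised at the integer disks $[n]=T_q^n[0]$, which are all tangent to $K_q$. The step you flag as the main obstacle follows directly from Theorem~\ref{Thm:diam-decrease}: write the child's centre as $(1-t)M_1+tM_2$ with $t\in(0,1)$; then its radius satisfies $r<\min(r_1,r_2)\le(1-t)r_1+tr_2$, and the disk of that interpolated radius is the Minkowski combination $(1-t)[\tfrac{a}{b}]+t[\tfrac{c}{d}]\subset H$ (your ``correct side'' conditions on the homothety centres only re-encode $r<r_1$ and $r<r_2$, so no further computation is needed).
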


\begin{figure}[h!]
    \centering
    \includegraphics[height=4cm]{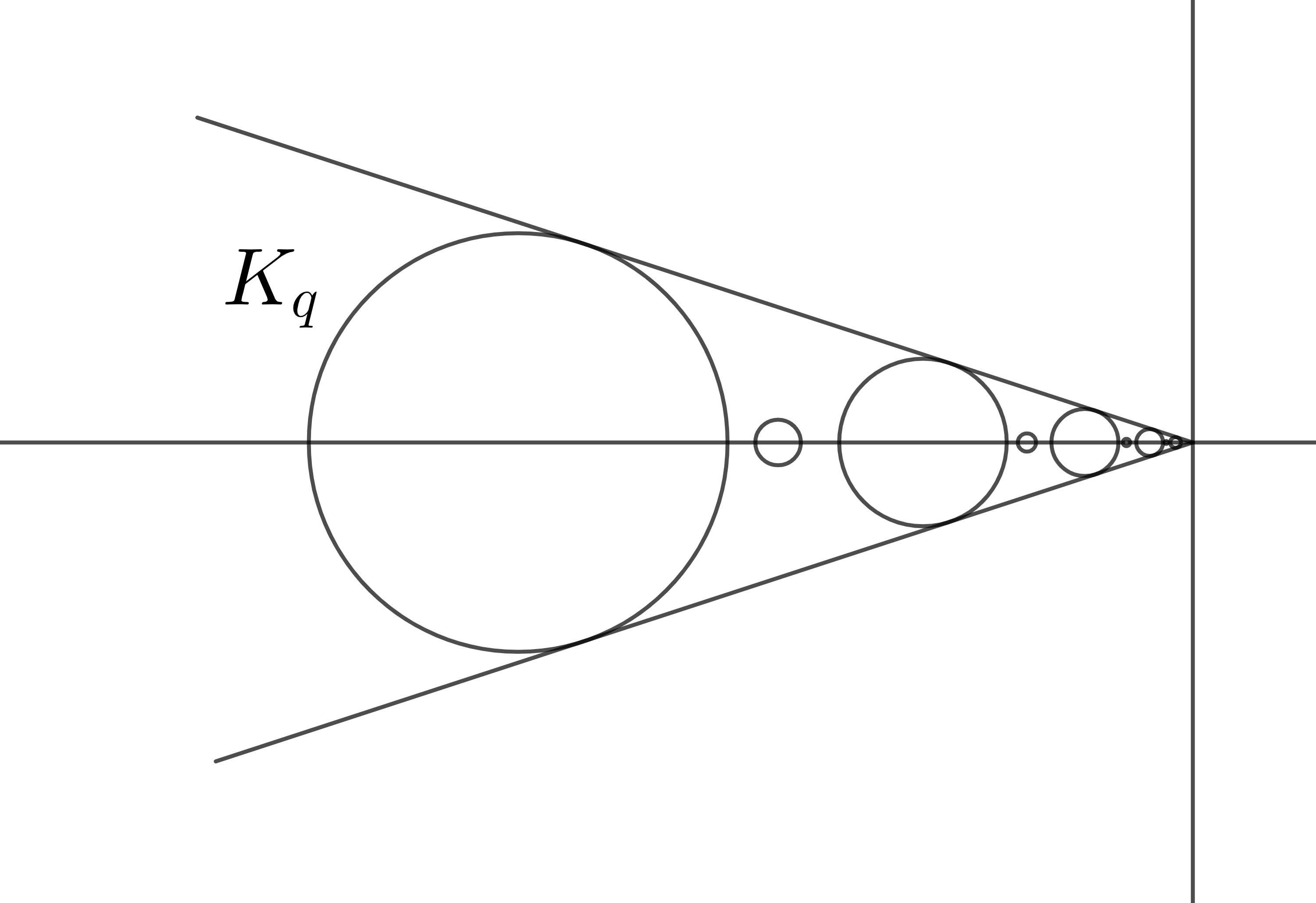}
    \caption{All $q$-rationals lie in the same cone $K_q$, spanned by the common outer tangents to all integers.}
    \label{Fig:q-cone}
\end{figure}

\begin{proof}
Consider two rationals $\tfrac{a}{b}< \tfrac{c}{d}$ of Farey distance 1, and denote by $Z([\tfrac{a}{b}],[\tfrac{c}{d}])$ the zone delimited between the two circles $[\tfrac{a}{b}], [\tfrac{c}{d}]$ and their common outer tangents, see Figure \ref{Fig_zoneZ}. From Theorem \ref{Thm:diam-decrease}, we know that
$$\left[\frac{a+c}{b+d}\right]\subset Z\left(\left[\frac{a}{b}\right],\left[\frac{c}{d}\right]\right).$$
In addition, we have $Z([\tfrac{a}{b}],[\tfrac{a+c}{b+d}])\subset Z([\tfrac{a}{b}],[\tfrac{c}{d}])$ and $Z([\tfrac{a+c}{b+d}],[\tfrac{c}{d}])\subset Z([\tfrac{a}{b}],[\tfrac{c}{d}])$.
The Corollary than follows by an induction in the depth of the Farey tree, with initialisation at the integers $[n]$, which are all inside (and tangent) to the cone $K_q$.
\end{proof}

\begin{figure}[h!]
    \centering
    \includegraphics[height=4cm]{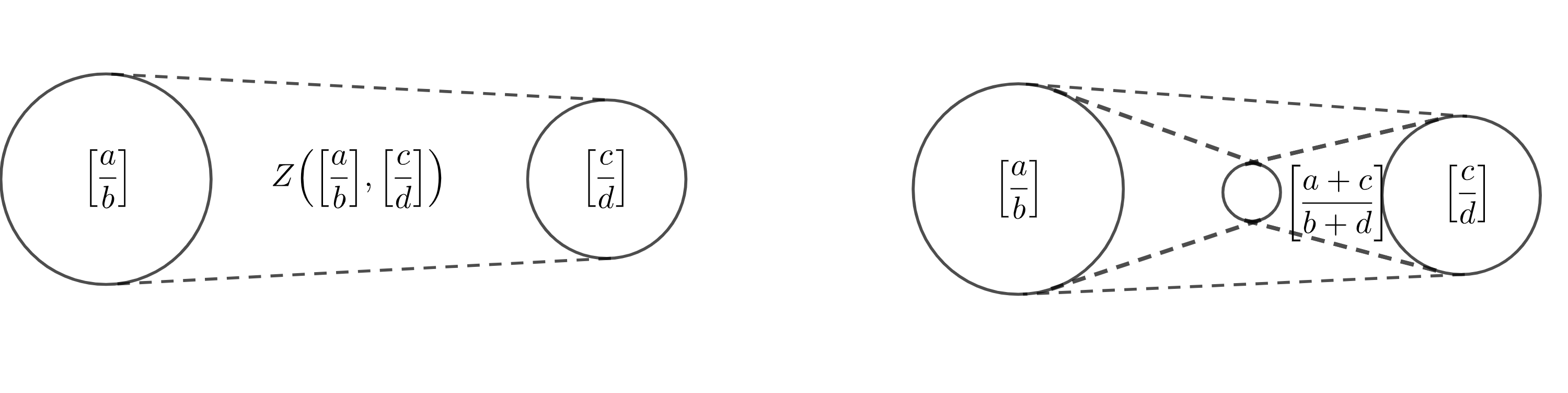}
    \caption{Zone $Z([\tfrac{a}{b}],[\tfrac{c}{d}])$ delimited by two circles and their outer tangents (left) and inclusion of these zones for Farey addition (right).}
    \label{Fig_zoneZ}
\end{figure}

\section{An example : Markov fractions}\label{Sec:Markov}

Let us consider Markov fractions, defined by Springborn in \cite{Springborn}. They form inner regular pairs, so they satisfy Theorem \ref{Thm:main}. We show that the $q$-deformations of rational Markov triples are solutions of a $q$-deformed Markov equation. We give an additional combinatorial interpretation of Theorem \ref{Thm:main} in this context. 

\subsection{Definition : iterating the Springborn addition}\label{subs:iterating the Springborn sum}

Markov fractions are defined in a recursive tree, starting with the inital two rationals $0$ and $\frac{1}{2}$, and iterating the Springborn addition. A given Markov fraction $x_1$ has thus two parents $x_0$ and $x_2$, and the triple $(x_0,x_1,x_2)$ with $x_1 = x_0\oplus_S x_2$ is called a rational Markov triple. In the rational Markov tree, a vertex represents a rational Markov triple and each zone is labelled by a Markov fraction.

\begin{figure}[h!]
\centering
\begin{tikzpicture}[scale=2]
    \draw (0,0)--(0,-0.7) node[midway,sloped] {$<$};
    \draw (0,0)--(1/2, {sqrt(3)/2}) node[midway,sloped] {$>$};
    \draw (0,0) -- (-1/2,{sqrt(3)/2}) node[midway,sloped] {$<$};
  
    \node at (1/2,0) 
    {\scalebox{1.3}{$\displaystyle \frac{0}{1}$}};
    \node at (-1/2,0) {\scalebox{1.3}{$\displaystyle \frac{1}{2}$}};
    \node at (0,0.8) {\scalebox{1.2}{$\displaystyle \frac{2}{5}$}};
    
    \draw (1/2, {sqrt(3)/2}) -- (1/2+0.8, {sqrt(3)/2-0.2}) node[midway,sloped] {$>$};
    \draw (1/2, {sqrt(3)/2}) -- (1/2, {sqrt(3)/2+0.75}) node[midway,sloped] {$>$};
      
    \node at ({1/2+0.3},{sqrt(3)/2+0.3}) {\scalebox{1.1}{$\displaystyle \frac{5}{13}$}};
    
    \draw (1.3, {sqrt(3)/2-0.2}) -- (1.3, {sqrt(3)/2-0.6}) node[midway,sloped] {$>$};
    \draw (1.3, {sqrt(3)/2-0.2}) -- (1.6, {sqrt(3)/2+0.1}) node[midway,sloped] {$>$};
    \node at (1.5, {sqrt(3)/2-0.4}){$\frac{13}{34}$};
    
   \draw (1/2, {sqrt(3)/2+0.75}) -- (1/2+0.4, {sqrt(3)/2+0.75}) node[midway,sloped] {$>$};
   \draw (1/2, {sqrt(3)/2+0.75}) -- (1/2-0.2, {sqrt(3)/2+1}) node[midway,sloped] {$<$};
    \node at (0.6,{sqrt(3)/2+1} ){$\frac{75}{194}$};
    
    \draw (-1/2,{sqrt(3)/2}) -- ({-1/2-0.8}, {sqrt(3)/2-0.2}) node[midway,sloped] {$<$};
    \draw (-1/2,{sqrt(3)/2}) -- (-1/2, {sqrt(3)/2+0.75}) node[midway,sloped] {$>$};
    \node at ({-1/2-0.3},{sqrt(3)/2+0.3}) 
    {\scalebox{1.1}{$\displaystyle \frac{12}{29}$}};

     \draw (-1.3, {sqrt(3)/2-0.2}) -- (-1.3, {sqrt(3)/2-0.6}) node[midway,sloped] {$>$};
    \draw (-1.3, {sqrt(3)/2-0.2}) -- (-1.6, {sqrt(3)/2+0.1}) node[midway,sloped] {$<$};
    \node at (-1.5, {sqrt(3)/2-0.4}){$\frac{70}{169}$};
    
     \draw (-1/2, {sqrt(3)/2+0.75}) -- (-1/2-0.4, {sqrt(3)/2+0.75}) node[midway,sloped] {$<$};
   \draw (-1/2, {sqrt(3)/2+0.75}) -- (-1/2+0.2, {sqrt(3)/2+1}) node[midway,sloped] {$>$};
     \node at (-0.6,{sqrt(3)/2+1} ){$\frac{179}{433}$};
\end{tikzpicture}
\label{fig:Springborn_tree}
\caption{The (oriented) rational Markov tree with the Springborn local rule, starting with $\frac{0}{1}$ and $\frac{1}{2}$. Every vertex of such tree corresponds to a triple of Markov fractions - two parents and a child. A couple of parents $\left(\frac{a}{b}, \frac{c}{d}\right)$ has one child $\frac{r}{s}$ with $r:=\frac{ac+bd}{ad-bc}$ and $s:=\frac{b^2+d^2}{ad-bc}$.}
\end{figure}

\begin{lemma}
Any two rational numbers in a rational Markov triple form an inner regular pair.
\end{lemma}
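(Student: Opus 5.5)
I will argue by induction along the rational Markov tree, using Proposition \ref{prop:iteration} as the inductive engine. It suffices to prove the following strengthened statement for every rational Markov triple $(\tfrac{a}{b},\tfrac{r}{s},\tfrac{c}{d})$ with $\tfrac{r}{s}=\tfrac{a}{b}\oplus_S\tfrac{c}{d}$. First, each of the three fractions $\tfrac{x}{y}$ satisfies $y\mid x^2+1$. Second, the denominators of any two of them are coprime. Third, the parent pair $(\tfrac{a}{b},\tfrac{c}{d})$ is inner regular. The children of the triple are generated by the pairs $(\tfrac{a}{b},\tfrac{r}{s})$ and $(\tfrac{r}{s},\tfrac{c}{d})$. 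Inner regularity of these two pairs follows immediately from Proposition \ref{prop:iteration}, once the three conditions above hold for $(\tfrac{a}{b},\tfrac{c}{d})$.

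For the base case $(\tfrac{0}{1},\tfrac{1}{2})$ I check the conditions directly. We have $d_F=1$, so the pair is inner regular. The denominators $1$ and $2$ are coprime. Finally $1\mid 0^2+1$ and $2\mid 1^2+1$. The child is $\tfrac{2}{5}$, and $5\mid 2^2+1$.

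For the inductive step I must propagate the three conditions to the new triples.
\begin{enumerate}
\item \emph{The condition $y\mid x^2+1$ for the new fraction.} This is item (3) in the proof of Proposition \ref{prop:iteration}: $\left(\tfrac{ab+cd}{d_F}\right)^2+1=\tfrac{(a^2+c^2)(b^2+d^2)}{d_F^2}$. Here $d_F$ divides $a^2+c^2$ by inner regularity, so $s=(b^2+d^2)/d_F$ divides $r^2+1$. This holds once $(r,s)$ is known to be coprime, i.e. once the Springborn sum is written in reduced form. Reducedness holds because inner regularity together with Lemma \ref{Lemma:simplify-Springborn-cond} gives $\gcd(ab+cd,b^2+d^2)=d_F$.
\item \emph{Coprimality of denominators.} This is item (1) there: $\gcd(b,s)=1$ and $\gcd(d,s)=1$, because $\gcd(b,d)=1$ makes $b^2+d^2$ coprime to both $b$ and $d$.
\end{enumerate}
The new parent pairs $(\tfrac{a}{b},\tfrac{r}{s})$ and $(\tfrac{r}{s},\tfrac{c}{d})$ are inner regular by Proposition \ref{prop:iteration}. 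So all conditions propagate.

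It remains to treat the third pair in each triple, namely the two parents themselves. Every pair of elements of a triple is either the generating parent pair, which is inner regular by the induction hypothesis, or a parent together with the child. The latter pairs are exactly the new generating pairs, which we just showed are inner regular. This covers all three pairs of each triple.

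The main obstacle is bookkeeping rather than mathematics: one must make sure that the fractions produced are taken in reduced form. That is what allows the divisibility $s\mid r^2+1$ and the coprimality to be stated for reduced numerators and denominators, as Proposition \ref{prop:iteration} requires. This is handled by inner regularity through Lemma \ref{Lemma:simplify-Springborn-cond}, as explained in the first point of the inductive step.
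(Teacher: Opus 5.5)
Your proof is correct and is essentially the paper's argument: induction down the rational Markov tree starting from the pair $(\tfrac{0}{1},\tfrac{1}{2})$, with Proposition \ref{prop:iteration} doing the work at each step. The one addition is that you state explicitly the invariant that must propagate for the iteration to continue. That invariant is: $y\mid x^2+1$ for each fraction $\tfrac{x}{y}$, pairwise coprime denominators, and the Springborn sum taken in reduced form. The paper's one-line proof leaves this implicit, but it is exactly what items (1) and (3) in the proof of Proposition \ref{prop:iteration} establish.
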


\begin{proof}
The initial pair $\left(\frac{0}{1},\frac{1}{2}\right)$ is inner regular, and $\gcd(1,2) = 1$. Moreover, $1 \mid 0^2+1$ and $2\mid 1^2 + 1$. By iteration conditions determined in Proposition \ref{prop:iteration}, we have the result.
\end{proof}

Springborn shows that rational Markov triples satisfy some defining equations. For a rational Markov triple $ \left(\frac{a_0}{b_0},\frac{a_1}{b_1},\frac{a_2}{b_2}\right)$,
\begin{equation} \label{eq:markovsystem}
    \begin{cases}
    b_1^2 + b_2^2 + b_0^2 = 3b_1b_2b_0\\
    b_0 = b_1a_2 - a_1b_2\\
    b_2 = b_0a_1 - a_0b_1\
    \end{cases}.
\end{equation}

Markov fractions fit in the classical theory of Markov numbers via a particular choice of a system of Cohn matrices. For each integer $n\in \mathbb{Z}$, one can define a family of matrices $C^t(n)$, indexed by rational numbers between $0$ and $1$, such that $\mathrm{tr}(C^t(n)) = 3m^t$ is a Markov number (see \cite[Chapter 4]{Aigner_markov}). 

\begin{proposition}
Let $t\in \Q\cap [0,1]$ and let $\frac{a^t}{m^t}$ be the Markov fraction associated to $t$. Then the second column of the Cohn matrix $C^t(3)$ is $\begin{pmatrix}
    m^t\\
    -a^t\\
\end{pmatrix}$. 
\end{proposition}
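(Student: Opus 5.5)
The plan is to argue by induction along the Farey tree that indexes both families, comparing the Cohn recursion with the Springborn recursion.

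First I fix the conventions, taking the construction of \cite[Chapter 4]{Aigner_markov} with parameter $n=3$. The Cohn matrices $C^t=C^t(3)\in\mathrm{SL}_2(\Z)$ are indexed by $t\in\Q\cap[0,1]$. For Farey neighbours $t_1<t_2$ they satisfy $C^{t_1\oplus_F t_2}=C^{t_1}C^{t_2}$, and each has trace $3m^t$. On the Springborn side, the Markov fraction $\tfrac{a^t}{m^t}$ attached to $t=t_1\oplus_F t_2$ is the Springborn sum of the fractions attached to $t_1$ and $t_2$. Those two fractions form an inner regular pair of Farey determinant $m^t$, by the Lemma above and \eqref{eq:markovsystem}. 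The indexing of both trees by the Farey tree on $[0,1]$ has to be matched carefully, possibly up to the reflection $t\mapsto 1-t$. I will write $v_t=\binom{m^t}{-a^t}$ for the claimed second column.

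\textbf{Base case.} I will compute $C^{0/1}(3)$, $C^{1/1}(3)$ and $C^{1/2}(3)=C^{0/1}(3)C^{1/1}(3)$ explicitly and check that their second columns are $v_0$, $v_1$ and $v_{1/2}$. These should correspond to the initial Markov fractions $\tfrac{0}{1}$, $\tfrac{1}{2}$ and $\tfrac{2}{5}$. This check also fixes which triple $(t_0,t_1,t_2)$ of the Farey tree corresponds to which rational Markov triple $(x_0,x_1,x_2)$.

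\textbf{Inductive step.} Let $t=t_1\oplus_F t_2$. Since $C^t=C^{t_1}C^{t_2}$, the second column of $C^t$ is $C^{t_1}v_{t_2}$ by the induction hypothesis. It therefore suffices to show the vector identity $C^{t_1}v_{t_2}=v_t$. For this I need the first column of $C^{t_1}$ in terms of $(m^{t_1},a^{t_1})$. Writing $C^{t_1}=\left(\begin{smallmatrix}x & m^{t_1}\\ y & -a^{t_1}\end{smallmatrix}\right)$, the trace condition gives $x=3m^{t_1}+a^{t_1}$. The determinant condition then gives $y=-\big(1+a^{t_1}(3m^{t_1}+a^{t_1})\big)/m^{t_1}$, which is integral because $m^{t_1}\mid (a^{t_1})^2+1$.

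Multiplying out $C^{t_1}v_{t_2}$ then yields two scalar identities. These have to be matched with the Markov system \eqref{eq:markovsystem} for the rational Markov triple containing $x_{t_1}$, $x_{t_2}$ and the child $x_t$. Concretely, I will use the linear relations $b_0=b_1a_2-a_1b_2$ and $b_2=b_0a_1-a_0b_1$, together with the Markov equation $b_0^2+b_1^2+b_2^2=3b_0b_1b_2$, to eliminate $y$. Alternatively, one can use Cayley--Hamilton, $C+C^{-1}=3m\,I$, which turns the recursion into the three-term relation $v_{t}+v_{t'}=3m^{t_1}v_{t_2}$ along the tree, where $t'$ is the other neighbour. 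This relation should mirror the Springborn relations.

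I expect the main obstacle to be this last matching step: checking that the entries of $C^{t_1}v_{t_2}$, which involve the denominator $m^{t_1}$ through $y$, simplify exactly to $(m^t,-a^t)$ with the correct signs and the correct assignment of parents. If the direct algebra is messy, I would first try the Cayley--Hamilton three-term recursion, since the Springborn rule for Markov fractions is known to satisfy the same linear recursion.
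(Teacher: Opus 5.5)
Your plan follows the paper's proof in outline. The paper also inducts along the Cohn tree, uses that the second column of a product $C^{t_1}C^{t_2}$ is $C^{t_1}$ applied to the second column of $C^{t_2}$, and identifies the result through Springborn's relations \eqref{eq:markovsystem} and the Markov equation. The one difference is cosmetic: the paper eliminates the first column of the left factor using the lower-right entry of the previous product $M_1=M_0M_2$, not trace and determinant.

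One claim in your setup is false, and it sits exactly at the step you call the main obstacle. The parents $x_{t_1}<x_{t_2}$ do \emph{not} have Farey determinant $m^t$: already $d_F(\tfrac01,\tfrac12)=1$, while their child is $\tfrac25$.

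Write $m_i=m^{t_i}$ and $a_i=a^{t_i}$ for $i=1,2$, $\tfrac{a}{m}=\tfrac{a^t}{m^t}$, and $D=m_1a_2-a_1m_2$. For the triple $(x_{t_1},x_t,x_{t_2})$, the relations \eqref{eq:markovsystem} read $m_1=ma_2-am_2$ and $m_2=m_1a-a_1m$. Multiplying them by $m_1$ and $m_2$ respectively and adding gives $Dm=m_1^2+m_2^2$. The Markov equation $m_1^2+m^2+m_2^2=3m_1mm_2$ then gives $D=3m_1m_2-m$, the other root of the Vieta involution.

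The first entry of $C^{t_1}v_{t_2}$ is $(3m_1+a_1)m_2-m_1a_2=3m_1m_2-D$, so it equals $m$. Had you used $D=m^t$, you would have obtained the other Vieta root instead.

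For the second entry $w_2$ you never need $y$. Since $\det C^{t_1}=1$ and $C^{t_1}\binom{0}{1}=v_{t_1}$,
\[
m_1w_2+a_1m=\det(v_{t_1},\,C^{t_1}v_{t_2})=\det(\tbinom{0}{1},\,v_{t_2})=-m_2 .
\]
Hence $w_2=-(m_2+a_1m)/m_1=-a$ by the relation $m_2=m_1a-a_1m$, and the induction closes.

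The Cayley--Hamilton fallback is then unnecessary. As written, with coefficient $3m^{t_1}$ and vector $v_{t_2}$, it is valid only when $t_2$ was itself created from $t_1$. When $t_1=t''\oplus_F t_2$ instead, the correct relation is $v_t+v_{t''}=3m^{t_2}v_{t_1}$.
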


\begin{proof}
We proceed by induction on the $3$rd Cohn tree. The first triple is:
$$
A(3) = \begin{pmatrix}
    3 & 1\\
    -1 & 0\\
\end{pmatrix} \, \text{ , } \, A(3)B(3) = \begin{pmatrix}
17 & 5 \\
-7 & -2 \\
\end{pmatrix} \, \text{ , } \, B(3) = \begin{pmatrix}
    7 & 2 \\
    -4 & -1\\
\end{pmatrix}.
$$
\noindent Let $(M_0,M_1,M_2)$ be a Cohn triple, write $M_i = \begin{pmatrix}
    d_i & b_i \\
    -c_i & -a_i\\
\end{pmatrix}$ for $i = 0,1,2$, so 
$$
M_1 = M_0M_2 = \begin{pmatrix}
    d_1 & d_0b_2 - b_0d_2 = b_1\\
    -c_1 & a_0a_2 - c_0b_2 = -a_1\\
\end{pmatrix}.
$$
\noindent Suppose that $(\tfrac{a_0}{b_0},\tfrac{a_1}{b_1},\tfrac{a_2}{b_2})$ is a rational Markov triple. Consider the left child $M_3 = M_0M_1$. Its second column is 
$$
b_3 = d_0b_1 - b_0a_1 \, \text{ , }\,  -a_3 = -c_0b_1 + a_0a_1. 
$$
\noindent Let us show that $\frac{a_3}{b_3} = \frac{a_0}{b_0}\oplus_S \frac{a_1}{b_1}$. By design of the Cohn tree, $b_3$ is a Markov number, part of the triple $(b_0,b_3,b_1)$, so $b_3 = \frac{b_0^2 + b_1^2}{b_2}$. Since $(\tfrac{a_0}{b_0},\tfrac{a_1}{b_1},\tfrac{a_2}{b_2})$ is a rational Markov triple is satisfies \eqref{eq:markovsystem} so $b_2 = b_0a_1 - a_0b_1$. Then $b_3 = \frac{b_0^2+b_1^2}{a_0b_1-a_1b_0}$ is the denominator of $\frac{a_0}{b_0}\oplus_S \frac{a_1}{b_1}$. \\
\noindent We want to show $a_3 = \frac{a_0b_0 + a_1b_1}{b_2}$. 
\begin{align*}
a_0b_0 + a_1b_1 &= a_0(b_1a_2 - a_1b_2) + a_1b_1\\
                    &=  b_1(c_0b_2-a_1) - a_0a_1b_2 + a_1b_1\\
                    &= b_2(b_1c_0 - a_0a_1)\\
                    &= b_2a_3.
\end{align*} 
\noindent The right child can be handled with similar arguments.
\end{proof}

We give a $q$-deformation of the rational Markov tree, replacing each Markov fraction by its right $q$-version. This gives a new notion of $q$-deformed Markov numbers, close to the ones studied in \cite{labbe_2022,labbe_2024,Aval_labbe} (but still different). Note that other approaches of $q$-deformed Markov numbers were explored, based on $q$-rational numbers as in \cite{Kogiso_Markov,Oguz_markov,EJMGO}, or in a completely different context in \cite{BJOMY_markov}. 

\begin{theorem}
\label{thm:qdeformedequations}
If $\left(\tfrac{a_0}{b_0},\tfrac{a_1}{b_1},\tfrac{a_2}{b_2}\right)$ is a rational Markov triple, denote by $\frac{A^{\sharp}_i}{B^{\sharp}_i}$ the right quantized Markov fractions associated with it, and by $\frac{A_i^{\flat}}{B_i^{\flat}}$ the left one, for $i\in \{0,1,2\}$. Then 
\begin{equation}\label{eq:qMarkoveq}
    \left\lbrace\begin{array}{l c r}
    B_1^{\sharp} B_1^{\flat} + q^{\varepsilon_0+3}B_2^{\sharp}B_2^{\flat} + B_0^{\flat}(B_1^{\sharp}A_2^{\sharp} -q^3A_1^{\sharp}B_2^{\sharp}) = [3]_qB_1^{\sharp}B_2^{\sharp}B_0^{\flat} & & (r_0)\\
    B_0^{\sharp} \equiv_q B_1^{\sharp}A_2^{\sharp} - A_1^{\sharp}B_2^{\sharp} & & (r_1)\\
    B_0^{\flat} \equiv_q B_1^{\sharp}A_2^{\flat} - A_1^{\sharp}B_2^{\flat} & & (r_1^{\flat})\\
    B_2^{\sharp} \equiv_q A_1^{\sharp}B_0^{\sharp} - B_1^{\sharp}A_0^{\sharp} & & (r_2)\\
    B_2^{\flat} \equiv_q A_1^{\flat}B_0^{\sharp} - B_1^{\flat}A_0^{\sharp} & & (r_2^{\flat})\\
    \end{array} \right.
\end{equation}
\noindent where $\varepsilon_0=\varepsilon(\tfrac{a_0}{b_0})$ from Definition \ref{defi:epsilon}.
\end{theorem}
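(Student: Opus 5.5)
The relations $(r_1),(r_1^\flat),(r_2),(r_2^\flat)$ are $q$-Farey determinants. By the previous Lemma, any two members of a rational Markov triple form an inner regular pair, and from the classical system \eqref{eq:markovsystem} we know $b_0 = d_F(\tfrac{a_1}{b_1},\tfrac{a_2}{b_2})$ and $b_2 = d_F(\tfrac{a_0}{b_0},\tfrac{a_1}{b_1})$.

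\emph{Step 1: $(r_1)$ and $(r_1^\flat)$.} Since $\tfrac{a_0}{b_0}=\tfrac{a_1}{b_1}\oplus_S\tfrac{a_2}{b_2}$ with $(\tfrac{a_1}{b_1},\tfrac{a_2}{b_2})$ inner regular, the Corollary after Theorem \ref{Thm:q-gcd} gives
$$B_0^\sharp\, d_F^{\sharp\flat}\equiv_q q^{\varepsilon_2}B_1^\sharp B_1^\flat+q^{\varepsilon_1}B_2^\sharp B_2^\flat,$$
with $d_F^{\sharp\flat}\equiv_q d_F^{\flat\sharp}$. We then need to identify $d_F^{\sharp\sharp}$ and $d_F^{\sharp\flat}$ of the pair $(\tfrac{a_1}{b_1},\tfrac{a_2}{b_2})$ with $B_0^\sharp$ and $B_0^\flat$.

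Both sides are $\PSL_2(\Z)$-invariant up to monomials by Proposition \ref{Prop:invariance-q-Farey-det}. Moving the pair to $(\tfrac10,\tfrac kn)$ with $n\mid k^2+1$ (Proposition \ref{Prop:represents-reg}), we get $d_F^{\sharp\sharp}=N^\sharp$ and $d_F^{\sharp\flat}=N^\flat$. However, $\tfrac{a_0}{b_0}$ is not carried along equivariantly, since the Springborn sum is not Möbius-equivariant, so this reduction does not by itself produce $B_0$.

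Instead, use the Markov-specific recursion directly. By induction on the Markov tree, I expect $(r_1)$ and $(r_2)$ to follow from Theorem \ref{thm:equal-denominators} and Theorem \ref{thm:palindromicity}: the denominators $b_i$ of a Markov triple satisfy $a_i^2\equiv -1 \bmod b_i$, and a $q$-Farey determinant whose classical value is the Markov number $b_0$ should be the polynomial $N^\flat$ (respectively $N^\sharp$) of a fraction with denominator $b_0$ and numerator congruent to $\pm a_0^{\pm1}$. Concretely, I would show that the $\PSL_2(\Z)$ element sending $\tfrac{a_1}{b_1}$ to $\infty$ sends $\tfrac{a_2}{b_2}$ to some $\tfrac{k}{b_0}$ with $k\equiv \pm a_0$ or $k a_0\equiv -1 \bmod b_0$. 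Theorem \ref{thm:equal-denominators} together with the palindromicity relation Corollary \ref{Prop:id-flat-sharp} then identifies $N^\sharp$ with $B_0^\sharp$, and $N^\flat$ with $B_0^\flat$, up to $q$-powers. $(r_2)$ and $(r_2^\flat)$ are handled identically with the pair $(\tfrac{a_0}{b_0},\tfrac{a_1}{b_1})$.

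\emph{Step 2: $(r_0)$.} Combine the $B$-identity of the Corollary for the child $\tfrac{a_0}{b_0}$ (or whichever entry is the child), namely $B_0^\flat\cdot(\text{child denominator})\equiv_q q^{\varepsilon}B_1^\sharp B_1^\flat+q^{\varepsilon'}B_2^\sharp B_2^\flat$, with $(r_1^\flat)$. Substituting $B_0^\flat$ from $(r_1^\flat)$ and using Theorem \ref{Thm:q-Farey-operations}-type expansions to rewrite $B_1^\sharp A_2^\sharp - q^3A_1^\sharp B_2^\sharp$ should yield an identity that specializes at $q=1$ to $b_1^2+b_2^2+b_0(b_1a_2-a_1b_2)=3b_1b_2b_0$, i.e. the classical relation with $b_0^2$ split as $b_0\cdot b_0$.

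The exact powers of $q$ (the $q^3$, the $q^{\varepsilon_0+3}$, and the $[3]_q$) should be fixed by checking the base triple $(\tfrac01,\tfrac25,\tfrac12)$ explicitly. One then verifies that they propagate under the two tree moves, tracking how $\varepsilon$ changes as in the proof of Theorem \ref{Thm:q-gcd}.

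\emph{Main obstacle.} I expect the hardest part to be the exact monomial bookkeeping in $(r_0)$, since $(r_0)$ is an equality and not merely an equality up to $\equiv_q$. My fallback is an induction on the Markov tree that tracks $\varepsilon_i$ explicitly through the continued-fraction description in Definition \ref{defi:epsilon}.
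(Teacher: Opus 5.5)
Your Step 1 starts from a false premise. In a rational Markov triple the child is the middle entry, $\tfrac{a_1}{b_1}=\tfrac{a_0}{b_0}\oplus_S\tfrac{a_2}{b_2}$. The sum $\tfrac{a_1}{b_1}\oplus_S\tfrac{a_2}{b_2}=:\tfrac{a_3}{b_3}$ is the \emph{next} child, with $b_3=(b_1^2+b_2^2)/b_0=3b_1b_2-b_0$. So your first display holds with $B_3^\sharp$, not $B_0^\sharp$.

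Your ``concrete'' route to $(r_1),(r_1^\flat),(r_2),(r_2^\flat)$ is nevertheless sound, and easier than you make it. Take $M\in\PSL_2(\Z)$ with bottom row $(b_1,-a_1)$. Then $M\cdot\tfrac{a_2}{b_2}=\tfrac{k}{b_0}$, and the two linear relations of \eqref{eq:markovsystem} give $kb_1\equiv -b_2\equiv a_0b_1 \pmod{b_0}$. Since $\gcd(b_0,b_1)=1$, this forces $k\equiv a_0$. Translation by $T$ alone then identifies $N^\sharp,N^\flat$ with $B_0^\sharp,B_0^\flat$ up to monomials. Theorem \ref{thm:equal-denominators} is not needed, and the option $k\equiv -a_0$ you leave open would not give $B_0^\sharp$. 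One can take $M$ to be the involution of Theorem \ref{Thm:charact-regularity} exchanging $\tfrac{a_0}{b_0}$ and $\tfrac{a_2}{b_2}$; it also swaps $\infty$ and $\tfrac{a_1}{b_1}$. This gives the four determinant relations non-inductively, unlike the paper's induction, but only up to $\equiv_q$.

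The genuine gap is Step 2. Use the reduced Springborn formula for $\tfrac{a_3}{b_3}$ exactly: $B_3^\sharp=(B_1^\sharp B_1^\flat+q^{\varepsilon_0+3}B_2^\sharp B_2^\flat)/B_0^\flat$. This already needs the exact normalisation and $\varepsilon_1=\varepsilon_0+\varepsilon_2+3$, which you do not derive. With it, $(r_0)$ becomes \emph{equivalent} to
\[
B_3^\sharp=[3]_qB_1^\sharp B_2^\sharp-B_1^\sharp A_2^\sharp+q^3A_1^\sharp B_2^\sharp ,
\]
which is a $q$-analogue of the Vieta move $b_3=3b_1b_2-b_0$. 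Nothing in your plan produces this identity:
\begin{itemize}
\item The Springborn formula only involves the products $B^\sharp B^\flat$.
\item The term $B_1^\sharp A_2^\sharp-q^3A_1^\sharp B_2^\sharp$ is neither $B_0^\sharp$ nor $B_0^\flat$ (compare with $(r_1)$), so substituting $(r_1^\flat)$ cannot account for it.
\item Theorem \ref{Thm:q-Farey-operations} concerns Farey sums and yields no $[3]_q$.
\item Checking at $q=1$ proves nothing about a polynomial identity.
\item ``Propagating from the base triple'' presupposes an inductive identity you have not formulated.
\end{itemize}

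The missing idea is the paper's Lemma \ref{lemma:fenceposetint}. By Lemma \ref{lemma:contfracsum}, the child's continued fraction is the concatenation $[0,\alpha_1,\dots,\alpha_n,2,1,\beta_1-1,\beta_2,\dots,\beta_m]$; this also gives $\varepsilon_1=\varepsilon_0+\varepsilon_2+3$. Hence its fence poset is that of one parent, then three extra vertices, then that of the other parent. Sorting order ideals by the extra vertices yields $B_1^\sharp=q^3A_0^\sharp B_2^\sharp+q^2B_0^\sharp B_2^\sharp+B_0^\sharp\bigl((1+q)B_2^\sharp-A_2^\sharp\bigr)$, together with its $\flat$-analogues. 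You need this, or an equivalent linear recursion, as the engine of the induction.

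The exact monomials matter as well. Given $B_1^\flat=[3]_qB_0^\flat B_2^\sharp-B_0^\flat A_2^\sharp+q^3A_0^\flat B_2^\sharp$, a direct expansion shows $(r_0)$ is equivalent to the \emph{exact} identity $A_1^\sharp B_0^\flat-B_1^\sharp A_0^\flat=q^{\varepsilon_0}B_2^\flat$. So the $\equiv_q$ versions from Step 1 do not suffice.
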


We prove this theorem in Section \ref{sec:proofmarkov} below.

\begin{Remark}
This equation above is a deformation of the classical Markov equation. Note that thanks to relations \eqref{eq:markovsystem}, $B_1^\sharp A_2^\sharp - q^3A_1^\sharp B_2^\sharp $ is a deformation of $b_0$.
\end{Remark}

\begin{Remark}
For a fraction $0 < a/b < 1$, the numerators and denominators of its $q$-deformation are chosen such that they are coprime polynomials in $q$, and the denominator has constant coefficient $1$. For $\tfrac{a_0}{b_0} = 0$, one has to choose $A_0^{\flat} = 1 - q^{-1}$ and $B_0^{\flat} = 1$ in order to make the equation above work.
\end{Remark}

\subsection{Counting in fence posets}

The proof of Theorem \ref{thm:qdeformedequations} relies on a combinatorial interpretation of $q$-Markov fractions, in terms of fence posets.

\begin{notation}
Let $\frac{a}{b}$ be a Markov fraction, different from $\frac{0}{1}$ and $\frac{1}{2}$. By convention, its canonical continued fraction expansion is $\frac{a}{b} = [0,\alpha_1,\cdots,\alpha_n]$, with $n$ even and $\alpha_i \geq 1$ for all $i$. \\
\noindent The canonical continued fraction expansion of $\frac{0}{1}$ is $[0]$ and the one of $\frac{1}{2}$ is $[0,2]$. 
\end{notation}

\begin{lemma}\label{lemma:contfracsum}
Let $\left(\frac{a_0}{b_0},\frac{a_1}{b_1},\frac{a_2}{b_2}\right)$ be a rational Markov triple. Write the canonical continued fraction expansions of $\frac{a_0}{b_0}$ and $\frac{a_2}{b_2}$ as 
$$
\frac{a_0}{b_0} = [0,\alpha_1,\cdots,\alpha_n] \text{ and } \frac{a_2}{b_2} = [0,\beta_1,\cdots,\beta_m],
$$
\noindent Then 
\begin{itemize}
    \item $\alpha_1 = \alpha_n = \beta_1 = \beta_m = 2$,
    \item the sequences $(\alpha_1,\cdots,\alpha_n)$ and $(\beta_1,\cdots,\beta_m)$ are palindromic,
    \item the continued fraction expansion of the Springborn sum $\frac{a_1}{b_1}$ is given by 
$$
\frac{a_1}{b_1} = [0,\alpha_1,\cdots,\alpha_n,2,1,\beta_1-1,\beta_2,\cdots,\beta_m].
$$
\end{itemize}
\end{lemma}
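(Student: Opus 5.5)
The plan is induction along the rational Markov tree, proving all three items at once. The convention matters: the expansions of $\tfrac{0}{1}$ and $\tfrac{1}{2}$ are $[0]$ and $[0,2]$, so for these the first two items are vacuous or trivial. For the first triple $(\tfrac01,\tfrac25,\tfrac12)$, the formula in the third item, with the $\alpha$-block empty, reads $[0,2,1,1]$. This indeed equals $\tfrac25$, which serves as the base case.

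\textbf{Matrix form.} I would encode continued fractions as products in $\PSL_2(\Z)$ with $T$ and $L=TST$, as in Definition \ref{defi:epsilon}. For $x=[0,\alpha_1,\dots,\alpha_n]$ with $n$ even, set
$$M_x = L^{\alpha_1}T^{\alpha_2}\cdots L^{\alpha_{n-1}}T^{\alpha_n}.$$
The first column of $M_x$ is $(a,b)^T$ up to the standard convention, and the second column is the convergent $(a',b')^T$ of $[0,\alpha_1,\dots,\alpha_n-1]$ or similar. Palindromicity of $(\alpha_i)$ is equivalent to $M_x$ being conjugate to its transpose in the standard way. Classically, for $[0,\alpha_1,\dots,\alpha_n]$ a palindrome of even length, this means $b\mid a^2+1$. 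So the second item is compatible with $b\mid a^2+1$, which Markov fractions satisfy by Proposition \ref{prop:iteration}.

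\textbf{Induction step.} A child triple is either $(x_0, x_0\oplus_S x_1, x_1)$ or $(x_1, x_1\oplus_S x_2, x_2)$. I would first check that in every rational Markov triple the ordering $x_0<x_1<x_2$ (or its reverse) is preserved and that the parents of a child have the stated form. Then the inductive claim is the following. Suppose $x_0$ and $x_2$ have palindromic expansions $P=(\alpha_i)$ and $Q=(\beta_j)$, each starting and ending with $2$. Then $x_1$ has expansion $R=(P,2,1,\beta_1-1,\beta_2,\dots,\beta_m)$, and we must show:
\begin{itemize}
\item[(a)] $R$ is palindromic and starts and ends with $2$;
\item[(b)] the expansions for the next level are again given by the same formula.
\end{itemize}
For (a): $R$ ends with $\beta_m=2$ and starts with $\alpha_1=2$. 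Palindromicity needs an inductive structural fact. The natural guess is that $Q=(P,2,1,\dots)$-type nesting holds, i.e. that the cut-sequences of Markov fractions are Christoffel-word-like. So I would strengthen the induction hypothesis to the known description of Markov continued fractions: each is $[0;2,\,w,\,2]$ with $w$ built from blocks $(1,1)$ and $(2,2)$ in Christoffel-word order (Cohn/Bombieri). Then concatenation of Christoffel words (the standard factorization $w=uv$) gives both palindromicity and the third item directly.

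\textbf{Verifying the formula.} The cleanest route is probably via matrices. I would show that $M_{x_1}$ equals $M_{x_0}\,G\,M'_{x_2}$, where $G$ is the fixed matrix corresponding to the inserted $2,1$ together with the $-1$ on $\beta_1$, and $M'_{x_2}$ is the matrix of $x_2$ with its first block adjusted. I would then read off its first column and compare with
$$\left(\frac{a_0b_0+a_2b_2}{b_1'},\ \frac{b_0^2+b_2^2}{b_1'}\right)^T,$$
where $b_1'$ is the Farey determinant. Using palindromicity of $x_0$ and $x_2$, the transpose of $M_{x_0}$ has a known relation to $M_{x_0}$, and this should produce the quadratic entries $b_0^2+b_2^2$ and $a_0b_0+a_2b_2$. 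The Farey determinant identities \eqref{eq:markovsystem} then handle the division.

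\textbf{Main obstacle.} I expect the main obstacle to be exactly this matrix identity. It amounts to producing the quadratic Springborn expression from a linear concatenation. I would attack it through the Cohn-matrix proposition just proved: the second column of $C^t(3)$ is $(m,-a)^T$, and Cohn matrices multiply along the tree. Writing $C^t(3)$ as a word in $T,L$ matching the continued fraction would turn the concatenation of expansions into the product $M_0M_2$, which avoids the quadratic computation entirely.
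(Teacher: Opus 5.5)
Your plan, encoding expansions as positive words $M_x=L^{\gamma_1}T^{\gamma_2}\cdots L^{\gamma_{k-1}}T^{\gamma_k}$ and inducting on the tree, is a genuinely different route from the paper. The paper reads the expansion off Springborn's triangular snake graphs, takes palindromicity from Christoffel words, and gets the concatenation by stacking the snake graphs of the two parents.

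As written, however, your proposal does not prove the third item. The step you label the main obstacle is the whole content of the lemma. Moreover, the Cohn shortcut as you phrase it cannot work literally: $C^t(3)$ has entries of both signs (already $A(3)$), so it is not a word in $T$ and $L$. Also, the concatenation is not a bare product of two words.

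The missing ingredient is a fixed middle factor. With your convention $M_x\cdot 0=x$ (so $(a,b)^T$ is the second column, not the first), put $G:=L^2TL^{-1}=\begin{pmatrix}0&1\\-1&3\end{pmatrix}$, i.e.\ $G\cdot z=\frac{1}{3-z}$. Then $M_{x_0}GM_{x_2}$ is the word of $(\alpha_1,\dots,\alpha_n,2,1,\beta_1-1,\beta_2,\dots,\beta_m)$. So the third item is equivalent to $x_0\oplus_S x_2=M_{x_0}\cdot\frac{1}{3-x_2}$.

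This is a short check. Palindromicity of $x_0$ gives $M_{x_0}=\begin{pmatrix}(a_0^2+1)/b_0&a_0\\a_0&b_0\end{pmatrix}$. Put $d=a_2b_0-a_0b_2$. The numerator and denominator of $M_{x_0}\cdot\frac{b_2}{3b_2-a_2}$ then come out as $(a_0b_0+a_2b_2)/d$ and $3b_0b_2-d=(b_0^2+b_2^2)/d$. This uses only that $(b_0,b_2,d)$ is a Markov triple, which follows by applying \eqref{eq:markovsystem} to the earlier triple in which one of $x_0,x_2$ is the child; for $(\tfrac{0}{1},\tfrac{1}{2})$ one has $d=1$.

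On the Cohn side, the correct statement is $S^{-1}C_xS=M_xG$. The base cases are $A(3)=SGS^{-1}$ and $B(3)=S(LT)GS^{-1}$, so $\tfrac{1}{2}$ must be encoded as $[0,1,1]$. Then $C_{x_1}=C_{x_0}C_{x_2}$ gives $M_{x_1}=M_{x_0}GM_{x_2}$ at once.

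For palindromicity, do not strengthen the hypothesis to the Cohn--Bombieri block description. For Springborn's Markov fractions, that description together with its compatibility with $\oplus_S$ is essentially the lemma itself, so quoting it is circular. Instead, use the converse of the fact you state in one direction.

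Since $L^T=T$, the transpose $M_x^T$ is the word of the reversed sequence. So palindromicity means $M_x=M_x^T$ (equality, not conjugacy). Suppose $0<a<b$ and $b\mid a^2+1$. The first column $(p',q')^T$ of $M_x$ is the penultimate convergent, and it satisfies $bp'-aq'=1$ with $0<q'<b$. This forces $q'=a$, hence $M_x$ is symmetric.

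Markov fractions satisfy $b\mid a^2+1$ by the computation of item (3) in the proof of Proposition \ref{prop:iteration}. Alternatively, $S^{-1}C_xSG^{-1}$ is symmetric exactly when $\mathrm{tr}\,C_x=3(C_x)_{12}$, which is the Cohn normalization. The first item then follows from the concatenation formula by induction.

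With these additions your argument is complete. It is also more self-contained than the snake-graph proof: it uses only \eqref{eq:markovsystem} (or the Cohn proposition) and elementary facts about positive words in $T$ and $L$.
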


\begin{proof}
Consider the triangular snake graph model for Markov fractions described by Springborn in Section 5 of \cite{Springborn}. This model can be interpreted as a synthesis between two different combinatorial situations :
\begin{itemize}
    \item[(i)] the snake graph model for Markov numbers, see for example Aigner's book \cite{Aigner_markov};
    \item[(ii)] the triangulation model for rational numbers, see Section 2 in \cite{MGO_Farey}.
\end{itemize}
Indeed, in the Springborn model, each Markov fraction corresponds to a snake graph built with triangles. When gluing pairs of adjacent triangles one get a usual snake graph made of squares (or tiles), which is exactly the snake graph usually associated to the Markov number in the denominator of the Markov fraction. See Figure \ref{fig:snakegraphs} below. On the other hand, in the Springborn triangular snake graph, vertices are labeled with fractions, computed using the Farey summation formula, exactly as in the polygon triangulation model in \cite{MGO_Farey}, see Figure \ref{fig:triangulatedpolygon}. Combining these two models, we can deduce the continued fractions of Markov fractions. 

\begin{figure}[h!]
    \centering
    \begin{tikzpicture}[scale=0.8]
    \begin{scope}
    \draw (0,0)--(0.5,0.87)--(1.5,0.87)--(2,1.73)--(4,1.73)--(3.5,0.87)--(2.5,0.87)--(2,0)--(0,0);
    \draw (0.5,0.87)--(1,0)--(1.5,0.87)--(2,0);
    \draw (1.5,0.87)--(2.5,0.87)--(2,1.73);
    \draw (2.5,0.87)--(3,1.73)--(3.5,0.87);
    \node at(0,-0.5) {$\frac{1}{0}$};
    \node at(1,-0.5) {$\frac{0}{1}$};
    \node at(0.5,1.3) {$\frac{1}{1}$};
    \node at(1.3,1.3) {$\frac{1}{2}$};
    \node at(2,-0.5) {$\frac{1}{3}$};
    \node at(2.7,0.5) {$\frac{2}{5}$};
    \node at(2,2.2) {$\frac{3}{7}$};
    \node at(3,2.2) {$\frac{5}{12}$};
    \node at(3.5,0.5) {$\frac{7}{17}$};
    \node at(4,2.2) {$\frac{12}{29}$};
    \end{scope}
    \begin{scope}[shift={(5,0)}]
    \draw (0,0)--(2,0);
    \draw (0,0)--(0,1);
    \draw (0,1)--(3,1);
    \draw (1,2)--(3,2);
    \draw (0,0)--(1,0);
    \draw (1,0)--(1,2);
    \draw (2,0)--(2,2);
    \draw (3,1)--(3,2);
    \draw[blue] (0,0)--(3,2);
    \end{scope}
    \end{tikzpicture}
    \caption{Snake graphs associated to the Markov fraction $\frac{12}{29}$.}
    \label{fig:snakegraphs}
\end{figure}

\begin{figure}[h!]
    \centering
    \begin{tikzpicture}[scale=0.7]
    \draw (0,0)--(4,0);
    \draw (-1,2)--(5,2);
    \draw (-1,2)--(0,0);
    \draw (0,0)--(0,2);
    \draw (0,0)--(1,2);
    \draw (1,0)--(1,2);
    \draw (2,0)--(1,2);
    \draw (2,0)--(2,2);
    \draw (2,0)--(3,2);
    \draw (3,2)--(4,0);
    \draw (4,0)--(5,2);
    \node at(-1,2.4) {$\frac{1}{0}$};
    \node at(0,2.4) {$\frac{1}{1}$};
    \node at(0,-0.5) {$\frac{0}{1}$};
    \node at(1,2.4) {$\frac{1}{2}$};
    \node at(1,-0.5) {$\frac{1}{3}$};
    \node at(2,-0.5) {$\frac{2}{5}$};
    \node at(2,2.4) {$\frac{3}{7}$};
    \node at(3,2.4) {$\frac{5}{12}$};
    \node at(4,-0.5) {$\frac{7}{17}$};
    \node at(5,2.4) {$\frac{12}{29}$};
    \end{tikzpicture}
    \caption{Triangulated polygon associated to $\frac{12}{29}$.}
    \label{fig:triangulatedpolygon}
\end{figure}

More precisely, let $\mu$ be a Markov fraction and $S_{\mu}$ its triangular snake graph. This graph has an even number of triangles, so let group them two by two in order to create quadrilateral tiles, starting with the first two adjacent triangles. The continued fraction of $\mu$ is obtained by reading $S_{\mu}$ from bottom to top, with the following rules :
\begin{itemize}
    \item[$\circ$] replace the first tile by $[0,2]$,
    \item[$\circ$] then for each tile, if it is below an other tile, replace these two tiles by $[2,2]$, and if it is not below an other tile, replace it by $[1,1]$,
    \item[$\circ$] except for the last tile which is replaced by $[2]$.
\end{itemize}

\noindent Now the usual snake graph model for Markov numbers ensures that the continued fraction we get is palindromic (by palindromicity of Christoffel words, see \cite{Aigner_markov}), and that for two neighbours $\mu_1$ and $\mu_2$ in the Markov tree, the snake graph of there Springborn sum $\mu_1\oplus\mu_2$ is the concatenation of $S_{\mu_1}$ and $S_{\mu_2}$, placing the second one on top of the first one. Hence the formula for the continued fraction.
\end{proof}

\begin{notation}
Let $n\in \N^*$ and let $\alpha = (\alpha_1,\cdots,\alpha_n)$ be a sequence of $n$ non-negative integers. The corresponding fence poset $F(\alpha)$ is the linear poset with $\alpha_1 + \cdots + \alpha_n + 1$ vertices, and covering relations described by the following Hasse diagram 
\begin{center}
    \begin{tikzpicture}[x=0.7cm,y=1cm,scale=0.6]
    \node (A1) at (0,0) {$\bullet$};
    \node (A2) at (1,1) {$\bullet$};
    \node[rotate=90] (A3) at (2,2) {$\ddots$};
    \node (A4) at (3,3) {$\bullet$};
    \node (A5) at(4,4) {$\bullet$};
    \node (A6) at(5,3) {$\bullet$};
    \node[rotate=-15] (A7) at(6,2) {$\ddots$};
    \node (A8) at(7,1) {$\bullet$};
    \node (A9) at(8,0) {$\bullet$};

    \node at(10,2) {$\cdots$};

    \node (B1) at (12,0) {$\bullet$};
    \node (B2) at (13,1) {$\bullet$};
    \node[rotate=90] (A3) at (14,2) {$\ddots$};
    \node (B4) at (15,3) {$\bullet$};
    \node (B5) at(16,4) {$\bullet$};
    \node (B6) at(17,3) {$\bullet$};
    \node[rotate=-15] (B7) at(18,2) {$\ddots$};
    \node (B8) at(19,1) {$\bullet$};
    \node (B9) at(20,0) {$\bullet$};
    
    \draw (A1)--(A2);
    \draw (A4)--(A5)--(A6);
    \draw (A8)--(A9);
    \draw (B1)--(B2);
    \draw (B4)--(B5)--(B6);
    \draw (B8)--(B9);

    \node[gray] (N1) at(1.3,2.4) {$\alpha_1$};
    \draw[<-,gray] (A1)to[out=90,in=-130] (N1);
    \draw[->,gray] (N1)to[out=60,in=190] (A5);

    \node[gray] (N2) at(6.7,2.4) {$\alpha_2$};
    \draw[<-,gray] (A5)to[out=-20,in=150] (N2);
    \draw[->,gray] (N2)to[out=-70,in=90] (A9);
    \node[gray] (M1) at(13.3,2.4) {$\alpha_{n-1}$};
    \draw[<-,gray] (B1)to[out=90,in=-130] (M1);
    \draw[->,gray] (M1)to[out=60,in=190] (B5);

    \node[gray] (M2) at(18.7,2.4) {$\alpha_n$};
    \draw[<-,gray] (B5)to[out=-20,in=150] (M2);
    \draw[->,gray] (M2)to[out=-70,in=90] (B9);
    \end{tikzpicture}
\end{center}

Following \cite{MGO-2020}, we associate to each rational number $0 < x < 1$ a fence poset $F_x$, such that if $x = [0,\alpha_1,\cdots,\alpha_n]$, then the poset $F_x$ is $F(0,\alpha_1-1,\cdots,\alpha_{n-1},\alpha_n-1)$. 
\end{notation}

Now we have tools to state the combinatorial interpretation of our $q$-Markov numbers. 

\begin{lemma}\label{lemma:fenceposetint}
Let $\left(\tfrac{a_0}{b_0},\tfrac{a_1}{b_1},\tfrac{a_2}{b_2}\right)$ be a rational Markov triple. Then $B_i^\sharp$ is the generating function of ordered ideals of $F_{\frac{a_i}{b_i}}$, $B_i^\flat$ is the generating function of ordered ideals in $F_{\frac{a_i}{b_i}}$ with the last vertex counting twice, and 
\begin{equation}
\begin{cases}
    B_1^{\sharp} = [3]_qB_0^{\sharp}B_2^{\sharp} - B_0^{\sharp}A_2^{\sharp} +q^3A_0^{\sharp}B_2^{\sharp} \\
    B_1^{\flat} = [3]_qB_0^{\sharp}B_2^{\flat} - B_0^{\sharp}A_2^{\flat} + q^3A_0^{\sharp}B_2^{\flat} = [3]_qB_0^{\flat}B_2^{\sharp} - B_0^{\flat}A_2^{\sharp} + q^3A_0^{\flat}B_2^{\sharp}\\
\end{cases}
\end{equation}
\end{lemma}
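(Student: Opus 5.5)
The first assertion, that $B_i^\sharp$ counts order ideals of $F_{a_i/b_i}$ and $B_i^\flat$ counts them with the last vertex weighted twice, I will not reprove. It is the fence-poset interpretation of denominators of right $q$-rationals from \cite{MGO-2020}, together with its left analogue from \cite{BBL}, specialised to $0<x<1$ via the continued fraction convention fixed in the Notation. The real content is the pair of recursions, and for these I will work with continued fractions, reading the rank generating functions as products of $2\times 2$ transfer matrices.

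\textbf{Step 1: matrix description.} For $x=[0,\alpha_1,\dots,\alpha_n]$ with $n$ even, the vector $(A^\sharp,B^\sharp)^T$ is (up to a power of $q$) the first column of
\[
M_q(x)=S_q\,T_q^{\alpha_1}L_q^{\alpha_2}\cdots T_q^{\alpha_{n-1}}L_q^{\alpha_n}.
\]
The left version is obtained by applying $M_q(x)$ to $(1,1-q)^T$ instead of $(1,0)^T$. This is exactly the setting used in the proof of Proposition \ref{prop:qradius}.

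\textbf{Step 2: factor the child's word.} By Lemma \ref{lemma:contfracsum}, the continued fraction of $a_1/b_1$ is obtained by concatenating the sequence of $a_0/b_0$, the block $2,1$, and the sequence of $a_2/b_2$ with its first entry lowered by one. So $M_q(a_1/b_1)$ factors as
\[
M_q(a_1/b_1)=M_q(a_0/b_0)\cdot W_q\cdot M'_q(a_2/b_2),
\]
where $W_q$ is the explicit matrix of the block $[2,1]$ combined with the $T_q^{-1}$ correction. The factor $M'_q$ satisfies $M'_q=(\text{explicit matrix})\cdot M_q(a_2/b_2)$ after conjugating away the leading $S_q$.

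\textbf{Step 3: read off the denominator.} The first column of $M_q(a_2/b_2)$ is $(A_2^\sharp,B_2^\sharp)$. The bottom row of $M_q(a_0/b_0)$ involves $B_0^\sharp$ together with the other column entry. Expanding the bottom-left entry of the product gives a bilinear form in $(A_0^\sharp,B_0^\sharp)$ and $(A_2^\sharp,B_2^\sharp)$.

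One difficulty is that the second column of $M_q(a_0/b_0)$ is not given directly by $A_0,B_0$. I will use palindromicity of the sequence $(\alpha_i)$ (Lemma \ref{lemma:contfracsum}) to handle this: reversing the word corresponds to transposition up to $S_q$ and powers of $q$. That expresses the second column through $A_0^\sharp,B_0^\sharp$. This is where the claimed coefficients $[3]_q$, $-1$ and $q^3$ should appear; $[3]_q=1+q+q^2$ should come from the small matrix $W_q$.

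\textbf{Step 4: the $\flat$ identities.} Applying the product to $(1,1-q)^T$ replaces $(A_2^\sharp,B_2^\sharp)$ by $(A_2^\flat,B_2^\flat)$, which yields the first expression for $B_1^\flat$. The second expression, with $\flat$ on the index-$0$ side, I will get by symmetry. Reading the palindromic word backwards exchanges the roles of $a_0/b_0$ and $a_2/b_2$, and conjugates the vector $(1,1-q)^T$ to the other end. Alternatively, I can check combinatorially that putting the doubled-vertex weight at either end of the concatenated fence gives the same count. This is plausible because the fence of $a_1/b_1$ is a concatenation of the fences of the parents through a small connecting piece.

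\textbf{Expected obstacle.} I expect the main obstacle to be bookkeeping of the monomial normalisations: powers of $q$ from $S_q$, from the $T_q^{-1}$ shift, and the convention $B(0)=1$. Getting exactly $q^3$, with no stray monomial factors, depends on these. I will fix them by checking the base triple $(0,\tfrac{2}{5},\tfrac12)$ explicitly, and the triple $(\tfrac25,\tfrac{12}{29},\tfrac12)$, before trusting the general computation.
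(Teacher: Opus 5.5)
Your transfer-matrix route works and differs from the paper's, but as written Step 1 is wrong and Step 3, which carries the content of the lemma, is only announced.

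\textbf{The error in Step 1.} Since $S\cdot y=-1/y$, the matrix $S_qT_q^{\alpha_1}L_q^{\alpha_2}\cdots L_q^{\alpha_n}$ sends $\infty$ to $[-x]^\sharp_q=-q^{-1}[x]^\sharp_{q^{-1}}$, not to $[x]^\sharp_q$. For $x=\tfrac25$, its denominator is, up to a power of $q$, $1+2q+q^2+q^3$. That is the reversal of $B^\sharp=1+q+2q^2+q^3$, so no monomial bookkeeping repairs it ($x\mapsto 1/x$ is not in $\PSL_2(\Z)$). Also $T_q^T\neq L_q$, so ``reversal $=$ transposition'' needs a precise $q$-form.

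\textbf{The corrected argument.} Put $L_q=\begin{pmatrix}q&0\\ q&1\end{pmatrix}$ and $P_q=L_q^{\alpha_1}T_q^{\alpha_2}\cdots L_q^{\alpha_{n-1}}T_q^{\alpha_n}$. This is the setting of Proposition~\ref{prop:qradius} with leading exponent $0$, and $[x]^\sharp_q=P_q\cdot 0$. Since $P_q(0)=\mathrm{diag}(0,1)$ and $\det P_q$ is a power of $q$, the second column of $P_q$ is exactly $(A^\sharp,B^\sharp)^T$. This fixes the normalisation without checking examples. With $D=\mathrm{diag}(1,q)$ one has $L_q=DT_q^TD^{-1}$ and $T_q=DL_q^TD^{-1}$. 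So palindromicity of $(\alpha_i)$ makes $P_qD$ symmetric, i.e.\ the bottom row of $P_q$ is $(qA^\sharp,B^\sharp)$, which is the linear expression you wanted. Lemma~\ref{lemma:contfracsum} gives $P^{(1)}_q=P^{(0)}_qW_qP^{(2)}_q$, and
\[
W_q=L_q^2T_qL_q^{-1}=\begin{pmatrix}0&q^2\\ -1&[3]_q\end{pmatrix},\qquad B_1^\sharp=(qA_0^\sharp,\,B_0^\sharp)\,W_q\begin{pmatrix}A_2^\sharp\\ B_2^\sharp\end{pmatrix}=[3]_qB_0^\sharp B_2^\sharp-B_0^\sharp A_2^\sharp+q^3A_0^\sharp B_2^\sharp,
\]
exactly, with no stray monomials. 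If $x_0=0$, take $P^{(0)}_q=I$. If $x_2=\tfrac12$, write $\tfrac12=[0,1,1]$ and $P^{(2)}_q=L_qT_q$.

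\textbf{The left versions.} In this convention the vector is $v=(q-1,q)^T$, representing $[0]^\flat_q$; your $(1,1-q)^T$ belongs to the action on $\infty$. This gives the first $\flat$ formula up to a power of $q$, which disappears on comparing constant terms (both equal $1$). Your ``read the word backwards'' step for the second formula becomes the following. The child's word is palindromic, so $e_2^TP^{(1)}_qv=q\,(D^{-1}v)^TP^{(1)}_qe_2$. Moreover $(D^{-1}v)^TP^{(0)}_q=(P^{(0)}_qv)^TD^{-1}$, which is proportional to $(A_0^\flat,B_0^\flat/q)$. Contracting with $W_q(A_2^\sharp,B_2^\sharp)^T$ gives the second formula, again up to a monomial fixed by constant terms. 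This also explains the convention $A_0^\flat=1-q^{-1}$ for $x_0=0$.

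\textbf{Comparison with the paper.} The paper splits the order ideals of the concatenated fence $F_0\cup\{1,2,3\}\cup F_2$ by whether they contain the connecting vertices $1$ and $2$. This produces $q^3A_0^\sharp B_2^\sharp$, $q^2B_0^\sharp B_2^\sharp$ and $B_0^\sharp(-A_2^\sharp+(1+q)B_2^\sharp)$. It then uses palindromicity of $F_1$ to move the doubled vertex to the other end. The corrected matrix computation is the transfer-matrix form of the same count. It needs only Lemma~\ref{lemma:contfracsum}, not the fence interpretation, and it yields the exact factor $q^3$ automatically. The paper's decomposition instead gives each term a combinatorial meaning.
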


\begin{proof}
Denote $\frac{a_0}{b_0} = [0,\alpha_1,\cdots,\alpha_n]$ and $\frac{a_2}{b_2} = [0,\beta_1,\cdots,\beta_{m}]$ the canonical continued fraction expansions. By combinatorial interpretation of $q$-rationals given in \cite{MGO-2020} and more precisely in \cite{Aval_labbe} for rationals in $(0,1)$, $B_0^{\sharp}$ (resp. $B_2^{\sharp}$) is the generating function of ordered ideals in the fence poset $F_0 := F_{\frac{a_0}{b_0}}$ (resp. $F_2 = F_{\frac{a_2}{b_2}}$) and $A_0^{\sharp}$ (resp. $A_2^{\sharp}$) is the generating function of ideals of $F_0$ (resp. $F_2$) containing the first $\alpha_1$ (resp. $\beta_1$) vertices. Moreover, by Lemma \ref{lemma:contfracsum}, the fence poset $F_1$ is the concatenation of posets $F_0$ and $F_2$.
\begin{center}
    \begin{tikzpicture}[scale=0.7]
    \node (A1) at(0,1) {$\bullet$};
    \node (A2) at(1,0) {$\bullet$};
    \node (A3) at(2,1) {$\bullet$};
    \node (A4) at(4,1) {$\cdots$};
    \node (A5) at(6,1) {$\bullet$};
    \node (A6) at(7,0) {$\bullet$};
    \node (A7) at(8,1) {$\bullet$};
    
    \node (B1) at(9,2) {\tiny{$\blacksquare$}};
    \node (B2) at(10,1) {\tiny{$\blacksquare$}};
    \node (B3) at(11,0) {\tiny{$\blacksquare$}};
    \node at(9,2.5) {$1$};
    \node at(10,1.5) {$2$};
    \node at(11,-0.5) {$3$};
    
    \node (C1) at(12,1) {$\bullet$};
    \node (C2) at(13,0) {$\bullet$};
    \node (C3) at(14,1) {$\bullet$};
    \node (C4) at(16,1) {$\cdots$};
    \node (C5) at(18,1) {$\bullet$};
    \node (C6) at(19,0) {$\bullet$};
    \node (C7) at(20,1) {$\bullet$};
    
    \draw (A1.center)--(A2.center)--(A3.center);
    \draw (A5.center)--(A6.center)--(A7.center)--(B1.center)--(B2.center)--(B3.center)--(C1.center)--(C2.center)--(C3.center);
    \draw (C5.center)--(C6.center)--(C7.center);
    
    \draw[<->] (0,2) -- (8,2) node[midway,above] {$F_{0}$};
    \draw[<->] (12,2) -- (20,2) node[midway,above] {$F_2$};
    \end{tikzpicture}
\end{center}

The ideals of this poset $F_1$ can be split into three groups :
\begin{itemize}
    \item Group 1 : those which contain vertex $1$.
    \item Group 2 : those which do not contain vertex $1$ but contain vertex $2$;
    \item Group 3 : those which do not contain vertex $2$ ;
\end{itemize}

\noindent Ideals in the group $1$ correspond to the couples of ideals in $F_0$ containing the last $\alpha_n$ vertices and ideals in $F_2$, so the generating function of group $1$ is $q^3A_1^{\sharp}B_2^{\sharp}$ (because $F_0$ is symmetric by the palindromicity of the sequence $(\alpha_1,\cdots,\alpha_n))$. \\
~\\
\noindent Ideals in the group $2$ correspond to the couples of ideals of $F_0$ and ideals of $F_2$, so the generating function of group $2$ is $q^2B_0^{\sharp}B_2^{\sharp}$.\\
~\\
\noindent Ideals in the group $3$ correspond to the couples of ideals in $F_0$ and ideals of the poset $F(1,\beta_1-1,\beta_2,\cdots,\beta_m-1)$, which are counted by the $q$-numerator of the continued fraction $[2,\beta_1-1,\beta_2,\cdots,\beta_m] = ST^{2}STS(a_2/b_2)$. We have
$$
T_q^{2}S_qT_qS_q = \begin{pmatrix}
-1 & 1+q\\
 -1 &  1\\
\end{pmatrix},
$$
\noindent so the ideals of the poset $(1,\beta_1-1,\beta_2,\cdots,\beta_m-1)$ are counted by $-A_2^{\sharp} + (1+q)B_2^{\sharp}$, and the generating function of the group $3$ is $B_0^{\sharp}(-A_2^{\sharp}+(1+q)B_2^{\sharp})$.\\
~\\
\noindent Finally, we get $B_1 = q^3A_0^{\sharp}B_2^{\sharp} + q^2B_0^{\sharp}B_2^{\sharp} -B_0^{\sharp}A_2^{\sharp} + (1+q)B_0^{\sharp}B_2^{\sharp}$.\\
~\\
\noindent On the other hand, the left denominator $B_1^{\flat}$ is the generating function of $F_1$ with the final vertex counting twice. \\
\noindent Therefore $B_1^{\flat}$ is given by the same formula as $B_1^{\sharp}$ but using the left quantization of $a_2/b_2$. By palindromicitiy of the sequence defining $F_1$, the left deformation $B_1^{\flat}$ is also the generating function of ordered ideals in $F_1$ with the first vertex counting twice, hence the second formula for $B_1^\flat$.
\end{proof}

\subsection{Proof of the $q$-deformed Markov relations}\label{sec:proofmarkov}

We prove here the following relations for rational Markov triples 
\begin{equation*}
    \left\lbrace\begin{array}{l c r}
    B_1^\sharp B_1^{\flat} + q^{\varepsilon_0+3}B_2^\sharp B_2^{\flat} + B_0^{\flat}(B_1^\sharp A_2^\sharp -q^3A_1^\sharp B_2^\sharp) = [3]_qB_1^\sharp B_2^\sharp B_0^{\flat} & & (r_0)\\
    B_0^\sharp \equiv_q B_1^\sharp A_2^\sharp - A_1^\sharp B_2^\sharp & & (r_1)\\
    B_0^{\flat} \equiv_q B_1^\sharp A_2^{\flat} - A_1^\sharp B_2^{\flat} \equiv_q B_1^\flat A_2^\sharp - A_1^\flat B_2^\sharp & & (r_1^{\flat})\\
    B_2^\sharp \equiv_q A_1^\sharp B_0^\sharp - B_1^\sharp A_0^\sharp & & (r_2)\\
    B_2^{\flat} \equiv_q A_1^{\flat}B_0^\sharp - B_1^{\flat}A_0^\sharp \equiv_q A_1^\sharp B_0^\flat - B_1^\sharp A_0^\flat & & (r_2^{\flat})\\
    \end{array} \right.
\end{equation*}

We proceed by induction on the rational Markov tree. It is straightforward to check that relations \eqref{eq:qMarkoveq} hold for the initial Markov triple $\left(\frac{0}{1},\frac{2}{5},\frac{1}{2}\right)$. \\
~\\
\noindent Suppose relations \eqref{eq:qMarkoveq} hold for a rational Markov triple $\left(\tfrac{a_0}{b_0},\tfrac{a_1}{b_1},\tfrac{a_2}{b_2}\right)$, and consider the child $\tfrac{a_3}{b_3} = \tfrac{a_1}{b_1}\oplus_S \tfrac{a_2}{b_2}$, part of the triple $\left(\tfrac{a_1}{b_1},\tfrac{a_3}{b_3},\tfrac{a_2}{b_2}\right)$.\\
~\\
\noindent By Theorem \ref{Thm:main}, 
\[
A_3^\sharp \equiv_q \frac{q^{\varepsilon_2}A_1^{\flat}B_1^\sharp + q^{\varepsilon_1}A_2^\sharp B_2^{\flat}}{A_2^{\flat}B_1^\sharp - A_1^\sharp B_2^{\flat}} \text{ and } B_3^\sharp \equiv_q \frac{q^{\varepsilon_2}B_1^{\flat}B_1^\sharp + q^{\varepsilon_1}B_2^\sharp B_2^{\flat}}{A_2^{\flat}B_1^\sharp - A_1^\sharp B_2^{\flat}}.
\]
\noindent Because of Lemma \ref{lemma:contfracsum}, $\varepsilon_1 = \varepsilon_0+\varepsilon_2 + 3$. Besides, by relation $(r_1^\flat)$, $A_2^{\flat}B_1^\sharp - A_1^\sharp B_2^{\flat} \equiv_q B_0^\flat$. Since Markov fractions are in $(0,1)$, their $q$-deformations must have denominators with constant coefficient $1$, so we can normalize and get
\[
A_3^\sharp = \frac{A_1^{\flat}B_1^\sharp + q^{\varepsilon_0+3}A_2^\sharp B_2^{\flat}}{B_0^\flat} \text{ and } B_3^\sharp = \frac{B_1^{\flat}B_1^\sharp + q^{\varepsilon_0+3}B_2^\sharp B_2^{\flat}}{B_0^\flat}.
\]
\noindent Let us show relation $(r_1)$ for the child. 
\begin{align*}
    B_3^\sharp A_2^\sharp - A_3^\sharp B_2^\sharp &= \frac{B_1^{\flat}B_1^\sharp + q^{\varepsilon_0+3}B_2^\sharp B_2^{\flat}}{B_0^\flat}A_2^\sharp - \frac{A_1^{\flat}B_1^\sharp + q^{\varepsilon_0+3}A_2^\sharp B_2^{\flat}}{B_0^\flat}B_2^\sharp\\
    &= B_1^\sharp \frac{A_2^\sharp B_1^\flat - B_2^\sharp A_1^\flat}{B_0^\flat}\\
    &\equiv_q q^{\varepsilon_2}B_1^\sharp\\
\end{align*}
\noindent where the last equality comes from relation $(r_1^\flat)$.\\
\noindent The relation $(r_2)$ for the child follows from similar computations, using the other expression of $A_3^\sharp = \frac{A_1^\sharp B_1^\flat + q^{\varepsilon_0+3}A_2^\flat B_2^\sharp}{B_0^\flat}$ (recall Remark \ref{rem:sym_sharp_and_flat}).\\
\noindent First equality of relation $(r_1^\flat)$ for the child is straightforward. The second equality is true up to a power of $q$ by Theorem \ref{Thm:q-gcd}.
\noindent Relation $(r_2^\flat)$ is symmetric. \\
\noindent Now relation $(r_0)$ for the child comes from Lemma \ref{lemma:fenceposetint} applied to the triple $(\frac{a_3}{b_3},\frac{a_3}{b_3}\oplus_S \frac{a_2}{b_2},\frac{a_2}{b_2})$, combined with Theorem \ref{Thm:main} : 
$$
[3]_qB_3^\sharp B_2^\sharp - B_3^\sharp A_2^\sharp + q^3A_3^\sharp B_2^\sharp = \frac{B_3^{\flat}B_3^\sharp + q^{\varepsilon_1+3}B_2^\sharp B_2^{\flat}}{B_1^\flat}.
$$

\noindent To finish induction, it remains to check the same relations \eqref{eq:qMarkoveq} for the other child of the triple $\left(\frac{a_0}{b_0},\frac{a_1}{b_1},\frac{a_2}{b_2}\right)$, with $\frac{a_0}{b_0}\oplus_S\frac{a_1}{b_1}$, and the arguments are symmetric to the previous case. This concludes the proof of Theorem \ref{thm:qdeformedequations}. 
~\\

\begin{question}
In analogy to $q$-binomials counting points in Grassmannians over finite fields, is there a geometric interpretation of $q$-deformed Markov fractions? Veselov's work \cite{Veselov} gives an interpretation of classical Markov fractions as slopes of exceptional bundles over $\mathbb{P}^2$.
\end{question}

\subsection{Companions of Markov fractions via Springborn's difference}
In \cite{Springborn}, Springborn is interested in the Diophantine approximations of the rational numbers, and especially in the question of bounding, for $x \in \Q$, the following constant from below :

$$
C(x):= \inf_{\frac{a}{b} \in \Q \setminus \left\{x\right\}} b^2 \left| x-\frac{a}{b} \right|.
$$

The ``worst'' cases are $C(x)=1$ - only in the case when $x$ is an integer, and $C(x)=1/2$ - only when $x$ is a half-integer, $x \in \Z+1/2$. In the title result of his paper, Springborn shows that the next worst case is that of companions of Markov fractions.

\begin{theorem}[\cite{Springborn}]
The approximation constant $C(x) \geq \frac{1}{3}$ if and only if $x$ is a Markov fraction or its companion. Furthermore, the equality is attained only for left or right (second) companions $c_2^{\pm} \left( \frac{a}{b}\right)$ of Markov fractions. 
\end{theorem}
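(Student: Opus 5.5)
The plan is to recast $C(x)$ as the normalized minimum of a binary quadratic form and then run Markov's classical argument in the hyperbolic language of Section \ref{Sec:Farey-tiling}, following the strategy Springborn uses in \cite{Springborn}.

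\emph{Reduction to quadratic forms.} Write $x=\tfrac{p}{r}$ in reduced form. For a reduced $\tfrac{a}{b}\neq x$,
$$b^2\left|x-\frac{a}{b}\right| = \frac{\lvert b\,(pb-ar)\rvert}{r}.$$
Hence $C(x)=\min\lvert f\rvert/\sqrt{D}$, where $f(a,b)=b(pb-ar)$ is an integral binary quadratic form of square discriminant $D=r^2$, and the minimum runs over primitive vectors not on the isotropic line of $x$. The isotropic lines of $f$ are exactly the two rational points $\infty$ and $x$. Both $f$ and $D$ transform naturally under $\PGL_2(\Z)$. So the statement becomes a Markov-type theorem for \emph{split} forms: $\min\lvert f\rvert/\sqrt{D}\geq\tfrac13$ exactly for the forms attached to Markov fractions and their companions, and equality exactly for second companions.

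\emph{Geometric translation.} Using the bijection of Proposition \ref{Prop:horocycle-bij}, a primitive vector $(a,b)$ corresponds to the Ford horocycle at $\tfrac{a}{b}$. Up to the normalization by $\sqrt D$, the value $\lvert f(a,b)\rvert$ measures the signed distance from the geodesic $g_x$ joining $\infty$ to $x$ to that horocycle. The condition $C(x)\geq\tfrac13$ therefore says that $g_x$ avoids every horocycle shrunk by a fixed factor, except the two at its endpoints. Equivalently, it is a condition on the sequence of Farey triangles crossed by $g_x$, which is encoded by the continued fraction of $x$ (the $L/R$ cutting sequence).

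\emph{Steps.}
\begin{enumerate}
\item Show that the inequality forces every partial quotient of $x$ to lie in $\{1,2\}$, apart from boundary effects at the two ends. A large partial quotient produces a Ford circle that $g_x$ penetrates too deeply. This is a local estimate near a single Farey vertex.
\item Show that the cutting word must be a Christoffel-type (balanced) word, as in Markov's original proof. An unbalanced factor $\dots 2\,2\,1\,1\dots$ against $\dots 1\,1\,2\,2\dots$ gives, by comparing continued fraction tails, a point where $\lvert f\rvert/\sqrt D<\tfrac13$.
\item Identify the balanced words with Markov fractions via the snake-graph and continued-fraction description of Lemma \ref{lemma:contfracsum}. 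The extra degrees of freedom at the two rational ends account exactly for the companions; these are generated by the Springborn difference, as in the example starting from $(\tfrac01,\tfrac13)$.
\item Conversely, compute $C$ directly for Markov fractions and their companions. The Markov equation \eqref{eq:markovsystem} together with the Cohn matrices gives an explicit minimal vector. Equality $C=\tfrac13$ then has to be tracked to the second companions $c_2^{\pm}$, where the two endpoint horocycles interact with the periodic Markov part.
\end{enumerate}

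\emph{Main obstacle.} I expect step 2 to be the hardest. Markov's balancing argument is written for doubly infinite periodic words, whereas here $g_x$ has two rational endpoints, so every inequality must be rechecked near the ends. Those end effects are exactly what separates Markov fractions from companions and strict inequality from equality.

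To handle this, I would first apply the $\PGL_2(\Z)$-symmetries of Section \ref{Sec:Farey-tiling} to place the end of $g_x$ at $\infty$. I would then treat the finite word as a prefix of a periodic Markov word and control the truncation error by an explicit comparison of continued fraction tails, reusing the Springborn-sum structure of Proposition \ref{prop:iteration} to organize the induction.
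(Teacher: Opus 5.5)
The paper does not prove this theorem. It quotes it from \cite{Springborn}, where the argument is geometric: Markov fractions correspond to simple cusp-to-cusp geodesics on the modular torus, and companions to non-simple ones disjoint from a pair of disjoint simple arcs, as recalled at the end of Section~\ref{Sec:Markov}. Your outline is not that strategy; it is Markov's classical continued-fraction method.

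Its opening is sound. The formula $C(x)=\min\lvert f\rvert/\sqrt D$ with $f(a,b)=b(pb-ar)$ and $D=r^2$ is correct. Since $\tfrac13<\tfrac12$, Legendre's theorem reduces $C(x)\ge\tfrac13$ to the convergents of $x=[a_0;a_1,\dots,a_n]$, i.e.\ to $[a_{k+1};\dots,a_n]+[0;a_k,\dots,a_1]\le 3$ for $0\le k<n$ (the second term is $0$ for $k=0$). This gives your Step~1: a partial quotient $3$ occurs only for $x\in\tfrac13+\Z$.

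The gap is everything after Step~1. Steps~2--4 contain the whole theorem but are only announced, and the one concrete claim, that the word must be balanced in Markov's sense, fails for finite words. Markov's lemmas are estimates for doubly infinite sequences. At the two ends one tail is empty, and the quantities change by a non-negligible amount.

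\begin{itemize}
\item $\tfrac38=[0;2,1,2]$ has $C=\tfrac38$, yet the factor $2,1,2$ is excluded in Markov's setting, where $[2;1,2,\dots]+[0;a,\dots]>\tfrac83+\tfrac13$.
\item $[0;2,1^m,2]$ is a Markov fraction ($\tfrac25,\tfrac5{13},\tfrac{13}{34},\dots$) for $m$ even and a companion of $\tfrac01$ ($\tfrac38,\tfrac8{21},\tfrac{21}{55},\dots$) for $m$ odd.
\item Likewise $[0;2^n]$ alternates between the Markov fractions $\tfrac25,\tfrac{12}{29},\tfrac{70}{169}$ and the companions $\tfrac5{12},\tfrac{29}{70}$ of $\tfrac12$.
\end{itemize}
In these families the Markov/companion dichotomy is decided by the parity of $m$ or $n$, a global feature. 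It is not a bounded correction at the ends of a Christoffel word.

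Your proposed remedy does not supply the missing criterion. Both $[0;2,2,1]=\tfrac37$ and $[0;2,2,1,1,2]=\tfrac{13}{31}$ are prefixes of $[0;\overline{2,2,1,1}]$, yet the approximant $\tfrac12$ gives $C<\tfrac13$ for both. So even granting that admissible words are such prefixes, you still need an exact characterization of the admissible finite words, and that is the theorem itself.

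Step~3 also runs the wrong way. Lemma~\ref{lemma:contfracsum} and the relation $c_k^\pm\ominus_S c_l^\pm=c_{k+l}^\pm$ only describe Markov fractions and companions. They do not show that every admissible word is one of them.

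Finally, in Step~4 an explicit vector only bounds $C$ from above; for $c_2^\pm$ the approximant $\tfrac ab$ already gives $C\le\tfrac13$. What is needed is the uniform inequality above for all $k$, strict except at $c_2^\pm$, and nothing in the proposal proves it.
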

While Markov fractions are defined by iterating the Springborn sum as we discussed in paragraph \ref{subs:iterating the Springborn sum}, their companions are defined by a following recursive procedure. 

\begin{definition}\label{def:Markov_companions}
For a Markov fraction $\frac{a}{b} \in \Q$, define two sequences of its (right and left) companions $c_k^+$ and $c_k^-, k=1,2, \ldots$ as follows: 
\begin{equation*}
    c_k^{\pm}:=\frac{a}{b} \pm \frac{u_{k-1}}{bu_k},
\end{equation*}
where $u_k$ is defined via a following recursive linear equation:
\begin{equation}\label{eq:sequence_u}
    u_0=0, u_1=1, u_{k+1}=3 b u_k-u_{k-1}.
\end{equation}
\end{definition}

In his work, Springborn gives geometric interpretations to Markov fractions and their companion sequences : Markov fractions correspond to simple geodesics in the modular torus with both ends in the cusps, and companions correspond to non-simple geodesics wit both ends in the cusps that do not intersect a pair of disjoint simple geodesics, cutting the topology of the modular torus. 

We propose a simple iterative procedure defining the companions of Markov fractions -- it happens that they can be defined via the Springborn difference!

\begin{proposition}
Let $c_k^{\pm}$ be a sequence of companions of some Markov fraction $\frac{a}{b} \in \mathbb{Q}$. Then, for any $k,l \geq 1$ :
\begin{equation}\label{Eq:ck+}
    c_k^\pm \ominus_S c_l^\pm = c_{k+l}^\pm.
\end{equation}
Moreover, the corresponding sequence $u_k$ from Definition \ref{def:Markov_companions} is a sequence of special values of Chebyshev polynomials of the second kind $U_{k-1}$ (see Remark \ref{rem:Chebyshev_polynomials} below):
$$u_k = U_{k-1} \left(\frac{3b}{2}\right), \; \textit{for all} \; k \geq 1.
$$
\end{proposition}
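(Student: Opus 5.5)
The plan is a direct computation. First I identify $u_k$ with Chebyshev values, then I write each companion as an explicit reduced fraction, and finally I reduce \eqref{Ek+} (i.e. \eqref{Eq:ck+}) to two product identities for the $u_k$.

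\emph{Step 1 (Chebyshev).} The Chebyshev polynomials of the second kind satisfy $U_{-1}=0$, $U_0=1$ and $U_{k}(x)=2xU_{k-1}(x)-U_{k-2}(x)$. Setting $x=\tfrac{3b}{2}$ turns this into exactly the recursion \eqref{eq:sequence_u} with the same initial values. Induction then gives $u_k=U_{k-1}(\tfrac{3b}{2})$. For computations I will use the closed form $u_k=\frac{\lambda^k-\lambda^{-k}}{\lambda-\lambda^{-1}}$ with $\lambda+\lambda^{-1}=3b$; equivalently $u_k=\sin(k\theta)/\sin\theta$ with $2\cos\theta=3b$, read formally, since all identities below are polynomial in $\lambda^{\pm1}$.

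\emph{Step 2 (reduced form of $c_k^\pm$).} I write $c_k^\pm=\frac{p_k}{bu_k}$ with $p_k=au_k\pm u_{k-1}$, and check that this fraction is reduced, as the definition of $\ominus_S$ requires.
\begin{itemize}
\item The recursion gives $\gcd(u_k,u_{k-1})=1$, so $\gcd(p_k,u_k)=1$.
\item Modulo $b$ the recursion reads $u_{k+1}\equiv -u_{k-1}$. Hence $u_k\equiv 0$ for $k$ even and $u_k\equiv\pm1$ for $k$ odd.
\item If $k$ is even, then $u_k\equiv 0$ and $u_{k-1}\equiv\pm1$, so $p_k\equiv\pm1 \pmod b$.
\item If $k$ is odd, then $u_{k-1}\equiv 0$, so $p_k\equiv\pm a \pmod b$, which is a unit mod $b$ because $\gcd(a,b)=1$.
\end{itemize}
In both cases $\gcd(p_k,b)=1$, so $\frac{p_k}{bu_k}$ is reduced.

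\emph{Step 3 (the difference).} Substituting into the definition, for $k\neq l$,
$$c_k^\pm\ominus_S c_l^\pm=\frac{p_k\,bu_k-p_l\,bu_l}{b^2(u_k^2-u_l^2)}=\frac{a}{b}\pm\frac{u_ku_{k-1}-u_lu_{l-1}}{b\,(u_k^2-u_l^2)}.$$
So \eqref{Eq:ck+} is equivalent to
$$\frac{u_ku_{k-1}-u_lu_{l-1}}{u_k^2-u_l^2}=\frac{u_{k+l-1}}{u_{k+l}}.$$
This follows from the two product identities
$$u_k^2-u_l^2=u_{k+l}u_{k-l},\qquad u_ku_{k-1}-u_lu_{l-1}=u_{k+l-1}u_{k-l}.$$
Both are instances of product-to-sum formulas in the trigonometric form. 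For example,
$$\sin k\theta\,\sin(k-1)\theta-\sin l\theta\,\sin(l-1)\theta=\tfrac12\bigl[\cos(2l-1)\theta-\cos(2k-1)\theta\bigr]=\sin(k+l-1)\theta\,\sin(k-l)\theta,$$
and the first identity is $\sin^2A-\sin^2B=\sin(A+B)\sin(A-B)$. Both can also be checked directly in $\lambda^{\pm1}$. We then cancel the factor $u_{k-l}$, which is nonzero for $k\neq l$ since $3b\ge 3$ makes $(u_k)$ strictly increasing.

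\emph{Expected obstacle.} The main point of care is the degenerate case $k=l$. There $b u_k=b u_l$, so the Springborn difference is $0/0$, and the statement must be read for $k\neq l$. For $l>k$ the same identities apply with $u_{k-l}=-u_{l-k}$, and the common sign cancels. Beyond that, the only genuinely arithmetic input is the reducedness check in Step~2. I expect that check to be the step that needs care, since it is what makes the formal substitution into $\ominus_S$ legitimate.
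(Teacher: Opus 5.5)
Your proof is correct. Its skeleton matches the paper's:
\begin{itemize}
\item you check that $c_k^\pm=\frac{au_k\pm u_{k-1}}{bu_k}$ is reduced;
\item you reduce the claim to $\frac{u_ku_{k-1}-u_lu_{l-1}}{u_k^2-u_l^2}=\frac{u_{k+l-1}}{u_{k+l}}$;
\item you identify $u_k=U_{k-1}(3b/2)$ by comparing recursions.
\end{itemize}
Your mod-$b$ parity argument is the paper's double use of the recursion. For $\gcd(u_k,u_{k-1})=1$ you run Euclid on the recursion, where the paper cites Springborn's identity $u_ku_{k-2}=u_{k-1}^2-1$.

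The genuine difference is how the key identity is proved. The paper inducts on $k+l$. The base case $l=1$ is the Cassini-type identity. The inductive step rewrites $u_k,u_l$ by the recursion, applies the hypothesis at $(k-1,l-1)$, and then uses only the single Chebyshev factorization $u_k^2-u_l^2=u_{k-l}u_{k+l}$ (twice). You instead add the mixed factorization $u_ku_{k-1}-u_lu_{l-1}=u_{k+l-1}u_{k-l}$. You verify both factorizations in one line from $u_n=(\lambda^n-\lambda^{-n})/(\lambda-\lambda^{-1})$, so no induction is needed.

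Each route has a merit. The paper's economizes on identities, needing only the standard one quoted in its Chebyshev remark. Yours is shorter and shows the mechanism. For $k>l$ the unreduced Springborn difference is literally $\frac{bu_{k-l}\,p_{k+l}}{bu_{k-l}\,bu_{k+l}}$, so by your Step 2 the cancelled factor is exactly $bu_{k-l}$.

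Your caveat about $k=l$ is a real correction. There the Springborn difference is $0/0$, so the statement ``for any $k,l\geq 1$'' must be read with $k\neq l$. The paper's proof tacitly assumes this as well when it divides by $u_k^2-u_l^2$. For $l>k$ you could simply invoke the symmetry $x\ominus_S y=y\ominus_S x$ instead of using negative indices.
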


\begin{proof}
For $u_k$ defined via the above Definition \ref{def:Markov_companions}, we first claim that \eqref{Eq:ck+} is equivalent to 
\begin{equation}\label{eq:relation_on_u}
\frac{u_{k-1}}{u_k} \ominus_S \frac{u_{l-1}}{u_l}=\frac{u_{k+l-1}}{u_{k+l}}.
    \end{equation}

For this, we show that $c_k^\pm = \tfrac{a}{b}\pm\tfrac{u_{k-1}}{bu_k}=\tfrac{au_k\pm u_{k-1}}{bu_k}$ is a reduced fraction. The relation
\begin{equation}\label{eq:easy}
u_k u_{k-2} = u_{k-1}^2 - 1,
\end{equation}
proven in \cite[Theorem 3.14]{Springborn}, shows that $\gcd(a u_k\pm u_{k-1}, u_k)=\mathrm{gcd} (u_{k-1}, u_k)=1$.
Applying twice the recurrence relation \eqref{eq:sequence_u}, we get
$$\gcd(a u_k\pm u_{k-1},b)=\gcd (\pm u_{k-1}-a u_{k-2},b)=
\gcd (a u_{k-2}\pm u_{k-3},b).$$
Depending on the parity of $k$, one finishes with either $\gcd (a u_1\pm u_0,b)=\gcd(a,b)=1$ or $\gcd (a u_2\pm u_1, b)=\gcd (ab\pm 1,b)=1$.

Using the definition of the Springborn difference, a direct computation shows that equation \eqref{Eq:ck+} is indeed equivalent to equation \eqref{eq:relation_on_u}. 

\smallskip
Second, we will prove \eqref{eq:relation_on_u} by induction on $N=k+l$. 

Let us first notice that for $l=1$, the statement is equivalent to \eqref{eq:easy}.
Then, suppose $k+l=N$ and that the statement is already proven for $k+l<N$. We calculate the studied Springborn difference and use the induction once for $N-2$:

\begin{align*}
    \frac{u_{k-1}}{u_k} \ominus_S \frac{u_{l-1}}{u_l} &=\frac{u_{k-1} \left( 3b u_{k-1} -u_{k-2}
    \right) - u_{l-1} \left( 3b u_{l-1} -u_{l-2}
    \right)}{u_k^2-u_l^2}\\
    &=\frac{\left(
    u_{k-1}^2-u_{l-1}^2
    \right) \left(
    3 b u_{k+l-2} - u_{k+l-3}
    \right)
    }{\left(u_k^2-u_l^2\right) u_{k+l-2}} \\
    &= \frac{u_{k-1}^2-u_{l-1}^2}{u_k^2-u_l^2}\, \frac{u_{k+l-1}}{u_{k+l-2}}\\
    &=\frac{u_{k+l-1}}{u_{k+l}}.
\end{align*}

In the last equality, we use that 
\begin{equation}\label{eq:general_form}
(u_k-u_l) (u_k+u_l)=u_{k}^2-u_l^2=u_{k-l} u_{k+l}
\end{equation}
for all $k$ and all $0 \leq l \leq k$, which is a generalisation of equation \eqref{eq:easy} (where $l=1$). 

Comparing the defining recursion \eqref{eq:sequence_u} for $u_k$ to the one defining the Chebychev polynomials of the second kind (see Remark \ref{rem:Chebyshev_polynomials} below), we see that $u_k = U_{k-1} \left(\frac{3b}{2}\right).$ The equality \ref{eq:general_form} is a standard recurrence relation on Chebyshev polynomials of the second kind.
This finishes the proof.
\end{proof}

\begin{example}
The companions of the fraction $\frac{0}{1}$, are the following : $c_1=0/1, c_2=1/3, c_3=3/8, c_4=8/21, c_5=21/55, c_6=55/144, c_7=144/377, \ldots$, see in particular Figure $5$ in \cite{Springborn} for the list of companions for the first thirteen Markov fractions.

The above proposition shows, in particular, that 
\begin{equation*}
    \frac{144}{377}=\frac{1}{3} \ominus_S \frac{21}{55} = \frac{3}{8} \ominus_S \frac{8}{21}.
\end{equation*}
\end{example}

\begin{Remark}\label{rem:Chebyshev_polynomials}
Let us remind the standard definition of Chebyshev polynomials of the second kind $U_k(x)$. They are defined via the following recurrence : 
$$U_{k+1}(x) = 2x U_k (x) - U_{k-1} (x),$$
with $U_0(x)=1, U_1(x)=2x$. 
They express the fact that the fraction $\frac{\sin ((n+1) \theta)}{\sin \theta}$ (as well as the function $\frac{\sinh ((n+1) \theta)}{\sinh \theta}$) is a polynomial in $\cos \theta$ (respectively, $\cosh \theta$).

Note that if $b \neq 0$, then $3b/2 \geq 1.5$ and we are in the hyperbolic case. Remind the following well-known identity:
$$ 
U_k^2(x)-U_l^2(x)=U_{k-l-1}(x) \cdot U_{k+l+1}(x).
$$
The proof of this identity is to substitute $U_n(x)=\frac{\sinh ((n+1) \theta)}{\sinh \theta}$ with $x=\cosh \theta$ (recall that $x \geq 3/2$ in our setting), and to use $\sinh^2 \varphi_1 - \sinh^2 \varphi_2 = \sinh (\varphi_1-\varphi_2) \sinh (\varphi_1 + \varphi_2)$. 
\end{Remark}

\bigskip

\bibliographystyle{plain}
\bibliography{ref-arxiv}

\end{document}